\documentclass[11pt]{article}
\usepackage{amsmath,amssymb,amsthm,xcolor,graphicx,bbm}
\usepackage{accents}
\usepackage[unicode,breaklinks=true,colorlinks=true,citecolor = {magenta}]{hyperref}
\usepackage{esint}  %
\usepackage{listings}  %

\usepackage{authblk}

\usepackage[top=1in, bottom=1in, left=1in, right=1in]{geometry}

\usepackage{thmtools} %

\DeclareSymbolFont{yhlargesymbols}{OMX}{yhex}{m}{n}
\DeclareMathAccent{\wideparen}{\mathord}{yhlargesymbols}{"F3}

\numberwithin{equation}{section}

\newtheorem{theorem}{Theorem}[section]
\newtheorem{corollary}[theorem]{Corollary}
\newtheorem{lemma}[theorem]{Lemma}

\newtheorem{definition}[theorem]{Definition} 

\theoremstyle{remark}

\newcommand{\bke}[1]{\left ( #1 \right )}
\newcommand{\bkt}[1]{\left [ #1 \right ]}
\newcommand{\bket}[1]{\left \{ #1 \right \}}
\newcommand{\norm}[1]{\left  \| #1  \right \|}

\newcommand{\abs}[1]{\left | #1 \right |}

\newcommand\al{\alpha}
\newcommand\be{\beta}
\newcommand\ga{\gamma}
\newcommand\de{\delta}
\newcommand\ep{\epsilon}
\newcommand\ve{\varepsilon}
\newcommand\e {\varepsilon}

\newcommand\ka{\kappa}
\newcommand\la{\lambda}
\newcommand\si{\sigma}

\newcommand\De{\Delta}

\newcommand{\R}{\mathbb{R}}

\newcommand{\ZZ}{\mathbb{Z}}
\newcommand{\NN}{\mathbb{N}}

\renewcommand{\div}{\mathop{\rm div}\nolimits}

\newcommand{\supp} {\mathop{\mathrm{supp}}\nolimits}

\newcommand{\esssup} {\mathop{\rm ess\,sup}}

\newcommand{\cM}{\mathcal{M}}

\newcommand{\pd}{\partial}
\newcommand{\nb}{\nabla}

\newcommand{\td}{\tilde}
\newcommand{\wt}[1]{\widetilde {#1}}

\newcommand{\lec}{{\ \lesssim \ }}

\newcommand{\I}{\infty}
\newcommand{\oo}{\infty}

\newcommand{\Eq}[1]{\begin{equation*}#1\end{equation*}}
\newcommand{\EQ}[1]{\begin{equation}#1\end{equation}}
\newcommand{\EQS}[1]{\begin{equation}\begin{split} #1 \end{split}\end{equation}}
\newcommand{\EQN}[1]{\begin{equation*}\begin{split} #1 \end{split}\end{equation*}}
\newcommand{\EN}[1]{\begin{enumerate} #1 \end{enumerate}}

\usepackage{mathtools} %

\newcommand{\loc}{\mathrm{loc}}
\newcommand{\uloc}{\mathrm{uloc}}
\newcommand{\one}{\mathbbm{1}}
 
 \setcounter{tocdepth}{2}

\begin{document}
\title{Weak and mild solutions to 
the MHD equations and the viscoelastic Navier--Stokes equations with damping
in Wiener amalgam spaces}

\author{
 Chen-Chih Lai \thanks{Email address: cclai.math@gmail.com}
 }
 \affil{\footnotesize Department of Mathematics, Columbia University, New York, NY 10027, USA}
\date{}
\maketitle

\vspace{-0.6cm}

\begin{abstract}
We study the three-dimensional incompressible magnetohydrodynamic (MHD) equations and the incompressible viscoelastic Navier--Stokes equations with damping. 
Building on techniques developed by Bradshaw, Lai, and Tsai (Math. Ann. 2024), we prove the existence of mild solutions in Wiener amalgam spaces that satisfy the corresponding spacetime integral bounds. 
In addition, we construct global-in-time local energy weak solutions in these amalgam spaces using the framework introduced by Bradshaw and Tsai (SIAM J. Math. Anal. 2021).
As part of this construction, we also establish several properties of local energy solutions with $L^2_\uloc$ initial data, including initial and eventual regularity as well as small-large uniqueness, extending analogous results obtained for the Navier--Stokes equations by Bradshaw and Tsai (Comm. Partial Differential Equations 2020).

\end{abstract}

\vspace{-0.4cm}

\renewcommand{\baselinestretch}{0.85}\normalsize
\tableofcontents
\renewcommand{\baselinestretch}{1.0}\normalsize

\section{Introduction}

The incompressible magnetohydrodynamic (MHD) equations describe the interaction of a fluid's velocity field and a magnetic field within a conducting medium, coupling the incompressible Navier--Stokes equations with Maxwell's equations of electromagnetism. These fundamental equations are given by
 \begin{equation}\label{MHD}\tag{MHD}
\setlength\arraycolsep{1.5pt}\def\arraystretch{1.2}
\left.\begin{array}{ll}
\pd_tv-\De v+v\cdot\nb v-b\cdot\nb b+\nb\pi&=0\ \\
\pd_tb-\De b+v\cdot\nb b-b\cdot\nb v&=0\ \\  
~~~~~~~~~~~~~~~~~~~~~~~~~~~~~~\,\nb\cdot v =\nb\cdot b&=0\  
\end{array}\right\} \text{ in }\R^3\times(0,\infty),
\end{equation}
where $v$ is the velocity, $b$ the magnetic field, and $\pi$ the pressure.
The study of MHD equations has attracted considerable attention.
The foundational result of Duvaut and Lions \cite{DL-ARMA1972} established the global existence of weak solutions with finite energy.
Building upon this, Sermange and Temam \cite{ST-CPAM1983} investigated regularity criteria for weak solutions.
Subsequent effort refined these regularity conditions under various assumptions.
For instance, Wu \cite{Wu-DCDS2004} and Zhou \cite{Zhou-DCDS2005} established Serrin-type criteria and scaling-invariant regularity conditions, while He and Xin \cite{HX-JFA2005} and Kang and Lee \cite{Kang-Lee-JDE2009} developed partial regularity results for suitable weak solutions.
Further improvements were made via harmonic analysis methods, as in \cite{CMZ-CMP2008}, and directionally-restricted criteria, such as those by Cao and Wu \cite{CW-JDE2010}.
More recently, local regularity theory for MHD has been advanced in parabolic Morrey spaces \cite{CCHJ-DocumentaMath2021}, and Fern\'andez-Dalgo and Jarr\'in \cite{FJ-JDE2021} provided weak-strong uniqueness results in weighted $L^2$ spaces, alongside constructions of weak suitable solutions in local Morrey spaces.

On the existence side, Miao, Yuan, and Zhang \cite{MYZ-MMAS2007} proved global mild solutions for small data in $BMO^{-1}$, and He and Xin \cite{HX-AMSSB2009} constructed self-similar solutions under small homogeneous initial data. 
Moreover, the existence of forward discretely self-similar and self-similar local Leray solutions is established in the critical space $L^{3,\infty}$ \cite{Lai-JMFM2019} and in the weighted $L^2$ spaces \cite{FJ-JMFM2021}.
 The criticality of the $L^{3,\infty}$ class was also highlighted in \cite{MNS-JMS2007}, where global regularity of weak solutions was established under this condition.
Additional contributions include the construction of global smooth solutions under spectral constraints \cite{LZZ-JDE2016}, the use of Morrey spaces to ensure global well-posedness for small data \cite{LXZZ-JDE2022}, and the construction of forward self-similar solutions via topological and blow-up methods \cite{Yang-arXiv2024}.

 Complementing the MHD system, the incompressible viscoelastic Navier--Stokes equations with damping (vNSEd) model non-Newtonian fluids with both viscous and elastic characteristics. In the simplified setting where both relaxation and retardation times are infinite, the vNSEd system reads
\begin{equation}\label{vNSE}
\setlength\arraycolsep{1.5pt}\def\arraystretch{1.2}
\left.\begin{array}{ll}
\pd_tv-\De v+v\cdot\nb v-\nb\cdot({\bf F}{\bf F}^\top)+\nb\pi&=0\ \\
\pd_t{\bf F}+v\cdot\nb {\bf F}-(\nb v){\bf F}&=0\ \\  
~~~~~~~~~~~~~~~~~~~~~~~~~~~~~~~~~~~~~~~~~~~~~~~\nb\cdot v &=0\  
\end{array}\right\} \text{ in }\R^3\times(0,\infty),
\end{equation}
with initial data \[v|_{t=0}=v_0\ \text{ and }\ {\bf F}|_{t=0}={\bf F}_0\ \text{ in }\R^3,\]
where $v$ is the velocity field, ${\bf F}$ is the local deformation tensor of the fluid, and $\pi$ is the pressure. 
This model arises from Oldroyd-type theories for viscoelastic fluids and captures the interplay between fluid motion and elastic stresses. 
The addition of a damping term in the equation for ${\bf F}$ (following Lin--Liu--Zhang \cite{LLZ-CPAM2005}) is critical for obtaining global solutions, particularly in the absence of intrinsic dissipative mechanisms. 
To be more precise, they introduced the following viscoelastic Navier-Stokes equations with damping as a way to approximate solutions of \eqref{vNSE}:
\begin{equation}\label{vNSEd0}
\setlength\arraycolsep{1.5pt}\def\arraystretch{1.2}
\left.\begin{array}{ll}
\pd_tv-\De v+v\cdot\nb v-\nb\cdot({\bf F}{\bf F}^\top)+\nb\pi&=0\ \\
\pd_t{\bf F}-\mu\De {\bf F}+v\cdot\nb{\bf F}-(\nb v){\bf F}&=0\ \\  
~~~~~~~~~~~~~~~~~~~~~~~~~~~~~~~~~~~~~~~~~~~~\nb\cdot v &=0\  
\end{array}\right\} \text{ in }\R^3\times(0,\infty),
\end{equation}
for a damping parameter $\mu>0$.
Existence results for smooth solutions under smallness conditions or specific symmetries have been established by various authors \cite{CZ-CPDE2006,LLZ-ARMA2008,LLZ-CPAM2005}.
Note that if $\nb\cdot{\bf F}=0$ at some instance of time, then $\nb\cdot{\bf F}=0$ at all later times. In fact, by taking divergence of $\eqref{vNSEd0}_2$ and using $\eqref{vNSEd0}_3$, one have the following equation for $\nb\cdot {\bf F}$:
\[\pd_t(\nb\cdot{\bf F})+v\cdot\nb(\nb\cdot{\bf F})=\mu\De(\nb\cdot{\bf F}).\]
 Hence it is natural to assume 
\[
\nb\cdot{\bf F}=0.
\]
The authors \cite{LLZ-CPAM2005} noted that using standard weak convergence methods to pass the limit of solutions to \eqref{vNSEd0} as $\mu\to0^+$ does not yield weak solutions of \eqref{vNSE}.
Despite this, system \eqref{vNSEd0} remains an interesting subject of study.
For instance, Lai, Lin, and Wang \cite{LLW-SIMA2017} established the existence of forward self-similar classical solution to \eqref{vNSEd0} for locally H\"{o}lder continuous, $(-1)$-homogeneous initial data. 
Additionally, the existence of forward discretely self-similar and self-similar local Leray solutions in the critical space $L^{3,\infty}$ is established in \cite{Lai-JMFM2019}, following the analysis in \cite{BT-AHP2017}.

Regularity issues for weak solutions of the viscoelastic Navier--Stokes equations with damping have been investigated from several perspectives. 
Hynd \cite{Hynd-SIMA2013} proved a version of the Caffarelli--Kohn--Nirenberg partial regularity theorem adapted to the viscoelastic system with damping, while Kim \cite{Kim-AML2017} established Serrin-type regularity criteria in weak-$L^p$ spaces. 
These results have been further extended in \cite{TWZ-CMS2020}, which proved global existence of mild solutions in scaling-invariant spaces for small data and derived various regularity criteria in Lorentz, multiplier, BMO, and Besov spaces. 
Additional contributions include the construction of global classical solutions with symmetry assumptions in periodic domains by Liu and Lin \cite{LL-JIA2021}, and refined local energy bounds leading to improved $\ep$-regularity conditions in the sense of Caffarelli--Kohn--Nirenberg in \cite{YPW-ACV2025}.

Since the damping parameter $\mu$ does not affect our analysis, we set $\mu = 1$ throughout this paper. 
Then, columnwisely, \eqref{vNSEd0} can be rewritten as
\begin{equation}\label{vNSEd}\tag{vNSEd}
\setlength\arraycolsep{1.5pt}\def\arraystretch{1.2}
\left.\begin{array}{ll}
\pd_tv-\De v+v\cdot\nb v-\underset{n=1}{\overset{3}\sum} f_n\cdot\nb f_n+\nb\pi&=0\ \\
\pd_tf_m-\De f_m+v\cdot\nb f_m- f_m\cdot\nb v&=0\ \\  
~~~~~~~~~~~~~~~~~~~~~~~~~~~~~~~~~~~~\nb\cdot f_m=\nb\cdot v &=0\  
\end{array}\right\} \text{ in }\R^3\times(0,\infty),\ m=1,2,3,
\end{equation}
where $f_m$ is the $m$-th column vector of ${\bf F}$.

A central theme in the analysis of both the \eqref{MHD} and \eqref{vNSEd} systems is the interplay between the nonlinear couplings, scaling symmetries, and the functional framework chosen for solutions. While much progress has been made in classical Lebesgue, Sobolev, and Besov spaces, recent advances have highlighted the utility of Wiener amalgam spaces in studying fluid systems. 
In this paper, the Wiener amalgam spaces are denoted $E^p_q$ and defined by the norm  
\[
\|  f \|_{E^p_q} :=\bigg\|   \norm{f}_{L^p(B_1(k))} \bigg\|_{\ell^q(k\in {\ZZ^3})}<\I. 
\]
These spaces, which blend local integrability and global decay properties, provide a flexible setting that accommodates non-decaying or large initial data while retaining control over both local and global behaviors.
We identify $E^p_\I$ with $L^p_\uloc$ with the norm $\norm{f}_{L^p_\uloc} := \sup_{x_0\in\R^3} \norm{f}_{L^p(B_1(x_0))}$.
The closure of $C^\infty_c(\R^3)$ under the $L^p_\uloc$ norm is denoted by $E^p$.
Note that for $p,p_1,p_2,q,q_1,q_2 \in [1,\infty]$ we have the H\"older inequality:
\EQ{\label{Holder}
\norm{fg}_{E^p_q} \le  \norm{f}_{E^{p_1}_{q_1}}  \norm{g}_{E^{p_2}_{q_2}}, \quad \frac 1p= \frac 1{p_1}+\frac1{p_2},\quad
\frac 1q\le \frac 1{q_1}+\frac1{q_2}.
}

We will consider two kinds of spacetime integrals: For $0<T\le \I$, $x\in \R^3$, and $1\le s,p,q\le\infty$, define the norms $L^s_T E^p_q$ and $E^{s,p}_{T,q}$ as follows:
\EQ{\label{LsEpq-def}
\| f\|_{L^s_T E^p_q} := \|  f \|_{L^s(0,T; E^p_q(\R^3))}, 
}
and  
\EQ{\label{Espq-def}
\norm{f}_{E^{s,p}_{T,q}}:=
\norm{\norm{ f}_{L^s_TL^p(B_1(k))}}_{\ell^q(k\in \ZZ^3)}.
}
These norms are different from each other when $s\neq q$.
By Minkowski's integral inequality,
\EQ{\label{ineq:embedding.parabolic.space}
\norm{f}_{L^s _T E^p_q} \le \norm{f}_{E^{s,p}_{T,q}},
\qquad \text{ if }q\le s,
}
and
\EQ{\label{ineq:embedding.parabolic.space2}
\norm{f}_{E^{s,p}_{T,q}} \le \norm{f}_{L^s _T E^p_q},
\qquad \text{ if }q\ge s.
}

Previous works \cite{BT-SIMA2021, BLT-MathAnn2024} developed a detailed theory for the incompressible Navier--Stokes equations in Wiener amalgam spaces, establishing mild and weak solutions, spacetime integral bounds, and eventual regularity results for different ranges of the Lebesgue exponent $q$. In this paper, we extend these techniques to the \eqref{MHD} and \eqref{vNSEd} systems. Specifically, we prove the existence of mild solutions for small data in critical and subcritical Wiener amalgam spaces, as well as global weak solutions under appropriate integrability and decay conditions. Our analysis demonstrates the robustness of the Wiener amalgam framework in addressing the intricate coupling structures and nonlinearities in these models.

In the following subsections, we introduce the definitions of mild and local energy solutions for the systems \eqref{MHD} and \eqref{vNSEd}, and present our main results.

\subsection{Mild solutions of MHD equations}

A pair of vector fields $(v, b)$ is called a \emph{mild solution} to \eqref{MHD} if it satisfies
\EQ{\label{eq-mild-mhd}
(v,b)(x,t) = (e^{t\De}v_0, e^{t\De}b_0) - B((v,b),(v,b))(t),
}
where $B$ is a bilinear operator defined by $B = (B_1,B_2)$,
\EQS{\label{eq-bilinear-operator}
B_1((v,b),(u,a))(t) = \int_0^t  e^{(t-s)\De}\mathbb{P}\nb\cdot(v\otimes u - b\otimes a)\, ds,\\
B_2((v,b),(u,a))(t) = \int_0^t  e^{(t-s)\De}\mathbb{P}\nb\cdot(v\otimes a - b\otimes u)\, ds,
}
in which $\mathbb{P}$ is the Helmholtz projection operator.
More precisely, the vector components of the bilinear operators $B_1$ and $B_2$ can be expressed by
\EQN{
B_1((v,b),(u,a))_i(x,t) = \int_0^t \int_{\R^d} \pd_l S_{ij}(x-y,t-s) (v_lu_j - b_la_j)(y,s)\, dyds,\quad i=1,2,3,\\
B_2((v,b),(u,a))_i(x,t) = \int_0^t \int_{\R^d} \pd_l S_{ij}(x-y,t-s) (v_la_j - b_lu_j)(y,s)\, dyds,\quad i=1,2,3,
}
where $S_{ij}$ is the Ossen tensor derived by Oseen in \cite{Oseen-book1927}.
We refer the readers to \cite[Section 2.2]{KLLT-CMP2023} for a brief introduction of the Oseen tensor.

We first consider the case of data $(v_0,b_0)\in E^r_q\times E^r_q$ with $r>3$, which we refer to as \emph{subcritical}, and state the existence of mild solutions in the amalgam spaces in the following theorem.

\begin{theorem}[Subcritical data (MHD)]\label{thrm.subcritical-mhd}
Let $r\in (3,\I]$  and $q\in[1,\infty]$. If $v_0, b_0\in E^r_q$ are divergence free, then, for any  positive time $T=T(\|(v_0,b_0)\|_{E^r_q\times E^r_q})$ chosen so that
\EQ{ \label{def:T.subcrit-mhd}    T^{1/2 - 3/(2r)} +  T^{1/2 }	  
\lec \|(v_0, b_0)\|_{E^r_q\times E^r_q} ^{-1},
} there exists a unique mild solution $(v, b)\in L^\infty(0,T;E^r_q\times E^r_q) \cap C((0,T);E^r_q\times E^r_q)$ to \eqref{MHD}. Moreover, $(v,b)$ satisfies
\EQ{ \label{th1.1eq2-mhd}
\sup_{0\leq t\leq T} \| (v,b)(t)\|_{E^r_q\times E^r_q} \leq C \|(v_0, b_0)\|_{E^r_q\times E^r_q} . 
}

If $q,r<\I$, then $(v,b)\in C([0,T];E^r_q\times E^r_q)$. If $q=\I$ or $r=\I$, then we still have $\|(e^{t\Delta}v_0- {v(t)}, e^{t\Delta}b_0- {b(t)})\|_{E^r_q\times E^r_q} \to 0$ as $t\to 0^+$. 

Furthermore, if $r<\infty$, then for any $s \in [r,\infty]$ and $p\in[r,3r]$ 
with $\frac2s + \frac3p = \frac3r$,
\[
\norm{(v,b)}_{ E^{s,p}_{T,m}\times E^{s,p}_{T,m}} \le C \|(v_0, b_0)\|_{E^r_q\times E^r_q},\qquad m\ge q,
\]
provided $(1+T^{\frac1s+\epsilon})(T^{\frac12-\frac3{2r}} + T^{1-\frac1s}) \lec \norm{(v_0,b_0)}_{E^r_q\times E^r_q}^{-1}$ for all $\epsilon>0$. 
\end{theorem}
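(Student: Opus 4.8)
The plan is to read \eqref{eq-mild-mhd} as a fixed-point equation in $X_T := L^\infty(0,T;E^r_q\times E^r_q)$, equipped with $\norm{(v,b)}_{X_T}:=\sup_{0\le t\le T}\norm{(v,b)(t)}_{E^r_q\times E^r_q}$, and to apply the standard contraction lemma: if $\norm{(e^{t\De}v_0,e^{t\De}b_0)}_{X_T}\le\eta$ and $\norm{B((v,b),(u,a))}_{X_T}\le K\,\norm{(v,b)}_{X_T}\norm{(u,a)}_{X_T}$ with $4K\eta<1$, then \eqref{eq-mild-mhd} has a unique solution in the ball of radius $2\eta$ in $X_T$. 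The decisive structural observation is that, up to signs, the MHD bilinear operator $B=(B_1,B_2)$ of \eqref{eq-bilinear-operator} coincides with the Navier--Stokes bilinear operator: each scalar component is $e^{(t-s)\De}\mathbb{P}\nb\cdot$ applied to a quadratic expression in the unknowns, so the coupling patterns $v\ot u-b\ot a$ and $v\ot a-b\ot u$ are irrelevant to every estimate, and the linear and bilinear bounds from \cite{BLT-MathAnn2024} transfer. Concretely, the heat semigroup is bounded on $E^r_q$, giving $\eta\lec\norm{(v_0,b_0)}_{E^r_q\times E^r_q}$; for the bilinear term, the H\"older inequality \eqref{Holder} places the quadratic terms in $L^\infty_T E^{r/2}_{q}$, and the convolution kernel $\pd_l S_{ij}$ of $e^{(t-s)\De}\mathbb{P}\nb\cdot$ obeys, via the Young inequality for amalgam spaces and the Oseen-tensor bounds, an estimate $E^{r/2}_q\to E^r_q$ with a factor that behaves like $(t-s)^{-1/2-3/(2r)}$ for $t-s\lec1$, together with a far-field/large-time contribution producing the second time power. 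Since $\int_0^t(t-s)^{-1/2-3/(2r)}\,ds=c\,t^{1/2-3/(2r)}$ is finite precisely because $r>3$, one gets $K\lec T^{1/2-3/(2r)}+T^{1/2}$. Choosing $T$ so that \eqref{def:T.subcrit-mhd} holds with a small enough implied constant gives $4K\eta<1$, hence the unique solution and the a priori bound \eqref{th1.1eq2-mhd}. Uniqueness in the full class $L^\infty(0,T;E^r_q\times E^r_q)\cap C((0,T);E^r_q\times E^r_q)$ follows by applying the bilinear bound to the difference of two solutions on a short time interval and iterating.

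The continuity statements are read off \eqref{eq-mild-mhd}. On $(0,T)$, $t\mapsto(v,b)(t)$ is continuous into $E^r_q\times E^r_q$ because $t\mapsto e^{t\De}(v_0,b_0)$ is continuous for $t>0$ and the bilinear term is continuous by dominated convergence and the same smoothing estimates. At $t=0$, the bilinear term tends to $0$ in $E^r_q\times E^r_q$ since $K\to0$ as $T\to0$, so $\norm{(e^{t\De}v_0-v(t),\,e^{t\De}b_0-b(t))}_{E^r_q\times E^r_q}\to0$ in every case. If in addition $q,r<\I$, then $C^\infty_c(\R^3)$ is dense in $E^r_q$ and $e^{t\De}$ is strongly continuous there, so $e^{t\De}v_0\to v_0$ and $(v,b)\in C([0,T];E^r_q\times E^r_q)$; if $q=\I$ or $r=\I$ the heat semigroup fails to be strongly continuous at $0$, so only the convergence of the difference persists.

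For the spacetime integral bounds one again splits $(v,b)=(e^{t\De}v_0,e^{t\De}b_0)-B((v,b),(v,b))$ and estimates each piece in $E^{s,p}_{T,m}\times E^{s,p}_{T,m}$. The linear piece is handled by the parabolic smoothing estimate for $e^{t\De}$ in amalgam spaces from \cite{BLT-MathAnn2024}: for $\frac2s+\frac3p=\frac3r$ with $s\in[r,\I]$, $p\in[r,3r]$, and $m\ge q$, one has $\norm{e^{t\De}f}_{E^{s,p}_{T,m}}\lec(1+T^{1/s+\ep})\norm{f}_{E^r_q}$; note the crude pointwise-in-time bound $\norm{e^{t\De}f}_{E^p_m}\lec t^{-1/s}\norm{f}_{E^r_q}$ is only logarithmically short of time-integrability, so the genuine estimate requires a maximal-function/Besov-type argument, while the factor $T^{1/s+\ep}$ absorbs the large-time regime, where for $m=q$ the $\ell^q$-summation provides no extra decay (the passage from $m=q$ to general $m\ge q$ being free, since $\ell^q\subset\ell^m$). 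For the bilinear piece, the H\"older inequality \eqref{Holder} again puts the quadratic terms into $L^\infty_T E^{r/2}_q\subset L^\infty_T E^{r/2}_m$, and the matching parabolic estimate for $\int_0^t e^{(t-s)\De}\mathbb{P}\nb\cdot$ yields $\norm{B((v,b),(v,b))}_{E^{s,p}_{T,m}\times E^{s,p}_{T,m}}\lec(1+T^{1/s+\ep})(T^{1/2-3/(2r)}+T^{1-1/s})\norm{(v,b)}_{X_T}^2$ (again a maximal-function argument rather than the pointwise kernel bound, which is only borderline when $s$ is near $r$); combining with \eqref{th1.1eq2-mhd} and the hypothesis $(1+T^{1/s+\ep})(T^{1/2-3/(2r)}+T^{1-1/s})\lec\norm{(v_0,b_0)}_{E^r_q\times E^r_q}^{-1}$ closes the estimate. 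The main obstacle is not the MHD coupling, which the H\"older inequality disposes of at once, but the sharp critical spacetime linear and bilinear estimates in the amalgam framework — recovering the borderline time-integrability against the apparent logarithmic divergence of the naive heat bound, and tracking the exact powers of $T$ forced by the mismatch between parabolic scaling and the unit-scale lattice underlying $E^p_q$ — both of which are, however, available from \cite{BLT-MathAnn2024}, so that the remaining task is to verify that the quadratic structure of \eqref{MHD} fits their hypotheses.
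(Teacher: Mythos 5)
Your treatment of the main existence, uniqueness, and continuity claims follows the paper's route essentially verbatim: a Picard contraction in $\sup_{0\le t\le T}\norm{\cdot}_{E^r_q\times E^r_q}$, the observation that $B_1,B_2$ have the same kernel structure as the Navier--Stokes bilinear operator so that the linear and bilinear estimates of \cite{BLT-MathAnn2024} transfer, the H\"older inequality \eqref{Holder} together with $E^r_q\subset E^r_\infty$ to place the quadratic terms in $E^{r/2}_q$, the factor $t^{1/2-3/(2r)}+t^{1/2}$ from integrating the kernel bound (finite since $r>3$), and the continuity/uniqueness arguments. All of that is correct and matches the paper.

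There is, however, a genuine gap in your derivation of the spacetime integral bound. You claim
\[
\norm{B((v,b),(v,b))}_{E^{s,p}_{T,m}\times E^{s,p}_{T,m}}\lec(1+T^{1/s+\ep})(T^{1/2-3/(2r)}+T^{1-1/s})\,\norm{(v,b)}_{X_T}^2,
\]
with $X_T=L^\infty_T(E^r_q\times E^r_q)$, i.e.\ you want to read the spacetime bound off \emph{a posteriori} from \eqref{th1.1eq2-mhd}, feeding only $L^\infty_T E^{r/2}_q$ control of the quadratic term into the Duhamel operator. This estimate is not available and the natural computation fails: the kernel of $e^{(t-\tau)\De}\mathbb{P}\nb\cdot$ from $E^{r/2}_q$ to $E^p_m$ costs $(t-\tau)^{-\al}$ with $\al=\tfrac12+\tfrac3{2r}+\tfrac1s$ (using $\tfrac2s+\tfrac3p=\tfrac3r$), and for $s=r$ with $r$ near $3$ one has $\al>1$, so the time integral against an $L^\infty_\tau$ density diverges; no maximal-function refinement rescues an input measured only in $L^\infty_T E^{r/2}_q$, and for $q<\infty$ this norm does not even control the spacetime amalgam norm $E^{\infty,r/2}_{T,q}$ (the inequality \eqref{ineq:embedding.parabolic.space2} goes the wrong way). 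The tool that actually exists, \cite[Lemma 2.7]{BLT-MathAnn2024}, takes as input $\norm{v\otimes u}_{E^{s/2,p/2}_{T,q}}$, which by H\"older is controlled by $\norm{v}_{E^{s,p}_{T,q}}\norm{u}_{E^{s,p}_{T,q}}$ --- that is, by the spacetime norms of the \emph{factors}, not by their $L^\infty_T E^r_q$ norms. Consequently the estimate does not close from \eqref{th1.1eq2-mhd} alone: one must run a second Picard iteration in the intersection space $\mathcal E_T\cap(E^{s,p}_{T,q}\times E^{s,p}_{T,q})$ under the stronger smallness hypothesis $(1+T^{1/s+\ep})(T^{1/2-3/(2r)}+T^{1-1/s})\lec\norm{(v_0,b_0)}_{E^r_q\times E^r_q}^{-1}$, obtain a solution bounded in $E^{s,p}_{T,q}$, and then invoke uniqueness in $L^\infty_T(E^r_q\times E^r_q)$ to identify it with the solution already constructed. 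This is exactly how the paper proceeds, and it is the step your proposal is missing.
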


Theorem \ref{thrm.subcritical-mhd} is proved in Section \ref{sec-subcritical}.

We now turn to the \emph{critical} case, i.e., the case when data $(v_0,b_0)\in E^3_q\times E^3_q$.
When the data is sufficiently small, we have the following existence theorem of mild solutions.

\begin{theorem}[Critical data I (MHD)]\label{thrm:critical-mhd} Let $q\in[1,\I]$. Fix $T>0$.
There exists $\ve=\ve(T)>0$ such that for all {divergence-free} $v_0,b_0\in E^3_q$ with $\norm{(v_0, b_0)}_{E^3_q\times E^3_q}\le\ve$, there exists a mild solution $(v,b)$  to \eqref{MHD} with 
\[
(v,b) \in L^\infty(0,T; E^3_q\times E^3_q)\quad \text{and}\quad t^{\frac12}(v, b)\in L^\infty (0,T; E^\oo_q\times E^\oo_q).
\]
The solution is unique in the class 
\EQ{\label{eq-uniquesmall-mhd}
\sup_{0<t<T} t^{\frac14} \norm{(v,b)}_{E^6_q\times E^6_q} \le 2 \sup_{0<t<T} t^{\frac14} \norm{(e^{t\De} v_0, e^{t\De} b_0)}_{E^6_q\times E^6_q}.
}
Furthermore, $\norm{(v,b)}_{L^\infty_T (E^3_q\times E^3_q)} + \norm{t^{1/2}(v,b)}_{L^\infty_T (E^\oo_q\times E^\oo_q)} \lec \norm{(v_0,b_0)}_{E^3_q\times E^3_q} $. We have $(v,b)\in C((0,T); E^3_q\times E^3_q) $ for $q=\I$ and $(v,b)\in C([0,T); E^3_q\times E^3_q)$ for $q<\I$. If $q=\I$, then we have for any ball $B$ and $\delta\in (0,2]$ that 
\EQ{\label{convergencetodata-mhd}
\lim_{t\to 0^+} \|  (v,b) (t) - (v_0,b_0)\|_{L^{3-\delta}(B)\times L^{3-\delta}(B)}=0.
}

For any $s\in [3,\infty]$ and $p\in [3,9]$ given by $\frac2s + \frac3p = 1$, by taking $\ve \le \ve_0(T,s)$ sufficiently small, this solution further satisfies%
\EQ{\label{eq1.7-mhd}
\norm{(v,b)}_{E^{s,p}_{T,m}\times E^{s,p}_{T,m}} + \one_{q \le s}  \norm{(v,b)}_{L^s_T (E^p_m\times E^p_m)} \le C \norm{(v_0,b_0)}_{E^3_q\times E^3_q},\quad \forall m \in [q,\infty].
}
\end{theorem}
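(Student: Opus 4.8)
The plan is to run a Kato-type Picard iteration for the integral equation \eqref{eq-mild-mhd}, treating the pair $U=(v,b)$ as a single unknown in $\mathcal{E}_q:=E^3_q\times E^3_q$. The structural point is that the MHD bilinear operator $B=(B_1,B_2)$ of \eqref{eq-bilinear-operator} is assembled from the single ingredient $e^{t\De}\mathbb{P}\nb\cdot$ acting on quadratic expressions $v\otimes u-b\otimes a$ and $v\otimes a-b\otimes u$; schematically $B(U,U')(t)=\int_0^t e^{(t-s)\De}\mathbb{P}\nb\cdot\,Q(U,U')(s)\,ds$ with $Q$ bilinear and, by the amalgam H\"older inequality \eqref{Holder}, $\|Q(U,U')\|_{E^p_m\times E^p_m}\lec\|U\|_{E^{p_1}_m\times E^{p_1}_m}\|U'\|_{E^{p_2}_m\times E^{p_2}_m}$ whenever $\tfrac1p=\tfrac1{p_1}+\tfrac1{p_2}$. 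Hence the coupling introduces no new algebraic difficulty and the whole argument parallels the Navier--Stokes analysis of \cite{BT-SIMA2021,BLT-MathAnn2024} with $U$ in place of the velocity. The fixed point will be carried out in
\[
\mathcal{X}_T:=\Bigl\{U:(0,T)\to\mathcal{E}_q\ :\ \|U\|_{\mathcal{X}_T}:=\sup_{0<t<T}t^{1/4}\|U(t)\|_{E^6_q\times E^6_q}<\I\Bigr\},
\]
whose norm is precisely the one defining the uniqueness class \eqref{eq-uniquesmall-mhd}.

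\textbf{Linear estimates.} By the amalgam heat-semigroup bounds of \cite{BT-SIMA2021,BLT-MathAnn2024}, for $t\in(0,T)$ one has $\|e^{t\De}U_0\|_{E^6_q\times E^6_q}\lec(1+T^{1/4})t^{-1/4}\|U_0\|_{\mathcal{E}_q}$ and $\|e^{t\De}U_0\|_{E^\oo_q\times E^\oo_q}\lec(1+T^{1/2})t^{-1/2}\|U_0\|_{\mathcal{E}_q}$, so in particular $\|e^{t\De}U_0\|_{\mathcal{X}_T}\lec(1+T^{1/4})\|U_0\|_{\mathcal{E}_q}$. The $T$-dependent prefactor is intrinsic to the amalgam setting: since the $E^p_q$ norms are built on unit balls, the heat flow is not parabolically scale-invariant and the sharp short-time decay rates degrade for $t\gtrsim1$. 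This is exactly why the smallness threshold must be $\ve=\ve(T)$ with $T$ fixed beforehand, rather than a scale-invariant global condition as for Navier--Stokes in $L^3(\R^3)$.

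\textbf{Bilinear estimate and contraction.} Using \eqref{Holder} in the form $\|Q(U,U')(s)\|_{E^3_q\times E^3_q}\lec\|U(s)\|_{E^6_q\times E^6_q}\|U'(s)\|_{E^6_q\times E^6_q}$ together with the (again not scale-invariant) kernel bound $\|e^{\tau\De}\mathbb{P}\nb\cdot G\|_{E^6_q\times E^6_q}\lec\ka(\tau)\|G\|_{E^3_q\times E^3_q}$, where $\ka(\tau)\lec\tau^{-3/4}$ for $\tau\le1$ and $\ka(\tau)\lec\tau^{-1/2}$ for $\tau\ge1$, we obtain
\[
t^{1/4}\|B(U,U')(t)\|_{E^6_q\times E^6_q}\lec t^{1/4}\int_0^t\ka(t-s)\,s^{-1/2}\,ds\ \|U\|_{\mathcal{X}_T}\|U'\|_{\mathcal{X}_T}\le C(T)\|U\|_{\mathcal{X}_T}\|U'\|_{\mathcal{X}_T},
\]
the time integral being a Beta-type integral, finite on $(0,T)$ and bounded by an increasing function of $T$. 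Thus $B:\mathcal{X}_T\times\mathcal{X}_T\to\mathcal{X}_T$ is bounded, and since $B(U,U)-B(U',U')=B(U-U',U)+B(U',U-U')$ the same bound furnishes the Lipschitz estimate. The standard abstract contraction lemma (as in \cite{BLT-MathAnn2024}) then yields, once $\|e^{t\De}U_0\|_{\mathcal{X}_T}\le(4C(T))^{-1}$ — guaranteed by $\|U_0\|_{\mathcal{E}_q}\le\ve(T)$ — a solution $U\in\mathcal{X}_T$ of \eqref{eq-mild-mhd} with $\|U\|_{\mathcal{X}_T}\le2\|e^{t\De}U_0\|_{\mathcal{X}_T}$, unique in the ball \eqref{eq-uniquesmall-mhd} and satisfying $\|U\|_{\mathcal{X}_T}\lec\|U_0\|_{\mathcal{E}_q}$.

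\textbf{Auxiliary norms, continuity, convergence, spacetime bounds.} Feeding $\|U\|_{\mathcal{X}_T}\lec\|U_0\|_{\mathcal{E}_q}$ back into \eqref{eq-mild-mhd} and invoking the $\mathcal{E}_q\to\mathcal{E}_q$ and $\mathcal{E}_q\to E^\oo_q\times E^\oo_q$ endpoint bounds for $e^{\tau\De}\mathbb{P}\nb\cdot$ yields the claimed estimates for $\|U\|_{L^\oo_T\mathcal{E}_q}$ and $\|t^{1/2}U\|_{L^\oo_T(E^\oo_q\times E^\oo_q)}$. Continuity of $U$ on $(0,T)$ follows from strong continuity of $t\mapsto e^{t\De}$ on $\mathcal{E}_q$ for $t>0$ plus continuity of the Duhamel integral (dominated convergence and the above bounds); continuity up to $t=0$ when $q<\I$ uses the density of $C^\infty_c(\R^3)$ in $E^3_q$ for $q<\I$, so that $e^{t\De}U_0\to U_0$ in $\mathcal{E}_q$ and $\limsup_{s\to0}s^{1/4}\|e^{s\De}U_0\|_{E^6_q\times E^6_q}=0$, whence the Duhamel term $\to0$ in $\mathcal{E}_q$. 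When $q=\I$ the Duhamel term need not vanish in $E^3_\I$, but $e^{t\De}v_0\to v_0$ in $L^{3-\de}(B)$ for every ball $B$ and $\de\in(0,2]$ (since $L^3_\uloc\subset L^{3-\de}_\loc$), and the Duhamel term tends to $0$ in these local norms — by splitting the time integral and exploiting the improved local integrability that $e^{\tau\De}\mathbb{P}\nb\cdot$ supplies for $s$ near $t$, exactly as in \cite{BLT-MathAnn2024} — which gives \eqref{convergencetodata-mhd}. For the spacetime bounds \eqref{eq1.7-mhd} one estimates the Duhamel term directly in the $E^{s,p}_{T,m}$ norms: the $\ell^m$ sum over $\ZZ^3$ with $m\ge q$ is absorbed via $\ell^q\hookrightarrow\ell^m$, the $L^s$-in-time integration is handled by Minkowski's inequality (cf.\ \eqref{ineq:embedding.parabolic.space}, \eqref{ineq:embedding.parabolic.space2}) combined with a Hardy--Littlewood--Sobolev estimate in time along the admissible line $\tfrac2s+\tfrac3p=1$, $p\in[3,9]$, and the extra $T$- and $s$-dependent constants force the stronger threshold $\ve\le\ve_0(T,s)$; the term $\one_{q\le s}\|U\|_{L^s_T(E^p_m\times E^p_m)}$ then comes from \eqref{ineq:embedding.parabolic.space}. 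The main obstacle is this last, spacetime-amalgam, estimate: the $L^s_t$ and $\ell^m_k$ norms do not commute and the heat and Oseen kernels are not scale-invariant, so one must split the time integral at $\tau=1$, bookkeep all $T$- and $s$-dependent constants, and verify that the combined Beta/Hardy--Littlewood--Sobolev exponents genuinely close for every admissible pair $(s,p)$; everything else is a routine transcription of the Navier--Stokes arguments in \cite{BT-SIMA2021,BLT-MathAnn2024}.
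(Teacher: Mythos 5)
Your construction of the solution itself — Picard iteration for the pair $U=(v,b)$ in the Kato-type space $\mathcal{X}_T=\tilde{\mathcal F}_T$ with norm $\sup_{0<t<T}t^{1/4}\|U(t)\|_{E^6_q\times E^6_q}$, using the amalgam H\"older inequality and the (non-scale-invariant) heat/Oseen bounds, with $\ve=\ve(T)$ forced by the unit-ball structure of $E^p_q$ — is exactly the paper's route, and your observations about the uniqueness class and about continuity at $t=0$ for $q<\infty$ are sound.

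The gap is in the paragraph ``Auxiliary norms, continuity, convergence, spacetime bounds,'' where you claim the remaining estimates follow by ``feeding $\|U\|_{\mathcal X_T}\lec\|U_0\|_{\mathcal E_q}$ back into \eqref{eq-mild-mhd}.'' This works for the $L^\infty_T(E^3_q\times E^3_q)$ bound (there $\|Q(U,U)(\tau)\|_{E^3_{q}}\le\|U(\tau)\|_{E^6_q}^2\lec\tau^{-1/2}$ meets an integrable $(t-\tau)^{-1/2}$ kernel), but it fails for the other two components because the relevant bilinear estimates are \emph{diagonal}: they require the target norm of the input. For $t^{1/2}\|B(U,U)\|_{E^\infty_q}$, the only route from $\mathcal X_T$ alone is $E^3_q\to E^\infty_q$, whose kernel costs $(t-\tau)^{-1}$, and $\int_0^t(t-\tau)^{-1}\tau^{-1/2}\,d\tau$ diverges; one must instead use $\|U\otimes U\|_{E^6_q}\le\|U\|_{E^\infty_q}\|U\|_{E^6_q}$, which presupposes the very bound being proved. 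Likewise for \eqref{eq1.7-mhd}: on the admissible line the $\mathcal X_T$ bound gives only $\|U(t)\|_{E^6_q}\lec t^{-1/4}$, which is exactly borderline \emph{not} in $L^4_t$ (the admissible $s$ for $p=6$), so no Hardy--Littlewood--Sobolev or Beta-integral bookkeeping can produce $U\in E^{s,p}_{T,m}$ a posteriori; the estimate $\|B(U,U')\|_{E^{s,p}_{T,m}}\lec(1+T^{1-1/s})\|U\|_{E^{s,p}_{T,m}}\|U'\|_{E^{s,p}_{T,m}}$ again needs the target norm on the right. The paper resolves this by (i) incorporating $E^{s,p}_{T,m}$ into the contraction space $\mathcal F_T=\tilde{\mathcal F}_T\cap E^{s,p}_{T,m}$, seeded by the linear estimate $\|e^{t\De}U_0\|_{E^{s,p}_{T,m}}\lec(1+T^{1/s+\epsilon})\|U_0\|_{E^3_q}$, and (ii) proving the off-diagonal bound $\|B(U,U')\|_{\tilde{\mathcal E}_T}\lec(1+T^{1/4})\min(\|U\|_{\tilde{\mathcal E}_T}\|U'\|_{\tilde{\mathcal F}_T},\|U'\|_{\tilde{\mathcal E}_T}\|U\|_{\tilde{\mathcal F}_T})$ and propagating uniform $\tilde{\mathcal E}_T$-bounds and the Cauchy property through the Picard iterates (a separate treatment, via weighted $E^{\infty,3}_{T,q}$/$E^{\infty,\infty}_{T,q}$ norms, is needed for the endpoint $s=\infty$, $p=3$). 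You need one of these mechanisms — enlarging the fixed-point space or running the iteration again in the stronger norm and invoking uniqueness — to close these estimates; a single Duhamel pass from $\mathcal X_T$ does not.
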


The following theorem concerns the critical case with enough decay, $1\le q\le3$.

\begin{theorem}[Critical data II (MHD)]\label{thrm:critical2-mhd}
Let $1\le q\leq 3$. 
For all divergence-free $v_0,b_0\in E^3_q$, there exist $T=T(v_0,b_0)>0$ and 
a unique mild solution $(v,b)$  to \eqref{MHD} satisfying
\[
(v,b) \in BC([0,T); E^3_q\times E^3_q)\quad \text{and}\quad t^{\frac12} (v, b) \in L^\I (0,T; E^\oo_{q_2}\times E^\oo_{q_2}),
\]
with $1/q_2 = 1/q-1/3$, $q_2 \in [\frac 32,\infty]$. For any 
$s\in [3,\infty)$, $\frac2s + \frac3p = 1$, and $m \in [q,\infty]$,
there is $T_1 \in (0,T]$ such that 
\EQ{\label{eq-thmIII-Ebound-mhd}
(v,b) \in E^{s,p}_{T_1,m}\times E^{s,p}_{T_1,m}.
}
Furthermore, there is $\e(q)>0$ such that $T=\oo$ if $\norm{(v_0,b_0)}_{E^3_q\times E^3_q} \le \e(q)$.
If we assume further%
\EQ{\label{eq-thmIII-mbound-mhd}
m > p' = \frac p{p-1},\quad \text{and}\quad
m \ge m_1,\quad
\frac 2 s +\frac 3 {m_1} = \frac 3 q, 
}
with $m>m_1(s,q)$ when $q=1$,
then there exists $\e_1(s,q,m)>0$ such that $T_1=\oo$ if $\norm{(v_0,b_0)}_{E^3_q\times E^3_q} \le \e_1(s,q,m)$.
Instead of \eqref{eq-thmIII-mbound-mhd}, %
if we assume
\EQ{
m \ge \max(p',m_1), \quad \text{and}\quad \left\{
\begin{aligned}
m>m_1&\quad \text{if }\quad q=1,\\
m \ge p &\quad \text{if }\quad 3s<5q,
\end{aligned}\right.
}
then there exists $\e_2(s,q,m)>0$ such that $v,b \in L^s_{T=\infty} E^p_m$ if $\norm{(v_0,b_0)}_{E^3_q\times E^3_q} \le \e_2(s,q,m)$.
\end{theorem}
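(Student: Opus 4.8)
The plan is to run a fixed-point argument for the integral equation \eqref{eq-mild-mhd} in a Kato-type space tailored to the critical exponent, then bootstrap the resulting solution into the spacetime-integral norms $E^{s,p}_{T_1,m}$ and $L^s_T E^p_m$ by estimating the bilinear operator $B=(B_1,B_2)$ in those norms. The crucial structural point is that the MHD nonlinearity $v\otimes u - b\otimes a$ and $v\otimes a - b\otimes u$ has exactly the same quadratic, divergence-form shape as the Navier--Stokes nonlinearity $u\otimes u$, and the Oseen tensor $S_{ij}$ obeys the same Gaussian-type kernel bounds $|\partial_l S_{ij}(x,t)| \lesssim (|x|^2+t)^{-2}$ as $e^{t\Delta}\mathbb{P}\nabla\cdot$. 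Hence every linear and bilinear estimate used for Navier--Stokes in \cite{BLT-MathAnn2024} transfers to the coupled system $(v,b)$ with at most a factor of $2$ (the two coupled components), and I would invoke these directly rather than re-deriving them. Concretely, the heat semigroup on Wiener amalgam spaces satisfies $\|e^{t\Delta} f\|_{E^p_m} \lesssim t^{-\frac32(\frac1r-\frac1p)} \|f\|_{E^r_m}$ for $p\ge r$ and $m$ fixed, and the Hölder inequality \eqref{Holder} controls the product inside the amalgam space.

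The first step is to establish the existence part: for $1\le q\le 3$ and $v_0,b_0\in E^3_q$, set up the mild-solution iteration in $X_T := \{(v,b): (v,b)\in BC([0,T);E^3_q\times E^3_q),\ t^{1/2}(v,b)\in L^\infty(0,T;E^\infty_{q_2}\times E^\infty_{q_2})\}$ with $1/q_2 = 1/q - 1/3$, using the gain-of-integrability property of the amalgam index under the heat flow (this is where $q\le 3$ is used, so that $q_2\ge 3/2$ makes sense). The linear term $e^{t\Delta}(v_0,b_0)$ lies in $X_T$ with small norm as $T\to 0$ because $E^3_q$ with $q<\infty$ is the closure of test functions in its own $L^3_{\mathrm{uloc}}$-type norm, and one shows $\sup_{0<t<T} t^{1/2}\|e^{t\Delta}(v_0,b_0)\|_{E^\infty_{q_2}} \to 0$ as $T\to 0^+$ by density. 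The bilinear estimate $\|B((v,b),(u,a))\|_{X_T} \lesssim \|(v,b)\|_{X_T}\|(u,a)\|_{X_T}$ with a constant independent of $T$ (or with a favorable $T$-power) then yields, via the standard Picard lemma, a unique solution on a small interval $[0,T)$, and on all of $[0,\infty)$ when $\|(v_0,b_0)\|_{E^3_q\times E^3_q}\le\varepsilon(q)$. Continuity into $E^3_q\times E^3_q$ at $t=0$ for $q<\infty$ follows from the density argument and the continuity of the bilinear term.

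The second step is the spacetime-integral conclusions. Fix $s\in[3,\infty)$ with $\frac2s+\frac3p=1$ and $m\in[q,\infty]$. Writing $(v,b) = e^{t\Delta}(v_0,b_0) - B((v,b),(v,b))$, I would estimate the linear part: $\|e^{t\Delta}(v_0,b_0)\|_{L^s_{T_1}E^p_m}$ is controlled by the smoothing estimate plus $t^{-3(\frac1q-\frac1p)/2}\in L^s_{loc}$ when $\frac2s+\frac3p = \frac3q$ fails — rather, one uses that $E^3_q\hookrightarrow E^3_m$ for $m\ge q$ and then $\|e^{t\Delta}f\|_{E^p_m}\lesssim t^{-\frac12(1-\frac3p)}\|f\|_{E^3_m}$, and $t^{-\frac12(1-\frac3p)} = t^{-1/s}\in L^s_{loc}$ only marginally fails, which is exactly why a small positive time $T_1$ and the refined amalgam-index gain are needed; this is the source of the conditions $m>p'$, $m\ge m_1$ with $\frac2s+\frac3{m_1}=\frac3q$, and the borderline restriction at $q=1$. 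For the bilinear part I would apply the parabolic bilinear estimate for $B$ in the $E^{s,p}_{T_1,m}$ norm (the analogue of Proposition-type bounds in \cite{BLT-MathAnn2024}), splitting $B$ near and far from the time diagonal and using \eqref{ineq:embedding.parabolic.space}--\eqref{ineq:embedding.parabolic.space2} to move between $L^s_T E^p_q$ and $E^{s,p}_{T,q}$ depending on whether $q\le s$. The global-in-time statement $v,b\in L^s_\infty E^p_m$ under the stronger hypotheses (including $m\ge p$ when $3s<5q$) comes from closing the bilinear estimate with a $T$-independent constant, which requires the nonlinear interaction exponent to be subcritical in the relevant Hölder pairing — precisely the role of the extra condition $m\ge p$ in the low-$s$ regime.

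The main obstacle I anticipate is bookkeeping the admissible range of the amalgam index $m$ in the bilinear estimate so that the output index matches the input index while still permitting $m$ to be as small as $q$: the Hölder inequality \eqref{Holder} only allows $\frac1m \le \frac1{m_1}+\frac1{m_2}$, so controlling a product in $E^p_m$ by factors in $E^{p_i}_{m_i}$ forces $\frac2m \ge \frac1m$, i.e.\ no constraint from above, but the \emph{time} integrability interacts: to keep $\|B\|_{L^s E^p_m}$ finite one must absorb the singular kernel in time, and the endpoint $q=1$ (where $m_1$ is smallest) is where the time-integrability exponent degenerates, forcing the strict inequality $m>m_1$. I would handle this by first proving the cleaner statement under $m>\max(p',m_1)$ with strict inequalities, then recovering the boundary cases $m\ge m_1$, $m\ge p'$ for $q>1$ by an interpolation/limiting argument exploiting that the solution already lies in $X_{T_1}$. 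Uniqueness of $(v,b)$ in the stated class follows from the same bilinear estimate applied to the difference of two solutions, using that the relevant norm of each solution is small on a sufficiently short time interval and then propagating by a continuation argument.
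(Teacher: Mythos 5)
Your overall strategy (Picard iteration in a Kato-type space followed by bootstrapping into the $E^{s,p}_{T,m}$ and $L^s_TE^p_m$ norms via the bilinear estimates of \cite{BLT-MathAnn2024}) matches the paper's, and your observation that the MHD coupling has the same divergence-form quadratic structure as Navier--Stokes is exactly what the paper exploits. However, there is a genuine gap in your fixed-point scheme for \emph{large} data. You propose to contract directly in $X_T=\{(v,b)\in BC([0,T);E^3_q\times E^3_q),\ t^{1/2}(v,b)\in L^\infty(0,T;E^\infty_{q_2}\times E^\infty_{q_2})\}$ and claim the caloric extension has small $X_T$-norm as $T\to0^+$. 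That is false for the unweighted component: $\sup_{0<t<T}\|e^{t\Delta}v_0\|_{E^3_q}$ does not tend to $0$ as $T\to0^+$; it is comparable to $\|v_0\|_{E^3_q}$. Your density argument only kills the weighted piece $\sup_{0<t<T}t^{1/2}\|e^{t\Delta}v_0\|_{E^\infty_{q_2}}$. Moreover, at critical scaling the bilinear estimate in $X_T$ carries a $T$-independent universal constant (the time integral $\int_0^t(t-\tau)^{-1/2}\tau^{-1/2}\,d\tau$ is scale-invariant), so your parenthetical hope of a ``favorable $T$-power'' does not materialize. Consequently your scheme closes only for small data and cannot deliver the local existence for arbitrary $v_0,b_0\in E^3_q$ asserted in the theorem. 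The paper avoids this by the standard two-norm Kato device: it contracts in the strictly larger auxiliary space $\mathcal F_T$ with norm $\sup_{0<t<T}t^{1/4}\|(v,b)\|_{E^6_{q_1}\times E^6_{q_1}}$, $1/q_1=1/q-1/6$, where $\|e^{t\Delta}(v_0,b_0)\|_{\mathcal F_T}\to0$ as $T\to0^+$ for \emph{every} $E^3_q$ datum, and then upgrades the fixed point to $\mathcal E_T=X_T$ using the asymmetric bound $\|B(U,W)\|_{\mathcal E_T}\lesssim\min(\|U\|_{\mathcal E_T}\|W\|_{\mathcal F_T},\|W\|_{\mathcal E_T}\|U\|_{\mathcal F_T})$ propagated along the Picard iterates.

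A secondary, smaller issue: your treatment of the admissible range of $m$ in the spacetime bounds is too vague to verify. The conditions $m>p'$, $m\ge m_1$ with $\frac2s+\frac3{m_1}=\frac3q$, the strict inequality at $q=1$, and the extra requirement $m\ge p$ when $3s<5q$ come out of a concrete case analysis of the exponents $\alpha$, $\beta$ in the linear estimate (\cite[Lemma 2.4]{BLT-MathAnn2024}) and the bilinear estimates (\cite[Lemmas 2.7, 2.8]{BLT-MathAnn2024}) --- in particular the requirement $\tilde m=\max(1,m/2)\ge1$ forcing $m\ge p'$, and the three cases A, E$_1$, E$_2$ for when $\|e^{t\Delta}f\|_{L^s_{T=\infty}E^p_m}\lesssim\|f\|_{E^3_q}$ holds. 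Your proposed ``interpolation/limiting argument'' to recover the boundary cases is not needed and would require justification; the boundary cases are obtained directly by checking when the relevant exponents are nonpositive.
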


We prove Theorem \ref{thrm:critical2-mhd} in Section \ref{sec-critical-II}.

\subsection{Mild solutions of viscoelastic Navier--Stokes equations with damping}

A pair $(v, {\bf F})$, ${\bf F} = [f_1, f_2, f_3]\in\R^{3\times 3}$, is called a \emph{mild solution} to \eqref{vNSEd} if it satisfies
\EQS{\label{eq-mild-vnsed}
v(x,t) &= e^{t\De}v_0 - \mathcal{B}_0((v,{\bf F}),(v,{\bf F}))(t),\\
f_m(x,t) &= e^{t\De}(f_m)_0 - \mathcal{B}_m((v,{\bf F}),(v,{\bf F}))(t),\quad m=1,2,3,
}
where
\EQN{
\mathcal{B}_0((v,{\bf F}),(u,{\bf G}))(t) &= \int_0^t  e^{(t-s)\De}\mathbb{P}\nb\cdot\bke{v\otimes u - \sum_{n=1}^3 f_n\otimes g_n}\, ds,\quad {\bf G} = [g_1, g_2, g_3],\\
\mathcal{B}_m((v,{\bf F}),(u,{\bf G}))(t) &= \int_0^t  e^{(t-s)\De}\mathbb{P}\nb\cdot(v\otimes g_m - f_m\otimes u)\, ds.
}

The main results of the mild solutions for the viscoelastic Navier--Stokes equations with damping are stated as follows:

\begin{theorem}[Subcritical data (vNSEd)]\label{thrm.subcritical-vnsed}
Let $r\in (3,\I]$  and $q\in[1,\infty]$. If $(v_0, {\bf F}_0)\in E^r_q\times {\bf E}^r_q$, ${\bf F}_0=[(f_1)_0,(f_2)_0,(f_3)_0]$, where $v_0$ and $(f_m)_0$, $m=1,2,3$, are divergence free, then, for any  positive time $T=T(\|(v_0,{\bf F}_0)\|_{E^r_q\times {\bf E}^r_q})$ chosen so that
\EQ{ \label{def:T.subcrit-mhd}    T^{1/2 - 3/(2r)} +  T^{1/2 }	  
\lec \|(v_0, {\bf F}_0)\|_{E^r_q\times {\bf E}^r_q} ^{-1},
} there exists a unique mild solution $(v, {\bf F})\in L^\infty(0,T;E^r_q\times {\bf E}^r_q) \cap C((0,T);E^r_q\times {\bf E}^r_q)$ to \eqref{vNSEd}. Moreover, $(v,{\bf F})$ satisfies
\EQ{ \label{th1.1eq2-mhd}
\sup_{0\leq t\leq T} \| (v,{\bf F})(t)\|_{E^r_q\times {\bf E}^r_q} \leq C \|(v_0, {\bf F}_0)\|_{E^r_q\times {\bf E}^r_q} . 
}

If $q,r<\I$, then $(v,{\bf F})\in C([0,T];E^r_q\times {\bf E}^r_q)$. If $q=\I$ or $r=\I$, then we still have $\|(e^{t\Delta}v_0- {v(t)}, e^{t\Delta}{\bf F}_0- {{\bf F}(t)})\|_{E^r_q\times {\bf E}^r_q} \to 0$ as $t\to 0^+$. 

Furthermore, if $r<\infty$, then for any $s \in [r,\infty]$ and $p\in[r,3r]$ 
with $\frac2s + \frac3p = \frac3r$,
\[
\norm{(v,{\bf F})}_{ E^{s,p}_{T,m}\times {\bf E}^{s,p}_{T,m}} \le C \|(v_0, {\bf F}_0)\|_{E^r_q\times {\bf E}^r_q},\qquad m\ge q,
\]
provided $(1+T^{\frac1s+\epsilon})(T^{\frac12-\frac3{2r}} + T^{1-\frac1s}) \lec \norm{(v_0,{\bf F}_0)}_{E^r_q\times {\bf E}^r_q}^{-1}$ for all $\epsilon>0$. 
\end{theorem}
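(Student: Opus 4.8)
The plan is to mimic the proof of Theorem \ref{thrm.subcritical-mhd} (the MHD case), treating the columnwise-decoupled system \eqref{vNSEd} as a coupled system of four transport-diffusion equations with a shared bilinear structure, and running a contraction mapping argument in the Banach space $X_T := L^\infty(0,T; E^r_q\times {\bf E}^r_q)$. First I would record the linear estimates for the Oseen semigroup acting on $E^r_q$: from the heat kernel bounds used in \cite{BLT-MathAnn2024} one has $\norm{e^{t\De}\mathbb{P}\nb\cdot \,F}_{E^r_q} \lec t^{-1/2}\norm{F}_{E^r_q}$ and, for the gain of integrability, $\norm{e^{t\De}\mathbb{P}\nb\cdot\, F}_{E^p_q} \lec t^{-1/2 - \frac32(\frac1r - \frac1p)}\norm{F}_{E^{r/2}_{q/??}}$; more precisely I would use $\norm{e^{t\De}g}_{E^p_q}\lec (1+t^{-\frac32(\frac1a-\frac1p)})\norm{g}_{E^a_q}$ with the extra factor $(1+t^{\cdot})$ accounting for the non-decay at $t\to\infty$, which is exactly what produces the two terms $T^{1/2-3/(2r)}$ and $T^{1/2}$ in \eqref{def:T.subcrit-mhd}. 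These are identical to the MHD estimates since the bilinear terms $v\otimes u$, $f_n\otimes g_n$, $v\otimes g_m$, $f_m\otimes u$ all have the same product structure; the H\"older inequality \eqref{Holder} in the $E$-scale gives $\norm{v\otimes u}_{E^{r/2}_q} \le \norm{v}_{E^r_q}\norm{u}_{E^r_q}$ etc., and the sum $\sum_{n=1}^3$ in $\mathcal{B}_0$ only costs a harmless constant $3$.

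Next I would verify the bilinear bound $\norm{\mathcal{B}((v,{\bf F}),(u,{\bf G}))}_{X_T} \lec (T^{1/2-3/(2r)} + T^{1/2})\,\norm{(v,{\bf F})}_{X_T}\norm{(u,{\bf G})}_{X_T}$ by combining the semigroup estimate under the time integral with H\"older in space and integrating $\int_0^t (t-s)^{-1/2-\frac3{2r}}\,ds \lec t^{1/2-3/(2r)}$ (for $r<\infty$; for $r=\infty$ only the $T^{1/2}$ term survives and one argues as in \cite{BT-SIMA2021}). Under the smallness-of-$T$ hypothesis \eqref{def:T.subcrit-mhd} the Picard/Banach fixed point theorem of \cite[Lemma]{...}—or simply the standard abstract lemma that $u = u_0 - B(u,u)$ has a unique small solution whenever $\norm{u_0}\le \eta$ and $\norm{B}\le 1/(4\eta)$—yields the unique mild solution with the a priori bound \eqref{th1.1eq2-mhd}. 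Continuity in time $C((0,T); E^r_q\times{\bf E}^r_q)$ follows from the strong continuity of $t\mapsto e^{t\De}$ on $E^r_q$ for $q,r<\infty$ (and for the endpoint cases, the density argument of \cite{BLT-MathAnn2024} gives $\norm{e^{t\De}v_0 - v(t)}\to 0$ but not necessarily $\norm{v(t)-v_0}\to0$, exactly as stated), using that the Duhamel term is continuous and vanishes as $t\to0^+$.

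For the spacetime-integral bound in the parabolic spaces $E^{s,p}_{T,m}$: once the solution is in hand, I would bootstrap. Writing $(v,{\bf F}) = (e^{t\De}v_0, e^{t\De}{\bf F}_0) - \mathcal{B}((v,{\bf F}),(v,{\bf F}))$, I estimate each piece in $E^{s,p}_{T,m}$ separately. The linear part: from $\norm{e^{t\De}v_0}_{E^p_m}\lec t^{-\frac32(\frac1r-\frac1p)}\norm{v_0}_{E^r_m}$ and the constraint $\frac2s+\frac3p = \frac3r$ (so the time exponent $-\frac32(\frac1r-\frac1p) = -\frac1s$ is exactly $L^s$-critical up to a logarithmic loss, absorbed by the $T^{\epsilon}$ slack), together with $m\ge q$ so $E^r_q\imbed E^r_m$; here the $(1+T^{1/s+\epsilon})$ prefactor handles the large-time contribution of the non-decaying heat flow. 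The bilinear part: apply the gain-of-integrability semigroup estimate inside the Duhamel integral and H\"older in $(s,p,m)$ on the product, reducing to a time-convolution $\int_0^t(t-s)^{-\alpha}\norm{(v,{\bf F})(s)}^2_{E^r_q}\,ds$ that is controlled in $L^s_T$ by Young's convolution inequality, producing the factor $(T^{1/2-3/(2r)}+T^{1-1/s})$ asserted in the hypothesis. Combining, under the stated smallness the parabolic norm is bounded by $C\norm{(v_0,{\bf F}_0)}_{E^r_q\times{\bf E}^r_q}$; that this bootstrap does not require re-running a fixed point (the solution is already unique) is the point of \eqref{ineq:embedding.parabolic.space}. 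I expect the main obstacle to be purely bookkeeping: carefully tracking the three distinct exponents $(s,p,m)$ through the H\"older inequality \eqref{Holder} in the amalgam scale and ensuring the $\ell^m$ summation in $k\in\ZZ^3$ closes (using $\frac1m\le \frac1{m_1}+\frac1{m_2}$ with both $m_i\ge q$), and verifying the time-integrability of the singular kernel at both endpoints $s=r$ and $s=\infty$; the coupling between $v$ and ${\bf F}$ introduces no new difficulty beyond the MHD case since the structure is term-by-term identical.
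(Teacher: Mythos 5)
Your overall strategy for the existence part coincides with the paper's: the paper proves the MHD version by Picard iteration in $\mathcal E_T=L^\infty(0,T;E^r_q\times E^r_q)$ using the linear and bilinear estimates of \cite[Lemma 2.1]{BLT-MathAnn2024} together with the H\"older inequality \eqref{Holder}, and then states that the vNSEd case is term-by-term identical (the sum $\sum_{n=1}^3 f_n\otimes g_n$ costing only a constant), which is exactly what you propose. That part, and your treatment of continuity at $t=0$ and at positive times, is fine.

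There is, however, a genuine gap in your argument for the spacetime integral bound in $E^{s,p}_{T,m}$. You claim the solution, once constructed in $L^\infty_T(E^r_q\times{\bf E}^r_q)$, can be \emph{bootstrapped} into $E^{s,p}_{T,m}$ by estimating the Duhamel term as a time convolution $\int_0^t(t-\tau)^{-\al}\norm{(v,{\bf F})(\tau)}_{E^r_q\times{\bf E}^r_q}^2\,d\tau$, ``without re-running a fixed point.'' This does not close. If you only use the $L^\infty_T E^{r/2}_q$ control of the products, the semigroup must gain integrability from $E^{r/2}$ to $E^p$, which costs $(t-\tau)^{-\frac12-\frac32(\frac2r-\frac1p)}=(t-\tau)^{-(\frac12+\frac3{2r}+\frac1s)}$; at the admissible endpoint $s=r$ this exponent equals $\frac12+\frac5{2r}\ge 1$ for all $3<r\le 5$, so the kernel is not integrable and the bootstrap fails on a nonempty part of the stated parameter range. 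If instead you H\"older the product in the parabolic scale to get $\norm{v\otimes u}_{E^{s/2,p/2}_{T,m}}\le\norm{v}_{E^{s,p}_{T,m}}\norm{u}_{E^{s,p}_{T,m}}$, you need the $E^{s,p}_{T,m}$ norm of the solution on the right-hand side, which is precisely what you are trying to prove --- i.e.\ you must run the contraction in the intersection space. This is what the paper does: it sets $X_T=\mathcal E_T\cap(E^{s,p}_{T,q}\times E^{s,p}_{T,q})$, establishes linear and bilinear estimates in $X_T$ via \cite[Lemmas 2.4 and 2.7]{BLT-MathAnn2024} (under the strengthened smallness condition involving $(1+T^{1/s+\ep})$), obtains a second mild solution $(\td v,\td{\bf F})\in X_T$ by Picard iteration, and identifies it with the original one by the uniqueness statement. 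A second, smaller gap: the theorem asserts uniqueness in the full class $L^\infty(0,T;E^r_q\times{\bf E}^r_q)\cap C((0,T);E^r_q\times{\bf E}^r_q)$, whereas your abstract fixed-point lemma only gives uniqueness among solutions in the small ball $\norm{\cdot}_{\mathcal E_T}\le 2C_1\norm{(v_0,{\bf F}_0)}_{E^r_q\times{\bf E}^r_q}$; the paper closes this by a separate difference estimate for two arbitrary solutions in the class, iterated over short time intervals, which you should add.
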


\begin{theorem}[Critical data I (vNSEd)]\label{thrm:critical-vnsed} 
Let $q\in[1,\I]$. Fix $T>0$.
There exists $\ve=\ve(T)>0$ such that for all {divergence-free} $v_0,(f_m)_0\in E^3_q$, $m=1,2,3$, with $\norm{(v_0, {\bf F}_0)}_{E^3_q\times {\bf E}^3_q}\le\ve$, ${\bf F}_0 = [(f_1)_0,(f_2)_0,(f_3)_0]$, there exists a mild solution $(v,{\bf F})$  to \eqref{vNSEd} with 
\[
(v,{\bf F}) \in L^\infty(0,T; E^3_q\times {\bf E}^3_q)\quad \text{and}\quad t^{\frac12}(v, {\bf F})\in L^\infty (0,T; E^\oo_q\times {\bf E}^\oo_q).
\]
The solution is unique in the class 
\EQ{\label{eq-uniquesmall-mhd}
\sup_{0<t<T} t^{\frac14} \norm{(v,{\bf F})}_{E^6_q\times {\bf E}^6_q} \le 2 \sup_{0<t<T} t^{\frac14} \norm{(e^{t\De} v_0, e^{t\De} {\bf F}_0)}_{E^6_q\times {\bf E}^6_q}.
}
Furthermore, $\norm{(v, {\bf F})}_{L^\infty_T (E^3_q\times {\bf E}^3_q)} + \norm{t^{1/2}(v,{\bf F})}_{L^\infty_T (E^\oo_q\times {\bf E}^\oo_q)} \lec \norm{(v_0,{\bf F}_0)}_{E^3_q\times {\bf E}^3_q} $. We have $(v,{\bf F})\in C((0,T); E^3_q\times {\bf E}^3_q) $ for $q=\I$ and $(v,{\bf F})\in C([0,T); E^3_q\times {\bf E}^3_q)$ for $q<\I$. If $q=\I$, then we have for any ball $B$ and $\delta\in (0,2]$ that 
\EQ{\label{convergencetodata-mhd}
\lim_{t\to 0^+} \|  (v,{\bf F}) (t) - (v_0,{\bf F}_0)\|_{L^{3-\delta}(B)\times {\bf L}^{3-\delta}(B)}=0.
}

For any $s\in [3,\infty]$ and $p\in [3,9]$ given by $\frac2s + \frac3p = 1$, by taking $\ve \le \ve_0(T,s)$ sufficiently small, this solution further satisfies%
\EQ{\label{eq1.7-mhd}
\norm{(v,{\bf F})}_{E^{s,p}_{T,m}\times {\bf E}^{s,p}_{T,m}} + \one_{q \le s}  \norm{(v,{\bf F})}_{L^s_T (E^p_m\times {\bf E}^p_m)} \le C \norm{(v_0,{\bf F}_0)}_{E^3_q\times {\bf E}^3_q},\quad \forall m \in [q,\infty].
}
\end{theorem}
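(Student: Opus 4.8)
The plan is to prove Theorem~\ref{thrm:critical-vnsed} by the same Kato-type fixed-point scheme used for Theorem~\ref{thrm:critical-mhd}, exploiting the fact that once the system is written in the Duhamel form \eqref{eq-mild-vnsed}, every nonlinear contribution is a finite sum of terms of the schematic type $\int_0^t e^{(t-s)\De}\mathbb P\nb\cdot(\phi\otimes\psi)\,ds$ with $\phi,\psi$ ranging over $\{v,f_1,f_2,f_3\}$; the only difference from \eqref{MHD} is the extra sum $\sum_{n=1}^3 f_n\otimes g_n$ in $\mathcal B_0$ and the finitely many mixed products in $\mathcal B_m$, none of which alters the analytic structure. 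One also records the algebraic fact that $\nb\cdot(v\otimes f_m-f_m\otimes v)$ is already divergence free (it equals $\pd_j(v_j f_m-(f_m)_j v)$ with $v,f_m$ divergence free, and the double divergence of the antisymmetric tensor $v_i(f_m)_j-(f_m)_iv_j$ vanishes), so that $\mathbb P$ acts harmlessly on the deformation-tensor nonlinearity and the formulation \eqref{eq-mild-vnsed} is consistent. Accordingly I work in the complete metric space $\mathcal X_T$ of pairs $(v,{\bf F})$ for which
\[
N_T(w):=\|w\|_{L^\infty_T E^3_q}+\sup_{0<t<T}t^{1/4}\|w(t)\|_{E^6_q}+\sup_{0<t<T}t^{1/2}\|w(t)\|_{E^\infty_q}
\]
is finite for $w=v$ and for each column $f_m$, and seek a fixed point of $w\mapsto e^{t\De}w_0-\mathcal B(w,w)$ in a ball of $\mathcal X_T$.

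The next step is to assemble the two linear ingredients, both available from \cite{BT-SIMA2021,BLT-MathAnn2024} and already used for \eqref{MHD}: (i) the heat-semigroup bounds on amalgam spaces, $\|e^{t\De}f\|_{E^p_q}\lec(1+t^{-\frac32(\frac1r-\frac1p)})\|f\|_{E^r_q}$ for $p\ge r$, with an extra gain $t^{-1/2}$ when $\mathbb P\nb\cdot$ is applied, yielding in particular the $t^{-1/4}$ map $E^3_q\to E^6_q$ and the $t^{-1/2}$ map $E^3_q\to E^\infty_q$, together with strong continuity at $t=0$ when $q<\I$; and (ii) the bilinear bound, obtained from the Oseen-kernel estimates $|S(x,t)|\lec(\sqrt t+|x|)^{-3}$, $|\nb S(x,t)|\lec(\sqrt t+|x|)^{-4}$ (see \cite[Sec.~2.2]{KLLT-CMP2023}) and the amalgam Hölder inequality \eqref{Holder}: one estimates each $E^p_q$-norm of $\mathcal B(w,w)(t)$ by splitting $\int_0^t=\int_0^{t/2}+\int_{t/2}^t$, placing the two factors in $E^6_q\times E^6_q$ or $E^3_q\times E^\infty_q$ according to the target, and checking that the resulting power of $(t-s)$ is time-integrable; the outcome is $N_T(\mathcal B(w,w))\lec C(T)\,N_T(w)^2$, where $C(T)$ collects the finitely many $(1+T^\alpha)$ factors produced by the low-frequency corrections. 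Since $C(T)$ is finite for each fixed $T$, the contraction lemma produces a unique fixed point in the ball of radius $\lesssim\|(v_0,{\bf F}_0)\|_{E^3_q\times{\bf E}^3_q}$ once $\|(v_0,{\bf F}_0)\|_{E^3_q\times{\bf E}^3_q}\le\ve(T)$, giving existence, the stated quantitative bound, and uniqueness in the indicated class.

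For the time-continuity and convergence-to-data claims, one uses that $e^{t\De}w_0\in C((0,T];E^3_q)$, that the bilinear term is continuous on $(0,T)$ into $E^3_q$ with better behavior at $t=0$ than the linear part, and the semigroup continuity at $t=0$ when $q<\I$; the local statement for $q=\I$, namely $\lim_{t\to0^+}\|(v,{\bf F})(t)-(v_0,{\bf F}_0)\|_{L^{3-\delta}(B)\times{\bf L}^{3-\delta}(B)}=0$, then follows by approximating $v_0,(f_m)_0\in L^3_\uloc$ in $L^3$ on a fixed larger ball by $C_c^\infty$ fields, for which the heat flow converges, and absorbing the remainder via the $E^3_\I\to L^{3-\delta}(B)$ inclusion.

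Finally, for the spacetime-integral conclusion with $\tfrac2s+\tfrac3p=1$, $p\in[3,9]$, $m\in[q,\I]$, I would enlarge the working space by adding $\|v\|_{E^{s,p}_{T,m}}+\sum_m\|f_m\|_{E^{s,p}_{T,m}}$ to the norm (this is the source of the extra threshold $\ve_0(T,s)\le\ve(T)$). The linear term is controlled by the heat-kernel bound together with $\ell^1(\ZZ^3)$-ness of the spatial kernel, which lets convolution raise the discrete index from $q$ to any $m\ge q$ for free; the companion bound $\one_{q\le s}\|\cdot\|_{L^s_T(E^p_m)}$ then comes from \eqref{ineq:embedding.parabolic.space}. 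For the bilinear term one estimates the local norms $L^s_tL^p(B_1(k))$ \emph{before} summing in $k$, so that Minkowski's inequality is applied in the order demanded by the definition \eqref{Espq-def}, the time singularity being exactly the classical Navier--Stokes one in this range and the endpoint $s=\I$ (i.e.\ $p=3$) reducing to the $L^\I_T E^3$ bound already obtained. I expect the only genuinely delicate point to be this last step --- keeping the $\ell^m$-in-$k$ and $L^s$-in-$t$ operations in the correct order while simultaneously tracking the $(1+t^{-\cdot})$ low-frequency corrections and the $q=\I$ endpoint --- but everything here runs parallel to the \eqref{MHD} case, the new nonlinear terms contributing nothing beyond additional uses of \eqref{Holder}.
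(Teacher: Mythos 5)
Your proposal follows essentially the same route as the paper: the paper omits the vNSEd proof and refers to the MHD argument of Section~\ref{sec-critical-I}, which is exactly the Picard scheme in the weighted norms $L^\infty_T E^3_q$, $\sup_t t^{1/4}\|\cdot\|_{E^6_q}$, $\sup_t t^{1/2}\|\cdot\|_{E^\infty_q}$ plus $E^{s,p}_{T,m}$ that you set up, using the same linear and bilinear amalgam estimates from \cite{BLT-MathAnn2024}. Two small points to align with that template: uniqueness in the class \eqref{eq-uniquesmall-mhd} is obtained by running the contraction in the $\sup_{0<t<T} t^{1/4}\|\cdot\|_{E^6_q}$ norm \emph{alone} (contracting in your full norm $N_T$ only yields uniqueness in a smaller ball), and for $q=\infty$ the convergence to data requires the near/far cutoff decomposition of the bilinear term to show that it vanishes in $L^{3-\delta}(B)$, since the $\tilde{\mathcal F}_{T'}$-norm of the caloric extension does not tend to zero when $q=\infty$ --- both steps are present in the Section~\ref{sec-critical-I} argument you are invoking.
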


\begin{theorem}[Critical data II (vNSEd)]\label{thrm:critical2-vnsed}
Let $1\le q\leq 3$. 
For all divergence-free $v_0,(f_1)_0,(f_2)_0,(f_3)_0\in E^3_q$, there exist $T=T(v_0,{\bf F}_0)>0$, ${\bf F}_0 = [(f_1)_0,(f_2)_0,(f_3)_0]$, and 
a unique mild solution $(v,{\bf F})$ to \eqref{vNSEd} satisfying
\[
(v,{\bf F}) \in BC([0,T); E^3_q\times {\bf E}^3_q)\quad \text{and}\quad t^{\frac12} (v, {\bf F}) \in L^\I (0,T; E^\oo_{q_2}\times {\bf E}^\oo_{q_2}),
\]
with $1/q_2 = 1/q-1/3$, $q_2 \in [\frac 32,\infty]$. For any 
$s\in [3,\infty)$, $\frac2s + \frac3p = 1$, and $m \in [q,\infty]$,
there is $T_1 \in (0,T]$ such that 
\EQ{\label{eq-thmIII-Ebound-mhd}
(v,{\bf F}) \in E^{s,p}_{T_1,m}\times {\bf E}^{s,p}_{T_1,m}.
}
Furthermore, there is $\e(q)>0$ such that $T=\oo$ if $\norm{(v_0,{\bf F}_0)}_{E^3_q\times {\bf E}^3_q} \le \e(q)$.
If we assume further%
\EQ{\label{eq-thmIII-mbound-mhd}
m > p' = \frac p{p-1},\quad \text{and}\quad
m \ge m_1,\quad
\frac 2 s +\frac 3 {m_1} = \frac 3 q, 
}
with $m>m_1(s,q)$ when $q=1$,
then there exists $\e_1(s,q,m)>0$ such that $T_1=\oo$ if $\norm{(v_0,{\bf F}_0)}_{E^3_q\times {\bf E}^3_q} \le \e_1(s,q,m)$.
Instead of \eqref{eq-thmIII-mbound-mhd}, %
if we assume
\EQ{
m \ge \max(p',m_1), \quad \text{and}\quad \left\{
\begin{aligned}
m>m_1&\quad \text{if }\quad q=1,\\
m \ge p &\quad \text{if }\quad 3s<5q,
\end{aligned}\right.
}
then there exists $\e_2(s,q,m)>0$ such that $(v,{\bf F}) \in L^s_{T=\infty} (E^p_m\times {\bf E}^p_m)$ if $\norm{(v_0,{\bf F}_0)}_{E^3_q\times {\bf E}^3_q} \le \e_2(s,q,m)$.
\end{theorem}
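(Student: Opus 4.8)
\noindent\textbf{Proof proposal for Theorem~\ref{thrm:critical2-vnsed}.}
The plan is to reduce \eqref{vNSEd} to the abstract setting already used for \eqref{MHD} in Theorem~\ref{thrm:critical2-mhd} and then transcribe that argument. Collect the unknowns and data into $U=(v,f_1,f_2,f_3)$ and $U_0=(v_0,(f_1)_0,(f_2)_0,(f_3)_0)$; then \eqref{eq-mild-vnsed} reads $U(t)=e^{t\De}U_0-\mc B(U,U)(t)$, where $\mc B=(\mc B_0,\mc B_1,\mc B_2,\mc B_3)$ and each component of $\mc B(U,W)$ is a finite linear combination of terms $\int_0^t e^{(t-s)\De}\mathbb P\nb\cdot\bke{g\ot h}(s)\,ds$ with $g,h$ among the components of $U$ and $W$. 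Relative to the MHD bilinear operators \eqref{eq-bilinear-operator}, the differences are purely combinatorial: the velocity block carries $\sum_{n=1}^3 f_n\ot f_n$ (a harmless factor $3$ in every bound) and there are three tensor blocks instead of one. The kernel $\pd_l S_{ij}$ of $e^{(t-s)\De}\mathbb P\nb\cdot$ is unchanged, so every pointwise and amalgam-space estimate proved for \eqref{MHD} applies here verbatim.

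The heart of the existence part is a contraction-mapping scheme for $\Phi(U):=e^{t\De}U_0-\mc B(U,U)$ in a Kato-type space adapted to $q\le3$. Introduce $q_6,q_2$ by $1/q_6=1/q-1/6$ and $1/q_2=1/q-1/3$, so that $q\le q_6\le q_2$, and take, say,
\[
\norm{U}_{\mc X_T}:=\norm{U}_{L^\infty_T E^3_q}+\sup_{0<t<T}t^{1/4}\norm{U(t)}_{E^6_{q_6}}+\sup_{0<t<T}t^{1/2}\norm{U(t)}_{E^\oo_{q_2}},
\]
with the corresponding ${\bf E}$ norms on the tensor blocks. Two ingredients are needed: (a) heat-semigroup bounds on amalgam spaces, where the $\ell$-summation gain obtained by improving the index from $q$ to $q_6$ (resp.\ $q_2$) exactly offsets the large-time growth of the unimproved estimates, so that $\norm{e^{t\De}f}_{E^6_{q_6}}\lec t^{-1/4}\norm{f}_{E^3_q}$ and $\norm{e^{t\De}f}_{E^\oo_{q_2}}\lec t^{-1/2}\norm{f}_{E^3_q}$ hold \emph{uniformly in $t>0$} (this is precisely where $q\le3$ is used), together with the analogous bounds for $e^{\tau\De}\mathbb P\nb\cdot$ carrying one extra $\tau^{-1/2}$; and (b) the amalgam H\"older inequality \eqref{Holder}, used to distribute the time weight of $U$ between the two factors of $g\ot h$ and to track the $\ell$-exponents. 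Combining these gives $\norm{\mc B(U,W)}_{\mc X_T}\lec C\norm{U}_{\mc X_T}\norm{W}_{\mc X_T}$ with $C$ independent of $T\in(0,\infty]$, while the weighted seminorms of $e^{t\De}U_0$ in $\mc X_T$ tend to $0$ as $T\to0^+$ (by density of $C^\infty_c$ in $E^3_q$, $q<\infty$) and $\norm{e^{t\De}U_0}_{L^\infty_T E^3_q}\lec\norm{U_0}_{E^3_q}$. Standard fixed-point arguments then yield: a unique local solution for arbitrary data; a global solution ($T=\oo$) when $\norm{U_0}_{E^3_q}$ is small, since then $\norm{e^{t\De}U_0}$ is globally small in all three pieces; uniqueness within $\mc X_T$; $(v,{\bf F})\in BC([0,T);E^3_q\times{\bf E}^3_q)$ from the fixed-point identity, the uniform $E^3_q$ bound, and density; and $t^{1/2}(v,{\bf F})\in L^\infty(0,T;E^\oo_{q_2}\times{\bf E}^\oo_{q_2})$ from the definition of $\mc X_T$.

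For the parabolic bounds \eqref{eq-thmIII-Ebound-mhd} and their global refinements I would bootstrap. With the solution already in $\mc X_{T}$, estimate $\mc B(U,U)$ directly in the $E^{s,p}_{T,m}$ and $L^s_T E^p_m$ norms: apply \eqref{Holder} to the integrand $g\ot h$ (splitting its decay budget), then the heat bound $e^{(t-s)\De}\mathbb P\nb\cdot:E^{p_0}_{m_0}\to E^p_m$, and integrate in $s$; the linear part is controlled by the same mapping properties, and one passes between $L^s_T E^p_m$ and $E^{s,p}_{T,m}$ via \eqref{ineq:embedding.parabolic.space}--\eqref{ineq:embedding.parabolic.space2} according to whether $m\le s$ or $m\ge s$. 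The conditions in the statement---$\tfrac2s+\tfrac3p=1$; $m>p'$; $m\ge m_1$ with $\tfrac2s+\tfrac3{m_1}=\tfrac3q$; the strict $m>m_1$ when $q=1$; the alternative $m\ge p$ when $3s<5q$---are exactly what is required so that (i) the resulting time integral $\int_0^t(t-s)^{-\al}s^{-\be}\,ds$ converges with $\al+\be\le1$ (and $<1$, hence a positive power of $T$, for the finite-time claims), and (ii) the $\ell^m$ exponents in \eqref{Holder} are compatible with an admissible choice of $(p_0,m_0)$. Tracking this as in Theorem~\ref{thrm:critical2-mhd} gives $T_1\in(0,T]$ in general, $T_1=\oo$ for small data under the first alternative, and $v,{\bf F}\in L^s_{T=\infty}(E^p_m\times{\bf E}^p_m)$ for small data under the second.

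The step I expect to be the main obstacle is not analytic---the vNSEd nonlinearity contributes only the factor $3$ and the extra blocks, so nothing genuinely new occurs there---but the simultaneous bookkeeping in the last step: one must choose intermediate exponents $(p_0,m_0)$ for which the time integral converges \emph{and} \eqref{Holder} holds, and in the borderline regimes $q=1$ and $3s<5q$ this forces a different split, routing part of the estimate through the weighted $E^6_{q_6}$ (resp.\ $E^p_m$) bound already in hand, in order to keep $\al+\be<1$. Verifying that the ranges of $(s,p,m)$ quoted in the theorem are precisely those for which such a choice exists is the technical core, and it is carried out exactly as for \eqref{MHD} in Theorem~\ref{thrm:critical2-mhd}.
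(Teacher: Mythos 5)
Your overall route — rewriting \eqref{vNSEd} as a four-block system $U=(v,f_1,f_2,f_3)$ whose bilinear form is a finite sum of the same Oseen-kernel convolutions as in \eqref{MHD}, running a weighted Kato iteration with the indices $1/q_1=1/q-1/6$, $1/q_2=1/q-1/3$, and then bootstrapping the $E^{s,p}_{T,m}$ and $L^s_TE^p_m$ bounds through the heat-semigroup lemmas of Bradshaw--Lai--Tsai — is exactly the paper's (the paper proves the MHD version, Theorem \ref{thrm:critical2-mhd}, in Section \ref{sec-critical-II} and omits the vNSEd transcription). The reduction itself is fine: the extra tensor blocks only contribute a factor $3$ and the estimates are componentwise identical.

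There is, however, a genuine gap in your local-existence step for \emph{arbitrary} (large) data. You place all three pieces, including $\norm{U}_{L^\infty_TE^3_q}$, into a single norm $\norm{\cdot}_{\mc X_T}$, prove the symmetric bound $\norm{\mc B(U,W)}_{\mc X_T}\le C\norm{U}_{\mc X_T}\norm{W}_{\mc X_T}$ with $C$ independent of $T$, and then invoke ``standard fixed-point arguments.'' But the standard Picard scheme requires $4C\norm{e^{t\De}U_0}_{\mc X_T}<1$, and — as you yourself note — the component $\norm{e^{t\De}U_0}_{L^\infty_TE^3_q}\simeq\norm{U_0}_{E^3_q}$ does \emph{not} shrink as $T\to0^+$. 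So for large data the contraction as written does not close, and the claim ``a unique local solution for arbitrary data'' does not follow. The repair is the two-space device the paper uses: contract only in the auxiliary norm $\norm{U}_{\mc F_T}=\sup_t t^{1/4}\norm{U(t)}_{E^6_{q_1}}$, where $\norm{e^{t\De}U_0}_{\mc F_T}\to0$ as $T\to0^+$ by density, and then upgrade the limit to $\mc E_T$ (the $E^3_q$ and $t^{1/2}E^\infty_{q_2}$ pieces) using the \emph{asymmetric} bilinear estimate
\[
\norm{\mc B(U,W)}_{\mc E_T}\lec \norm{U}_{\mc E_T}\norm{W}_{\mc F_T}\wedge\norm{W}_{\mc E_T}\norm{U}_{\mc F_T},
\]
which lets the smallness always come from an $\mc F_T$ factor (cf.\ \eqref{ineq:E3qintegral2}, \eqref{ineq:E3qintegral3-B1}--\eqref{eq3.38} in the MHD proof). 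With that modification your argument goes through; the remainder of your sketch (small-data globality, the $E^{s,p}_{T,m}$ bootstrap, and the case analysis behind the conditions $m>p'$, $m\ge m_1$, $m>m_1$ for $q=1$, $m\ge p$ for $3s<5q$) matches the paper's treatment.
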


\subsection{Weak solutions of MHD equations}

We first introduce the notion of local energy solutions for the MHD equations, which is consistent with the concept introduced in \cite{BT-SIMA2021} for the Navier--Stokes equations.

\begin{definition}[local energy solution (MHD)]\label{def-local-energy-sol-mhd}
Let $0<T\le\infty$. A pair of vector fields $(v,b)$, $v,b\in L^2_\loc(\R^3\times[0,T))$, is a local energy solution to \eqref{MHD} with divergence-free initial data $v_0,b_0\in L^2_\uloc(\R^3)$, denoted as $(v,b)\in\mathcal N_{\rm MHD}(v_0,b_0)$, if the following hold:

1. There exists $\pi\in L^{3/2}_\loc(\R^3\times[0,T))$ such that $(v,b,\pi)$ is a distributional solution to \eqref{MHD}.

2. For any $R>0$, $(v,b)$ satisfies
\EQN{
\esssup_{0\le t<R^2\wedge T} &\sup_{x_0\in\R^3} \int_{B_R(x_0)} \frac12 \bke{|v(x,t)|^2 + |b(x,t)|^2} dx\\ 
&+ \sup_{x_0\in\R^3} \int_0^{R^2\wedge T} \int_{B_R(x_0)} \bke{|\nb v(x,t)|^2 + |\nb b(x,t)|^2} dxdt < \infty.
}

3. For any $R>0$, $x_0\in\R^3$, and $0<T'<T$, there exists a function of time $c_{x_0,R}(t)\in L^{3/2}(0,T')$ so that, for every $0<t<T'$ and $x\in B_{2R}(x_0)$,
\EQS{\label{eq-pi-formula-mhd}
\pi(x,t) &= -\De^{-1} \div\div\bkt{(v\otimes v - b\otimes b)\chi_{4R}(x-x_0)}\\
&\quad - \int_{\R^3} \bke{K(x-y) - K(x_0-y)} (v\otimes v - b\otimes b )(y,t) \bke{1-\chi_{4R}(y-x_0)} dy + c_{x_0,R}(t)
}
in $L^{3/2}(B_{2R}(x_0)\times(0,T'))$ where $K(x)$ is the kernel of $\De^{-1}\div\div$, $K_{ij}(x) = \pd_i\pd_j \frac{-1}{4\pi|x|}$, and $\chi_{4R}(x)$ is the characteristic function of $B_{4R}$.

4. For all compact subsets $K$ of $\R^3$ we have $v(t)\to v_0$ and $b(t)\to b_0$ in $L^2(K)$ as $t\to0^+$.

5. For all cylinders $Q$ compactly supported in $\R^3\times(0,T)$ and all nonnegative $\phi\in C^\infty_c(Q)$, we have the local energy inequality
\EQS{\label{lei_mhd}
2\int\int\left(|\nb v|^2+|\nb b|^2\right)\phi\,dxdt\le&\int\int\left(|v|^2+|b|^2\right)\left(\pd_t\phi+\De\phi\right)dxdt\\
&+\int\int\left(|v|^2+|b|^2+2\pi\right)(v\cdot\nb\phi)\,dxdt\\
&-2\int\int(v\cdot b)(b\cdot\nb\phi)\,dxdt.
}

6. The functions
\[
t\mapsto \int_{\R^3} v(x,t)\cdot w(x)\, dx\qquad
t\mapsto \int_{\R^3} b(x,t)\cdot w(x)\, dx
\]
are continuous in $t\in[0,T)$ for any compactly supported $w\in L^2(\R^3)$.

For given divergence-free $v_0, b_0\in L^2_\uloc$, let $\mathcal{N}_{\rm MHD}(v_0,b_0)$ denote the set of all local energy solutions to \eqref{MHD} with initial data $(v_0,b_0)$.
\end{definition}

\begin{theorem}[Eventual regularity in $E^2_q$ (MHD)]\label{bt4-thm-1.3-mhd}%
Assume $v_0,b_0\in E^2_q$, where $1\le q\le3$, are divergence free, and $(v,b)\in\mathcal N_{\rm MHD}(v_0,b_0)$. 
Then $(v,b)$ has eventual regularity, i.e., there is $t_1<\infty$ such that $v$ and $b$ are regular at $(x,t)$ whenever $t\ge t_1$, and 
\[
\norm{(v,b)(\cdot,t)}_{L^\infty\times L^\infty} \lec t^{1/2},
\]
for sufficiently large $t$.
\end{theorem}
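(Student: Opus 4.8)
The plan is to adapt the Navier--Stokes eventual regularity argument of \cite{BT-SIMA2021} (their Theorem 1.3) to the coupled MHD system, exploiting the structural similarity between \eqref{MHD} and the Navier--Stokes equations. The starting point is the observation that the magnetic field $b$ satisfies a transport--diffusion equation driven by $v$, so the energy structure of the pair $(v,b)$ is essentially that of Navier--Stokes with the combined quantity $|v|^2 + |b|^2$ playing the role of $|v|^2$; this is already reflected in the local energy inequality \eqref{lei_mhd} and in the pressure formula \eqref{eq-pi-formula-mhd}. First I would record the consequences of $v_0,b_0 \in E^2_q$ with $1 \le q \le 3$: by the $E^p_q$-embedding/H\"older inequality \eqref{Holder} and the fact that $q \le 3$ gives spatial decay of local $L^2$ mass, one gets a decay estimate for the linear evolution $\|e^{t\Delta}v_0\|$, $\|e^{t\Delta}b_0\|$ in $L^\infty$ (or $E^\infty_{q_2}$) of order $t^{-\si}$ for a suitable $\si>0$, together with the uniform-in-time bound $\sup_t \|(v,b)(t)\|_{E^2_q \times E^2_q} < \infty$ coming from the local energy inequality and an iteration on shells (the $q \le 3$ hypothesis is exactly what makes the global $E^2_q$ norm non-increasing up to controlled corrections, as in the Navier--Stokes case).

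The core of the argument is a local $\ep$-regularity criterion of Caffarelli--Kohn--Nirenberg type for \eqref{MHD}: there is an absolute $\ep_* > 0$ such that if
\[
\frac{1}{r^2}\int_{Q_r(x_0,t_0)} \bke{|v|^3 + |b|^3 + |\pi|^{3/2}}\, dx\, dt \le \ep_*,
\]
then $(v,b)$ is regular (H\"older, hence smooth) at $(x_0,t_0)$, with $\|(v,b)\|_{L^\infty(Q_{r/2})} \lec r^{-1}$. Such a criterion for the MHD system is available in the literature (e.g. \cite{HX-JFA2005, Kang-Lee-JDE2009, CCHJ-DocumentaMath2021}); I would cite it rather than reprove it. The strategy is then: (i) use the uniform $E^2_q$ energy bound plus the $\nb$-control in item 2 of Definition~\ref{def-local-energy-sol-mhd} to show, via interpolation ($L^\infty_t L^2 \cap L^2_t \dot H^1 \hookrightarrow L^3_{t,x}$ locally) and the local pressure representation \eqref{eq-pi-formula-mhd} (which gives $\pi \in L^{3/2}_\loc$ with norm controlled by the same energy quantities), that the scale-invariant CKN functional on a unit-scale cylinder is finite and in fact small once we are far enough out in time; (ii) quantify the smallness by showing the local energy $\int_{B_1(x_0)} (|v|^2+|b|^2)(t)\,dx$ decays as $t \to \infty$. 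This last decay is where the hypothesis $q \le 3$ is used decisively: it forces the initial data into a space with enough decay that, combined with the energy dissipation, the local $L^2$-energy at scale $1$ tends to $0$ uniformly in $x_0$ as $t\to\infty$; the same holds for the pressure term by \eqref{eq-pi-formula-mhd}. Feeding this into the $\ep$-regularity criterion at a fixed scale $r=1$ yields a time $t_1 < \infty$ beyond which every point is regular, and rescaling the criterion to scale $r = \sqrt{t}$ (using parabolic scaling of \eqref{MHD}) upgrades the conclusion to $\|(v,b)(t)\|_{L^\infty\times L^\infty} \lec t^{1/2}$ for large $t$.

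The main obstacle I anticipate is the quantitative decay of the local energy: one must show $\sup_{x_0}\int_{B_1(x_0)} (|v|^2+|b|^2)(x,t)\,dx \to 0$ as $t\to\infty$. Unlike the finite-energy (global $L^2$) case, here the total energy is only locally finite, so a direct integration of the energy equality does not give decay; instead one argues as in \cite{BT-SIMA2021}: split the linear part $e^{t\Delta}(v_0,b_0)$, which decays by the $E^2_q$, $q\le 3$, assumption via the heat-kernel bound, from the Duhamel/nonlinear part, and use the local energy inequality on a moving cutoff together with the already-established uniform bound to run a bootstrap showing the local energy cannot stay bounded below. The MHD-specific complication is the extra cross term $-2\int\int (v\cdot b)(b\cdot\nb\phi)$ in \eqref{lei_mhd} and the $b\otimes b$ contribution to the pressure; both are handled by the same H\"older/interpolation estimates used for the $v\otimes v$ terms, since $|v\cdot b| \le \frac12(|v|^2+|b|^2)$, so no genuinely new analytic difficulty arises — only careful bookkeeping. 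A secondary point to check is that the local energy solution class of Definition~\ref{def-local-energy-sol-mhd} is closed under the parabolic rescaling $(v,b)(x,t)\mapsto \la(v,b)(\la x,\la^2 t)$ needed in step (iii), which follows directly from the scaling invariance of conditions 1--6.
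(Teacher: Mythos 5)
Your high-level ingredients are the right ones --- an $\ep$-regularity criterion of CKN type for \eqref{MHD} (the paper uses \cite[Theorem 3.1]{MNS-JMS2007}, stated as Lemma \ref{lem2.3-BT-CPDE2020}) combined with a smallness input coming from the decay encoded in $E^2_q$, $q\le 3$. But the mechanism you propose for producing the smallness is not the paper's, and it contains a genuine gap. Your step (ii) asks for time-decay of the \emph{unit-scale} local energy, $\sup_{x_0}\int_{B_1(x_0)}(|v|^2+|b|^2)(x,t)\,dx\to 0$ as $t\to\infty$, to be fed into the $\ep$-regularity criterion on unit cylinders at late times. This decay is nowhere established, your sketch of how to get it (``split the linear part from the Duhamel part \dots\ run a bootstrap showing the local energy cannot stay bounded below'') is not an argument, and for a general local energy solution in this class it is at best a hard open-ended claim: the total energy is only locally finite, the global energy inequality is unavailable, and nothing forces the unit-scale mass at a fixed $x_0$ to vanish in time. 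The paper avoids this entirely. The smallness it uses is \emph{spatial-scale} decay of the initial data: Lemma \ref{lem2.2-BT-SIMA2021} shows $N_R^0(v_0,b_0)=\sup_{x_0}R^{-1}\int_{B_R(x_0)}(|v_0|^2+|b_0|^2)\,dx\to 0$ as $R\to\infty$ when $2\le q\le 3$ (and trivially when $1\le q<2$ via $E^2_q\subset L^2$). This smallness of the \emph{data} at scale $R$ is then propagated to the \emph{solution} on the parabolic time interval $[0,\sigma R^2]$ by the a priori estimate of Lemma \ref{lem2.1-BT-CPDE2020} (a Gr\"onwall plus continuation-in-$\sigma$ argument on the local energy inequality), whose output \eqref{eq2.2-BT-CPDE2020} is precisely that the scale-invariant CKN functional on cylinders of radius $\sim R$ is $\lesssim (N_R^0)^{3/2}$. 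Applying the $\ep$-regularity criterion at scale $R$ (not at unit scale) then gives regularity on the slab $t\in(\tfrac{3c_0R^2}{4},c_0R^2]$ with $|v|+|b|\lesssim (N_R^0)^{1/2}t^{-1/2}$, and taking the union over $R\ge R_0$ covers all large times. So the missing piece in your write-up is exactly the scaled a priori bound (Lemma \ref{lem2.1-BT-CPDE2020}); once you have it, your final remark about ``rescaling the criterion to $r=\sqrt t$'' becomes the whole proof, and the unit-scale time-decay claim can be discarded.

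Two smaller points. First, the $L^\infty$ bound should read $\norm{(v,b)(\cdot,t)}_{L^\infty\times L^\infty}\lesssim t^{-1/2}$ (your own intermediate bound $\|(v,b)\|_{L^\infty(Q_{r/2})}\lesssim r^{-1}$ with $r\sim\sqrt t$ gives this; the exponent $+1/2$ you wrote to match the statement is a sign slip). Second, your claimed global-in-time bound $\sup_t\norm{(v,b)(t)}_{E^2_q\times E^2_q}<\infty$ for an arbitrary element of $\mathcal N_{\rm MHD}(v_0,b_0)$ is not proved in the paper (the a priori bounds of Lemma \ref{lem3.1-BT-SIMA2021} hold only on $[0,\la_R R^2]$ and under the extra hypothesis \eqref{eq-3.4-BT-SIMA2021}); fortunately it is not needed for this theorem.
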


The proof of Theorem \ref{bt4-thm-1.3-mhd} is given in Section \ref{sec-eventual-reg}

Define the $\ell^q$ \emph{local energy}
\EQ{\label{ETq.def}
\norm{(v,b)}_{{\bf LE}_q(0,T)}
:= \norm{(v,b)}_{E^{\infty,2}_{T,q}\times E^{\infty,2}_{T,q}} + \norm{(\nb v,\nb b)}_{E^{2,2}_{T,q}\times E^{2,2}_{T,q}}.
}

\begin{theorem}[Explicit growth rate in $E^2_q$ (MHD)]\label{thm-1.4BT4-mhd}
Assume $v_0,b_0\in E^2_q$, where $1\le q<\infty$, are divergence free, and $(v,b)\in\mathcal N_{\rm MHD}(v_0,b_0)$ satisfies, for some $T_2>0$,
\[
\norm{(v,b)}_{{\bf LE}_q(0,T_1)}<\infty,\quad \forall T_1\in(0,T_2).
\]
Then, for any $R\ge 1$, with $T=\min\bke{\la_1(1+\norm{(v_0,b_0)}_{E^2_q\times E^2_q})^{-4}R^2, T_2}$, we have
\EQ{\label{eq-thm1.4BT4}
\bigg\|\esssup_{0\leq t \leq T} \int_{B_R(Rk) } \bke{|v|^2 + |b|^2} dx+ \int_0^T\int_{B_R(Rk) } \bke{|\nabla v|^2 + |\nabla b|^2} dx\,dt \bigg\| _{\ell^{q/2}(k\in \ZZ^3)}
\leq  C \norm{(v_0,b_0)}_{E^2_q\times E^2_q}^2,
}
for positive constants $\la_1$ and $C$ independent of $(v_0,b_0)$ and $R$.
In particular, if $T_2=\infty$ then $T\to\infty$ as $R\to\infty$.
\end{theorem}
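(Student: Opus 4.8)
The plan is to adapt the bootstrapping argument of Bradshaw--Tsai (\cite{BT-SIMA2021}) for the Navier--Stokes local energy solutions to the coupled MHD system, keeping careful track of how the magnetic-field terms enter the pressure formula \eqref{eq-pi-formula-mhd} and the local energy inequality \eqref{lei_mhd}. First I would fix $R\ge 1$, set $u = (v,b)$ for brevity, and introduce the quantities
\[
A_k(t) := \int_{B_R(Rk)}\bke{|v|^2+|b|^2}\,dx,\qquad
E_k(t) := \int_0^t\int_{B_R(Rk)}\bke{|\nb v|^2+|\nb b|^2}\,dx\,dt',
\]
with the goal of bounding $\|\esssup_t A_k(T) + E_k(T)\|_{\ell^{q/2}_k}$. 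The starting point is the local energy inequality applied with a cutoff $\phi = \phi_k$ adapted to $B_R(Rk)$ (with $|\nb\phi_k|\lec R^{-1}$, $|\nb^2\phi_k|\lec R^{-2}$, and $\phi_k$ chosen of Takahashi type so that the time interval of integration can be taken up to $T$). This produces, after the usual manipulations, a Grönwall-type differential inequality for $A_k + $ (localized dissipation) whose right-hand side involves the cubic terms $\int|u|^3|\nb\phi_k|$, the pressure terms $\int|\pi||u||\nb\phi_k|$, the extra mixed term $\int|v\cdot b||b||\nb\phi_k|$, and the lower-order term $R^{-2}\int|u|^2$.

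The three central estimates, each carried out locally on balls of radius $\sim R$ and then summed in $\ell^{q/2}$, are: (i) the cubic nonlinearity, which by interpolation $\|u\|_{L^3(B_R)}^3 \lec \|u\|_{L^2(B_R)}^{3/2}\|\nb u\|_{L^2(B_R)}^{3/2} + R^{-3/2}\|u\|_{L^2(B_R)}^3$ and Young's inequality absorbs a fraction of the dissipation while leaving a term polynomial in $\sup_t A_k$; the mixed term $|v\cdot b||b|\le \frac12|u|^3$ pointwise, so it is handled identically; (ii) the pressure term, where one inserts \eqref{eq-pi-formula-mhd} — the local Calderón--Zygmund part is controlled by $\|v\otimes v - b\otimes b\|_{L^{3/2}(B_{4R})}\lec \|u\|_{L^3(B_{4R})}^2$ and reduces to case (i) after covering $B_{4R}$ by unit-scale (or $R$-scale) balls, while the nonlocal tail $\int(K(x-y)-K(x_0-y))(v\otimes v-b\otimes b)(y)(1-\chi_{4R})\,dy$ is estimated using $|K(x-y)-K(x_0-y)|\lec R|x_0-y|^{-4}$ and summed over dyadic annuli, yielding a factor $\sum_{j\ge 2}2^{-j}\|u\|^2_{E^2_q(\text{annulus})}$ that is controlled by $\|u_0\|^2_{E^2_q}$ via the a priori bound $\|u\|_{{\bf LE}_q}<\infty$; the constant $c_{x_0,R}(t)$ drops out against $\int\nb\phi_k$ since $v$ is divergence free; (iii) the linear term, trivially bounded by $R^{-2}\sup_t A_k\cdot T$. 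Combining these and using $T\le \la_1(1+\|u_0\|_{E^2_q})^{-4}R^2$ makes the coefficient of the Grönwall iteration small, so that a continuity/bootstrap argument on $[0,T]$ closes and gives $\sup_t A_k(T) + E_k(T) \lec A_k(0) + (\text{tail contributions})$; taking $\ell^{q/2}$ norms and using $\|u_0\|_{E^2_q} = \|u_0\|_{E^2_q}$ (with $A_k(0)\le \|u_0\|_{L^2(B_R(Rk))}^2$ summable in $\ell^{q/2}$ to $\|u_0\|_{E^2_q}^2$) yields \eqref{eq-thm1.4BT4}. The final assertion $T\to\infty$ as $R\to\infty$ when $T_2=\infty$ is immediate from the definition of $T$.

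The main obstacle I anticipate is the propagation of the $\ell^{q/2}$ summability through the \emph{nonlocal pressure tail} in step (ii): unlike the $L^2_\uloc$ case where one only needs a uniform bound, here one must show that the double sum (over lattice points $k$ and over dyadic annuli $j$ around each $Rk$) converges in $\ell^{q/2}$, which requires a discrete convolution / Young-type inequality in $\ell^{q/2}(\ZZ^3)$ together with the decay factor $2^{-j}$ beating the volume growth $2^{3j}$ of the annuli — this is exactly where the hypothesis $R\ge1$ and the a priori finiteness $\|u\|_{{\bf LE}_q(0,T_1)}<\infty$ are used, and it is the place where the coupling (the $b\otimes b$ piece) must be checked not to spoil the cancellation structure. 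A secondary technical point is verifying that the Takahashi-type test function permits integrating the local energy inequality all the way to $t=T$ despite $(v,b)$ being only a local energy solution; this follows as in \cite{BT-SIMA2021} from property 6 of Definition \ref{def-local-energy-sol-mhd} (weak continuity in time) combined with a limiting argument. Once these are in place, everything else is a routine adaptation of the Navier--Stokes computation with the bookkeeping of the extra magnetic terms, all of which are structurally of the same (quadratic-in-$u$, cubic-in-$u$) type.
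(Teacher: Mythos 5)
Your proposal follows essentially the same route as the paper: the paper packages all of the hard work (scale-$R$ cutoffs in the local energy inequality, Gagliardo--Nirenberg for the cubic and mixed $v\cdot b$ terms, the near/far pressure splitting with the kernel estimate $|K(x-y)-K(\ka-y)|\le CR|\ka-y|^{-4}$, discrete Young convolution on the lattice $R\ZZ^3$ in $\ell^{q/2}$, and a continuity-in-$\la$ bootstrap) into Lemma \ref{lem3.1-BT-SIMA2021}, and the theorem then follows from that lemma together with the covering bound $N^0_{q,R}\le CR^{2-6/q}\norm{(v_0,b_0)}^2_{E^2_q\times E^2_q}$, which is exactly the argument you rederive inline. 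Two small points to watch: your final summation step $\norm{A_k(0)}_{\ell^{q/2}(k)}\lec\norm{(v_0,b_0)}^2_{E^2_q\times E^2_q}$ actually incurs a factor $R^{3-6/q}$ when $q>2$ (this is the H\"older/covering estimate \eqref{eq-3.17-BT-SIMA2021}, which the paper derives and then absorbs into the stated bound), and the far-field pressure convolution must be treated separately for $1\le q<2$, where the exponent $q/2<1$ forces a Cauchy--Schwarz variant producing a quadratic term $E_{R,q,\la}^2$ in the bootstrap inequality rather than the $3/2$-power term.
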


The proof of Theorem \ref{thm-1.4BT4-mhd} is given in Section \ref{sec-growth-rate}

\begin{theorem}[Existence in $E^2_q$ (MHD)]\label{th4.8-mhd}
Assume $v_0,b_0\in E^2_q$, where $1\le q<\infty$, are divergence free. Then, there exists a time-global local energy solution $(v,b)$ and associated pressure $\pi$ to \eqref{MHD} in $\R^3$  with initial data $v_0,  b_0$ so that, for any $0<T<\infty$,
\EQ{\label{eq-LEq-global-mhd}
\norm{(v,b)}_{{\bf LE}_q(0,T)}<\infty.
}
In particular, $(v,b)\in L^\infty(0,T; E^2_q\times E^2_q)$.
\end{theorem}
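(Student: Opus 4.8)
The plan is to follow the Bradshaw--Tsai construction \cite{BT-SIMA2021} adapted to the MHD coupling, combining a local-in-space-in-time a priori bound (which is exactly the content of Theorem \ref{thm-1.4BT4-mhd}) with an approximation/compactness argument. First I would regularize the initial data: given divergence-free $v_0, b_0 \in E^2_q$, set $v_0^{(k)} = \eta_k (v_0 * \rho_{1/k})$, $b_0^{(k)} = \eta_k(b_0*\rho_{1/k})$ where $\rho$ is a standard mollifier and $\eta_k$ a smooth cutoff to $B_k(0)$ (composed with the Leray projection $\mathbb P$ to restore the divergence-free condition), so that $v_0^{(k)}, b_0^{(k)} \in L^2 \cap E^2_q$ with $\norm{(v_0^{(k)}, b_0^{(k)})}_{E^2_q\times E^2_q} \lec \norm{(v_0,b_0)}_{E^2_q\times E^2_q}$ uniformly in $k$, and $(v_0^{(k)},b_0^{(k)}) \to (v_0,b_0)$ in $L^2_\loc$. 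For each $k$, since the data has finite global energy, the Duvaut--Lions \cite{DL-ARMA1972} theory (or rather, a standard Leray--Hopf construction for MHD) yields a global weak solution $(v^{(k)}, b^{(k)})$ with the global energy inequality; in particular each $(v^{(k)}, b^{(k)})$ is a local energy solution in the sense of Definition \ref{def-local-energy-sol-mhd} on $\R^3 \times (0,\infty)$, with a pressure $\pi^{(k)}$ given by the formula \eqref{eq-pi-formula-mhd}.

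Next I would extract uniform local bounds. Each $(v^{(k)}, b^{(k)})$ satisfies the hypotheses of Theorem \ref{thm-1.4BT4-mhd} with $T_2 = \infty$ (its $\mathbf{LE}_q$ norm on any finite interval is finite, since it has finite total energy controlled by $k$, although not uniformly in $k$ — but it is finite for each fixed $k$, which is all the hypothesis requires). Applying \eqref{eq-thm1.4BT4}, for every $R \ge 1$ and $T = \min(\lambda_1(1+\norm{(v_0,b_0)}_{E^2_q\times E^2_q})^{-4}R^2, \infty)$ we get
\EQ{\nonumber
\bigg\|\esssup_{0\le t\le T}\int_{B_R(Rk)}\bke{|v^{(k)}|^2+|b^{(k)}|^2}\,dx + \int_0^T\int_{B_R(Rk)}\bke{|\nb v^{(k)}|^2+|\nb b^{(k)}|^2}\,dx\,dt\bigg\|_{\ell^{q/2}(k\in\ZZ^3)} \le C\norm{(v_0,b_0)}_{E^2_q\times E^2_q}^2,
}
uniformly in $k$. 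Since $T\to\infty$ as $R\to\infty$, a diagonal argument over an increasing sequence $R_j\to\infty$ produces, for any fixed finite $T$, a uniform-in-$k$ bound on $\norm{(v^{(k)},b^{(k)})}_{\mathbf{LE}_q(0,T)}$ — this is the estimate \eqref{eq-LEq-global-mhd} at the level of the approximants. From the pressure formula \eqref{eq-pi-formula-mhd} together with Calder\'on--Zygmund estimates and the H\"older inequality \eqref{Holder}, one derives a matching uniform bound for $\pi^{(k)}$ in $L^{3/2}_\loc$ (modulo the additive functions $c_{x_0,R}^{(k)}(t)$, which are handled by normalizing the pressure on a fixed ball). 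Using the MHD equations, $\pd_t v^{(k)}$ and $\pd_t b^{(k)}$ are uniformly bounded in a negative-order space locally.

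Then I would pass to the limit. By Aubin--Lions on each bounded parabolic cylinder, extract a subsequence with $(v^{(k)}, b^{(k)}) \to (v,b)$ strongly in $L^3_\loc(\R^3\times(0,\infty))$ (interpolating the uniform $L^\infty_t L^2$ and $L^2_t H^1$ local bounds) and weakly in the appropriate spaces, and $\pi^{(k)}\to\pi$ weakly in $L^{3/2}_\loc$. Strong $L^3_\loc$ convergence is exactly what is needed to pass to the limit in all the quadratic nonlinearities $v\otimes v$, $b\otimes b$, $v\otimes b$, $v\cdot\nb\phi$, $(v\cdot b)(b\cdot\nb\phi)$ appearing in the distributional formulation, the pressure formula, and the local energy inequality \eqref{lei_mhd} (the last by lower semicontinuity of the dissipation terms under weak convergence). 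The weak-$*$ lower semicontinuity of the $\ell^{q/2}$- and $\mathbf{LE}_q$-norms under the established convergences gives \eqref{eq-LEq-global-mhd}, and $(v,b)\in L^\infty(0,T;E^2_q\times E^2_q)$ follows from the $E^{\infty,2}_{T,q}$ part via \eqref{ineq:embedding.parabolic.space}. The remaining items of Definition \ref{def-local-energy-sol-mhd} — the continuity-in-time item 6 and the strong initial convergence item 4 — follow from the uniform $\pd_t$ bounds plus the local energy inequality by a now-standard argument (strong $L^2_\loc$ convergence at $t=0$ via the energy inequality forcing $\limsup_{t\to0}\norm{v(t)}_{L^2(K)} \le \norm{v_0}_{L^2(K)}$ combined with weak lower semicontinuity).

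The main obstacle is the MHD-specific structure of the local energy inequality and the pressure. The pressure in \eqref{eq-pi-formula-mhd} couples $v$ and $b$ through $v\otimes v - b\otimes b$, and the local energy inequality \eqref{lei_mhd} contains the genuinely MHD term $-2\int\int (v\cdot b)(b\cdot\nb\phi)$ which has no Navier--Stokes analogue; verifying that the a priori estimate of Theorem \ref{thm-1.4BT4-mhd} (which I am invoking as a black box) indeed controls this term, and that it passes to the limit under only strong $L^3_\loc$ convergence, requires care — in particular one must check that the approximate solutions $(v^{(k)},b^{(k)})$ genuinely satisfy the \emph{local} energy inequality \eqref{lei_mhd} and not merely the global one, which may necessitate an additional mollification of the equations (à la the retarded-mollifier scheme) at the level of the approximation so that each $(v^{(k)},b^{(k)})$ is a \emph{suitable} weak solution. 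Everything else is a routine transcription of \cite[Theorem 1.5 and its proof]{BT-SIMA2021} with $b$ carried along.
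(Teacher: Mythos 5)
Your overall architecture (approximate, obtain uniform local energy bounds from the growth-rate estimate, pass to the limit by compactness) is the right one, and it is close to what the paper actually does for $1\le q<2$. But there is a genuine gap at the very first step: you take Duvaut--Lions/Leray--Hopf solutions for the truncated data and assert that each one ``satisfies the hypotheses of Theorem \ref{thm-1.4BT4-mhd} with $T_2=\infty$'' because it has finite total energy. Finite energy gives $\esssup_t \sum_k \|v^{(k)}(t)\|_{L^2(B_1(k))}^2<\infty$, whereas the hypothesis $\|(v,b)\|_{{\bf LE}_q(0,T_1)}<\infty$ requires $\big\|\esssup_t\|v^{(k)}(t)\|_{L^2(B_1(k))}^2\big\|_{\ell^{q/2}(k)}<\infty$, with the essential supremum \emph{inside} the $\ell^{q/2}$ sum; this does not follow from the energy inequality even for $q=2$, and for $q<2$ the class $\ell^{q/2}$ is strictly stronger than $\ell^1$, so the finite-energy class does not even embed into it. Since Lemma \ref{lem3.1-BT-SIMA2021} (hence Theorem \ref{thm-1.4BT4-mhd}) is proved by a continuation-in-$\la$ argument that needs the qualitative finiteness \eqref{eq-3.4-BT-SIMA2021} as an a priori input, your uniform bounds are not justified for your approximants. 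The same objection applies to your claim that the Duvaut--Lions solutions are local energy solutions in the sense of Definition \ref{def-local-energy-sol-mhd}: the local energy inequality \eqref{lei_mhd} and the local pressure expansion \eqref{eq-pi-formula-mhd} are not part of the standard finite-energy construction. You flag this last point as a caveat, but it is not a side issue; it is the reason the paper constructs the approximants by solving a \emph{mollified} system (with convective terms $\eta_\ep*v$, and additionally localized by $\Phi_\ep$ when $q<2$) via a Picard iteration carried out directly in the ${\bf LE}_q$ norm (Lemmas \ref{lem-4.1-BT-SIMA2021} and \ref{lem-4.6-BLT-MathAnn2024}), which simultaneously supplies the qualitative ${\bf LE}_q$ membership needed to run the a priori estimate and the local energy equality needed to pass to the limit.

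The second, smaller discrepancy is that the a priori bound alone does not extend the \emph{existence} interval, only the quantitative control on an interval where the solution is already known to lie in ${\bf LE}_q$; your diagonal argument in $R$ therefore needs the approximants to be global \emph{and} in ${\bf LE}_q$ on every finite interval, which again is exactly what fails for Leray--Hopf approximants. The paper resolves this differently in the two regimes: for $1\le q<2$ the regularized-localized system is globally well-posed and inherits the bounds \eqref{eq-3.5-BT-SIMA2021} at scales $R=n$ up to times $T_n\to\infty$; for $q\ge2$ the paper instead goes through the perturbed MHD system \eqref{eq-4.31-BT-SIMA2021}, splits the solution at a restart time into a small $E^4_\infty$ part (handled by Lemma \ref{lem-A.1-BT-SIMA2021}) plus an $L^2$ part (handled by Lemma \ref{lem-5.1-BT-SIMA2021}), and glues using the uniqueness results of Theorem \ref{thm-1.7-BT-CPDE2020} and Corollary \ref{cor1.8-BT-CPDE2020}. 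None of this machinery appears in your proposal, and without a substitute for it the passage from a short-time ${\bf LE}_q$ solution to a global one is not established. The limiting/compactness step you describe (strong $L^3_{\loc}$ convergence, pressure convergence, lower semicontinuity) is fine once the correct approximants are in hand.
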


The proof of Theorem \ref{th4.8-mhd} is divided into two cases: $q\ge2$ and $1\le q<2$.
The case $q\ge2$ is addressed in Section \ref{sec-q>=2}, and the case $1\le q<2$ is handled separately in Section \ref{sec-1<=q<2}.

\subsection{Weak solutions of viscoelastic Navier--Stokes equations with damping}

We now define analogous local energy solutions to the viscoelastic Navier--Stokes equations with damping as follows.

\begin{definition}[local energy solution (vNSEd)]
Let $0<T\le\infty$. A pair of a vector field and a tensor $(v,{\bf F})$, ${\bf F} = [f_1,f_2,f_3]$, $v, f_m\in L^2_\loc(\R^3\times[0,T))$, $m=1,2,3$, is a local energy solution to \eqref{vNSEd} with initial data $v_0, {\bf F}_0\in L^2_\uloc(\R^3)$, ${\bf F}_0 = [(f_1)_0, (f_2)_0, (f_3)_0]$, where $v_0, (f_m)_0$ are divergence free, denoted as $(v,{\bf F})\in\mathcal N_{\rm vNSEd}(v_0,{\bf F}_0)$, if the following hold:

1. There exists $\pi\in L^{3/2}_\loc(\R^3\times[0,T))$ such that $(v,{\bf F},\pi)$ is a distributional solution to \eqref{vNSEd}.

2. For any $R>0$, $(v,{\bf F})$ satisfies
\EQN{
\esssup_{0\le t<R^2\wedge T} &\sup_{x_0\in\R^3} \int_{B_R(x_0)} \frac12 \bke{|v(x,t)|^2 + |{\bf F}(x,t)|^2} dx\\ 
&+ \sup_{x_0\in\R^3} \int_0^{R^2\wedge T} \int_{B_R(x_0)} \bke{|\nb v(x,t)|^2 + |\nb {\bf F}(x,t)|^2} dxdt < \infty.
}

3. For any $R>0$, $x_0\in\R^3$, and $0<T'<T$, there exists a function of time $c_{x_0,R}(t)\in L^{3/2}(0,T')$ so that, for every $0<t<T'$ and $x\in B_{2R}(x_0)$,
\EQS{\label{eq-pi-formula-vnsed}
\pi(x,t) &= -\De^{-1} \div\div\bkt{\bke{v\otimes v - \sum_{n=1}^3 f_n\otimes f_n}\chi_{4R}(x-x_0)}\\
&\quad - \int_{\R^3} \bke{K(x-y) - K(x_0-y)} \bke{v\otimes v - \sum_{n=1}^3 f_n\otimes f_n }(y,t) \bke{1-\chi_{4R}(y-x_0)} dy + c_{x_0,R}(t)
}
in $L^{3/2}(B_{2R}(x_0)\times(0,T'))$ where $K(x)$ is the kernel of $\De^{-1}\div\div$, $K_{ij}(x) = \pd_i\pd_j \frac{-1}{4\pi|x|}$, and $\chi_{4R}(x)$ is the characteristic function of $B_{4R}$.

4. For all compact subsets $K$ of $\R^3$ we have $v(t)\to v_0$ and $f_m(t)\to (f_m)_0$, $m=1,2,3$, in $L^2(K)$ as $t\to0^+$.

5. For all cylinders $Q$ compactly supported in $\R^3\times(0,T)$ and all nonnegative $\phi\in C^\infty_c(Q)$, we have the local energy inequality
\EQS{\label{lei_mhd}
2\int\int\left(|\nb v|^2+|\nb {\bf F}|^2\right)\phi\,dxdt\le&\int\int\left(|v|^2+|{\bf F}|^2\right)\left(\pd_t\phi+\De\phi\right)dxdt\\
&+\int\int\left(|v|^2+|{\bf F}|^2+2\pi\right)(v\cdot\nb\phi)\,dxdt\\
&-2\sum_{n=1}^3\int\int(v\cdot f_n)(f_n\cdot\nb\phi)\,dxdt.
}

6. The functions
\[
t\mapsto \int_{\R^3} v(x,t)\cdot w(x)\, dx\qquad
t\mapsto \int_{\R^3} {\bf F}(x,t)\cdot w(x)\, dx
\]
are continuous in $t\in[0,T)$ for any compactly supported $w\in L^2(\R^3)$.

For given divergence-free $v_0, (f_m)_0\in L^2_\uloc$, $m=1,2,3$, let $\mathcal{N}_{\rm MHD}(v_0,{\bf F}_0)$, ${\bf F}_0 = [(f_1)_0, (f_2)_0, (f_3)_0]$, denote the set of all local energy solutions to \eqref{vNSEd} with initial data $(v_0,{\bf F}_0)$.
\end{definition}

The main results concerning weak solutions of the viscoelastic Navier--Stokes equations with damping are stated as follows:

\begin{theorem}[Eventual regularity in $E^2_q$ (vNSEd)]\label{bt4-thm-1.3-vnsed}%
Assume $v_0,(f_1)_0, (f_2)_0, (f_3)_0\in E^2_q$, where $1\le q\le3$, are divergence free, and $(v,{\bf F})\in\mathcal N_{\rm vNSEd}(v_0,{\bf F}_0)$, ${\bf F}_0 = [(f_1)_0, (f_2)_0, (f_3)_0]$.
Then $(v,{\bf F})$ has eventual regularity, i.e., there is $t_1<\infty$ such that $v$ and ${\bf F}$ are regular at $(x,t)$ whenever $t\ge t_1$, and 
\[
\norm{(v,{\bf F})(\cdot,t)}_{L^\infty\times {\bf L}^\infty} \lec t^{1/2},
\]
for sufficiently large $t$.
\end{theorem}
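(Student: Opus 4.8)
\textbf{Proof plan for Theorem \ref{bt4-thm-1.3-vnsed} (Eventual regularity for vNSEd).}
The plan is to mirror the Navier--Stokes argument of Bradshaw and Tsai (CPDE 2020) and the MHD case of Theorem \ref{bt4-thm-1.3-mhd}, exploiting the fact that the nonlinear structure of \eqref{vNSEd} is, up to signs and the summation over columns, the same as that of \eqref{MHD}: $v$ transports and is forced by $\sum_n f_n\otimes f_n$, and each $f_m$ transports and is stretched by $\nb v$, with a full Laplacian dissipation on every component. The starting point is the local energy solution $(v,{\bf F})\in\mathcal N_{\rm vNSEd}(v_0,{\bf F}_0)$ with $v_0,(f_m)_0\in E^2_q$, $1\le q\le 3$. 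First I would record the $E^2_q$ energy structure: since $q\le 3<\infty$, $E^2_q\subset E^2=\overline{C_c^\infty}^{L^2_\uloc}$, so the local energy is ``small at large scales'' in the $\ell^{q/2}$ sense; combining the local energy inequality \eqref{lei_mhd} with the pressure formula \eqref{eq-pi-formula-vnsed} and the $\ell^{q/2}$ summation over the lattice (as in the as-yet-unproven but citable Theorem \ref{thm-1.4BT4-mhd}, whose vNSEd analogue I would state or invoke), one gets that the $\ell^{q/2}$ local energy $\|(v,{\bf F})\|_{{\bf LE}_{q}}$ on $(0,T)$ is controlled by $\|(v_0,{\bf F}_0)\|_{E^2_q}^2$ on a time interval $T$ that grows without bound.

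Next, the key decay mechanism: because $q\le 3$, the $\ell^{q/2}$-summability forces the \emph{uniform-local} energy to decay in time. Concretely, I would show that for any $\e>0$ there is $t_0=t_0(\e)$ such that for all $t\ge t_0$ and all $x_0$,
\[
\int_{B_1(x_0)}\bke{|v(x,t)|^2+|{\bf F}(x,t)|^2}\,dx
+\int_{t}^{t+1}\int_{B_1(x_0)}\bke{|\nb v|^2+|\nb{\bf F}|^2}\,dx\,ds<\e.
\]
This follows from the growth-rate bound: on the ball $B_R(Rk)$ the energy at time $\sim R^2$ is bounded after $\ell^{q/2}$-summation by a fixed constant, so each individual ball-energy tends to $0$ as the ball index $\to\infty$, and rescaling/translating turns this spatial decay at a late time into a genuine smallness statement uniform in $x_0$. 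The only wrinkle relative to Navier--Stokes is bookkeeping the extra tensor components $f_1,f_2,f_3$ and the cross term $(v\cdot f_n)(f_n\cdot\nb\phi)$ in \eqref{lei_mhd}; these are handled by Young/Cauchy--Schwarz exactly as the $b\cdot\nb b$ coupling in MHD, with no new structure.

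With smallness of the local energy at a large time $t_1$ in hand, the final step is an $\e$-regularity criterion of Caffarelli--Kohn--Nirenberg type for \eqref{vNSEd}. Here I would invoke the partial-regularity theory for the viscoelastic system with damping (Hynd \cite{Hynd-SIMA2013}, refined in \cite{YPW-ACV2025}): there is a universal $\e_*>0$ such that if a suitable weak solution satisfies a scaled local energy plus pressure quantity $<\e_*$ on a parabolic cylinder $Q_r(x,t)$, then $(v,{\bf F})$ is bounded (indeed smooth in $v$, and $C^\alpha$ in ${\bf F}$, enough to be ``regular'') on $Q_{r/2}(x,t)$. The pressure contribution is controlled by \eqref{eq-pi-formula-vnsed} together with the $L^{3/2}$ bound on $\pi$ and the decayed local energy. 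Applying this at every point $(x,t)$ with $t\ge t_1+1$ and a fixed small radius yields regularity in $\{t\ge t_1\}$ (relabeling $t_1$); and tracking constants in the $\e$-regularity estimate gives the quantitative bound $\|(v,{\bf F})(\cdot,t)\|_{L^\infty\times{\bf L}^\infty}\lec t^{1/2}$ for large $t$, since the natural parabolic radius one can afford at time $t$ scales like $t^{1/2}$ and the sup-norm scales inversely to the radius.

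The main obstacle is the $\e$-regularity / partial regularity input for \eqref{vNSEd}: one must be sure the CKN-type criterion is available in exactly the form needed (scaled $L^3$ energy of $(v,{\bf F})$ plus $L^{3/2}$ pressure smallness $\Rightarrow$ local boundedness), with the deformation tensor ${\bf F}$ only solving a transport--diffusion equation without its own pressure. This is essentially \cite{Hynd-SIMA2013, YPW-ACV2025}, but I would need to restate it in the local-energy-solution language of Definition above (in particular checking that membership in $\mathcal N_{\rm vNSEd}$ supplies enough to be a ``suitable weak solution'' in their sense, via the local energy inequality \eqref{lei_mhd} and the pressure decomposition \eqref{eq-pi-formula-vnsed}). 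The decay-of-local-energy step, while the conceptual heart, is then a routine adaptation of the Navier--Stokes/MHD arguments once Theorem \ref{thm-1.4BT4-mhd}'s vNSEd counterpart is in place.
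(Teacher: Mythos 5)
Your overall architecture (local energy a priori bounds at spatial scale $R$ over times $\sim R^2$, followed by a CKN-type $\ep$-regularity criterion for \eqref{vNSEd}, for which Hynd's \cite[Proposition 3.2]{Hynd-SIMA2013} is indeed the correct input, exactly as the paper uses \cite[Theorem 3.1]{MNS-JMS2007} for MHD) matches the paper's strategy for the MHD analogue. However, your key intermediate step --- the claimed uniform-in-$x_0$ decay of the \emph{unit-scale} local energy $\int_{B_1(x_0)}(|v|^2+|{\bf F}|^2)\,dx<\e$ for $t\ge t_0(\e)$ --- has a genuine gap, both in the statement and in the justification offered. The $\ell^{q/2}$-summability of the ball energies at a fixed scale tells you that the energy on $B_R(Rk)$ tends to $0$ as $|k|\to\infty$, but this says nothing about the balls near the origin, and no "rescaling/translating" converts spatial decay at infinity into smallness uniform in $x_0$ at late times. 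Moreover, unit-scale energy decay is not what the $\ep$-regularity criterion needs, and proving it before regularity is essentially circular (it is only a posteriori a consequence of the $t^{-1/2}$ sup-norm bound).

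The correct mechanism, which is what the paper runs for MHD, is parabolic scaling rather than temporal decay: the relevant quantity is $N_R=\sup_{x_0}\frac1R\int_{B_R(x_0)}(\cdot)\,dx$, with the division by $R$ doing all the work. One first shows $N^0_R(v_0,{\bf F}_0)\to0$ as $R\to\infty$ --- this is a statement about the \emph{initial data} only: for $2\le q\le3$ it is the analogue of Lemma \ref{lem2.2-BT-SIMA2021} (splitting the $\ell^q$ coefficient sequence into a finitely supported piece and a small tail), and for $1\le q<2$ it is immediate from $E^2_q\subset L^2$. One then propagates this to the solution via the Gr\"onwall-type a priori bound of Lemma \ref{lem2.1-BT-CPDE2020} (the $L^2_\uloc$-level estimate; the $\ell^{q/2}$-summed growth-rate theorem you invoke is not needed here), whose consequence \eqref{eq2.2-BT-CPDE2020} bounds the scaled $L^3$ spacetime quantity on $B_R\times(0,\si R^2)$ by $C(N^0_R)^{3/2}$. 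This is exactly the smallness the $\ep$-regularity criterion requires on cylinders of radius $\sim R$ at times $\sim R^2$, yielding regularity for $t\ge \frac{3c_0R_0^2}{4}$ together with $|v|+|{\bf F}|\lec (N^0_R)^{1/2}t^{-1/2}$. (Your heuristic for the final sup-norm bound gives $t^{-1/2}$, which is what the underlying Theorem \ref{thm1.2-BT-CPDE2020} actually delivers; the exponent in the theorem statement appears to be a sign typo.) With your step 2 replaced by this $N_R\to0$ argument, the rest of your plan goes through.
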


Define the $\ell^q$ \emph{local energy}
\EQN{
\norm{(v,{\bf F})}_{{\bf LE}_q(0,T)}
:= \norm{(v,{\bf F})}_{E^{\infty,2}_{T,q}\times {\bf E}^{\infty,2}_{T,q}} + \norm{(\nb v,\nb {\bf F})}_{E^{2,2}_{T,q}\times {\bf E}^{2,2}_{T,q}}.
}

\begin{theorem}[Explicit growth rate in $E^2_q$ (vNSEd)]\label{thm-1.4BT4-vnsed}
Assume $v_0,(f_1)_0, (f_2)_0, (f_3)_0\in E^2_q$, where $1\le q<\infty$, are divergence free, and $(v,{\bf F})\in\mathcal N_{\rm vNSEd}(v_0,{\bf F}_0)$, ${\bf F}_0 = [(f_1)_0, (f_2)_0, (f_3)_0]$, satisfies, for some $T_2>0$,
\[
\norm{(v,{\bf F})}_{{\bf LE}_q(0,T_1)}<\infty,\quad \forall T_1\in(0,T_2).
\]
Then, for any $R\ge 1$, with $T=\min\bke{\la_1(1+\norm{(v_0,{\bf F}_0)}_{E^2_q\times {\bf E}^2_q})^{-4}R^2, T_2}$, we have
\EQN{
\bigg\|\esssup_{0\leq t \leq T} \int_{B_R(Rk) } \bke{|v|^2 + |{\bf F}|^2} dx+ \int_0^T\int_{B_R(Rk) } \bke{|\nabla v|^2 + |\nabla {\bf F}|^2} dx\,dt \bigg\| _{\ell^{q/2}(k\in \ZZ^3)}
\leq  C \norm{(v_0,{\bf F}_0)}_{E^2_q\times {\bf E}^2_q}^2,
}
for positive constants $\la_1$ and $C$ independent of $(v_0,{\bf F}_0)$ and $R$.
In particular, if $T_2=\infty$ then $T\to\infty$ as $R\to\infty$.
\end{theorem}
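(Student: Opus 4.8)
The plan is to adapt the proof of Theorem~\ref{thm-1.4BT4-mhd} (the MHD explicit-growth-rate theorem) to the \eqref{vNSEd} system, since the two systems differ only in the algebraic structure of the nonlinear coupling: in \eqref{MHD} the magnetic field $b$ appears, while in \eqref{vNSEd} we have the columns $f_1,f_2,f_3$ of $\mathbf F$ with the forcing term $\sum_{n=1}^3 f_n\cdot\nb f_n$ in the velocity equation and $f_m\cdot\nb v$ in each $f_m$ equation. The energy structure is the same: testing $\eqref{vNSEd}_1$ with $v\phi$ and $\eqref{vNSEd}_2$ with $f_m\phi$, summing over $m$, and using $\nb\cdot v=\nb\cdot f_m=0$, the cross-terms $\int (f_n\cdot\nb f_n)\cdot v\,\phi$ and $\sum_m\int(f_m\cdot\nb v)\cdot f_m\,\phi$ combine (after integration by parts, moving the derivative onto $\phi$) to produce exactly the term $-2\sum_n\int(v\cdot f_n)(f_n\cdot\nb\phi)$ in the local energy inequality~\eqref{lei_mhd}, with no uncontrolled leftover. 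This is the analogue of the MHD cancellation $-b\cdot\nb b\cdot v - (v\cdot\nb b - b\cdot\nb v)\cdot b \rightsquigarrow -2(v\cdot b)(b\cdot\nb\phi)$. So the a priori estimates propagate through verbatim with $|b|^2$ replaced by $|\mathbf F|^2=\sum_n|f_n|^2$ and $v\cdot b$ replaced by $v\cdot f_n$ summed.

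First I would set up the localized energy functional on balls $B_R(Rk)$, using the local energy inequality~\eqref{lei_mhd} with a cutoff $\phi$ adapted to $B_R(Rk)$ in space and a suitable temporal weight on $[0,T]$. The pressure term is handled via the decomposition~\eqref{eq-pi-formula-vnsed}: split $\pi$ into a local Calder\'on--Zygmund piece $-\De^{-1}\div\div[(v\otimes v-\sum_n f_n\otimes f_n)\chi_{4R}]$, controlled in $L^{3/2}(B_{2R})$ by $\|(v,\mathbf F)\|_{L^3(B_{4R})}^2$, a far-field piece whose kernel difference $K(x-y)-K(x_0-y)$ gains decay $|x-x_0|/|x_0-y|^4$ and is summable against the $\ell^{q/2}$ structure, and the harmless $c_{x_0,R}(t)$ which drops out against $v\cdot\nb\phi$ since $\int v\cdot\nb\phi=0$ when $\phi$ is constant in space — more precisely one only needs $c_{x_0,R}\in L^{3/2}(0,T')$ so that its contribution is absorbed. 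Then I would run the Gronwall/bootstrap argument exactly as in the MHD case: the quadratic nonlinear and pressure terms are absorbed using $\int_{B_R}|(v,\mathbf F)|^3 \lesssim (\sup_t\int_{B_R}|(v,\mathbf F)|^2)^{1/2}\cdot(\text{energy}+\text{gradient})$ via Gagliardo--Nirenberg on each ball, summed in $\ell^{q/2}$ using the discrete H\"older inequality~\eqref{Holder}, yielding a closed inequality for $\|\,\cdot\,\|_{\ell^{q/2}}$ of the localized energy. The time $T=\min(\la_1(1+\|(v_0,\mathbf F_0)\|_{E^2_q\times\mathbf E^2_q})^{-4}R^2, T_2)$ is chosen precisely so the smallness needed to close the bootstrap holds on $[0,T]$, and the bound by $C\|(v_0,\mathbf F_0)\|_{E^2_q\times\mathbf E^2_q}^2$ follows; finiteness of $\|(v,\mathbf F)\|_{\mathbf{LE}_q(0,T_1)}$ for $T_1<T_2$ is used to justify that the localized quantities are finite so the inequality can be iterated rather than merely formal.

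The main obstacle I anticipate is bookkeeping the extra sum over $m$ in the $\ell^{q/2}$ estimates — one must verify that replacing the single magnetic field $b$ by three coupled fields $f_1,f_2,f_3$ does not destroy any of the H\"older or interpolation bounds, which it does not because all estimates are quadratic and $|\mathbf F|^2$ behaves exactly like $|b|^2$; the new trilinear term $\sum_n(v\cdot f_n)(f_n\cdot\nb\phi)$ is estimated by $\|v\|_{L^3}\|\mathbf F\|_{L^3}^2\|\nb\phi\|_\infty$ pointwise on each ball, identical in form to the MHD term $(v\cdot b)(b\cdot\nb\phi)$. A secondary technical point is confirming that the Oseen-tensor / Helmholtz-projection structure plays no role here (it enters only the mild-solution statements, not the local energy estimates), so the weak-solution machinery of \cite{BT-SIMA2021} transfers without modification. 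Given these observations, the proof of Theorem~\ref{thm-1.4BT4-vnsed} reduces to citing the corresponding steps in the proof of Theorem~\ref{thm-1.4BT4-mhd} with the dictionary $b \leftrightarrow \mathbf F$, $|b|^2\leftrightarrow\sum_n|f_n|^2$, $(v\cdot b)b\leftrightarrow\sum_n(v\cdot f_n)f_n$, and I would present it in that condensed form, highlighting only the point where the nonlinear cancellation is checked.
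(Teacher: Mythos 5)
Your proposal is correct and takes essentially the same route as the paper, which itself omits the proof of Theorem \ref{thm-1.4BT4-vnsed} and instructs the reader to transcribe the MHD argument (the a priori bound of Lemma \ref{lem3.1-BT-SIMA2021} combined with the estimate $N^0_{q,R}\lec R^{2-6/q}\norm{(v_0,{\bf F}_0)}^2_{E^2_q\times {\bf E}^2_q}$ to extract the time scale $\la_R R^2$). Your verification of the nonlinear cancellation $-\sum_n\int(f_n\cdot\nb f_n)\cdot v\phi-\sum_m\int(f_m\cdot\nb v)\cdot f_m\phi\rightsquigarrow -2\sum_n\int(v\cdot f_n)(f_n\cdot\nb\phi)$ is exactly the one point that needs checking, and it is correct.
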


\begin{theorem}[Existence in $E^2_q$ (vNSEd)]\label{th4.8-vnsed}
Assume $v_0,(f_1)_0, (f_2)_0, (f_3)_0\in E^2_q$, where $1\le q<\infty$, are divergence free. Then, there exists a time-global local energy solution $(v,{\bf F})$ and associated pressure $\pi$ to \eqref{vNSEd} in $\R^3$  with initial data $v_0,  {\bf F}_0$, ${\bf F}_0 = [(f_1)_0, (f_2)_0, (f_3)_0]$, so that, for any $0<T<\infty$,
\EQN{
\norm{(v,{\bf F})}_{{\bf LE}_q(0,T)}<\infty.
}
In particular, $(v,{\bf F})\in L^\infty(0,T; E^2_q\times {\bf E}^2_q)$.
\end{theorem}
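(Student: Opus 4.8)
The plan is to mirror the construction of Theorem \ref{th4.8-mhd}, splitting into the two cases $q\ge 2$ and $1\le q<2$, and at each stage replace the single magnetic field $b$ by the triple $(f_1,f_2,f_3)$, accounting for the structural differences between the MHD coupling $-b\cdot\nb b$, $-b\cdot\nb v$ and the viscoelastic coupling $-\sum_{n} f_n\cdot\nb f_n$, $-f_m\cdot\nb v$. First I would set up a family of approximate solutions: regularize the initial data $(v_0,{\bf F}_0)\in E^2_q\times {\bf E}^2_q$ by mollification and spatial truncation to obtain $(v_0^{(k)},{\bf F}_0^{(k)})$ that are smooth, divergence free, and bounded (so they lie in $L^2_\uloc$ with a uniform bound), and for each $k$ solve \eqref{vNSEd} — either via a retarded-mollifier scheme or, since \eqref{vNSEd} is a semilinear parabolic system once the pressure is eliminated by $\mathbb P$, via the mild-solution theory already established in Theorems \ref{thrm.subcritical-vnsed}--\ref{thrm:critical2-vnsed} together with a bootstrap — to get a smooth local energy solution $(v^{(k)},{\bf F}^{(k)})$ on a maximal interval.

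The core of the argument is a uniform-in-$k$ a priori bound in ${\bf LE}_q$ on a time interval independent of $k$. Here I would test $\eqref{vNSEd}_1$ against $v\phi_R^2$ and each $\eqref{vNSEd}_2$ against $f_m\phi_R^2$ (with $\phi_R$ a cutoff adapted to $B_R(x_0)$), sum over $m$, and use the local energy inequality \eqref{lei_mhd} for \eqref{vNSEd}. The nonlinear terms involving $v$ and the pressure are handled exactly as in the Navier--Stokes/MHD case using the pressure decomposition \eqref{eq-pi-formula-vnsed}; the genuinely new terms are the cross terms $\int (v\cdot\nb f_n)f_n\phi^2$ and $\int (f_n\cdot\nb v)f_n\phi^2$ arising from the $f$-transport and stretching. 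Integrating by parts and using $\nb\cdot v=\nb\cdot f_n=0$, the transport term $\int v\cdot\nb(\tfrac12|f_n|^2)\phi^2$ moves the derivative onto $\phi^2$ (no derivative loss), while the two stretching-type terms $-\sum_n\int f_n\cdot\nb f_n\cdot v\phi^2$ from the $v$-equation and $\sum_n\int f_n\cdot\nb v\cdot f_n\phi^2$ from the $f$-equations cancel up to a term $\sum_n \int (f_n\otimes f_n):(v\otimes\nb(\phi^2))$ — i.e.\ after summation the dangerous top-order couplings cancel by the same antisymmetry that gives the global energy identity for \eqref{vNSEd}, leaving only lower-order terms bounded by $\|(v,{\bf F})\|^2_{L^2(B_{cR})}\|v\|_{\cdots}$ which are absorbed by Young's inequality and the dissipation. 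This yields, on the local energy level, a closed differential inequality of exactly the same shape as for MHD, so the local-in-time bound $T=\la_1(1+\|(v_0,{\bf F}_0)\|_{E^2_q\times{\bf E}^2_q})^{-4}R^2$-type estimate of Theorem \ref{thm-1.4BT4-vnsed} applies, and iterating in time (restarting from $t=T/2$, etc., using $L^\infty_tE^2_q$ control of the solution at each restart as new "initial data") propagates the bound to any finite $T$.

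With the uniform ${\bf LE}_q$ bound in hand (hence uniform bounds in $E^{\infty,2}_{T,q}$ for the fields and $E^{2,2}_{T,q}$ for the gradients, and, via the pressure formula and Calder\'on--Zygmund estimates, a uniform $L^{3/2}_\loc$ bound on $\pi^{(k)}$), I would extract a subsequence converging weakly-$*$ in these spaces and, using the equation to bound $\pd_t v^{(k)},\pd_t f_m^{(k)}$ in a negative-order local space, strongly in $L^3_\loc$ or $L^2_\loc$ by Aubin--Lions on bounded cylinders; a diagonal argument over an exhaustion of $\R^3\times(0,\infty)$ gives a limit $(v,{\bf F},\pi)$. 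Passing to the limit in the weak formulation (the quadratic terms $v\otimes v$, $f_n\otimes f_n$, $v\otimes f_m$, $f_m\otimes v$ converge by strong $L^2_\loc$ convergence) and in the pressure formula is routine; lower semicontinuity of norms under weak convergence gives \eqref{eq-LEq-global-mhd} and hence $(v,{\bf F})\in L^\infty(0,T;E^2_q\times{\bf E}^2_q)$, and the local energy inequality \eqref{lei_mhd} survives by weak lower semicontinuity of the dissipation against the strongly convergent right-hand side. One must also verify the strong initial trace (item 4) and the weak-continuity item 6 for the limit — this follows as in \cite{BT-SIMA2021} from the uniform energy bound plus equicontinuity of $t\mapsto\int (v^{(k)},{\bf F}^{(k)})\cdot w$.

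The main obstacle I anticipate is the uniform local energy estimate: unlike the Navier--Stokes case, one must simultaneously control $v$ and all three $f_n$ and make sure the stretching terms $\int f_n\cdot\nb v\cdot f_n\,\phi^2$ do not destroy the estimate. The point is that these are exactly compensated by the $\sum_n f_n\cdot\nb f_n$ forcing in the $v$-equation — this is the same cancellation underlying the energy structure of \eqref{vNSE} noted after \eqref{vNSEd0} — but one has to be careful that with the localizing weight $\phi_R^2$ the cancellation is only up to harmless commutator terms supported where $\nb\phi_R\ne 0$; establishing this cleanly, uniformly in $R$ and $k$, is the delicate computation. The case $1\le q<2$ additionally requires, as in Section \ref{sec-1<=q<2} for MHD, summing the local estimates in $\ell^{q/2}(k\in\ZZ^3)$ rather than taking a supremum, which forces one to track how the pressure nonlocal term in \eqref{eq-pi-formula-vnsed} couples different lattice cells; this is handled by the same weighted-sum / Young's-inequality-for-series argument as in the MHD proof, now with the extra $f_n\otimes f_n$ terms, which obey the same bounds as $b\otimes b$.
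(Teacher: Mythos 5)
Your proposal captures the right ingredients for the local-in-time a priori estimate (the antisymmetric cancellation between $-\sum_n f_n\cdot\nb f_n$ in the $v$-equation and $-f_m\cdot\nb v$ in the $f_m$-equations is exactly what lets the local energy inequality close, leaving only the commutator term $\sum_n\iint(v\cdot f_n)(f_n\cdot\nb\phi)$ already present in the paper's definition and in Theorem \ref{thm-1.4BT4-vnsed}), and your compactness scheme is essentially how the paper handles $1\le q<2$, via the localized and regularized system \eqref{eq-4.10-BLT-MathAnn2024} with $\mathcal{J}_\ep$ and $\Phi_\ep$, for which global solvability and uniqueness of the approximations are elementary. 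But for $q\ge 2$ there is a genuine gap at the global-in-time step. Since $E^2_q\not\subset L^2$ for $q\ge2$, your approximations have no global energy identity unless you truncate the data to $L^2$, and the local a priori bound only controls the solution up to a time determined by $\norm{(v_0,{\bf F}_0)}_{E^2_q\times{\bf E}^2_q}$. Your plan to ``restart from $t=T/2$ using the $L^\infty_tE^2_q$ control as new initial data'' does not close: restarting produces a possibly different local energy solution on the new interval, and without a uniqueness statement you cannot glue it to the original one to obtain a single global solution; local energy solutions are not known to be unique. Likewise, producing the approximations from the subcritical mild theory only yields solutions on a maximal interval that may be finite for large data, so the approximating sequence itself need not be global.

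The paper resolves this with a Calder\'on-type splitting that your outline does not contain. By the far-field regularity Lemma \ref{lem-solution-in-E4q}, the solution at a restart time $t_0$ lies in $E^4_q\times{\bf E}^4_q$; one writes $(v,{\bf F})(t_0)$ as a sum of a piece that is small in $E^4_\infty$ (solved by small-data mild theory with uniqueness, Lemma \ref{lem-A.1-BT-SIMA2021}) and a piece in $L^2$ obtained via the Bogovskii map (solved by energy methods for a \emph{perturbed} vNSEd system in which the small smooth solution enters as a drift, Lemmas \ref{lem-4.1-BT-SIMA2021}--\ref{lem-5.1-BT-SIMA2021}); the two evolutions are then recombined and glued to the original solution using the local uniqueness in $E^3$ of Corollary \ref{cor1.8-BT-CPDE2020}, and this step is iterated. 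The perturbed system, the smallness decomposition, and the $E^3$ uniqueness mechanism are the actual content of the proof for $q\ge2$, and without them the iteration you describe does not produce a single time-global local energy solution.
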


The remainder of the paper is organized as follows: In Section \ref{sec-mild}, we construct mild solutions in critical and subcritical spaces and prove Theorems \ref{thrm.subcritical-mhd}, \ref{thrm:critical-mhd}, \ref{thrm:critical2-mhd}, \ref{thrm.subcritical-vnsed}, \ref{thrm:critical-vnsed}, and \ref{thrm:critical2-vnsed}.
In Section \ref{sec-weak}, we examine properties of local energy solutions, including uniqueness and regularity, establish a priori bounds and explicit growth rate, 
and prove the global existence results: Theorems \ref{bt4-thm-1.3-mhd}, \ref{thm-1.4BT4-mhd}, \ref{th4.8-mhd}, \ref{bt4-thm-1.3-vnsed}, \ref{thm-1.4BT4-vnsed}, and \ref{th4.8-vnsed}.

\section{Construction of mild solutions in Wiener amalgam spaces}\label{sec-mild}

This section is dedicated to proving Theorems \ref{thrm.subcritical-mhd}, \ref{thrm:critical-mhd}, \ref{thrm:critical2-mhd}, \ref{thrm.subcritical-vnsed}, \ref{thrm:critical-vnsed}, and \ref{thrm:critical2-vnsed}.
The proof techniques closely follow those outlined in \cite[Section 3]{BLT-MathAnn2024}.
Specifically, we apply the Picard iteration scheme as in \cite[Section 3]{BLT-MathAnn2024} to the problems \eqref{eq-mild-mhd} and \eqref{eq-mild-vnsed} to construct mild solutions for both the MHD equations \eqref{MHD} and the viscoelastic Navier--Stokes equations with damping \eqref{vNSEd}, respectively.
Since the structure of \eqref{vNSEd} is analogous to that of  \eqref{MHD}, we prove  Theorems \ref{thrm.subcritical-mhd}, \ref{thrm:critical-mhd}, and \ref{thrm:critical2-mhd} for \eqref{MHD}. 
The proofs of Theorems \ref{thrm.subcritical-vnsed}, \ref{thrm:critical-vnsed}, and \ref{thrm:critical2-vnsed} for \eqref{vNSEd} are omitted for brevity.

\subsection{Mild solutions in subcritical spaces: Proof of Theorem \ref{thrm.subcritical-mhd}}\label{sec-subcritical}

The proof of Theorem \ref{thrm.subcritical-mhd} is an adaption of the proof of \cite[Theorem 1.1]{BLT-MathAnn2024} for the Navier--Stokes equations to the MHD equations.

Define \[\norm{(v,b)}_{\mathcal{E}_T} = \sup_{0\le t\le T} \norm{(v,b)(t)}_{E^r_q\times E^r_q}.\]
By \cite[Lemma 2.1]{BLT-MathAnn2024}, we have the linear estimate
\[
\norm{(e^{t\De}v_0, e^{t\De}b_0)}_{E^r_q\times E^r_q} \lec \norm{(v_0,b_0)}_{E^r_q\times E^r_q}.
\]
So,
\EQ{\label{eq-linear_mE_T}
\norm{(e^{t\De}v_0, e^{t\De}b_0)}_{\mathcal{E}_T} \le C_1 \norm{(v_0,b_0)}_{E^r_q\times E^r_q}. 
}
For bilinear estimate, again by \cite[Lemma 2.1]{BLT-MathAnn2024}, we estimate:
\EQN{
\| B_1&((v,b), (u,a))  (t)\|_{E^r_q} \\
&\lec \int_0^t \bigg(\frac 1 {(t-s)^{\frac 1 2+\frac3{2r}}   }  +\frac 1 {(t-s)^{\frac 1 2 }   }\bigg)
 \bke{\| (v\otimes u)(s)\|_{E^{r/2}_q} + \| (b\otimes a)(s)\|_{E^{r/2}_q}} ds\\
&\lec \bke{ t^{\frac12 - \frac3{2r}} +  t^{\frac12} } \bke{\sup_{0<\tau<t} \| v(\tau)\|_{E^r_q}
\sup_{0<\tau<t} \|u(\tau)\|_{E^r_\infty} + \sup_{0<\tau<t} \| b(\tau)\|_{E^r_q}
\sup_{0<\tau<t} \|a(\tau)\|_{E^r_\infty} }\\
&\lec \bke{ t^{\frac12 - \frac3{2r}} +  t^{\frac12} } \bke{\sup_{0<\tau<t} \| v(\tau)\|_{E^r_q}
\sup_{0<\tau<t} \|u(\tau)\|_{E^r_q} + \sup_{0<\tau<t} \| b(\tau)\|_{E^r_q}
\sup_{0<\tau<t} \|a(\tau)\|_{E^r_q} }\\
&\lec \bke{ t^{\frac12 - \frac3{2r}} +  t^{\frac12} } \| (v,b)\|_{\mathcal E_T}\|(u,a)\|_{\mathcal E_T},
}
where we used the embedding $E^r_q\subset E^r_\infty$.
Similarly, 
\EQN{
\| B_2&((v,b), (u,a))  (t)\|_{E^r_q} \\
&\lec \int_0^t \bigg(\frac 1 {(t-s)^{\frac 1 2+\frac3{2r}}   }  +\frac 1 {(t-s)^{\frac 1 2 }   }\bigg)
 \bke{\| (v\otimes a)(s)\|_{E^{r/2}_q} + \| (b\otimes u)(s)\|_{E^{r/2}_q}} ds\\
&\lec \bke{ t^{\frac12 - \frac3{2r}} +  t^{\frac12} } \bke{\sup_{0<\tau<t} \| v(\tau)\|_{E^r_q}
\sup_{0<\tau<t} \|a(\tau)\|_{E^r_\infty} + \sup_{0<\tau<t} \| b(\tau)\|_{E^r_q}
\sup_{0<\tau<t} \|u(\tau)\|_{E^r_\infty} }\\
&\lec \bke{ t^{\frac12 - \frac3{2r}} +  t^{\frac12} } \bke{\sup_{0<\tau<t} \| v(\tau)\|_{E^r_q}
\sup_{0<\tau<t} \|a(\tau)\|_{E^r_q} + \sup_{0<\tau<t} \| b(\tau)\|_{E^r_q}
\sup_{0<\tau<t} \|u(\tau)\|_{E^r_q} }\\
&\lec \bke{ t^{\frac12 - \frac3{2r}} +  t^{\frac12} } \| (v,b)\|_{\mathcal E_T}\|(u,a)\|_{\mathcal E_T}.
}
Thus, the full bilinear estimate becomes 
\EQ{\label{eq-bilinear_mE_T}
\norm{B((v,b), (u,a))}_{\mathcal E_T} \le C_2 \bke{ T^{\frac12 - \frac3{2r}} +  T^{\frac12} } \| (v,b)\|_{\mathcal E_T}\|(u,a)\|_{\mathcal E_T}.
}
We look for a solution of the form 
\[
(v,b) = (e^{t\De}v_0,e^{t\De}b_0) - B((v,b),(v,b)).
\]
Suppose $T$ is small enough so that $  \norm{(v_0,b_0)}_{E^r_q\times E^r_q} < (8C_1C_2(   T^{1/2 - 3/(2r)} +  T^{1/2}	 ))^{-1}$. 
Then by the Picard contraction principle, there exists a unique strong mild solution satisfying 
\EQ{\label{eq-picard-uniqueclass}
\norm{(v,b)}_{\mathcal E_T} \le 2C_1 \norm{(v_0,b_0)}_{E^r_q\times E^r_q}.
}

To prove continuity at time zero, assume $r,q<\I$. 
Then
\EQN{
\| v(t) - v_0\|_{E^r_q} &\leq \| B_1(u,u)(t) \|_{E^r_q} + \| e^{t\Delta}v_0 - v_0\|_{E^r_q}\\
&\lec \bke{ t^{\frac12 - \frac3{2r}} +  t^{\frac12} } \bkt{ \sup_{0<\tau<t} \| v(\tau)\|_{E^r_q} \sup_{0<\tau<t} \| v(\tau) \|_{E^r_\infty} + \sup_{0<\tau<t} \| b(\tau)\|_{E^r_q} \sup_{0<\tau<t} \| b(\tau) \|_{E^r_\infty} }\\
&\qquad +\| e^{t\Delta}v_0 - v_0\|_{E^r_q}\\
&\le \bke{ t^{\frac12 - \frac3{2r}} +  t^{\frac12} } \bkt{ \sup_{0<\tau<t} \| v(\tau)\|_{E^r_q} \sup_{0<\tau<t} \| v(\tau) \|_{E^r_q} + \sup_{0<\tau<t} \| b(\tau)\|_{E^r_q} \sup_{0<\tau<t} \| b(\tau) \|_{E^r_q} }\\
&\qquad +\| e^{t\Delta}v_0 - v_0\|_{E^r_q}.
}
Both terms tend to zero as $t\to0^+$, the latter by \cite[Lemma 2.3]{BLT-MathAnn2024}.
Hence, $\| v(t) - v_0\|_{E^r_q} \to 0$ as $t\to 0^+$.
Similarly, 
\EQN{
\| b(t) - b_0\|_{E^r_q} 
&\lec \bke{ t^{\frac12 - \frac3{2r}} +  t^{\frac12} } \bkt{ \sup_{0<\tau<t} \| v(\tau)\|_{E^r_q} \sup_{0<\tau<t} \| b(\tau) \|_{E^r_q} + \sup_{0<\tau<t} \| v(\tau)\|_{E^r_q} \sup_{0<\tau<t} \| b(\tau) \|_{E^r_q} }\\
&\qquad +\| e^{t\Delta}b_0 - b_0\|_{E^r_q}
\to 0 \text{ as }t\to 0^+.
}
The continuity at $t\in (0,T)$ can be shown as usual, see e.g., \cite[lines 3-8, page 86]{Tsai-book}, including $r=\I$ or $q=\I$. 

If either $r=\I$ or $q=\I$, the semigroup terms no longer vanish, but the bilinear terms still tend to zero: 
\[
\bke{ t^{\frac12 - \frac3{2r}} +  t^{\frac12} }  \sup_{0<\tau<t} \| v(\tau)\|_{E^r_q}\sup_{0<\tau<t}  \| v(\tau) \|_{E^r_q}  \to 0,\ 
\bke{ t^{\frac12 - \frac3{2r}} +  t^{\frac12} }  \sup_{0<\tau<t} \| b(\tau)\|_{E^r_q}\sup_{0<\tau<t}  \| b(\tau) \|_{E^r_q}  \to 0,
\]
and 
\[
\bke{ t^{\frac12 - \frac3{2r}} +  t^{\frac12} }  \sup_{0<\tau<t} \| v(\tau)\|_{E^r_q}\sup_{0<\tau<t}  \| b(\tau) \|_{E^r_q}  \to 0.
\]
Hence,
\[
\| v (t)-e^{t\Delta}v_0 \|_{E^r_q} + \| b (t)-e^{t\Delta}b_0 \|_{E^r_q}\to 0\text{ as }t\to 0^+,
\]
as asserted in the theorem.

For uniqueness, let $(v,b), (v',b')\in L^\infty(0,T;E^r_q\times E^r_q) \cap C((0,T);E^r_q\times E^r_q)$ be two mild solutions with initial data $(v_0, b_0)\in E^r_q\times E^r_q$. 
Then, for $0<t<T'\le T$,
\EQN{
&\norm{(v - v')(t)}_{E^r_q} 
\le \norm{B_1((v-v',b-b'), (v,b))(t)}_{E^r_q} + \norm{B_1((v',b'), (v-v',b-b'))(t)}_{E^r_q} \\
&\quad\lec \bke{ t^{\frac12 - \frac3{2r}} +  t^{\frac12} }   \bke{\norm{(v,b)}_{\mathcal E_T}+\norm{(v',b')}_{\mathcal E_T}}
\bke{ \sup_{0<t<T'} \| (v - v')(t)\|_{E^r_q} + \sup_{0<t<T'} \| (b - b')(t)\|_{E^r_q}},
}
so that we have
\EQN{
\sup_{0<t<T'} \| (v - v')(t)\|_{E^r_q}
&\lec \bke{ T'^{\frac12 - \frac3{2r}} +  T'^{\frac12}} \bke{\norm{(v,b)}_{\mathcal E_T}+\norm{(v',b')}_{\mathcal E_T}}\\
&\qquad\cdot\sup_{0<t<T'} 
\bke{ \sup_{0<t<T'} \| (v - v')(t)\|_{E^r_q} + \sup_{0<t<T'} \| (b - b')(t)\|_{E^r_q}}.
}
Similarly,
\EQN{
\sup_{0<t<T'} \| (b - b')(t)\|_{E^r_q}
&\lec \bke{ T'^{\frac12 - \frac3{2r}} +  T'^{\frac12}} \bke{\norm{(v,b)}_{\mathcal E_T}+\norm{(v',b')}_{\mathcal E_T}}\\
&\qquad\cdot\sup_{0<t<T'} 
\bke{ \sup_{0<t<T'} \| (v - v')(t)\|_{E^r_q} + \sup_{0<t<T'} \| (b - b')(t)\|_{E^r_q}}.
}
Thus, for small enough $T'>0$, this implies $(v,b)=(v',b')$ on $(0,T')$, and repeating the argument yields uniqueness on $(0,T)$.

To obtain the spacetime integral bound, assume $3<r\le s\le \infty$, $r\le p<\infty$, 
$\frac2s + \frac3p = \frac3r$, and $1\le q=m \le\infty$.
Consider the Banach space
\[
X_T = \mathcal E_T \cap (E^{s,p}_{T,q}\times E^{s,p}_{T,q}).
\]
We can assume $m=q$ since $\norm{f}_{E^{s,p}_{T,m}}\le \norm{f}_{E^{s,p}_{T,q}}$ for $m\ge q$. %
From 
$\frac3p = \frac3r-\frac2s  \ge \frac3r-\frac2r$ we get $p \le 3r<\infty$.
For the linear term, by \eqref{eq-linear_mE_T} and \cite[Lemma 2.4]{BLT-MathAnn2024} (which needs $r<\infty$ and $r \le s$), we have for a fixed $\epsilon>0$ that
\EQN{
\norm{(e^{t\De} v_0, e^{t\De}b_0)}_{X_T} 
&= \norm{(e^{t\De} v_0, e^{t\De} b_0)}_{\mathcal E_T} + \norm{(e^{t\De} v_0,e^{t\De} b_0)}_{E^{s,p}_{T,q}\times E^{s,p}_{T,q}}\\
& \le C_3(1+T^{1/s+\epsilon}) \norm{(v_0,b_0)}_{E^r_q\times E^r_q}.
}
For the bilinear term, by \eqref{eq-bilinear_mE_T} and \cite[Lemma 2.7]{BLT-MathAnn2024}
with $\td p = p/2$ and $\td s = s/2$ so that $\si = \frac12 - \frac3{2r}>0$ due to $r>3$, (allowing $s=\infty$),
\EQN{
&\norm{B((v,b),(u,a))}_{X_T} = \norm{B((v,b),(u,a))}_{\mathcal E_T} + \norm{B_1((v,b),(u,a))}_{E^{s,p}_{T,q}} + \norm{B_2((v,b),(u,a))}_{E^{s,p}_{T,q}}\\
&\quad\le C_4 \left[ \bke{T^{\frac12-\frac3{2r}} + T^{\frac12}} \norm{(v,b)}_{\mathcal E_T} \norm{(u,a)}_{\mathcal E_T}\right.\\
&\qquad\left. + \bke{T^{\frac12 - \frac3{2 r}} + T^{1-\frac1s }} \bke{\norm{v\otimes u}_{E^{\frac{s}2,\frac{p}2}_{T,q}} + \norm{b\otimes a}_{E^{\frac{s}2,\frac{p}2}_{T,q}} + \norm{v\otimes a}_{E^{\frac{s}2,\frac{p}2}_{T,q}} + \norm{b\otimes u}_{E^{\frac{s}2,\frac{p}2}_{T,q}} } \right].
}
Since%
\[
\norm{f\otimes g}_{E^{\frac{s}2,\frac{p}2}_{T,q}} 
\le \norm{f}_{E^{s,p}_{T,q}} \norm{g}_{E^{s,p}_{T,\infty}}
\le \norm{f}_{E^{s,p}_{T,q}} \norm{g}_{E^{s,p}_{T,q}},
\]
we derive%
\[
\norm{B((v,b),(u,a))}_{X_T} \le 2C_4 \bke{T^{\frac12-\frac3{2r}} + T^{1 - \frac1s}} \norm{(v,b)}_{X_T} \norm{(u,a)}_{X_T}.
\]
Choose $T>0$ small enough such that $ \norm{(v_0,b_0)}_{E^r_q\times E^r_q} < [8C_3 C_4 (1 + T^{\frac1s+\epsilon})(T^{\frac12-\frac3{2r}} + T^{1 - \frac1s})]^{-1}$,  
the Picard iteration yields a unique strong mild solution $(\td v,\td b)\in X_T$. 
Since $X_T\subset L^\infty_T (E^r_q\times E^r_q)$, uniqueness implies $(\td v,\td b)$, and the solution satisfies the desired spacetime integral bound.
This completes the proof of Theorem \ref{thrm.subcritical-mhd}.
\qed

\subsection{Mild solutions in critical spaces with small data: Proof of Theorem \ref{thrm:critical-mhd}}\label{sec-critical-I}

The proof of Theorem \ref{thrm:critical-mhd} is an adaption of the proof of \cite[Theorem 1.2]{BLT-MathAnn2024} for the Navier--Stokes equations to the MHD equations.

Let
\[
\norm{(v,b)}_{\tilde{\mathcal{E}}_T} 
:= \sup_{0<t<T} \norm{(v,b)(\cdot,t)}_{E^3_q\times E^3_q}    + \sup_{0<t<T} t^{\frac12} \norm{(v,b)(\cdot,t)}_{E^\I_q\times E^\I_q}
\]
and
\[ 
\norm{(v,b)}_{\tilde{\mathcal{F}}_T} 
:= \sup_{0<t<T} t^{\frac14} \norm{(v,b)(\cdot,t)}_{E^6_q\times E^6_q}.
\]
The inclusion $\tilde{\mathcal{E}}_T \subset \tilde{\mathcal{F}}_T$ is obvious.
We define the spaces
\EQ{
\mathcal{E}_T := \tilde{\mathcal{E}}_T \cap E^{s,p}_{T,m} \qquad \text{ and } \qquad 
\mathcal{F}_T := \tilde{\mathcal{F}}_T \cap E^{s,p}_{T,m} ,
}
with corresponding norms
\[
\norm{(v,b)}_{\mathcal{E}_T} := \norm{(v,b)}_{\tilde{\mathcal{E}}_T} + \norm{(v,b)}_{E^{s,p}_{T,m}}\qquad \text{ and } \qquad 
\norm{(v,b)}_{\mathcal{F}_T} := \norm{(v,b)}_{\tilde{\mathcal{F}}_T} + \norm{(v,b)}_{E^{s,p}_{T,m}}.
\]
The inclusion $\mathcal{E}_T\subset\mathcal{F}_T$ follows immediately.

From \cite[(2.16)]{BLT-MathAnn2024}, and choosing $\epsilon >\frac 3{2m}-\frac 3 {2q}$ from the definition of $\beta$ in \cite[Lemma 2.4]{BLT-MathAnn2024}, we have 
\EQ{
\| (e^{t\Delta}v_0,e^{t\Delta}b_0)\|_{E^{s,p}_{T,m}\times E^{s,p}_{T,m}}
\lec (1+T^{\frac1 s +\epsilon}) \|(v_0, b_0)\|_{E^3_q\times E^3_q}.
}
Also,  by \cite[Lemma 2.1]{BLT-MathAnn2024}, 
\EQ{
t^{\frac {1}{4}}  \| (e^{t\Delta}v_0, e^{t\Delta}b_0)\|_{E^6_q\times E^6_q} 
\lec  (1+ T^{ \frac 1 4}) \|(v_0, b_0)\|_{E^3_q\times E^3_q}.
}
Combining, we obtain
\EQ{\label{3.21a}
\| (e^{t\Delta}v_0, e^{t\Delta}b_0)\|_{\mathcal F_T} \le C_1( 1 + T^{\frac1 s +\epsilon} +T^{\frac {1}4}   ) \| (v_0,b_0)\|_{E^3_q\times E^3_q}.
}

Using \cite[Lemma 2.7]{BLT-MathAnn2024} with $\tilde{s}=s/2$, $\td p = p/2$, and $\td m = m$ (so that $\si=0$), and applying H\"{o}lder's inequality, we estimate the bilinear terms:
\EQS{\label{eq-0321-a-B1}
\norm{B_1((v,b),(u,a))}_{E^{s,p}_{T,m}} 
&\le C(1+T^{1 - \frac 1s}) \bke{ \norm{v\otimes u}_{E^{\frac s 2,\frac p 2}_{T,m}} + \norm{b\otimes a}_{E^{\frac s 2,\frac p 2}_{T,m}} }\\
&\le C(1+T^{1 - \frac 1s}) \bke{ \norm{v}_{E^{s,p}_{T,m}} \norm{u}_{E^{s,p}_{T,\infty}} + \norm{b}_{E^{s,p}_{T,m}} \norm{a}_{E^{s,p}_{T,\infty}} }\\
&\le C(1+T^{1 - \frac 1s}) \bke{ \norm{v}_{E^{s,p}_{T,m}} \norm{u}_{E^{s,p}_{T,m}} + \norm{b}_{E^{s,p}_{T,m}} \norm{a}_{E^{s,p}_{T,m}} },
}
where we used the inclusion $E^{s,p}_{T,m} \subset E^{s,p}_{T,\infty}$ in the last inequality. 
Similarly, we have 
\EQS{\label{eq-0321-a-B2}
\norm{B_2((v,b),(u,a))}_{E^{s,p}_{T,m}} 
&\le C(1+T^{1 - \frac 1s}) \bke{ \norm{v\otimes a}_{E^{\frac s 2,\frac p 2}_{T,m}} + \norm{b\otimes u}_{E^{\frac s 2,\frac p 2}_{T,m}} }\\
&\le C(1+T^{1 - \frac 1s}) \bke{ \norm{v}_{E^{s,p}_{T,m}} \norm{a}_{E^{s,p}_{T,\infty}} + \norm{b}_{E^{s,p}_{T,m}} \norm{u}_{E^{s,p}_{T,\infty}} }\\
&\le C(1+T^{1 - \frac 1s}) \bke{ \norm{v}_{E^{s,p}_{T,m}} \norm{a}_{E^{s,p}_{T,m}} + \norm{b}_{E^{s,p}_{T,m}} \norm{u}_{E^{s,p}_{T,m}} }.
}
Hence,
\EQS{\label{eq-0321-a}
\norm{B((v,b),(u,a))}_{E^{s,p}_{T,m}\times E^{s,p}_{T,m}} 
&\le C(1+T^{1 - \frac 1s}) \norm{(v,b)}_{\mathcal{F}_T} \norm{(u,a)}_{\mathcal{F}_T}.
}
Additionally, applying \cite[Lemma 2.1]{BLT-MathAnn2024} and H\"older inequality \eqref{Holder}, we obtain
\EQN{ 
\norm{B_1((v,b),(u,a)}_{E^6_q}(t) 
&\lec \int_0^t \bke{\frac1{(t-\tau)^{\frac12}} + \frac1{(t-\tau)^{\frac34}}} \bke{ \norm{(v\otimes u)(\tau)}_{E^3_{q}} + \norm{(b\otimes a)(\tau)}_{E^3_{q}} } d\tau\\
&\le \int_0^t \bke{\frac1{(t-\tau)^{\frac12}} + \frac1{(t-\tau)^{\frac34}}} \bke{ \norm{v(\tau)}_{E^6_q} \norm{u(\tau)}_{E^6_q} + \norm{b(\tau)}_{E^6_q} \norm{a(\tau)}_{E^6_q} } d\tau\\
&\lec (1+t^{-1/4}) \norm{(v,b)}_{\tilde{\mathcal{F}}_T} \norm{(u,a)}_{\tilde{\mathcal{F}}_T}.
}
Similarly,
\EQN{ 
\norm{B_2((v,b),(u,a)}_{E^6_q}(t) 
&\lec \int_0^t \bke{\frac1{(t-\tau)^{\frac12}} + \frac1{(t-\tau)^{\frac34}}} \bke{ \norm{(v\otimes a)(\tau)}_{E^3_{q}} + \norm{(b\otimes u)(\tau)}_{E^3_{q}} } d\tau\\
&\le \int_0^t \bke{\frac1{(t-\tau)^{\frac12}} + \frac1{(t-\tau)^{\frac34}}} \bke{ \norm{v(\tau)}_{E^6_q} \norm{a(\tau)}_{E^6_q} + \norm{b(\tau)}_{E^6_q} \norm{u(\tau)}_{E^6_q} } d\tau\\
&\lec (1+t^{-1/4}) \norm{(v,b)}_{\tilde{\mathcal{F}}_T} \norm{(u,a)}_{\tilde{\mathcal{F}}_T}.
}
Hence
\[
\| B((v,b),(u,a))\|_{\mathcal F_T} \le C_2 (1+T^{\frac 1 4}+T^{1 - \frac 1s}  )  \|(v,b)\|_{\mathcal F_T} \|(u,a)\|_{\mathcal F_T}.
\]
Taking $\|(v_0,b_0)\|_{E^3_q\times E^3_q}$ small enough, by \eqref{3.21a}, it is possible to make 
\EQ{
\| (e^{t\Delta}v_0, e^{t\Delta}b_0)\|_{\mathcal F_T}  \leq \big[  4C_2 (   1+T^{1/4}+T^{1 - \frac 1s} ) 	  \big]^{-1}.
}
The Picard iteration yields a mild solution $(v,b)\in\mathcal{F}_T$ to \eqref{MHD} 
so that
\[
\| (v,b)\|_{\mathcal F_T} \leq  2\| (e^{t\Delta}v_0,e^{t\Delta}b_0) \|_{\mathcal F_T}.
\]
This solution is unique among all mild solutions $(u,a)$ with data $(v_0,b_0)$ satisfying $\|(u,a)\|_{\mathcal F_T}\le 2\| (e^{t\Delta}v_0, e^{t\Delta}b_0)\|_{\mathcal F_T}$.
In fact, since we can also apply the Picard contraction to $\td {\mathcal F}_T$, the solution is also unique in the class $\|(u,a)\|_{\td {\mathcal F}_T}\leq 2\| (e^{t\Delta}v_0, e^{t\Delta}b_0)\|_{\td {\mathcal F}_T}$.

Next, we show that a solution $(v,b)\in \mathcal{F}_T$ with small enough initial data $(v_0,b _0)\in E^3_q\times E^3_q$ also belongs to $\mathcal{E}_T$. Let $\{(v^{(n)}, b^{(n)})\}_{n\ge1}$ be the Picard iteration sequence in $\mathcal{F}_T$. 
By construction, 
\[
\norm{(v^{(n)}, b^{(n)})}_{ {\mathcal{F}}_T}  \le  2C_1 ( 1 + T^{\frac1 s +\epsilon} +T^{\frac {1}4}   ) \| (v_0, b_0)\|_{E^3_q\times E^3_q}< 1.
\]

Note that
\[
\norm{(v^{(n)}, b^{(n)})}_{\td{\mathcal{E}}_T} 
\le \norm{(e^{t\De} v_0, e^{t\De} b_0)}_{\td{\mathcal{E}}_T} + \norm{B((v^{(n-1)}, b^{(n-1)}), (v^{(n-1)}, b^{(n-1)}))}_{\td{\mathcal{E}}_T}.
\]
By \cite[Lemma 2.1]{BLT-MathAnn2024},
\[
\norm{(e^{t\De} v_0, e^{t\De} b_0)}_{\td{\mathcal{E}}_T} \le C(1+T^{1/2})  \| (v_0, b_0)\|_{E^3_q\times E^3_q}.
\]
As is usual in arguments like this, we now seek estimates for $B((v,b),(u,a))$ in $\td{\mathcal{E}}_T$ in terms of measurements of $(v,b)$ and $(u,a)$ in $\td{\mathcal{F}}_T$ and $\td{\mathcal{E}}_T$. 
We have by \cite[Lemma 2.1]{BLT-MathAnn2024} and H\"older inequality \eqref{Holder},\EQS{\label{ineq:E3qintegral-B1}
\| B_1((v,b),(u,a))\|_{E^3_q}&\lec \int_0^t \bigg(\frac 1 {(t-\tau)^{\frac 1 2  }}+\frac 1 {(t-\tau)^{\frac 3 4  }} \bigg) \bke{ \|(v\otimes u)(\tau)\|_{E^2_q} + \|(b\otimes a)(\tau)\|_{E^2_q} } d\tau\\
&\le \int_0^t \bigg(\frac 1 {(t-\tau)^{\frac 1 2  }}+\frac 1 {(t-\tau)^{\frac 3 4  }} \bigg)  \bke{\|v(\tau)\|_{E^3_q}\|u(\tau)\|_{E^6_q} + \|b(\tau)\|_{E^3_q}\|a(\tau)\|_{E^6_q}  }d\tau\\
&\lec 	(1+T^{1/4})			\|(v,b)\|_{\td{\mathcal{E}}_T}\|(u,a)\|_{\td{\mathcal{F}}_T} ,
}
and
\EQS{\label{ineq:E3qintegral-B2}
\| B_2((v,b),(u,a))\|_{E^3_q}&\lec \int_0^t \bigg(\frac 1 {(t-\tau)^{\frac 1 2  }}+\frac 1 {(t-\tau)^{\frac 3 4  }} \bigg) \bke{ \|(v\otimes a)(\tau)\|_{E^2_q} + \|(b\otimes u)(\tau)\|_{E^2_q} } d\tau\\
&\le \int_0^t \bigg(\frac 1 {(t-\tau)^{\frac 1 2  }}+\frac 1 {(t-\tau)^{\frac 3 4  }} \bigg)  \bke{\|v(\tau)\|_{E^3_q}\|a(\tau)\|_{E^6_q} + \|b(\tau)\|_{E^3_q}\|u(\tau)\|_{E^6_q}  }d\tau\\
&\lec 	(1+T^{1/4})			\|(v,b)\|_{\td{\mathcal{E}}_T}\|(u,a)\|_{\td{\mathcal{F}}_T} ,
}
which imply
\EQS{\label{ineq:E3qintegral}
\| B((v,b),(u,a))\|_{E^3_q\times E^3_q} \lec 	(1+T^{1/4})			\|(v,b)\|_{\td{\mathcal{E}}_T}\|(u,a)\|_{\td{\mathcal{F}}_T}.
}
Moreover, we have
\EQN{
t^{\frac 1 2}\| B_1((v,b),(u,a))\|_{E^\I_q} 
&\lec t^{\frac12}\int_0^t \bigg(\frac 1 {(t-\tau)^{\frac 1 2  }}+\frac 1 {(t-\tau)^{\frac 3 4  }} \bigg) \bke{ \| (v\otimes u)(\tau)\|_{E^6_q} + \| (b\otimes a)(\tau)\|_{E^6_q} } d\tau\\
&\le t^{\frac12} \int_0^t \bigg(\frac 1 {(t-\tau)^{\frac 1 2  }}+\frac 1 {(t-\tau)^{\frac 3 4  }} \bigg)  \bke{\|v(\tau)\|_{E^\infty_q}\|u(\tau)\|_{E^6_q} + \|b(\tau)\|_{E^\infty_q}\|a(\tau)\|_{E^6_q}}  d\tau\\
&\lec (1+T^{1/4}) \|(v,b)\|_{\td{\mathcal{E}}_T}\|(u,a)\|_{\td{\mathcal{F}}_T} ,
}
and
\EQN{
t^{\frac 1 2}\| B_2((v,b),(u,a))\|_{E^\I_q} 
&\lec t^{\frac12}\int_0^t \bigg(\frac 1 {(t-\tau)^{\frac 1 2  }}+\frac 1 {(t-\tau)^{\frac 3 4  }} \bigg) \bke{ \| (v\otimes a)(\tau)\|_{E^6_q} + \| (b\otimes u)(\tau)\|_{E^6_q} } d\tau\\
&\le t^{\frac12} \int_0^t \bigg(\frac 1 {(t-\tau)^{\frac 1 2  }}+\frac 1 {(t-\tau)^{\frac 3 4  }} \bigg)  \bke{\|v(\tau)\|_{E^\infty_q}\|a(\tau)\|_{E^6_q} + \|b(\tau)\|_{E^\infty_q}\|u(\tau)\|_{E^6_q}}  d\tau\\
&\lec (1+T^{1/4}) \|(v,b)\|_{\td{\mathcal{E}}_T}\|(u,a)\|_{\td{\mathcal{F}}_T},
}
which imply 
\EQN{
t^{\frac 1 2}\| B_2((v,b),(u,a))\|_{E^\I_q\times E^\I_q} 
\lec (1+T^{1/4}) \|(v,b)\|_{\td{\mathcal{E}}_T}\|(u,a)\|_{\td{\mathcal{F}}_T}.
}
By switching $(v,b),(u,a)$ in the estimates, %
\[
\| B((v,b),(u,a))\|_{\td{\mathcal E}_T} 
\lec (1 + T^{1/4}) \min \bke{ \| (v,b)\|_{\td{\mathcal E}_T}\|(u,a)\|_{\td{\mathcal F}_T},\, \| (u,a)\|_{\td{\mathcal E}_T}\|(v,b)\|_{\td{\mathcal F}_T}}. 
\]

We now return to our main objective: proving that the Picard iterates $\{v^{(n)},b^{(n)}\}$ are uniformly bounded in $\td{\mathcal{E}}_T$.
From the recursive relation, we have
\EQN{
\norm{(v^{(n)}, b^{(n)})}_{\td{\mathcal E}_T} 
&\le \norm{(e^{t\De} v_0, e^{t\De} b_0)}_{\td{\mathcal E}_T} + \norm{B((v^{(n-1)}, b^{(n-1)}), (v^{(n-1)}, b^{(n-1)}))}_{\td{\mathcal E}_T} \\
&\le C_T  \| (v_0,b_0)\|_{E^3_q\times E^3_q}  +  C_T' \norm{(v^{(n-1)}, b^{(n-1)})}_{\td{\mathcal F}_T} \norm{(v^{(n-1)}, b^{(n-1)})}_{\td{\mathcal E}_T}\\
&\le C_T  \| (v_0,b_0)\|_{E^3_q\times E^3_q}  +  C_T' C_T''  \| (v_0,b_0)\|_{E^3_q\times E^3_q} \norm{(v^{(n-1)}, b^{(n-1)})}_{\td{\mathcal E}_T}.
}
Thus, if $\| (v_0\times b_0)\|_{E^3_q\times E^3_q}  \le (2  C_T' C_T'')^{-1}$, then $\norm{(v^{(n)}, b^{(n)})}_{\td{\mathcal E}_T} 
$ is uniformly bounded by $2C_T  \| (v_0, b_0)\|_{E^3_q\times E^3_q} $.
To show that the limit $(v,b)\in \td{\mathcal E}_T$, consider the difference of successive iterates:
\EQS{\label{ineq:difference.picard.iterates}
&\norm{(v^{(n+1)}, b^{(n+1)}) - (v^{(n)}, b^{(n)})}_{\td{\mathcal E}_T} \\
&= \norm{B((v^{(n)}, b^{(n)}), (v^{(n)}, b^{(n)})) - B((v^{(n-1)}, b^{(n-1)}),(v^{(n-1)}, b^{(n-1)}))}_{\td{\mathcal E}_T} \\
&\le \norm{B((v^{(n)}, b^{(n)})-(v^{(n-1)},b^{(n-1)}),(v^{(n)}, b^{(n)}))}_{\td{\mathcal E}_T} + \norm{B((v^{(n-1)}, b^{(n-1)}),(v^{(n)}, b^{(n)})-(v^{(n-1)}, b^{(n-1)}))}_{\td{\mathcal E}_T}\\
&\lec \norm{(v^{(n)}, b^{(n)}) - (v^{(n-1)}, b^{(n-1)})}_{\td{\mathcal F}_T} \bke{\norm{(v^{(n)}, b^{(n)})}_{\td{\mathcal E}_T} + \norm{(v^{(n-1)}, b^{(n-1)})}_{\td{\mathcal E}_T} }
\\&\lec \norm{(v^{(n)}, b^{(n)}) - (v^{(n-1)}, b^{(n-1)})}_{\td{\mathcal E}_T} \bke{\norm{(v^{(n)}, b^{(n)})}_{\td{\mathcal E}_T} + \norm{(v^{(n-1)}, b^{(n-1)})}_{\td{\mathcal E}_T} }.
}
This shows the sequence $\{(v^{(n+1)}, b^{(n+1)})-(v^{(n)}, b^{(n)})\}$ is Cauchy in $\td{\mathcal E}_T$, and the limit of $\{(v^{(n)}, b^{(n)})\}$ lies in $\mathcal E_T$, since convergence in $\mathcal F_T$ implies convergence in the full $\mathcal E_T$-norm.

If $q \le s$, we use the embeddings 
\[
\norm{f}_{L^s_T E^p_m} \le \norm{f}_{L^s_T E^p_q} \le \norm{f}_{E^{s,p}_{T,q}},
\]
from \eqref{ineq:embedding.parabolic.space} to justify the $L^s_T E^p_m$-estimate in \eqref{eq1.7-mhd}.

Next, we prove convergence to the initial data when $q<\I$.
By \cite[Lemma 2.3]{BLT-MathAnn2024} we have
\EQ{\label{limit:zero.heat}
\lim_{T'\to 0^+} \sup_{0<t<T'}t^{\frac 1 4} \| (e^{t\Delta}v_0, e^{t\Delta}b_0)\|_{E^6_q\times E^6_q} = \lim_{T'\to 0} \| (e^{t\Delta}v_0, e^{t\Delta}b_0)\|_{\td{\mathcal F}_{T'}} = 0,
}
whenever $(v_0, b_0)\in E^3_q\times E^3_q$. 
We extend this to the Picard sequence by induction. 
The base case $(v^{(0)}, b^{(0)}) = (e^{t\Delta}v_0, e^{t\Delta}b_0)$ satisfies \eqref{limit:zero.heat}. 
Suppose the inductive hypothesis holds: 
\EQ{\label{limit:inductiveHyp}
\lim_{T'\to 0} \|(v^{(n-1)}, b^{(n-1)})\|_{\td{\mathcal F}_{T'}} = 0.
}
Then, from the iteration estimate in the class $\td {\mathcal F}_{T'}$ where we are taking $T'\leq T$, we have
\EQN{\label{inductiveStep}
\| (v^{(n)}, b^{(n)})\|_{\td {\mathcal F}_{T'}} &\leq \|(e^{t\Delta} v_0, e^{t\Delta} b_0)\|_{\td {\mathcal F}_{T'}}+ \| B((v^{(n-1)}, b^{(n-1)}),(v^{(n-1)}, b^{(n-1)}))\|_{\td {\mathcal F}_{T'}}\\
&\lec \|(e^{t\Delta} v_0, e^{t\Delta} b_0)\|_{\td {\mathcal F}_{T'}}+\| (v^{(n-1)}, b^{(n-1)})\|_{\td {\mathcal F}_{T'}}^2,
}
which implies
\EQ{\label{limit:Picard}
\lim_{T'\to 0} \|(v^{(n)}, b^{(n)})\|_{\td{\mathcal F}_{T'}} = 0
}
 by \eqref{limit:zero.heat} and \eqref{limit:inductiveHyp}. 
 This completes the induction.
 
 Since the Picard sequence converges in $\td {\mathcal F}_T$, the limit $(v,b)$ also satisfies 
  \EQ{\label{limit:zero.ns}
\lim_{T'\to 0^+} \| (v,b)\|_{\td{\mathcal F}_{T'}}= 0.
}
for $T'>0$ sufficiently small.
Using \eqref{ineq:E3qintegral} and \eqref{limit:zero.ns}, we find
\EQN{
\lim_{T'\to 0^+} \sup_{0<t<T'} \| B((v,b),(v,b))\|_{E^3_q\times E^3_q} (t)=0.
}
Combining this with \cite[Lemma 2.3]{BLT-MathAnn2024}, we conclude 
\[
\lim_{t\to 0} \| (v,b)-(v_0,b_0)\|_{E^3_q\times E^3_q}=0.
\] 

If $q=\I$ then we have a weaker mode of convergence. Fix a ball $B$. Take $R>0$ large so that $B\subset B_R(0)$.  We re-write the  bilinear form as  \[B((v,b),(v,b)) = B((v,b),(v\chi_{B_R(0)},b\chi_{B_R(0)})) + B((v,b),(v(1-\chi_{B_R(0)}), b(1-\chi_{B_R(0)}))).\]
If $1< \omega<3$ then we have  
\EQN{
\|   B_1&((v,b),(v\chi_{B_R(0)}, b\chi_{B_R(0)}))  \|_{L^\omega }\\
&\lec \int_0^t \frac 1 {(t-\tau)^{\frac 1 2 + \frac 3 2 (  \frac 2 {3} -\frac 1 \omega )}} \bke{\| |v|^2\chi_{B_R(0)} \|_{L^{3/2}}  + \| |b|^2\chi_{B_R(0)} \|_{L^{3/2}} }(\tau) \,d\tau \\
&\lesssim_R t^{\frac 1 2 - \frac 3 2(\frac 2 3 - \frac 1 \omega)} \| (v,b)\|_{L^\I (L^{3}_\uloc\times L^{3}_\uloc)}^2,
}
\EQN{
\|   B_2&((v,b),(v\chi_{B_R(0)}, b\chi_{B_R(0)}))  \|_{L^\omega }\\
&\lec \int_0^t \frac 1 {(t-\tau)^{\frac 1 2 + \frac 3 2 (  \frac 2 {3} -\frac 1 \omega )}} \bke{\| v\otimes b\chi_{B_R(0)} \|_{L^{3/2}}  + \| b\otimes v\chi_{B_R(0)} \|_{L^{3/2}} }(\tau) \,d\tau \\
&\lec \int_0^t \frac 1 {(t-\tau)^{\frac 1 2 + \frac 3 2 (  \frac 2 {3} -\frac 1 \omega )}} \bke{\| |v|^2\chi_{B_R(0)} \|_{L^{3/2}}  + \| |b|^2\chi_{B_R(0)} \|_{L^{3/2}} }(\tau) \,d\tau \\
&\lesssim_R t^{\frac 1 2 - \frac 3 2(\frac 2 3 - \frac 1 \omega)} \| (v,b)\|_{L^\I (L^{3}_\uloc\times L^{3}_\uloc)}^2,
}
by Young's inequality, so that 
\EQN{
\|   B((v,b),(v\chi_{B_R(0)}, b\chi_{B_R(0)}))  \|_{L^\omega\times L^\omega }
\lesssim_R t^{\frac 1 2 - \frac 3 2(\frac 2 3 - \frac 1 \omega)} \| (v,b)\|_{L^\I (L^{3}_\uloc\times L^{3}_\uloc)}^2.
}
For any $R>0$, the above vanishes as $t\to 0$ provided $\omega<3$.

By taking $R= 2\max_{x\in B}|x|$, we can ensure that for all $x\in B$ and $|y|\geq R$ we have 
$\frac 1 2 |y|\leq |x-y|\leq \frac 3 2 |y|$. Hence, for $x\in B$,
\EQN{
|B&((v,b),(v\chi_{B_R(0)^c}, b\chi_{B_R(0)^c}))(x,t) | \\
&\lesssim  \int_0^t \int_{ y\in B_R(0)^c} \frac 1 {(|x-y|+\sqrt {t-\tau})^4}  (|v|^2+|b|^2)(y,\tau)\,dy\,d\tau\\
&\lesssim  t\sup_{0<\tau<t} \sum_{k=1}^\I  \frac 1 {2^{4k-4}R^4} \int_{  R2^{k-1}\leq  |y| < R2^{k} } (|v|^2+|b|^2)(y,\tau) \,dy
\\&\lesssim t \sup_{0<\tau<t}\sum_{k=1}^\I  \frac {R 2^k} {R^42^{4k-4}}   \bigg(\int_{  |y| < R2^{k} } (|v|^3+|b|^3)(y,\tau) \,dy\bigg)^\frac 2 3
\\&\lesssim t \sum_{k=1}^\I  \frac {R^3 2^{3k} } {R^42^{4k-4}}  \sup_{0<\tau<t}  \| (v, b)(\tau)\|_{L^3_\uloc\times L^3_\uloc}^2
\\&\lesssim \frac t R  \sup_{0<\tau<t} \| (v,b)(\tau)\|_{L^3_\uloc\times L^3_\uloc}^2.
}

Now, 
\EQN{
\| B&((v,b),(v,b)) (t)\|_{L^\omega(B)\times L^\omega(B)} \\
&\lesssim _R  \| B((v,b),(v\chi_{B_R(0)^c},b\chi_{B_R(0)^c})) (t)\|_{L^\omega}  +   \| B((v,b), (v\chi_{B_R(0)^c},b\chi_{B_R(0)^c})) (t)\|_{L^\I(B)} 
\\&\lesssim_R  t^{\frac 1 2 - \frac 3 2(\frac 2 3 - \frac 1 \omega)} \| (v,b)\|_{L^\I (L^{3}_\uloc\times L^3_\uloc)}^2 + t \sup_{0<\tau<t} \| (v,b)(\tau)\|_{L^3_\uloc\times L^3_\uloc}^2.
}
Hence 
\[
\lim_{t\to 0^+}\| B((v,b),(v,b)) (t)\|_{L^\omega(B)\times L^\omega(B)} = 0.
\]
Referring to  \cite[p. 394]{MaTe},
 we have \[\lim_{t\to 0}\|(e^{t\Delta}v_0, e^{t\Delta}b_0) - (v_0,b_0)\|_{L^\omega(B)\times L^\omega(B)} =0.\]   It follows that 
\[
\lim_{t\to 0}\| (v,b)(\cdot,t)-(v_0,b_0)(\cdot)\|_{L^\omega(B)\times L^\omega(B)} =0.
\]

We now prove continuity at positive times. Let $t_1>0$ be fixed. We will send $t\to t_1$.  Note that by \cite[Lemma 2.3]{BLT-MathAnn2024} we have $(e^{t\Delta}v_0, e^{t\Delta}b_0) -(e^{t_1\Delta}v_0, e^{t_1\Delta}b_0)\to 0$ in $E^3_q\times E^3_q$ as $t\to t_1$. We therefore only need to show $B((v,b),(v,b))(t)\to B((v,b),(v,b))(t_1)$. Following \cite[p.~86]{Tsai-book}, we take $\rho$ slightly less than $1$ so that $\rho t_1<t$  and write
\EQN{
B((v,b),(v,b))(t)- B((v,b),(v,b))(t_1) &= \int_{\rho t_1}^t e^{(t-\tau)\Delta} \mathbb P\nb \cdot F\,d\tau
{-} \int_{\rho t_1}^{t_1} e^{(t_1-\tau)\Delta} \mathbb P\nb \cdot F\,d\tau 
\\&+\int_{0}^{\rho t_1} \big(e^{(t-\rho t_1)\Delta}-e^{(t_1-\rho t_1)\Delta} \big)e^{(\rho t_1-\tau)\Delta} \mathbb P\nb \cdot F\,d\tau 
}
where $F=(v\otimes v)(\tau)$, $(b\otimes b)(\tau)$, $(v\otimes b)(\tau)$, or $(b\otimes v)(\tau)$. For the first and second terms,
by \cite[(2.3)]{BLT-MathAnn2024} with $p=\td p = 3$, $\td q = q$ and using the embedding $E^\infty_q\subset E^\infty_\infty$,
we have 
\EQN{
\int_{\rho t_1}^t  \|  e^{(t-\tau)\Delta} \mathbb P\nb\cdot  F\|_{E^3_q}\,d\tau  &\lesssim  \int_{\rho t_1}^{t} \frac 1 {(t-\tau)^{\frac 1 2}\tau^\frac 1 2} \| v,b\|_{E^3_q} (\tau) \tau^{\frac 1 2}\| v,b\|_{E^\I_q} \,d\tau
 \lesssim \frac {(t-\rho t_1)^\frac 1 2}	{(\rho t_1)^\frac 1 2}	\|(v,b)\|_{\mathcal E_T}^2,
}
and 
\EQN{
\int_{\rho t_1}^{t_1} \|e^{(t_1-\tau)\Delta} \mathbb P\nb \cdot F\|_{E^3_q}\,d\tau  &\lesssim  \int_{\rho t_1}^{t_1} \frac 1 {(t_1-\tau)^{\frac 1 2}\tau^\frac 1 2} \| v,b\|_{E^3_q} (\tau) \tau^{\frac 1 2}\| v,b\|_{E^\I_q} \,d\tau
 \lesssim \frac {(t_1-\rho t_1)^\frac 1 2}	{(\rho t_1)^\frac 1 2}	\|(v,b)\|_{\mathcal E_T}^2,
}
both of which can be made arbitrarily small by taking $\rho t_1$ close to $t_1$ and $t$ close to $t_1$.

For the third term we note that by \cite[Lemma 2.3]{BLT-MathAnn2024}, for each $0<\tau<\rho t_1$, we have
\[
\|  \big(e^{(t-\rho t_1)\Delta}-e^{(t_1-\rho t_1)\Delta} \big)e^{(\rho t_1-\tau)\Delta} \mathbb P\nb \cdot F(\tau) \|_{E^3_q}\to 0 \text{ as }t\to t_1,
\]
which follows {(even if $q=\infty$)}  the fact that $e^{(\rho t_1-\tau)\Delta} \mathbb P\nb \cdot F(\tau)\in  E^3_q$, which is a consequence of \cite[Lemma 2.1]{BLT-MathAnn2024}.
Additionally,  
\EQN{
& \|  \big(e^{(t-\rho t_1)\Delta}-e^{(t_1-\rho t_1)\Delta} \big)e^{(\rho t_1-\tau)\Delta} \mathbb P\nb \cdot F(\tau)	\|_{E^3_q}
\\&\lesssim  
 \bigg( \frac 1 {(t-\tau)^\frac 1 2 \tau^\frac 1 2} + \frac 1 {(t_1-\tau)^\frac 1 2 \tau^\frac 1 2}\bigg) \| (v,b)\|_{\mathcal E_T}^2 \in L^1(0,\rho t_1),
}
where integration in   $L^1(0,\rho t_1)$ is with respect to $\tau$.
So, by Lebesgue's dominated convergence theorem, 
\Eq{
\int_0^{\rho t_1 } \|  \big(e^{(t-\rho t_1)\Delta}-e^{(t_1-\rho t_1)\Delta} \big)e^{(\rho t_1-\tau)\Delta} \mathbb P\nb \cdot F(\tau)	\|_{E^3_q}\,d\tau \to 0 \text{ as }t\to t_1.
}
{The above show the continuity of $(v,b)(t)$ at positive times.}

To prove the spacetime integral bound \eqref{eq1.7-mhd} for $p=3$, $s=\infty$, we work in the spaces with the norms
\[
\norm{(v,b)}_{\mathcal E_T^*} = \norm{(v,b)}_{E^{\infty,3}_{T,q}\times E^{\infty,3}_{T,q}} + \norm{(t^{\frac12}v, t^{\frac12}b)}_{E^{\infty,\infty}_{T,q}\times E^{\infty,\infty}_{T,q}},\quad
\norm{(v,b)}_{\mathcal F_T^*} = \norm{(t^{\frac14} v,t^{\frac14} b)}_{E^{\infty,6}_{T,q}\times E^{\infty,6}_{T,q}}.
\]
Note that $\mathcal E_T^* \subset \mathcal F_T^*$.

For the linear estimate in $\mathcal F^*_T$, taking $p=6$ so that $a=1/4$ in \cite[Lemma 3.1]{BLT-MathAnn2024}, we have
\EQS{
\norm{(e^{t\De} v_0,e^{t\De} b_0)}_{\mathcal F^*_T} 
= \norm{(t^{\frac14} e^{t\De} v_0, t^{\frac14} e^{t\De} b_0)}_{E^{\infty,6}_{T,q}\times E^{\infty,6}_{T,q}} 
\le C_1^* (1 + T^{\frac14}) \norm{(v_0,b_0)}_{E^3_q\times E^3_q}.
}

For bilinear estimate, taking $a=b=1/4$, $p=\td p=6$ in \cite[Lemma 3.2]{BLT-MathAnn2024},
\EQN{
\norm{B((v,b),(u,a))}_{\mathcal F_T^*} &= \norm{t^{\frac14}B((v,b),(u,a))}_{E^{\infty,6}_{T,q}\times E^{\infty,6}_{T,q}}\\
&\le C_2^* (1 + T^{\frac34}) \norm{(t^{\frac14}v, t^{\frac14}b)}_{E^{\infty,6}_{T,q}\times E^{\infty,6}_{T,q}} \norm{(t^{\frac14}u, t^{\frac14}a)}_{E^{\infty,6}_{T,q}\times E^{\infty,6}_{T,q}}\\
&= C_2^* (1 + T^{\frac34}) \norm{(v,b)}_{\mathcal F_T^*} \norm{(u,a)}_{\mathcal F_T^*}.
}
By choosing $\norm{(v_0, b_0)}_{E^3_q\times E^3_q}$ small enough so that $\norm{(v_0, b_0)}_{E^3_q\times E^3_q}<\frac1{4C_1^*C_2^*\bke{1+T^{\frac14}}\bke{1+T^{\frac34}}}$, Picard iteration yields a unique mild solution satisfying 
\[
\norm{(v,b)}_{\mathcal F_T^*} \le 2 C_1^*(1+T^{\frac14}) \norm{(v_0, b_0)}_{E^3_q\times E^3_q}.
\]

Now, we claim that any solution $(v,b)\in\mathcal F^*_T$ with sufficiently small $(v_0,b_0)\in E^3_q\times E^3_q$ also belongs to $\mathcal E^*_T$.
By \cite[(3.21)]{BLT-MathAnn2024} of \cite[Lemma 3.1]{BLT-MathAnn2024}, we have
\[
\norm{(e^{t\De} v_0, e^{t\De} b_0)}_{E^{\infty,3}_{T,q}\times E^{\infty,3}_{T,q}} \lec \ln(2+T) \norm{(v_0,b_0)}_{E^3_q\times E^3_q}.
\]
Taking $p=\infty$ so that $a=1/2$ in \cite[Lemma 3.1]{BLT-MathAnn2024}, we also obtain
\[
\norm{(t^{\frac12} v_0, t^{\frac12} b_0)}_{E^{\infty,\infty}_{T,q}} \lec (1+T^\frac12) \norm{(v_0,b_0)}_{E^3_q\times E^3_q}.
\]
Combining both estimates, we conclude:
\EQ{
\norm{(e^{t\De}v_0, e^{t\De}b_0)}_{\mathcal E^*_T} \lec (1+T^\frac12)  \norm{(v_0,b_0)}_{E^3_q\times E^3_q}.
}

Next, we establish the bilinear estimate:
\EQ{ 
\norm{B((v,b),(u,a))}_{\mathcal E^*_T} \lec_T \min\bke{\norm{(v,b)}_{\mathcal E^*_T}\norm{(u,a)}_{\mathcal F^*_T}, \norm{(u,a)}_{\mathcal E^*_T}\norm{(v,b)}_{\mathcal F^*_T}}.
}
Indeed, by applying \cite[Lemma 3.2]{BLT-MathAnn2024} with $(a,b,p,\td p)=(0,1/4,3,6)$ and $(a,b,p,\td p)=(1/2,1/4,\infty,6)$, we obtain
\EQN{
&\norm{B((v,b),(u,a))}_{E^{\infty,3}_{T,q}\times E^{\infty,3}_{T,q}} \\
&\qquad \lec (1+T^{\frac34}) \min\left( \norm{(v,b)}_{E^{\infty,3}_{T,q}\times E^{\infty,3}_{T,q}} \norm{ (t^{\frac14}u,t^{\frac14}a)}_{E^{\infty,6}_{T,q}\times E^{\infty,6}_{T,q}},\right.\\
&\qquad\qquad\qquad\qquad\qquad\qquad\qquad\qquad\left. \norm{ (u,a)}_{E^{\infty,3}_{T,q}\times E^{\infty,3}_{T,q}} \norm{ (t^{\frac14}v,t^{\frac14}b)}_{E^{\infty,6}_{T,q}\times E^{\infty,6}_{T,q}} \right),
}
and
\EQN{
&\norm{t^{\frac12} B((v,b),(u,a))}_{E^{\infty,\infty}_{T,q}\times E^{\infty,\infty}_{T,q}} \\
&\qquad \lec (1+T^{\frac34}) \min\left( \norm{(t^{\frac12}v,t^{\frac12}b)}_{E^{\infty,\infty}_{T,q}\times E^{\infty,\infty}_{T,q}} \norm{ (t^{\frac14}u,t^{\frac14}a)}_{E^{\infty,6}_{T,q}\times E^{\infty,6}_{T,q}}, \right.\\
&\qquad\qquad\qquad\qquad\qquad\qquad\qquad\qquad\left. \norm{(t^{\frac12}u,t^{\frac12}a)}_{E^{\infty,\infty}_{T,q}\times E^{\infty,\infty}_{T,q}} \norm{ (t^{\frac14}v,t^{\frac14}b)}_{E^{\infty,6}_{T,q}\times E^{\infty,6}_{T,q}} \right).
}
Using the same argument as before, we conclude that $(v,b) \in \mathcal E^*_T$, possibly after taking a smaller $T>0$. 
In particular, $(v,b)\in E^{\infty,3}_{T,q}\times E^{\infty,3}_{T,q}$, with the norm controlled by $\norm{(v_0,b_0)}_{E^3_q\times E^3_q}$.
The case $s=\infty$ in Theorem \ref{thrm:critical-mhd} then follows from the embeddings $\norm{(v,b)}_{L^\infty E^3_q\times L^\infty E^3_q}\le \norm{(v,b)}_{E^{\infty,3}_{T,q}\times E^{\infty,3}_{T,q}}$ and $\norm{(t^{\frac12}v, t^{\frac12}b)}_{L^\infty_T(E^\infty_q\times E^\infty_q)}\le \norm{(t^{\frac12}v, t^{\frac12}b)}_{E^{\infty,\infty}_{T,q}\times E^{\infty,\infty}_{T,q}}$.
\qed

\subsection{Mild solutions in critical spaces with enough decay: Proof of Theorem \ref{thrm:critical2-mhd}}\label{sec-critical-II}

The proof of Theorem \ref{thrm:critical2-mhd} is an adaption of the proof of \cite[Theorem 1.3]{BLT-MathAnn2024} for the Navier--Stokes equations to the MHD equations.

By \cite[Lemma 2.1]{BLT-MathAnn2024}, we have for $1\le q \le 3$,
\EQS{\label{u-lin-est}
\sup_{0<t<\I}&\bke{ \norm{(e^{t\De}v_0, e^{t\De}b_0)}_{E^3_q\times E^3_q}  +  t^{\frac12} \norm{(e^{t\De}v_0, e^{t\De}b_0)}_{E^\I_{q_2}\times} +t^{\frac14} \norm{(e^{t\De}v_0, e^{t\De}b_0)}_{E^6_{q_1}\times E^6_{q_1}}} \\
&\qquad\qquad\qquad\qquad\qquad\qquad\qquad\qquad\qquad\qquad\qquad\qquad\qquad\quad\le C \norm{(v_0,b_0)}_{E^3_q\times E^3_q},
}
where
\EQ{
\frac 1{q_1} = \frac 1q - \frac 16, \quad
\frac 1{q_2} = \frac 1q - \frac 13 , \quad
q < q_1 < q_2 \le \oo.
}
For $0<T\le \oo$ and $1\le q \le 3$, let $\mathcal{E}_T$, $\mathcal{F}_T$ be Banach spaces defined as
\EQ{
\mathcal{E}_T 
:= \bket{ (v,b)\in L^\infty(0,T;E^3_q\times E^3_q):\ t^{\frac12} (v,b)(\cdot,t) \in L^\infty(0,T; E^\I_{q_2}\times E^\I_{q_2})
},
}
and
\EQ{
\mathcal{F}_T := \bket{ (v,b) :\ t^{\frac14}(v,b)(\cdot,t) \in L^\infty(0,T; E^6_{q_1}\times E^6_{q_1})} ,
}
with norms
\[
\norm{(v,b)}_{\mathcal{E}_T} 
:= \sup_{0<t<T} \norm{(v,b)(\cdot,t)}_{E^3_q\times E^3_q}  + \sup_{0<t<T} t^{\frac12} \norm{(v,b)(\cdot,t)}_{E^\I_{q_2}\times E^\I_{q_2}},
\]
and
\[
\norm{(v,b)}_{\mathcal{F}_T} 
:= \sup_{0<t<T} t^{\frac14} \norm{(v,b)(\cdot,t)}_{E^6_{q_1}\times E^6_{q_1}},
\]
respectively.
Note that $\mathcal{E}_T \subset \mathcal{F}_T$.

By \cite[Lemma 2.1]{BLT-MathAnn2024} again and {H\"older inequality \eqref{Holder} using} $2q \ge q_1$ due to $q\le 3$, 
\EQS{\label{0819a-B1}
\norm{B_1((v,b),(u,a))}_{E^6_{q_1}}(t) 
&\lec \int_0^t  \frac1{(t-\tau)^{\frac34}} \bke{\norm{v\otimes u(\tau)}_{E^3_{q}} + \norm{b\otimes a(\tau)}_{E^3_{q}} } d\tau\\
&\le \int_0^t  \frac1{(t-\tau)^{\frac34}} \bke{\norm{v(\tau)}_{E^6_{q_1}} \norm{u(\tau)}_{E^6_{q_1}} + \norm{b(\tau)}_{E^6_{q_1}} \norm{a(\tau)}_{E^6_{q_1}} } d\tau\\
&\le \int_0^t  \frac1{(t-\tau)^{\frac34}} \tau^{-1/4}\norm{(v,b)}_{{\mathcal{F}}_T} \tau^{-1/4}\norm{(u,a)}_{{\mathcal{F}}_T}\, d\tau\\
&\lec t^{-1/4} \norm{(v,b)}_{{\mathcal{F}}_T} \norm{(u,a)}_{{\mathcal{F}}_T},
}
\EQS{\label{0819a-B2}
\norm{B_2((v,b),(u,a))}_{E^6_{q_1}}(t) 
&\lec \int_0^t  \frac1{(t-\tau)^{\frac34}} \bke{\norm{v\otimes a(\tau)}_{E^3_{q}} + \norm{b\otimes u(\tau)}_{E^3_{q}} } d\tau\\
&\le \int_0^t  \frac1{(t-\tau)^{\frac34}} \bke{\norm{v(\tau)}_{E^6_{q_1}} \norm{a(\tau)}_{E^6_{q_1}} + \norm{b(\tau)}_{E^6_{q_1}} \norm{u(\tau)}_{E^6_{q_1}} } d\tau\\
&\le \int_0^t  \frac1{(t-\tau)^{\frac34}} \tau^{-1/4}\norm{(v,b)}_{{\mathcal{F}}_T} \tau^{-1/4}\norm{(u,a)}_{{\mathcal{F}}_T}\, d\tau\\
&\lec t^{-1/4} \norm{(v,b)}_{{\mathcal{F}}_T} \norm{(u,a)}_{{\mathcal{F}}_T},
}
so that
\EQ{\label{0819a}
\norm{B((v,b),(u,a))}_{E^6_{q_1}\times E^6_{q_1}}(t) 
\lec t^{-1/4} \norm{(v,b)}_{{\mathcal{F}}_T} \norm{(u,a)}_{{\mathcal{F}}_T}.
}
Hence, %
\[
\| B((v,b),(u,a))\|_{{\mathcal F}_T} \leq c_*   \|(v,b)\|_{{\mathcal{F}}_T} \|(u,a)\|_{{\mathcal{F}}_T},
\]
where $c_*$ is a universal constant.

Concerning the caloric extension of $(v_0,b_0)$, we have for $\|(v_0,b_0)\|_{E^3_q\times E^3_q}$ of any size that
\[
\lim_{T\to 0} \| (e^{t\Delta}v_0, e^{t\Delta}b_0)\|_{{\mathcal F}_T}  = 0,
\]
by \cite[(2.13)]{BLT-MathAnn2024} of \cite[Lemma 2.3]{BLT-MathAnn2024}.  
 Hence, there exists $T=T(u_0)$ so that 
\EQ{\label{small.caloric}
 \| (e^{t\Delta}v_0, e^{t\Delta}b_0)\|_{{\mathcal F}_T} \lesssim c_*^{-1}.
}
If, on the other hand, $\|(v_0,b_0)\|_{E^3_q\times E^3_q} \lesssim c_*^{-1}$, then by \eqref{u-lin-est}, %
we have  \eqref{small.caloric} for $T=\I$.
The Picard contraction theorem then guarantees the existence of a mild solution $(v,b)$ to \eqref{MHD} 
so that
\[
\| (v,b)\|_{\mathcal F_T} \leq  2\| (e^{t\Delta}v_0, e^{t\Delta}b_0)\|_{\mathcal F_T}.
\]
This solution is unique among all mild solutions $(u,a)$ with data $(v_0,b_0)$ satisfying $\|(u,a)\|_{\mathcal F_T}\leq 2\| (e^{t\Delta}v_0, e^{t\Delta}b_0)\|_{\mathcal F_T}$.%

Next, we show that a solution $(v,b)\in \mathcal{F}_T$ with initial data $(v_0,b_0)\in E^3_q\times E^3_q$ also belongs to $\mathcal{E}_T$. Let $\{(v^{(n)}, b^{(n)})\}_{n\ge1}$ be the Picard iteration sequence in $\mathcal{F}_T$. %
By construction, 
\EQ{\label{unFTbound}
\norm{(v^{(n)}, b^{(n)})}_{ {\mathcal{F}}_T}  \le  2 \|(e^{t\Delta}v_0, e^{t\Delta}b_0)\|_{\mathcal F_T}.
}
Note that
\[
\norm{(v^{(n)}, b^{(n)})}_{\mathcal{E}_T} 
\le \norm{(e^{t\De} v_0, e^{t\De} b_0)}_{\mathcal{E}_T} + \norm{B((v^{(n-1)}, b^{(n-1)}), v^{(n-1)}, b^{(n-1)}))}_{\mathcal{E}_T}.
\]
We now bound $B((v,b),(u,a))$ in $\mathcal{E}_T$ in terms of $(v,b)$ and $(u,a)$ in $\mathcal{F}_T$ and $\mathcal{E}_T$. 
We have by \cite[Lemma 2.1]{BLT-MathAnn2024} {and H\"older inequality \eqref{Holder} using $q_1 \ge 6$,} 
\EQS{\label{ineq:E3qintegral2-B1}
\norm{B_1((v,b),(u,a))}_{E^3_{q}}(t) 
&\lec \int_0^t  \frac1{(t-\tau)^{\frac12}} \bke{\norm{v\otimes u(\tau)}_{E^3_{q}} + \norm{b\otimes a(\tau)}_{E^3_{q}} } d\tau\\
&\le \int_0^t  \frac1{(t-\tau)^{\frac12}} \bke{\norm{v(\tau)}_{E^6_{q_1}} \norm{u(\tau)}_{E^6_{q_1}} + \norm{b(\tau)}_{E^6_{q_1}} \norm{a(\tau)}_{E^6_{q_1}} } d\tau\\
&\le \int_0^t  \frac1{(t-\tau)^{\frac12}} \tau^{-1/4}\norm{(v,b)}_{\mathcal{F}_T} \tau^{-1/4}\norm{(u,a)}_{\mathcal{F}_T}\, d\tau\\
&\lec \norm{(v,b)}_{\mathcal{F}_T} \norm{(u,a)}_{\mathcal{F}_T},
}
\EQS{\label{ineq:E3qintegral2-B2}
\norm{B_2((v,b),(u,a))}_{E^3_{q}}(t) 
&\lec \int_0^t  \frac1{(t-\tau)^{\frac12}} \bke{\norm{v\otimes a(\tau)}_{E^3_{q}} + \norm{b\otimes u(\tau)}_{E^3_{q}} } d\tau\\
&\le \int_0^t  \frac1{(t-\tau)^{\frac12}} \bke{\norm{v(\tau)}_{E^6_{q_1}} \norm{a(\tau)}_{E^6_{q_1}} + \norm{b(\tau)}_{E^6_{q_1}} \norm{u(\tau)}_{E^6_{q_1}} } d\tau\\
&\le \int_0^t  \frac1{(t-\tau)^{\frac12}} \tau^{-1/4}\norm{(v,b)}_{\mathcal{F}_T} \tau^{-1/4}\norm{(u,a)}_{\mathcal{F}_T}\, d\tau\\
&\lec \norm{(v,b)}_{\mathcal{F}_T} \norm{(u,a)}_{\mathcal{F}_T},
}
so that 
\EQ{\label{ineq:E3qintegral2}
\norm{B((v,b),(u,a))}_{E^3_{q}\times E^3_{q}}(t) 
\lec \norm{(v,b)}_{\mathcal{F}_T} \norm{(u,a)}_{\mathcal{F}_T}.
}
{By $\mathcal{E}_T \subset \mathcal{F}_T$,}
we have
\[
\norm{B((v,b),(u,a))}_{E^3_{q}\times E^3_{q}}(t) \lec \norm{(v,b)}_{\mathcal{E}_T} \norm{(u,a)}_{\mathcal{F}_T} \wedge \norm{(u,a)}_{\mathcal{E}_T} \norm{(v,b)}_{\mathcal{F}_T}.
\]

Also by \cite[Lemma 2.1]{BLT-MathAnn2024} {and H\"older inequality \eqref{Holder},}
\begin{align}
\nonumber
\| B_1((v,b),(u,a))\|_{E^\I_{q_2}}(t) &\lec\int_0^t \frac 1 {(t-\tau)^{\frac 34 }} \bke{\| v\otimes u\|_{E^6_{q_1}}(\tau) + \| b\otimes a\|_{E^6_{q_1}}(\tau)} d\tau
\\&\lec \int_0^t\frac 1 {(t-\tau)^{\frac 34 }\tau^{3/4}} 
 ( \tau^{1/2}	\|v\|_{E^\I_{q_2}}\tau^{1/4}\|u\|_{E^6_{q_1}} \wedge \tau^{1/2}\|v\|_{E^\I_{q_2}}\tau^{1/4}\|u\|_{E^6_{q_1}}
 \nonumber
\\&\qquad\qquad\qquad\qquad\quad + \tau^{1/2}	\|b\|_{E^\I_{q_2}}\tau^{1/4}\|a\|_{E^6_{q_1}} \wedge \tau^{1/2}\|b\|_{E^\I_{q_2}}\tau^{1/4}\|a\|_{E^6_{q_1}} )\,d\tau
\nonumber
\\&\lec t^{-\frac 1 2}(\norm{(v,b)}_{\mathcal{E}_T} \norm{(u,a)}_{\mathcal{F}_T} \wedge \norm{(u,a)}_{\mathcal{E}_T} \norm{(v,b)}_{\mathcal{F}_T}),
\label{ineq:E3qintegral3-B1}
\end{align}
\begin{align}
\nonumber
\| B_2((v,b),(u,a))\|_{E^\I_{q_2}}(t) &\lec\int_0^t \frac 1 {(t-\tau)^{\frac 34 }} \bke{\| v\otimes a\|_{E^6_{q_1}}(\tau) + \| b\otimes u\|_{E^6_{q_1}}(\tau)} d\tau
\\&\lec \int_0^t\frac 1 {(t-\tau)^{\frac 34 }\tau^{3/4}} 
 ( \tau^{1/2}	\|v\|_{E^\I_{q_2}}\tau^{1/4}\|a\|_{E^6_{q_1}} \wedge \tau^{1/2}\|v\|_{E^\I_{q_2}}\tau^{1/4}\|a\|_{E^6_{q_1}}
 \nonumber
\\&\qquad\qquad\qquad\qquad\quad + \tau^{1/2}	\|b\|_{E^\I_{q_2}}\tau^{1/4}\|u\|_{E^6_{q_1}} \wedge \tau^{1/2}\|b\|_{E^\I_{q_2}}\tau^{1/4}\|u\|_{E^6_{q_1}} )\,d\tau
\nonumber
\\&\lec t^{-\frac 1 2}(\norm{(v,b)}_{\mathcal{E}_T} \norm{(u,a)}_{\mathcal{F}_T} \wedge \norm{(u,a)}_{\mathcal{E}_T} \norm{(v,b)}_{\mathcal{F}_T}),
\label{ineq:E3qintegral3-B2}
\end{align}
so that 
\begin{align}
\nonumber
\| B((v,b),(u,a))\|_{E^\I_{q_2}\times E^\I_{q_2}}(t) 
\lec t^{-\frac 1 2}(\norm{(v,b)}_{\mathcal{E}_T} \norm{(u,a)}_{\mathcal{F}_T} \wedge \norm{(u,a)}_{\mathcal{E}_T} \norm{(v,b)}_{\mathcal{F}_T}).
\label{ineq:E3qintegral3}
\end{align}

Based on the above estimates we conclude
\EQ{\label{eq3.38}
\| B((v,b),(u,a))\|_{\mathcal E_T} \lec \norm{(v,b)}_{\mathcal{E}_T} \norm{(u,a)}_{\mathcal{F}_T} \wedge \norm{(u,a)}_{\mathcal{E}_T} \norm{(v,b)}_{\mathcal{F}_T}.
}

We can now conclude that $\{(v^{(n)}, b^{(n)})\}$ is Cauchy in $\mathcal{E}_T$ by the calculation preceding and including \eqref{ineq:difference.picard.iterates}. {However, the smallness of the constant is now provided by \eqref{small.caloric}-\eqref{unFTbound}, not by $\norm{(v_0,b_0)}_{E^3_q\times E^3_q}$.}

We now show continuity. For small data, we can try to inherit continuity from Theorem \ref{thrm:critical-mhd}. But we will provide a proof valid for general data.
We first address convergence to the initial data.
By \cite[Lemma 2.3]{BLT-MathAnn2024} we have
\EQ{\label{limit:zero.heat2}
\lim_{T'\to 0^+} \sup_{0<t<T'}t^{\frac 1 4} \| (e^{t\Delta}v_0, e^{t\Delta}b_0)\|_{E^6_{q_1}\times E^6_{q_1}} = \lim_{T'\to 0} \| (e^{t\Delta}v_0, e^{t\Delta}b_0)\|_{{\mathcal F}_{T'}} = 0,
}
whenever $(v_0,b_0)\in E^3_q\times E^3_q$. By our estimates in the class $ {\mathcal F}_{T'}$ where we are taking $T'\leq T$, we have 
\EQN{
\| (v^{(n)}, b^{(n)})\|_{{\mathcal F}_{T'}} &\leq \|(e^{t\Delta} v_0,e^{t\Delta} b_0)\|_{ {\mathcal F}_{T'}}+ \| B((v^{(n-1)},b^{(n-1)}),(v^{(n-1)},b^{(n-1)}))\|_{ {\mathcal F}_{T'}}\\
&\lec \|(e^{t\Delta} v_0, e^{t\Delta} b_0)\|_{ {\mathcal F}_{T'}}+\| (v^{(n-1)},b^{(n-1)})\|_{ {\mathcal F}_{T'}}^2.
}
From this and by induction, for any $n$ we have 
\EQN{
\lim_{T'\to 0^+} \| (v^{(n)}, b^{(n)})\|_{{\mathcal F}_{T'}} = 0.
}
The limit \eqref{limit:zero.heat2}, convergence of the Picard iterates in $ {\mathcal F}_T$ and the above inequality imply that, by taking $T'$ small, we can make $\sup_{0<t<T'}t^{\frac 1 4} \| (v,b)(t)\|_{E^6_{q_1}\times E^6_{q_1}}$ small. To elaborate,  we have 
\EQ{
\| (v,b)\|_{ {\mathcal F}_{T'}} \leq \| (v,b) - (v^{(n)},b^{(n)})\|_{ {\mathcal F}_{T'}}  + \| (v^{(n)},b^{(n)})\|_{ {\mathcal F}_{T'}}.
}
We may choose $n$ large so that the first term is small and then make the second term small by taking $T'$ small.
Hence,
\EQ{\label{limit:zero.ns2}
\lim_{T'\to 0^+} { \| (v,b)\|_{{\mathcal F}_{T'}}}= 0.
}
Using \eqref{ineq:E3qintegral2}, this implies  
\EQN{
\lim_{T'\to 0^+} \sup_{0<t<T'} \| B((v,b),(v,b))\|_{E^3_q\times v} (t)=0.
}
This and  \cite[Lemma 2.3]{BLT-MathAnn2024} imply
\[
\lim_{t\to 0} \| (v,b)-(v_0,b_0)\|_{E^3_q\times E^3_q}=0.
\] 

The proof of continuity for positive times follows from the same argument used in the proof of Theorem \ref{thrm:critical-mhd} and is therefore omitted for brevity.

We now prove the spacetime integral bound \eqref{eq-thmIII-Ebound-mhd} for $p\in (3,9]$ and $\frac2s + \frac3p = 1$.  Note that we exclude $p=3$, i.e., $s=\infty$.
By imbedding $E^p_q\subset E^p_m$ for $m>q$, we may assume $m<\infty$. (We do not take $m=q$ since we need $q<m$ for global existence).\
Denote the Banach space
\[
X_T = \mathcal E_T \cap (E^{s,p}_{T,m}\times E^{s,p}_{T,m}).
\]
For the linear term, by \cite[Lemma 2.1]{BLT-MathAnn2024} and \cite[Lemma 2.4]{BLT-MathAnn2024}, 
\EQS{\label{beta0}
\norm{(e^{t\De} v_0, e^{t\De} b_0)}_{X_T} 
&= \sup_{0<t<T} \norm{(e^{t\De} v_0,e^{t\De} b_0)}_{E^3_q\times E^3_q}\\
&\quad  + \sup_{0<t<T} t^{\frac12}\norm{(e^{t\De} v_0, e^{t\De} b_0)}_{E^\infty_{q_2}\times E^\infty_{q_2}} + \norm{(e^{t\De} v_0, e^{t\De} b_0)}_{E^{s,p}_{T,m}\times E^{s,p}_{T,m}}\\
&\le C_3(1+T^{\beta_0})\norm{(v_0,b_0)}_{E^3_q\times E^3_q},
}
for any $\be_0 \in [0,\infty)$ and $\be_0 > \al_0=\frac 3{2m}-\frac3{2q}+\frac 1s$. 
Note that
\EQ{\label{smallspacetimeintegral}
\lim_{T \to 0_+ } \norm{(e^{t\De} v_0, e^{t\De} b_0)}_{E^{s,p}_{T,m}\times E^{s,p}_{T,m}} = 0. 
}

For the bilinear term, by \cite[Lemma 2.7]{BLT-MathAnn2024} with $\td s=s/2$, $\td p=p/2$, and $\td m = \max(1, m/2)$, so that
\EQ{
\si=0, \quad
\al=
\left\{
\begin{aligned}
\tfrac12 - \tfrac3{2m} - \tfrac1s ,\quad &\text{if}\quad 2 \le m \le \infty,\\[1mm]
-1 + \tfrac3{2m} - \tfrac1s,\quad &\text{if}\quad 1<m<2,
\end{aligned}
\right. \quad \al < 1-\tfrac1s,
}
we have
\EQN{
\norm{B((v,b),(u,a))}_{E^{s,p}_{T,m}\times E^{s,p}_{T,m}}
&\le C_4 (1+ T^{\be}) \bke{\norm{v\otimes u}_{E^{\frac{s}2,\frac p2}_{T,\td m}} + \norm{b\otimes a}_{E^{\frac{s}2,\frac p2}_{T,\td m}} + \norm{v\otimes a}_{E^{\frac{s}2,\frac p2}_{T,\td m}} + \norm{b\otimes u}_{E^{\frac{s}2,\frac p2}_{T,\td m}}}
}
for any $\be \in [0,1-\frac 1s]$ and $\be > \al$. Note
\[
\norm{f\otimes g}_{E^{\frac s2,\frac p2}_{T,\td m}}
\le \norm{f}_{E^{s,p}_{T,2\td m}} \norm{g}_{E^{s,p}_{T,2\td m}}\le \norm{f}_{E^{s,p}_{T, m}} \norm{g}_{E^{s,p}_{T, m}}
\]
no matter $m\ge2$ or $1<m<2$.
We conclude, also using \eqref{eq3.38},
\EQS{\label{beta1}
\norm{B((v,b),(u,a))}_{E^{s,p}_{T, m}\times E^{s,p}_{T, m}} &\leq  C_4 (   1+ T^{\be}   ) \norm{(v,b)}_{E^{s,p}_{T, m}\times E^{s,p}_{T, m}} \norm{(u,a)}_{E^{s,p}_{T, m}\times E^{s,p}_{T, m}},
\\
\norm{B((v,b),(u,a))}_{X_T} &\leq  2C_4 (   1+ T^{\be}   ) \norm{(v,b)}_{X_T} \norm{(u,a)}_{X_T}.
}

By \eqref{smallspacetimeintegral}, we can find $T_1 \in (0,T]$ so that
\[
\norm{(e^{t\De} v_0, e^{t\De} b_0)}_{E^{s,p}_{T_1,m}\times E^{s,p}_{T_1,m}}  \le \de= [4C_4 (   1+ T^{\be}   )]^{-1}.
\]
Then the Picard sequence $(v^{(k)}, b^{(k)})$ satisfies $\norm{(v^{(k)},b^{(k)})}_{E^{s,p}_{T_1,m}\times E^{s,p}_{T_1,m}}  \le 2\de$ for all $k \in \NN$, and we get
$\norm{(v,b)}_{E^{s,p}_{T_1,m}\times E^{s,p}_{T_1,m}}  \le 2\de$.
Thus, $(v,b)$ satisfies the spacetime integral bound \eqref{eq-thmIII-Ebound-mhd}.

We now establish the global $E^{s,p}_{T,m}$-estimates when $(v_0,b_0)$ is sufficiently small in $E^3_q\times E^3_q$.
To this end, we aim to eliminate the dependence of constants on $T$, i.e., we choose $\be_0=0$ in \eqref{beta0} and $\be=0$ in \eqref{beta1}.

Let us analyze the conditions under which $\be=0$ in \eqref{beta1} is permissible. 
When $m\ge2$, we additionally assume that $\frac2s + \frac3m \ge 1$ so that the exponent $\al\le 0$. 
In particular, we can take $\be=0$ when $\al=0$ since the condition $1<\td m < m< \infty$ required in \cite[Lemma 2.7]{BLT-MathAnn2024} is satisfied. 
Note that when $m=2$, we have $\td m=1$, but then $\al<0$.

For $1<m<2$,  we impose the additional condition $\frac3m<\frac2s + 2=3-\frac3p$, which ensures $\al<0$ and hence again allows us to take $\be=0$.

With $\be_0=\be=0$ in \eqref{beta0} and \eqref{beta1}, we obtain a global-in-time estimate for $(v,b)$ in $E^{s,p}_{T=\infty,m}\times E^{s,p}_{T=\infty,m}$, provided that the initial data satisfies $ \norm{(v_0,b_0)}_{E^p_q\times E^p_q} < [32C_3 C_4 ]^{-1}$.

Observe that when $m \ge 2$, all required conditions--including the upper bound $\frac2s + \frac3m \ge 1$-- are satisfied if we take $m=p$.
Once we have established $(v,b) \in E^{s,p}_{T=\infty,m}\times E^{s,p}_{T=\infty,m}$ for some $m$, then the inclusion property implies $(v,b) \in E^{s,p}_{T=\infty,\td m}\times E^{s,p}_{T=\infty,\td m}$ for all $\td m \in [m,\infty]$.
Thus, the condition $\frac2s + \frac3m \ge 1$ can be removed entirely.

 We now consider the $L^s_T E^p_m$-estimates of $(v,b)$, restricting to $T=\infty$ for simplicity.
 Fix $s\in [3,\infty)$ and $q\in[1,3]$, and define $p$ by the relation $\frac3p+\frac 2s=1$. 
Using \cite[Lemma 2.8]{BLT-MathAnn2024} with
$\td p = p/2$, $\td s=s/2$ and $\td m\ge1$ such that $\frac1{\td m} - \frac1m=\frac1{\td p} - \frac1p = \frac 1p$, and by applying H\"older inequality, we obtain
\EQN{
\norm{B((v,b),(u,a))}_{L^s_T(E^p_m\times E^p_m)} 
&\lec \norm{v\otimes u}_{L^{\frac{s}2}_TE^{\frac{p}2}_{\td m}} + \norm{b\otimes a}_{L^{\frac{s}2}_TE^{\frac{p}2}_{\td m}} + \norm{v\otimes a}_{L^{\frac{s}2}_TE^{\frac{p}2}_{\td m}} + \norm{b\otimes u}_{L^{\frac{s}2}_TE^{\frac{p}2}_{\td m}}\\
&\lec \norm{(v,b)}_{L^s_T(E^p_m\times E^p_m)} \norm{(u,a)}_{L^s_T(E^p_{p}\times E^p_{p})}
\\
&\lec \norm{(v,b)}_{L^s_T(E^p_m\times E^p_m)} \norm{(u,a)}_{L^s_T(E^p_{p}\times E^p_{p})},\ \text{ if }p\ge m.
}
The condition $\td m \ge 1$ is equivalent to $m \ge p'{=\frac{p}{p-1}}$. 
Hence,
\EQ{\label{eq3.47}
\norm{B((v,b),(u,a))}_{L^s_T(E^p_m\times E^p_m)} \lec \norm{(v,b)}_{L^s_T(E^p_m\times E^p_m)} \norm{(u,a)}_{L^s_T(E^p_m\times E^p_m)},\ \text{ if }p'\le m\le p.
}

Let $\cM(s,q)$ denote the set of all $m$ for which we can establish $(v,b)\in L^s (E^p_m\times E^p_m)$.
Since $L^s E^p_m \subset L^s E^p_{ m_2}$ for $m <  m_2$, the set $\cM(s,q)$, if nonempty, must be an interval of the form
\EQ{\label{m-region}
\underline m < m \le \infty, \quad \text{or}\quad \underline m \le  m \le \infty,
}
for some $\underline m =\underline m(s,q)\in [1,\infty]$. 

Define $m_1$ by
\[
\frac 1{m_1} = \frac 1q - \frac 2{3s}.
\]
Given $s<\infty$ and $q\le 3$, and recalling that
$\frac 1{p} = \frac 13 - \frac 2{3s}$,
we deduce that 
\EQ{\label{m1.range}
q < m_1 \le p.
}

From \cite[Lemma 2.4]{BLT-MathAnn2024} with $d=r=3$, we have
\EQ{\label{eq3.50}
\norm{e^{t\De} f}_{L^s_{T=\infty} E^p_{m} }  
\lesssim  \|f\|_{E^3_q},
}
with constants independent of $R$, provided one of the following holds:
\EN{
\item [A.] $m_1 \le m\le s$, (and $m_1<m\le s$ if $q=1$), %

\item [E$_1$.] $ s<p=m$,  

\item [E$_2$.] $s<m$ and $\frac 1q \ge\frac{5}{3s}$.
}
If the linear estimate \eqref{eq3.50} holds, and if $p'\le m \le p$ so that the bilinear estimate \eqref{eq3.47} holds, then the Picard iteration $(v^{(n)}, b^{(n)})$ converges in $L^s (E^p_m\times E^p_m)$ for sufficiently small initial data.

\noindent{\bf Case A}: This case applies as soon as $m_1 \le s$, i.e., $\frac 1q \ge\frac{5}{3s}$. Strict inequality $m_1<s$ holds when $q=1$.
By \eqref{m1.range} and since $p'<2<s$, the value
$$m^*(s,q)=\max(p', m_1)$$ 
belongs to both $[p',p]$ and $[m_1,s]$.
Thus, $(v^{(n)}, b^{(n)})$ converges in $L^s (E^p_m\times E^p_m)$ for $m=m^*$, or for $m$ slightly larger than $m^*$ when $q=1$ and $3 \le s\le 4$.

\noindent{\bf Case E$_1$}: This case applies when $3\le s<5$, allowing us to take $m=p$. It thus covers the parameter range $1\le q \le 3 \le s <5$, and $\frac 1q <\frac{5}{3s}$.

\noindent{\bf Case E$_2$}: While this case also requires $\frac 1q \ge\frac{5}{3s}$, it does not yield smaller admissible $m$ than Case A.

This completes the proof of $L^s E^p_m$-estimates, and concludes the proof of
Theorem \ref{thrm:critical2-mhd}.
\qed

\section{Local energy solutions in Wiener amalgam spaces}\label{sec-weak}

In this section, we address weak solutions and establish Theorems \ref{bt4-thm-1.3-mhd}, \ref{thm-1.4BT4-mhd}, \ref{th4.8-mhd} for the MHD equations \eqref{MHD}, along with Theorems \ref{bt4-thm-1.3-vnsed}, \ref{thm-1.4BT4-vnsed}, \ref{th4.8-vnsed} for the viscoelastic Navier--Stokes equations with damping \eqref{vNSEd}.
Given the similarity between the structures of \eqref{vNSEd} and \eqref{MHD}, we focus on presenting the proofs of Theorems \ref{bt4-thm-1.3-mhd}, \ref{thm-1.4BT4-mhd}, \ref{th4.8-mhd}  for \eqref{MHD}.
The details of verification of Theorems \ref{bt4-thm-1.3-vnsed}, \ref{thm-1.4BT4-vnsed}, \ref{th4.8-vnsed} for \eqref{vNSEd} are left to the readers.

Define
\[
N_R^0(v_0,b_0) = \sup_{x_0\in\R^3} \frac1R \int_{B_R(x_0)} \bke{|v_0|^2 + |b_0|^2} dx,
\]
and
\EQN{
N_{q,R}^0(v_0,b_0) &=  \frac1R\bkt{\sum_{k\in\ZZ^3} \bke{\int_{B_R(kR)} \bke{|v_0|^2 + |b_0|^2} dx}^{q/2} }^{2/q},\\
N_{\infty,R}^0(v_0,b_0) &= N_R^0(v_0,b_0):= \sup_{x_0\in\R^3} \frac1R \int_{B_R(x_0)} \bke{|v_0|^2 + |b_0|^2} dx.
}

\begin{lemma}\label{lem2.1-BT-CPDE2020}
Let $v_0,b_0\in L^2_\uloc$ be divergence free, and assume $(v,b)\in\mathcal{N}_{\rm MHD}(v_0,b_0)$. 
For all $r>0$ we have
\EQ{\label{eq2.1-BT-CPDE2020}
\esssup_{0\le t\le\si r^2} \sup_{x_0\in\R^3} \int_{B_r(x_0)} \frac{|v|^2 + |b|^2}2\, dxdt + \sup_{x_0\in\R^3} \int_0^{\si r^2} \int_{B_r(x_0)} \bke{|\nb v|^2 + |\nb b|^2} dxdt < CA_0(r),
}
\EQ{\label{eq2.2-BT-CPDE2020}
\sup_{x_0\in\R^3} \int_0^{\si r^2} \int_{B_r(x_0)} \bke{|v|^3 + |b|^3 + |p - c_{x_0,r}(t)|^{3/2}} dxdt < Cr^{\frac12}(A_0(r))^{\frac32},
}
where
\[
A_0(r) = rN_r^0(v_0,b_0) = \sup_{x_0\in\R^3} \int_{B_r(x_0)} \bke{|v_0|^2 + |b_0|^2} dx,
\]
and
\EQ{\label{eq2.3-BT-CPDE2020}
\si = \si(r) = c_0\min\bket{(N_r^0(v_0,b_0))^{-2},1},
}
for a small universal constant $c_0>0$.
\end{lemma}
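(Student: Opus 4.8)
The plan is to transplant the local-in-time a priori estimate for local Leray (Bradshaw--Tsai, CPDE 2020) solutions of the Navier--Stokes equations to the MHD system; the coupling terms will turn out to be harmless. By translation invariance it suffices to bound everything over a single ball and take $\sup_{x_0}$ at the end. Fix $r>0$, write $T_\ast=\sigma(r)r^2$, and note that since $c_0<1$ we have $T_\ast<r^2$, so part~(2) of Definition~\ref{def-local-energy-sol-mhd} (applied with $R=r$) makes all the quantities below a priori finite on $[0,T_\ast\wedge T)$. For $0<\tau<T_\ast\wedge T$ I would introduce the running energy
\[
\mathcal A(\tau):=\esssup_{0\le s\le\tau}\ \sup_{x_0}\int_{B_r(x_0)}\bke{|v|^2+|b|^2}(s)\,dx+\sup_{x_0}\int_0^\tau\!\!\int_{B_r(x_0)}\bke{|\nb v|^2+|\nb b|^2}\,dx\,ds,
\]
the cubic quantity $\mathcal C_\rho(\tau):=\sup_{x_0}\int_0^\tau\!\int_{B_\rho(x_0)}(|v|^3+|b|^3)$, and the pressure quantity $\mathcal P(\tau):=\sup_{x_0}\int_0^\tau\!\int_{B_{2r}(x_0)}|\pi-c_{x_0,r}|^{3/2}$. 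The goal is $\mathcal A(T_\ast)\lesssim A_0(r)$ together with $\mathcal C_r(T_\ast)+\mathcal P(T_\ast)\lesssim r^{1/2}A_0(r)^{3/2}$, which give \eqref{eq2.1-BT-CPDE2020}--\eqref{eq2.2-BT-CPDE2020} (reading $T_\ast$ as $T_\ast\wedge T$ when $T<\infty$).

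First I would run the local energy inequality. Take $\theta=\theta_{x_0,r}\in C^\infty_c(B_{2r}(x_0))$ with $\theta\equiv1$ on $B_r(x_0)$, $0\le\theta\le1$, $|\nb^k\theta|\lesssim r^{-k}$; combining parts~(4) and~(5) of the definition in the usual way (test \eqref{lei_mhd} with $\theta(x)\psi(s)$, $\psi\nearrow\one_{(0,\tau)}$, and use $L^2_{\loc}$ convergence to the data as $s\to0^+$) produces, for a.e.\ $\tau$,
\[
\int\theta\bke{|v|^2+|b|^2}(\tau)\,dx+2\int_0^\tau\!\!\int\theta\bke{|\nb v|^2+|\nb b|^2}\le\int\theta\bke{|v_0|^2+|b_0|^2}\,dx+\mathrm{(I)}+\mathrm{(II)}+\mathrm{(III)},
\]
with $\mathrm{(I)}=\int_0^\tau\!\int(|v|^2+|b|^2)\De\theta$, $\mathrm{(II)}=\int_0^\tau\!\int(|v|^2+|b|^2+2\pi)(v\cdot\nb\theta)$, $\mathrm{(III)}=-2\int_0^\tau\!\int(v\cdot b)(b\cdot\nb\theta)$. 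The data term is $\le A_0(2r)\lesssim A_0(r)$; $|\mathrm{(I)}|\lesssim r^{-2}\tau\,\mathcal A(\tau)\lesssim\sigma\,\mathcal A(\tau)$ using $\tau\le\sigma r^2$; and, using $\div v=0$ to subtract $c_{x_0,r}(s)$ from $\pi$ and then Hölder, with the new cross term controlled by $|v\cdot b|\le\tfrac12(|v|^2+|b|^2)$, one gets $|\mathrm{(II)}|+|\mathrm{(III)}|\lesssim r^{-1}\mathcal C_{2r}(\tau)+r^{-1}\mathcal P(\tau)^{2/3}\mathcal C_{2r}(\tau)^{1/3}$. Taking $\sup_{x_0}$, $\esssup_\tau$, and covering $B_{2r}$ by boundedly many balls $B_r(x_i)$ gives $\mathcal A(\tau)\lesssim A_0(r)+\sigma\,\mathcal A(\tau)+r^{-1}\mathcal C_{2r}(\tau)+r^{-1}\mathcal P(\tau)^{2/3}\mathcal C_{2r}(\tau)^{1/3}$.

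Next I would close the loop by bounding $\mathcal C_\rho$ and $\mathcal P$ by $\mathcal A$. Gagliardo--Nirenberg on a ball gives $\|f\|_{L^3(B_\rho)}^3\lesssim\|\nb f\|_{L^2(B_\rho)}^{3/2}\|f\|_{L^2(B_\rho)}^{3/2}+\rho^{-3/2}\|f\|_{L^2(B_\rho)}^3$; integrating in $s$, Hölder in time, covering, and $\tau\le\sigma r^2\le r^2$ yield $\mathcal C_\rho(\tau)\lesssim\sigma^{1/4}r^{1/2}\mathcal A(\tau)^{3/2}$ for $\rho\in\{r,2r,4r\}$. For $\mathcal P$ I insert \eqref{eq-pi-formula-mhd} with $R=r$: the Calder\'on--Zygmund term obeys $\|{-}\De^{-1}\div\div[(v\otimes v-b\otimes b)\chi_{4r}]\|_{L^{3/2}(\R^3)}\lesssim\|(v,b)\|_{L^3(B_{4r})}^2$, with time integral $\lesssim\mathcal C_{4r}(\tau)$, while for $x\in B_{2r}(x_0)$ and $|x_0-y|\ge4r$ one has $|K(x-y)-K(x_0-y)|\lesssim r|x_0-y|^{-4}$, so a dyadic decomposition plus covering gives $\|\,\text{far field}\,(\cdot,s)\|_{L^\infty(B_{2r})}\lesssim r^{-3}\sup_{x_1}\int_{B_r(x_1)}(|v|^2+|b|^2)(s)$ and hence a contribution $\lesssim\tau r^{-3/2}\mathcal A(\tau)^{3/2}\lesssim\sigma r^{1/2}\mathcal A(\tau)^{3/2}$; thus $\mathcal P(\tau)\lesssim\sigma^{1/4}r^{1/2}\mathcal A(\tau)^{3/2}$. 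Combining everything,
\[
\mathcal A(\tau)\le C_0\Bigl(A_0(r)+\sigma\,\mathcal A(\tau)+\sigma^{1/4}r^{-1/2}\mathcal A(\tau)^{3/2}\Bigr)
\]
for a dimensional constant $C_0$.

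The main obstacle is closing this inequality, since the last term is superlinear in $\mathcal A(\tau)$ and cannot be absorbed on its own; this is exactly where the choice $\sigma(r)=c_0\min\{(N^0_r)^{-2},1\}$ is used. If on some sub-interval $\mathcal A(\tau)\le 2C_0A_0(r)=2C_0rN^0_r$, then, using $\sigma(N^0_r)^2\le c_0$ and $\sigma\le c_0$, one has $\sigma^{1/4}r^{-1/2}\mathcal A(\tau)^{3/2}\lesssim\bigl(\sigma(N^0_r)^2\bigr)^{1/4}N^0_r\,r\lesssim c_0^{1/4}A_0(r)$ and $\sigma\,\mathcal A(\tau)\lesssim c_0A_0(r)$, so for $c_0$ chosen small (depending only on $C_0$) the right-hand side of the displayed inequality is strictly below $2C_0A_0(r)$ --- a strict improvement of the hypothesis. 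Since $\mathcal A$ is finite and non-decreasing on $[0,T_\ast\wedge T)$ with $\mathcal A(0^+)=A_0(r)<2C_0A_0(r)$, a standard continuity/bootstrap argument propagates $\mathcal A(\tau)\le 2C_0A_0(r)$ up to $\tau=T_\ast\wedge T$, which is \eqref{eq2.1-BT-CPDE2020}; feeding this back into the bounds for $\mathcal C_\rho$ and $\mathcal P$ yields $\sup_{x_0}\int_0^{T_\ast}\!\int_{B_r(x_0)}(|v|^3+|b|^3)\le\mathcal C_{2r}(T_\ast)\lesssim r^{1/2}A_0(r)^{3/2}$ and $\sup_{x_0}\int_0^{T_\ast}\!\int_{B_r(x_0)}|\pi-c_{x_0,r}|^{3/2}\le\mathcal P(T_\ast)\lesssim r^{1/2}A_0(r)^{3/2}$, i.e.\ \eqref{eq2.2-BT-CPDE2020}. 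I do not expect extra difficulty from the MHD structure: the cross term in \eqref{lei_mhd} and the $-b\otimes b$ contribution in \eqref{eq-pi-formula-mhd} are pointwise dominated by the corresponding $v$-terms, so only the bookkeeping changes; the genuine care is in choosing $c_0$ and in making the continuity argument rigorous despite the essential supremum in $\mathcal A$ --- handled, as usual, by working with the continuous-in-time $L^2_{\loc}$ representative available on short time intervals via parts~(4) and~(6).
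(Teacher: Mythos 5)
Your proposal is correct and follows essentially the same route as the paper: the local energy inequality with a scaled cutoff, Gagliardo--Nirenberg for the cubic terms, the Calder\'on--Zygmund plus far-field decomposition of the pressure via \eqref{eq-pi-formula-mhd}, and a continuation-in-time argument that closes because $\sigma(N_r^0)^2\le c_0$. The only cosmetic difference is that the paper keeps the resulting estimate as an integral inequality in $\sigma$ and invokes the nonlinear Gr\"onwall lemma of Bradshaw--Tsai before the continuation step, whereas you pull the monotone quantity $\mathcal A(\tau)$ out of the time integrals and absorb algebraically; both yield the same bound.
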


\begin{proof}
The proof of the lemma is an adaption of the proof of \cite[Lemma 2.1]{BT-CPDE2020} for the Navier--Stokes equations to the MHD equations.

By H\"older and Young inequalities, for any $\de>0$, we have
\[
\norm{v}_{L^3(0,T;L^3)}^3 \lec \norm{v}_{L^6(0,T;L^2)}^{3/2} \norm{v}^{3/2}_{L^2(0,T;L^6)} \lec (\de R)^{-3} \norm{v}_{L^6(0,T;L^2)}^6 + \de R\norm{v}_{L^2(0,T;L^6)}^2,
\]
and
\[
\norm{b}_{L^3(0,T;L^3)}^3 \lec \norm{b}_{L^6(0,T;L^2)}^{3/2} \norm{b}^{3/2}_{L^2(0,T;L^6)} \lec (\de R)^{-3} \norm{b}_{L^6(0,T;L^2)}^6 + \de R\norm{b}_{L^2(0,T;L^6)}^2.
\]
In addition, applying Sobolev inequality yields
\EQN{
\frac1R \int_0^{\si R^2} \int_{B_{2R}(x_0)} |v|^3\, dxdt
&\le \frac{C}{\de^3R^4} \int_0^{\si R^2} \bke{\int_{B_{2R}(x_0)} |v|^2\, dx}^3 dt + \frac{C\de}{R^2} \int_0^{\si R^2} \int_{B_{2R}(x_0)} |v|^2\, dxdt\\
&\quad + C\de\sup_{x_0\in\R^3} \int_0^{\si R^2} \int_{B_{2R}(x_0)} |\nb v|^2\, dxdt,
}
and
\EQN{
\frac1R \int_0^{\si R^2} \int_{B_{2R}(x_0)} |b|^3\, dxdt
&\le \frac{C}{\de^3R^4} \int_0^{\si R^2} \bke{\int_{B_{2R}(x_0)} |b|^2\, dx}^3 dt + \frac{C\de}{R^2} \int_0^{\si R^2} \int_{B_{2R}(x_0)} |b|^2\, dxdt\\
&\quad + C\de\sup_{x_0\in\R^3} \int_0^{\si R^2} \int_{B_{2R}(x_0)} |\nb b|^2\, dxdt,
}
where $C$ is a positive constant independent of $\si$.
For the pressure term, using the local pressure expansion \eqref{eq-pi-formula-mhd}, we obtain
\EQ{
\frac1R \int_0^{\si R^2} \int_{B_{2R}(x_0)} |\pi - c_{x_0,R}(t)|^{3/2}
\le \frac{C}{R} \int_0^{\si R^2} \int_{B_{4R}(x_0)} \bke{|v|^3 + |b|^3} dxdt + \frac{C}{R^4} \int_0^{\si R^2} \bkt{\overline{A}\bke{\frac{t}{R^2}}}^{3/2} dt,
}
where
\[
\overline{A}(\si) = \esssup_{0\le t\le \si R^2} \sup_{x_0\in\R^3} \int_{\R^3} \frac{|v|^2 + |b|^2}2\, \phi(x-x_0)\, dx.
\]
Applying the local energy inequality, we deduce
\EQS{
\int_{\R^3} \frac{|v|^2+|b|^2}2 \phi(x-x_0)\, dx& + \int_0^t \int_{\R^3} \bke{|\nb v|^2 + |\nb b|^2} \phi(x-x_0)\, dxds\\
&\le \frac{A_0(R)}2 + \frac{C}{R^2} \int_0^{\si R^2} \overline{A}\bke{\frac{s}{R^2}} ds + \frac{C}{R^4} \int_0^{\si R^2} \bke{\overline{A}\bke{\frac{s}{R^2}}}^3 ds,
}
for sufficiently small $\de$.
Therefore, we conclude that
\EQ{
\frac{\overline{A}(\si)}R \le \frac{A_0(R)}{2R} + \frac{C}{R^2} \int_0^{\si R^2} \frac{\overline{A}\bke{\frac{s}{R^2}}}R\, ds + \frac{C}{R^2} \int_0^{\si R^2} \bke{\frac{\overline{A}\bke{\frac{s}{R^2}}}R}^2 ds.
}
Making the change of variables $\tau = s/R^2$ and applying Gr\"onwall's inequality \cite[Lemma 2.2]{BT-CPDE2020}, we derive
\[
\overline{A}(\si) \le A_0(R),
\]
for $t\in[0,T_R]$ where $T_R = \si R^2$, and 
\[
\si = c_0\min\bket{(N_R^0(v_0,b_0))^{-2},1}
\]
for some small constant $c_0$ independent of $R$ and $(v_0,b_0)$.
Note that $\overline{A}(\si)$ is nondecreasing in $\si$, and its continuity in $\si$ follows directly from the local energy inequality. With the above estimate for $\overline{A}(\si)$, the lemma follows by the standard \emph{continuation in $\si$} argument, provided that $c_0$ is chosen sufficiently small.
\end{proof}

We recall a local regularity criterion for \emph{suitable weak solutions} that is a replacement for the case of the Navier--Stokes equations proposed in \cite{CKN-CPAM1982} (see also \cite{Lin-CPAM1998, NRS-Acta1996}).
See \cite{HX-JFA2005} for the definition of suitable weak solutions.

\begin{lemma}[$\ep$-regularity criterion {\cite[Theorem 3.1]{MNS-JMS2007}}]\label{lem2.3-BT-CPDE2020}
There exists a universal constant $\ep_*>0$ such that, if $(v,b,\pi)$ is a suitable weak solution of \eqref{MHD} in $Q_r = Q_r(x_0,t_0) = B_r(x_0)\times(t_0-r^2, t_0)$, $B_r(x_0)\subset\R^3$, and
\[
\ep^3 = \frac1{r^2} \int_{Q_r} \bke{|v|^3 + |v|^3 + |\pi|^{3/2}} dxdt < \ep_*,
\]
then $v$ and $b$ are H\"older continuous on $\overline{Q_{r/2}}$.
\end{lemma}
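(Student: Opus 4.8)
The plan is to carry out the Caffarelli--Kohn--Nirenberg-type $\ep$-regularity argument for the parabolic system \eqref{MHD}; since the statement is precisely \cite[Theorem 3.1]{MNS-JMS2007}, one may of course simply invoke that reference, but I would reconstruct it as follows. First, using the parabolic rescaling $(v,b,\pi)(x,t)\mapsto(\la v,\la b,\la^2\pi)(x_0+\la x,\,t_0+\la^2 t)$, which maps suitable weak solutions of \eqref{MHD} to suitable weak solutions and leaves the quantity $\frac1{r^2}\int_{Q_r}(|v|^3+|b|^3+|\pi|^{3/2})$ invariant, I reduce to the normalized case $(x_0,t_0)=(0,0)$, $r=1$: the goal becomes to show that if this integral over $Q_1$ is below a universal $\ep_*$ then $v,b$ are H\"older continuous on $\overline{Q_{1/2}}$.

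The core is a one-scale decay estimate, which I would prove by a blow-up/compactness argument in the spirit of Lin \cite{Lin-CPAM1998}: there exist $\theta\in(0,\tfrac12)$ and $\ep_*>0$ so that a suitable weak solution on $Q_\rho$ with $\Psi(\rho):=\frac1{\rho^2}\int_{Q_\rho}(|v|^3+|b|^3+|\pi-\bar\pi_\rho|^{3/2})\le\ep_*$ satisfies $\Psi(\theta\rho)\le\tfrac12\Psi(\rho)$. If this failed, there would be suitable weak solutions on $Q_1$ (after rescaling) violating it with $\Psi(1)\to0$; after a suitable renormalization---dividing $v,b$ and $\pi$ by appropriate powers of $\Psi(1)$ and using the pressure representation \eqref{eq-pi-formula-mhd} to keep the renormalized pressure controlled---one obtains solutions of \eqref{MHD} in which the quadratic terms $v\cdot\nb v-b\cdot\nb b$ and $v\cdot\nb b-b\cdot\nb v$ carry a vanishing coefficient. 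Feeding the local energy inequality \eqref{lei_mhd} into an Aubin--Lions compactness argument, one passes to a limit solving the \emph{linear} system $\pd_tV-\De V+\nb q=0$, $\nb\cdot V=0$, $\pd_tC-\De C=0$ with $q$ harmonic in $x$; interior estimates for the Stokes system, the heat equation and harmonic functions give $\Psi_{\mathrm{lin}}(\theta)\le c_0\theta\,\Psi_{\mathrm{lin}}(1)$ with $c_0$ universal, and choosing $\theta$ so that $c_0\theta<\tfrac14$ contradicts the failure of the estimate for large $k$. Iterating the one-scale estimate from $\rho=\tfrac12$ downward yields, at every point of $Q_{1/2}$, a parabolic Morrey-type decay $\Psi(\rho)\lesssim\rho^{\ga}$ for some $\ga>0$. (Alternatively one can run the classical iteration of the dimensionless quantities $A(\rho),E(\rho),\rho^{-2}\int_{Q_\rho}(|v|^3+|b|^3),\rho^{-2}\int_{Q_\rho}|\pi|^{3/2}$ built from \eqref{lei_mhd}, Gagliardo--Nirenberg interpolation, and \eqref{eq-pi-formula-mhd}, but the bookkeeping is heavier.)

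From the uniform decay $\Psi(\rho)\lesssim\rho^{\ga}$ on $Q_{1/2}$ I would conclude H\"older continuity (indeed smoothness) on $\overline{Q_{1/2}}$ by a standard bootstrap: the $v$- and $b$-equations become heat systems with right-hand sides $\nb\cdot(v\otimes v-b\otimes b)$ and $\nb\cdot(v\otimes b-b\otimes v)$ lying in a subcritical parabolic Morrey class, so parabolic Campanato/Schauder estimates upgrade $v,b$ first to $L^\infty_\loc$ and then to $C^{0,\al}_\loc$. The step I expect to be the main obstacle is the compactness/limit passage in the blow-up argument: one must verify that a bounded sequence of suitable weak solutions of the (mildly nonlinear) MHD system converges locally strongly enough in $L^3$ to pass the quadratic terms \emph{and} the pressure to the limit, which is exactly where the local energy inequality \eqref{lei_mhd} and the local pressure structure are needed; once this is in place the magnetic field only contributes a heat-equation component in the linearized limit, so the coupling of \eqref{MHD} costs essentially nothing beyond carrying the $b\otimes b$ term through the pressure, and every remaining step transcribes the Navier--Stokes proof verbatim.
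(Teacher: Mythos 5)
The paper offers no proof of this lemma at all: it is imported verbatim from \cite[Theorem 3.1]{MNS-JMS2007} (just as the vNSEd counterpart is imported from \cite{Hynd-SIMA2013}), and the definition of suitable weak solution is itself outsourced to \cite{HX-JFA2005}. So there is no in-paper argument to measure your proposal against; simply invoking the reference, as you note one may, is exactly what the author does, and for the way the lemma is used in Theorem \ref{thm1.2-BT-CPDE2020} (as a black box applied to $\td p = \pi - c_{x_0,R}(t)$) that is the right call. Your reconstruction is nonetheless a faithful outline of how the cited result is actually proved: the scaling reduction is correct (the quantity $\frac1{r^2}\int_{Q_r}(|v|^3+|b|^3+|\pi|^{3/2})$ is indeed invariant under $(v,b,\pi)\mapsto(\la v,\la b,\la^2\pi)(x_0+\la\cdot,t_0+\la^2\cdot)$), the renormalized blow-up limit is the time-dependent Stokes system for $V$ coupled to a heat equation for $C$ with spatially harmonic $q$ (the quadratic couplings all carry the vanishing factor), and you correctly isolate the compactness passage as the load-bearing step. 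This is the Lin \cite{Lin-CPAM1998}/Ladyzhenskaya--Seregin scheme, which is essentially what \cite{MNS-JMS2007} runs for MHD.

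Two places in the sketch would need care if written out. First, before Aubin--Lions can be applied you must extract uniform $L^\infty L^2\cap L^2\dot H^1$ bounds on the \emph{renormalized} sequence from the generalized energy inequality \eqref{lei_mhd}; the cubic and pressure terms there carry only one factor of $\ep_k$ after renormalization, so absorbing them uses the smallness hypothesis and is not automatic. Second, the one-step decay $\Psi(\theta\rho)\le\tfrac12\Psi(\rho)$ with the pressure included on equal footing is provable, but only because at each scale one re-subtracts the mean $\bar\pi_{\theta\rho}$ and re-splits $\pi$ into a Calder\'on--Zygmund part (which inherits the velocity's strong $L^{3/2}$ convergence) and a harmonic part (which converges in $C^1_{\loc}$ by interior compactness and whose oscillation supplies the $\theta$-gain); if you keep a single fixed decomposition across scales the pressure term does not iterate. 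Both points are standard, and the lemma's hypothesis with $|\pi|^{3/2}$ rather than $|\pi-\bar\pi_r|^{3/2}$ only strengthens the assumption, so your plan closes. In short: correct strategy, consistent with the cited source, with the usual MHD cost being nothing more than carrying $b\otimes b$ through the pressure representation.
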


The corresponding $\ep$-regularity criterion for  weak solutions of the viscoelastic Navier--Stokes equations with damping, \eqref{vNSEd}, is established in \cite[Proposition 3.2]{Hynd-SIMA2013}.

\begin{theorem}[Initial and eventual regularity]\label{thm1.2-BT-CPDE2020}
There is a small positive constant $\ep_1$ such that the following holds.
Assume that $v_0, b_0\in L^2_\uloc(\R^3)$ are divergence free and that $(v,b)\in\mathcal{N}_{\rm MHD}(v_0,b_0)$.
Let
\[
N_R^0 := \sup_{x_0\in\R^3} \frac1R \int_{B_R(x_0)} \bke{|v_0|^2 + |b_0|^2} dx.
\]

1. If there exists $R_0>0$ so that
\[
\sup_{R\ge R_0} N_R^0 < \ep_1,
\]
then $(v,b)$ has eventual regularity.
Moreover, if $R_0^2\lec t$, then
\[
t^{1/2} \norm{(v,b)}_{L^\infty\times L^\infty} \lec \bke{\sup_{R\ge R_0} N_R^0}^{1/2} < \infty.
\]

2. If there exists $R_0>0$ so that 
\[
\sup_{R\le R_0} N_R^0 < \ep_1,
\]
then $(v,b)$ has initial regularity. 
Moreover, if $t\le c_0R_0^2$, then
\[
t^{1/2}\norm{(v,b)(\cdot,t)}_{L^\infty\times L^\infty} \lec \bke{\sup_{R\le R_0}N_R^0}^{1/2} <\infty.
\]

3. If $(v_0,b_0)$ satisfies
\[
\sup_{R>0}N_R^0<\ep_1,
\]
then the set of singular times of $(v,b)$ in $\R^3\times(0,\infty)$ is empty.
Moreover, for all $t>0$,
\[
t^{1/2}\norm{(v,b)(\cdot,t)}_{L^\infty\times L^\infty} \lec \bke{\sup_{R>0}N_R^0}^{1/2} < \infty.
\]
\end{theorem}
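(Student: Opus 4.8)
The plan is to deduce all three parts from a single local statement and then choose the scale $R=R(t_0)$ by parabolic scaling in each case. The local statement is: \emph{if $N_R^0<\ep_1$ for some $R>0$ and $(x_0,t_0)$ satisfies $t_0\le\tfrac{c_0}{2}R^2$, then $(v,b)$ is regular at $(x_0,t_0)$ and $|v(x_0,t_0)|+|b(x_0,t_0)|\lesssim t_0^{-1/2}(N_R^0)^{1/2}$.} To prove it, note first that $N_R^0<\ep_1\le1$ forces $\si(R)=c_0$ in Lemma \ref{lem2.1-BT-CPDE2020}, so the a priori bounds there hold on $B_R(x_0)\times(0,c_0R^2)$; setting $\rho=\tfrac14\sqrt{t_0}$ we have $\rho<R$ and $Q_\rho(x_0,t_0)\subset B_R(x_0)\times(0,c_0R^2)$. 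Then \eqref{eq2.2-BT-CPDE2020} gives $\int_{Q_\rho(x_0,t_0)}\bke{|v|^3+|b|^3+|\pi-c_{x_0,R}(t)|^{3/2}}\,dxdt\le CR^{1/2}(RN_R^0)^{3/2}=CR^2(N_R^0)^{3/2}$, hence the scale-invariant quantity satisfies $\rho^{-2}\int_{Q_\rho}(\cdots)\le\tfrac{32C}{c_0}(N_R^0)^{3/2}$, which is $<\ep_*$ once $\ep_1$ is fixed small (depending only on $c_0,C,\ep_*$). Applying the $\ep$-regularity criterion Lemma \ref{lem2.3-BT-CPDE2020} then yields Hölder continuity of $(v,b)$ on $\overline{Q_{\rho/2}(x_0,t_0)}$, which standard bootstrapping upgrades to full regularity at $(x_0,t_0)$; the quantitative, scale-iterated form of the criterion, as in \cite{CKN-CPAM1982,MNS-JMS2007}, simultaneously gives $|v(x_0,t_0)|+|b(x_0,t_0)|\lesssim\rho^{-1}\bke{(N_R^0)^{3/2}}^{1/3}\lesssim t_0^{-1/2}(N_R^0)^{1/2}$.

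With the local statement in hand, Part 1 follows by taking, for each $t_0\ge t_1:=\tfrac{c_0}{2}R_0^2$, the scale $R=\sqrt{2t_0/c_0}\ge R_0$, so that $N_R^0\le\sup_{R'\ge R_0}N_{R'}^0<\ep_1$ and $t_0=\tfrac{c_0}{2}R^2$; this gives regularity at all such $(x_0,t_0)$ (eventual regularity) and, after taking the supremum in $x_0$, the bound $t^{1/2}\norm{(v,b)(\cdot,t)}_{L^\infty\times L^\infty}\lesssim\bke{\sup_{R'\ge R_0}N_{R'}^0}^{1/2}$ whenever $t\gtrsim R_0^2$. Part 2 follows symmetrically by choosing, for $0<t_0\lesssim R_0^2$, a scale $R$ with $\sqrt{t_0/c_0}<R\le R_0$ and $t_0\le\tfrac{c_0}{2}R^2$ (e.g.\ $R=\sqrt{2t_0/c_0}$), so that $N_R^0\le\sup_{R'\le R_0}N_{R'}^0<\ep_1$; this gives regularity throughout $\R^3\times(0,c_0R_0^2)$ (initial regularity) and the corresponding bound for $t\lesssim R_0^2$. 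Part 3 is the special case in which both halves are available for every $R_0$: given any $t_0>0$, the admissible choice $R=\sqrt{2t_0/c_0}>0$ shows $(v,b)$ is regular at every $(x_0,t_0)\in\R^3\times(0,\infty)$, so there are no singular times, and $t^{1/2}\norm{(v,b)(\cdot,t)}_{L^\infty\times L^\infty}\lesssim\bke{\sup_{R>0}N_R^0}^{1/2}$.

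The one genuinely nontrivial point is the pressure: Lemma \ref{lem2.1-BT-CPDE2020} controls only $\pi-c_{x_0,R}(t)$, whereas Lemma \ref{lem2.3-BT-CPDE2020} is phrased for $\pi$. I will handle this by observing that the $\ep$-regularity criterion is invariant under replacing $\pi$ by $\pi-c_{x_0,R}(t)$: since $\nb c_{x_0,R}(t)=0$ and $\int c_{x_0,R}(t)\,v\cdot\nb\phi\,dx=0$ by $\div v=0$, the triple $(v,b,\pi-c_{x_0,R})$ satisfies \eqref{MHD} and the local energy inequality of Definition \ref{def-local-energy-sol-mhd} on $Q_\rho$ exactly as $(v,b,\pi)$ does, hence is a suitable weak solution there (a local energy solution is suitable by properties 1,2,5 of the definition). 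A second, more routine, technical step is extracting the vertex bound with the sharp power $(N_R^0)^{1/2}$: the qualitative Hölder conclusion of Lemma \ref{lem2.3-BT-CPDE2020} alone does not suffice, and one must iterate the smallness down in scale. All remaining work is bookkeeping of the absolute constants $c_0$ and $\ep_*$ and the fixed ratio $\rho/\sqrt{t_0}=\tfrac14$, so that $\ep_1$ is fixed once and for all; otherwise the argument transcribes \cite[proof of Theorem 1.2]{BT-CPDE2020} for Navier–Stokes, with the MHD $\ep$-regularity of \cite{MNS-JMS2007} in place of the Navier–Stokes one.
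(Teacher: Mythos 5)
Your proposal follows essentially the same route as the paper: renormalize the pressure to $\pi-c_{x_0,R}(t)$, use the a priori bounds of Lemma \ref{lem2.1-BT-CPDE2020} (with $\si(R)=c_0$ since $N_R^0<\ep_1\le 1$) to verify the smallness hypothesis of the $\ep$-regularity criterion Lemma \ref{lem2.3-BT-CPDE2020} at a spatial scale comparable to $\sqrt{t_0}$, extract the quantitative bound $|v|+|b|\lec \ep r^{-1}$, and then let $R$ range over $[R_0,\infty)$, $(0,R_0]$, or $(0,\infty)$ to cover the relevant time slabs. The only point to correct is the hypothesis of your ``local statement'': with $\rho=\tfrac14\sqrt{t_0}$ and $\int_{Q_\rho}(\cdots)\le CR^2(N_R^0)^{3/2}$, you get $\rho^{-2}\int_{Q_\rho}(\cdots)\le 16C\,(R^2/t_0)\,(N_R^0)^{3/2}$, which is bounded by $\tfrac{32C}{c_0}(N_R^0)^{3/2}$ only when $t_0\ge\tfrac{c_0}{2}R^2$ --- the opposite of your stated assumption $t_0\le\tfrac{c_0}{2}R^2$. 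For $t_0\ll R^2$ the scale-invariant quantity is not controlled and the step fails, so the local claim should be stated for $t_0\asymp R^2$ (say $\tfrac{c_0}{2}R^2\le t_0\le c_0R^2$, which is exactly the slab the paper uses). Since every one of your applications instantiates $t_0=\tfrac{c_0}{2}R^2$, the overall argument is unaffected once the hypothesis is restated; the resulting factor-of-two discrepancies in the time thresholds are absorbed by the implicit constants in the theorem.
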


\begin{proof}
The proof of the theorem is an adaption of the proof of \cite[Theorem 1.2]{BT-CPDE2020} for the Navier--Stokes equations to the MHD equations.

Assume there exists $R_0>0$ such that for all $R\ge R_0$, we have $N_R^0(v_0,b_0)<\ep_1$, where $\ep_1\in(0,1)$ is a small constant to be determined.

Fix $x_0\in\R^3$ and $R>R_0$. Define $\td p(x,t) = p(x,t) - c_{x_0,R}(t)$ where $c_{x_0,R}(t)$ is the function of $t$ from the local pressure expansion \eqref{eq-pi-formula-mhd}.
Then $(v,b)$ is a suitable weak solution to \eqref{MHD} with associated pressure $\td p$.
By the estimate \eqref{eq2.2-BT-CPDE2020}, we have
\[
\int_0^{\si(R)R^2} \int_{B_R(x_0)} \bke{|v|^3 + |b|^3 + |\td p|^{3/2}} dxdt \le C(N_R^0)^{\frac32} R^2.
\]
Thus, if $R\ge R_0$ and $\ep_1\le\bke{c_0C^{-1}\ep_*}^{2/3}$, then the right side is bounded by $\ep_*$, and we may apply Lemma \ref{lem2.3-BT-CPDE2020}.
It follows that
\[
v,b \in L^\infty(Q),\quad Q = B_{\frac{c_0^{1/2}R}2}(x_0)\times\bkt{\frac{3c_0R^2}4, c_0R^2},
\]
and for $(x,t)\in Q$,
\EQ{\label{eq2.10-BT-CPDE2020}
|v(x,t)| + |b(x,t)| \le C_0\bke{\frac{C}{c_0}(N_R^0)^{3/2}}^{\frac13} \bke{\frac{c_0^{1/2}R}2}^{-1} 
\le C(N_R^0)^{\frac12}t^{-\frac12}.
}

Hence, $(v,b)$ is regular in $\R^3\times\left(\frac{3c_0R^2}4, c_0R^2\right]$.
Since $R\ge R_0$ is arbitrary, we conclude that $(v,b)$ is regular at every point $(x,t)\in\R^3\times\left(\frac{3c_0R^2}4, \infty\right)$, with the bound given by \eqref{eq2.10-BT-CPDE2020}. 
Note that this threshold $\frac{3c_0R_0^2}4$ depends only on $(v_0,b_0)$ and is uniform for all $(v,b)\in\mathcal{N}_{\rm MHD}(v_0,b_0)$.

The argument proceeds analogously in the case where $\sup_{R\le R_0} N_R^0<\ep_1$; we omit the details for brevity.

Finally, if $N_R^0<\ep_1$ for all $R>0$, then $\si(R) = c_0$ for all $R>0$. In this case, $(v,b)$ is regular with the same bound \eqref{eq2.10-BT-CPDE2020} throughout the entire space-time domain $\cup_{0<R<\infty} \R^3\times\left(\frac{3c_0R^2}4, c_0R^2\right] = \R^3\times (0,\infty)$.
\end{proof}

As a consequence of Theorem \ref{thm1.2-BT-CPDE2020}, the uniqueness result below can be established by adapting the argument used in the proof of \cite[Theorem 1.7]{BT-CPDE2020}.

\begin{theorem}[Uniqueness for data that is small at high frequencies]\label{thm-1.7-BT-CPDE2020}
Assume that $v_0,b_0\in E^2$ are divergence free.
Let $(v,b), (u,a)\in\mathcal{N}_{\rm MHD}(v_0,b_0)$. 
Then there exist universal constants $0<\ep_3,\tau_0\le1$ so that if
\[
\sup_{0<r\le R} N_r^0\le \ep_3
\]
for some $R>0$, then $(u,a) = (v,b)$ as distributions on $\R^3\times(0,T)$, $T=\tau_0R^2$.
\end{theorem}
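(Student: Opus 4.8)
The strategy is the classical one used for local energy solutions of Navier--Stokes in \cite{BT-CPDE2020}: reduce to a Gr\"onwall-type estimate for the local energy of the difference $(w,c) := (v-u, b-a)$, using the $\ve$-regularity theory of Theorem \ref{thm1.2-BT-CPDE2020} to gain $L^\infty$ control of $(u,a)$ (or $(v,b)$) on a short time interval so that the quadratic nonlinear terms can be absorbed. First, I would record that the hypothesis $\sup_{0<r\le R}N_r^0\le\ve_3$ with $\ve_3$ small (in particular $\ve_3\le\ve_1$ from Theorem \ref{thm1.2-BT-CPDE2020}) forces, by part 2 of that theorem, both solutions to be smooth on $\R^3\times(0,c_0R^2]$ with the bound
\[
\|(v,b)(\cdot,t)\|_{L^\infty\times L^\infty} + \|(u,a)(\cdot,t)\|_{L^\infty\times L^\infty} \lec \ve_3^{1/2}\,t^{-1/2},\qquad 0<t\le c_0R^2.
\]
After parabolic rescaling we may normalize $R=1$.

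Next I would set up the energy identity for the difference. Subtracting the two systems, $(w,c)$ satisfies, with pressure $\sigma := \pi_v-\pi_u$,
\[
\pd_t w -\De w + v\cdot\nb w + w\cdot\nb u - b\cdot\nb c - c\cdot\nb a + \nb\sigma = 0,\qquad
\pd_t c -\De c + v\cdot\nb c + w\cdot\nb b - b\cdot\nb w - c\cdot\nb v = 0,
\]
together with $\nb\cdot w=\nb\cdot c=0$ and $(w,c)|_{t=0}=0$. The pressure $\sigma$ enjoys the same Calder\'on--Zygmund-type local representation as in Definition \ref{def-local-energy-sol-mhd}(3), now with the quadratic source $v\otimes v - u\otimes u - (b\otimes b - a\otimes a) = w\otimes v + u\otimes w - (c\otimes b + a\otimes c)$, which is \emph{linear} in $(w,c)$ times the smooth factors $(v,b),(u,a)$. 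Testing against $(w,c)\phi_r^2$ where $\phi_r$ is a cutoff adapted to $B_r(x_0)$ (and summing/taking suprema over $x_0$ to get the $L^2_\uloc$ version), I would derive an inequality of the schematic form
\[
\frac{d}{dt}\!\int |(w,c)|^2\phi_r^2 + \int|(\nb w,\nb c)|^2\phi_r^2 \;\lec\; \frac{1}{r^2}\!\int|(w,c)|^2\phi_r^2 + \big(\|(u,a)(t)\|_{L^\infty} + \|(v,b)(t)\|_{L^\infty}\big)\!\int|(w,c)|^2\phi_r^2 + (\text{pressure term}),
\]
where the MHD cross terms $b\cdot\nb c$, $c\cdot\nb a$ etc.\ are handled exactly like the Navier--Stokes term $v\cdot\nb w$ because the magnetic field obeys an advection--diffusion equation of the same type; the extra term $-2(v\cdot b)(b\cdot\nb\phi)$ in the local energy inequality \eqref{lei_mhd} contributes only lower-order cutoff-derivative terms. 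The pressure term is estimated using the local formula: the near part $-\De^{-1}\div\div[(\cdots)\chi_{4r}]$ is bounded in $L^{3/2}$ by Calder\'on--Zygmund, and paired with $(w,c)\phi_r^2$ gives a term controlled by $\|(v,b),(u,a)\|_{L^\infty}\int|(w,c)|^2$ plus a small multiple of $\int|(\nb w,\nb c)|^2\phi_r^2$; the far part is a smoothing tail giving an $r^{-\text{const}}$ factor times $\int|(w,c)|^2$.

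The key point is then the time integrability: since $\|(u,a)(t)\|_{L^\infty}+\|(v,b)(t)\|_{L^\infty}\lec \ve_3^{1/2} t^{-1/2}\in L^1(0,\tau_0)$, writing $y(t):=\esssup_{s\le t}\sup_{x_0}\int_{B_1(x_0)}|(w,c)|^2$ we obtain
\[
y(t) \;\lec\; \int_0^t\Big(1 + \ve_3^{1/2}s^{-1/2}\Big)\,y(s)\,ds \qquad (0<t\le \tau_0),
\]
and Gr\"onwall's inequality (in the form used in \cite[Lemma 2.2]{BT-CPDE2020}, which tolerates the integrable singularity $s^{-1/2}$) together with $y(0)=0$ forces $y\equiv 0$ on $[0,\tau_0]$ for a universal $\tau_0\le \min(1,c_0)$; rescaling back gives $T=\tau_0R^2$. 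The smallness $\ve_3$ is needed \emph{only} to invoke Theorem \ref{thm1.2-BT-CPDE2020} (so that both solutions are smooth with the $t^{-1/2}$ bound on $(0,R^2]$); the uniqueness conclusion itself does not require $\ve_3$ to be small in the Gr\"onwall step.

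The main obstacle I anticipate is the careful handling of the local pressure term and the cutoff-derivative remainders in the $L^2_\uloc$ setting: unlike the finite-energy case one cannot simply integrate by parts globally, so one must use the explicit decomposition \eqref{eq-pi-formula-mhd}, control the nonlocal tail uniformly in $x_0$, and make sure all the error terms generated by $\nb\phi_r$ and $\De\phi_r$ are either absorbed into $\int|(\nb w,\nb c)|^2\phi_r^2$ or bounded by $C(r)\,y(t)$ with a constant depending only on $r$ (hence harmless after fixing $r=1$). The MHD-specific coupling is not a genuine obstacle — every magnetic term mirrors a velocity term of the same scaling — so once the Navier--Stokes argument of \cite[Theorem 1.7]{BT-CPDE2020} is in hand, the adaptation is essentially bookkeeping.
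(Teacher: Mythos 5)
Your first step---invoking Theorem \ref{thm1.2-BT-CPDE2020} to get $\sqrt{t}\,\|(v,b)(t)\|_{L^\infty\times L^\infty}+\sqrt{t}\,\|(u,a)(t)\|_{L^\infty\times L^\infty}\lec \ep_3^{1/2}$ on $(0,c_0R^2]$---is exactly how the paper begins. But the second half of your plan has a genuine gap. You propose to test the difference equation for $(w,c)=(v-u,b-a)$ against $(w,c)\phi_r^2$ and integrate from $t=0$, using $(w,c)|_{t=0}=0$ to start the Gr\"onwall iteration. For two local energy \emph{weak} solutions this step is not justified near $t=0$: the local energy inequality in Definition \ref{def-local-energy-sol-mhd} is only assumed for test functions compactly supported in $\R^3\times(0,T)$, and the cross term $\int v\cdot u\,\phi$ requires using one solution as an admissible test function in the other's weak formulation on $[0,t]$. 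The available regularity is exactly borderline for this: $\|u(t)\|_{L^\infty}\lec t^{-1/2}$ gives $u\notin L^2(0,T;L^\infty)$ and $u\notin L^4_{t,x,\loc}$ (both fail by a logarithm), so the classical Prodi--Serrin justification of the difference energy identity from time zero does not apply, smallness of $\ep_3$ notwithstanding. A further problem is that your $y(t)$ takes $\sup_{x_0}$, but item 4 of the definition only gives $v(t)\to v_0$, $u(t)\to v_0$ in $L^2(K)$ for compact $K$, so $y(0^+)=0$ (uniformly in $x_0$) is not available either; one would additionally have to run the Gr\"onwall argument on the lattice with the nonlocal pressure tail treated as a convolution, as in Lemma \ref{lem3.1-BT-SIMA2021}.

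The paper takes a different route that avoids both issues. It first shows that \emph{every} local energy solution under these hypotheses is a \emph{mild} solution: defining $(\td v,\td b)$ by the Duhamel formula \eqref{eq-mild-mhd} with the known $(v,b)$ in the bilinear term, the difference $(V,B)=(v,b)-(\td v,\td b)$ has mollifications that are harmonic in $x$ and grow sublinearly (using the $L^2_{\uloc,r}$ bounds \eqref{eq4.15-BT-SIMA2021}--\eqref{eq4.11-BT-CPDE2020}), hence vanish by a Liouville argument. Once both $(v,b)$ and $(u,a)$ are mild solutions, uniqueness follows from the contraction estimate for the bilinear operator in $L^\infty(0,T;L^2_\uloc)$ (via \cite[Corollary 3.1]{MaTe}), where the prefactor $\sup_s\sqrt{s}\,(\|(v,b)(s)\|_{L^\infty}+\|(u,a)(s)\|_{L^\infty})$ is small precisely because of $\ep_3$---so, contrary to your closing remark, the smallness is used in the absorption step, not only to trigger the regularity theorem. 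If you want to salvage your approach you would need to either upgrade the convergence to data and justify the cross-term identity at $t=0$ (essentially reproving the mild-solution property in disguise), or switch to the paper's Duhamel/Liouville strategy.
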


\begin{proof}
Assume $\lim_{R\to0} N^0_R<\ep$ for some $\ep>0$, and either $(v_0,b_0)$ satisfies 
\EQ{\label{eq-1.12-BT-CPDE2020}
\lim_{R\to\infty} \sup_{x_0\in\R^3} \frac1{R^2} \int_{B_R(x_0)} (|v_0(x)|^2 + |b_0(x)|^2)\, dx = 0,
}
or $(v_0,b_0)\in E^2\times E^2$.
Let $R_0$ satisfy $\sup_{R<R_0} N_R^0<\ep$.
By Theorem \ref{thm1.2-BT-CPDE2020} we have for $T=c_0R_0^2$,
\[
t^{1/2}\norm{(v,b)(\cdot,t)}_{L^\infty\times L^\infty}\le C(\ep),\quad 0<t\le T.
\]
Using \eqref{eq2.1-BT-CPDE2020} we have the estimate
\EQ{\label{eq4.4-BT-CPDE2020}
\sup_{r>\sqrt{t/c_0},\, x\in\R^3} \frac1r\int_{B_r(x)} (|v(x,t)|^2 + |b(x,t)|^2)\, dx < C\ep^2,
}
where $c_0$ is defined in \eqref{eq2.3-BT-CPDE2020}.
Moreover, by item 3 of Theorem \ref{thm1.2-BT-CPDE2020}, we have
\EQ{\label{eq4.3-BT-CPDE2020}
\norm{(v,b)(t)}_{L^\infty\times L^\infty} < C\ep^2t^{-1/2},\quad 0<t<\infty.
}
Combining \eqref{eq4.4-BT-CPDE2020} and \eqref{eq4.3-BT-CPDE2020}, we deduce that, for $r<\sqrt{t/c_0}$,
\EQS{\label{eq4.5-BT-CPDE2020}
\frac1r\int_{B_r(x)} &(|v(x,t)|^2 + |b(x,t)|^2)\, dx \\
&= \frac1r \bke{\int_{B_r(x)} (|v(x,t)|^2 + |b(x,t)|^2)\, dx}^{1/3} \bke{\int_{B_r(x)} (|v(x,t)|^2 + |b(x,t)|^2)\, dx}^{2/3}\\
&\lec \norm{(v,b)(t)}_{L^\infty\times L^\infty}^{2/3} \bke{\int_{B_{\sqrt{t/c_0}}(x)} (|v(x,t)|^2 + |b(x,t)|^2)\, dx}^{2/3}\\
&\lec C(\ep) t^{-1/3} (t/c_0)^{1/3} = C(\ep).
}
Using the estimates \eqref{eq4.4-BT-CPDE2020} for $r=\sqrt{T}$ and \eqref{eq4.5-BT-CPDE2020} for $r<\sqrt{T}$, we have for all $t\in(0,T)$ and $r\in(0,T^{1/2})$ that
\[
\frac1r\int_{B_r(x)} (|v(x,t)|^2 + |b(x,t)|^2)\, dx \le C(\ep),
\]
which implies that
\EQ{\label{eq4.6-BT-CPDE2020}
\sup_{0<t<\infty} \norm{(v,b)(t)}_{L^2_{\uloc,r}\times L^2_{\uloc,r}} < C(\ep),
}
where $\norm{f}_{L^2_{\uloc,r}}:= \sup_{x\in\R^3} \norm{f}_{L^2(x)}$.

We now check that $(v,b)$ satisfies the integral formula \eqref{eq-mild-mhd}, i.e., it is a mild solution, on $\R^3\times(0,T)$.
If $(v_0,b_0)\in E^2\times E^2$,
then this follows from a direct adaption of \cite[\S8]{KMT-IMRN2018} for the Navier--Stokes equations to the MHD equations.
On the other hand, assume $(v_0,b_0)$ satisfies \eqref{eq-1.12-BT-CPDE2020}.
By \eqref{eq2.1-BT-CPDE2020}, we have
\[
\esssup_{0\le t\le \si(r)r^2} \norm{(v,b)}_{L^2_{\uloc,r}\times L^2_{\uloc,r}}^2 < CA_0(r),\qquad A_0(r) = \norm{(v_0,b_0)}_{L^2_{\uloc,r}\times L^2_{\uloc,r}}^2.
\]
Since $(v_0,b_0)$ satisfies \eqref{eq-1.12-BT-CPDE2020}, we have $\si(r)r^2\to\infty$ as $r\to\infty$.
So, there exists $\bar{R}$ so that, for all $R>\bar{R}$, $\si(R)R^2>T$. 
We conclude that for any $r>0$
\EQ{\label{eq4.15-BT-CPDE2020}
\esssup_{0\le t\le T}  \norm{(v,b)}_{L^2_{\uloc,r}\times L^2_{\uloc,r}}^2 \le f(r),
}
where $f(r) = CA_0(r) + CA_0(\bar{R})$.

Let $(\td v, \td b)$ be defined as
\EQ{
(\td v,\td b)(x,t) = (e^{t\De}v_0, e^{t\De}b_0) - B((v,b),(v,b))(t),
}
where $B$ is a bilinear operator defined by $B = (B_1,B_2)$, where $B_1$ and $B_2$ are given in \eqref{eq-bilinear-operator}.
Using \cite[(1.8), (1.10)]{MaTe}, we have
\[
\norm{(e^{t\De}v_0,e^{t\De}b_0)}_{L^2_{\uloc,r}\times L^2_{\uloc,r}} \le \norm{(v_0,b_0)}_{L^2_{\uloc,r}\times L^2_{\uloc,r}} = (A_0(r))^{1/2},
\]
and
\EQN{
&\norm{\int_0^t e^{(t-s)\De}\mathbb{P}\nb\cdot(v\otimes v - b\otimes b)(s)\, ds}_{L^2_{\uloc,r}} + \norm{\int_0^t e^{(t-s)\De}\mathbb{P}\nb\cdot(v\otimes b - b\otimes v)(s)\, ds}_{L^2_{\uloc,r}}\\
&\le \int_0^t \frac{C}{(t-s)^{1/2}} \norm{(v,b)(s)}_{L^\infty\times L^\infty} \norm{(v,b)(s)}_{L^2_{\uloc,r}\times L^2_{\uloc,r}}\, ds\\
&\le \int_0^t \frac{C(\ep)}{(t-s)^{1/2}s^{1/2}}\, ds
= C(\ep).
}
Thus
\EQ{\label{eq4.11-BT-CPDE2020}
\sup_{0\le t\le T} \norm{(\td v,\td b)}_{L^2_{\uloc,r}\times L^2_{\uloc,r}}^2 
\le CA_0(r) + C(\ep) \sup_{0<s<T} \norm{(v,b)}_{L^2_{\uloc,r}\times L^2_{\uloc,r}}^2
\le Cf(r).
}
Then $(V,B) = (v,b) - (\td v, \td b)$ satisfies 
\[
\esssup_{0\le t\le T} \norm{(V,B)}_{L^2_{\uloc,r}\times L^2_{\uloc,r}}^2 \le Cf(r),\quad \forall r>0.
\]
Following the same logic in \cite[\S8]{KMT-IMRN2018} and adapting it to MHD equations, we have that the mollified $V_\ep$ and $B_\ep$ are harmonic in $x$ and for fixed $t\in(0,T)$
\[
|V_\ep(x,t)| + |B_\ep(x,t)| \le C\bke{\frac1{r^3}\int_{B_r(x)} (|V_\ep(y,t)|^2 + |B_\ep(y,t)|^2)\, dy}^{1/2}
\le C\bke{\frac{f(r)}{r^3}}^{1/2} \to 0\ \text{ as } \ r\to\infty,
\]
This shows $(V_\ep, B_\ep) = (0,0)$ for $t<T$, for all $\ep>0$.
Hence $(V, B) = (0,0)$ and $(v,b) = (\td v,\td b)$.
This shows that any local energy solution with data satisfying the assumptions of Theorem \ref{thm-1.7-BT-CPDE2020} is a mild solution.

Assume $(u,a)\in\mathcal{N}_{\rm MHD}(v_0,b_0)$ also and satisfy the assumptions of Theorem \ref{thm-1.7-BT-CPDE2020}. 
Then, $(u,a)$ is also a mild solution.
Let $(w,d)=(v,b)-(u,a)$.
Then
\[
w(\cdot,t) = -\int_0^t e^{(t-s)\De}\mathbb{P}\nb\cdot(w\otimes w + u\otimes w + w\otimes u - d\otimes d - a\otimes d - d\otimes a)(\cdot,s)\, ds,
\]
and
\[
d(\cdot,t) = -\int_0^t e^{(t-s)\De}\mathbb{P}\nb\cdot(w\otimes d + u\otimes d + w\otimes a - d\otimes w - a\otimes w - d\otimes u)(\cdot,s)\, ds.
\]
By \cite[Corollary 3.1]{MaTe} we have for $t>0$ that
\EQN{
&\norm{(w,d)(t)}_{L^2_\uloc\times L^2_\uloc} \\
&\qquad \le C\sup_{0<s<t}\sqrt{s} (\norm{(v,b)(s)}_{L^\infty\times L^\infty} + \norm{(u,a)(s)}_{L^\infty\times L^\infty}) \norm{(w,d)}_{L^\infty(L^2_\uloc\times L^2_\uloc)}.
}
Since $\sqrt{s} (\norm{(v,b)(s)}_{L^\infty\times L^\infty} + \norm{(u,a)(s)}_{L^\infty\times L^\infty})$ is small we have 
\[
\norm{(w,d)(t)}_{L^2_\uloc\times L^2_\uloc} 
\le \frac12 \norm{(w,d)}_{L^\infty(L^2_\uloc\times L^2_\uloc)}.
\]
Taking the essential supremum on the left hand side leads to uniqueness.
\end{proof}

As a corollary of Theorem \ref{thm-1.7-BT-CPDE2020}, the following local uniqueness result in $E^3$ can be derived by adapting the proof of \cite[Corollary 1.8]{BT-CPDE2020}.

\begin{corollary}[Local uniqueness in $E^3$]\label{cor1.8-BT-CPDE2020}
Assume $v_0,b_0\in E^3$ are divergence free.
Let $(v,b)$ and $(u,a)$ be elements of $\mathcal{N}_{\rm MHD}(v_0,b_0)$. 
Then, there exists $T=T(v_0,b_0)>0$ so that $(v,b) = (u,a)$ as distributions on $\R^3\times(0,T)$.
\end{corollary}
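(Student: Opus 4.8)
The plan is to deduce Corollary~\ref{cor1.8-BT-CPDE2020} directly from Theorem~\ref{thm-1.7-BT-CPDE2020} by an approximation argument, exactly as \cite[Corollary 1.8]{BT-CPDE2020} is deduced from \cite[Theorem 1.7]{BT-CPDE2020}. Recall that Theorem~\ref{thm-1.7-BT-CPDE2020} needs only that $v_0,b_0$ be divergence-free elements of $E^2$, that $(v,b),(u,a)\in\mathcal N_{\rm MHD}(v_0,b_0)$, and that $\sup_{0<r\le R}N_r^0\le\ep_3$ for some $R>0$; it then yields $(v,b)=(u,a)$ as distributions on $\R^3\times(0,\tau_0R^2)$. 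Hence it suffices to check (i) that $E^3\subset E^2$, and (ii) that for divergence-free $v_0,b_0\in E^3$ the high-frequency quantity $\sup_{0<r\le R}N_r^0$ tends to $0$ as $R\to0^+$, so that one may pick $R=R(v_0,b_0)>0$ making it $\le\ep_3$.

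Step (i) is immediate: since $L^3(\Om)\hookrightarrow L^2(\Om)$ for bounded $\Om$, one has $\|f\|_{L^2_\uloc}\lesssim\|f\|_{L^3_\uloc}$, so the $L^3_\uloc$-closure of $C^\infty_c(\R^3)$ is contained in its $L^2_\uloc$-closure, i.e. $E^3\subset E^2$.

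For step (ii), by H\"older's inequality on $B_r(x_0)$, for every $r\le R\le1$ and $x_0\in\R^3$,
\[
\frac1r\int_{B_r(x_0)}\bke{|v_0|^2+|b_0|^2}\,dx
\le \frac{|B_r|^{1/3}}{r}\bke{\|v_0\|_{L^3(B_r(x_0))}^2+\|b_0\|_{L^3(B_r(x_0))}^2}
\le C\bke{\|v_0\|_{L^3(B_R(x_0))}^2+\|b_0\|_{L^3(B_R(x_0))}^2},
\]
using $|B_r|^{1/3}\le Cr$ and $B_r(x_0)\subset B_R(x_0)$. Taking suprema over $r\le R$ and $x_0$ gives $\sup_{0<r\le R}N_r^0\le C\,g(R)$ with $g(R):=\sup_{x_0\in\R^3}\bke{\|v_0\|_{L^3(B_R(x_0))}^2+\|b_0\|_{L^3(B_R(x_0))}^2}$. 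To see $g(R)\to0$: given $\eta>0$, choose $\phi,\psi\in C^\infty_c(\R^3)$ with $\|v_0-\phi\|_{L^3_\uloc}+\|b_0-\psi\|_{L^3_\uloc}<\eta$; then for $R\le1$ one has $\|v_0\|_{L^3(B_R(x_0))}\le\|v_0-\phi\|_{L^3_\uloc}+\|\phi\|_{L^\infty}|B_R|^{1/3}$ and similarly for $b_0$, so $g(R)^{1/2}\le\eta+CR\bke{\|\phi\|_{L^\infty}+\|\psi\|_{L^\infty}}$, whence $\limsup_{R\to0^+}g(R)\le\eta^2$; since $\eta>0$ is arbitrary, $g(R)\to0$.

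Combining, we choose $R=R(v_0,b_0)>0$ so small that $C\,g(R)\le\ep_3$; then all hypotheses of Theorem~\ref{thm-1.7-BT-CPDE2020} are met (using step (i) for the $E^2$ requirement), and that theorem gives $(v,b)=(u,a)$ as distributions on $\R^3\times(0,T)$ with $T=\tau_0R^2$, which depends only on $(v_0,b_0)$. There is no real obstacle here beyond the elementary density-plus-H\"older estimate in step (ii); the one point to handle carefully is that the smallness of $N_r^0$ is required for \emph{all} scales $r\le R$, not merely $r=R$, which is exactly what the inclusion $B_r(x_0)\subset B_R(x_0)$ in the displayed estimate provides.
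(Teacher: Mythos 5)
Your proof is correct and follows essentially the same route as the paper: both reduce the corollary to Theorem \ref{thm-1.7-BT-CPDE2020} by using H\"older's inequality to bound $\frac1r\int_{B_r(x_0)}(|v_0|^2+|b_0|^2)\,dx$ by the local $L^3$ norm and then showing that $\sup_{x_0}\|(v_0,b_0)\|_{L^3(B_r(x_0))}$ is uniformly small for small $r$. The only cosmetic difference is how that uniform smallness is obtained: the paper splits into a far-field region (where the $E^3$ decay at infinity applies) and a compact region (where absolute continuity of the integral applies), whereas you invoke the density of $C^\infty_c$ in $E^3$ directly; both are valid and of comparable length.
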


\begin{proof}
Assume $(v_0,b_0)\in E^3\times E^3$.
Then, in particular, $(v_0,b_0)\in E^2\times E^2$.
Let $\ep>0$ be given.
Since $(v_0,b_0)\in E^3\times E^3$, there exists $R_0$ such that 
\[
\sup_{|x_0|\ge R_0}\int_{B_1(x_0)} (|v_0(x)|^3 + |b_0(x)|^3)\, dx < \ep.
\]
On the other hand, since $v_0, b_0\in E^3$, their local $L^3$-norms are uniformly small on sufficiently small balls.
That is, there exists $\ga\in(0,1]$ such that
\[
\sup_{|x_0|\le R_0,\, 0<r\le\ga} \int_{B_r(x_0)} (|v_0(x)|^3 + |b_0(x)|^3)\, dx < \ep.
\]
Applying H\"older's inequality,  we obtain:
\[
\sup_{|x_0|\le R_0,\, 0<r\le\ga} \frac1r \int_{B_r(x_0)} (|v_0(x)|^2 + |b_0(x)|^2)\, dx < |B_1|^{1/3} \ep^{2/3}.
\]
Therefore, by Theorem \ref{thm-1.7-BT-CPDE2020}, any local energy solution with initial data $(v_0,b_0)$ is unique in the local energy class, at least for a short time.
\end{proof}

\subsection{Eventual regularity for local energy solutions}\label{sec-eventual-reg}

\begin{lemma}\label{lem2.2-BT-SIMA2021}
Assume $v_0,b_0\in E^2_q$, $2\le q<\infty$. Then
\[
\lim_{R\to\infty} R^{\frac6q - 2} N_{q,R}^0(v_0,b_0) = 0.
\]
Consequently, if $v_0, b_0\in E^2_q$, then
\[
\lim_{R\to\infty} N^0_R(v_0,b_0) = 0\ \text{ if }\ 2\le q\le 3
\quad\text{ and }\quad
\lim_{R\to\infty} R^{-1} N^0_{q,R}(v_0,b_0) = 0\ \text{ if }\ 2\le q\le 6.
\]
\end{lemma}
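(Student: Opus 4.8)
The plan is to reduce everything to a packing estimate for the single nonnegative function $g := |v_0|^2 + |b_0|^2 \in L^1_{\loc}(\R^3)$, prove an $R$-uniform bound, and then upgrade it to a limit by a density argument. Writing $a_j := \int_{B_1(j)} g\, dx$ for $j\in\ZZ^3$, the hypothesis $v_0,b_0\in E^2_q$ says precisely that $(a_j)\in\ell^{q/2}(\ZZ^3)$, with $\norm{(a_j)}_{\ell^{q/2}} \lec \norm{v_0}_{E^2_q}^2 + \norm{b_0}_{E^2_q}^2$ (here $q/2\ge 1$ and one uses the triangle inequality in $\ell^{q/2}$ together with $(s+t)^{q/2}\lec s^{q/2}+t^{q/2}$). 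Since
\[
N^0_{q,R}(v_0,b_0) = \frac1R\, \norm{ \bke{b^R_k}_{k\in\ZZ^3} }_{\ell^{q/2}}, \qquad b^R_k := \int_{B_R(kR)} g\, dx ,
\]
the first assertion is equivalent to $R^{6/q-3}\norm{(b^R_k)_k}_{\ell^{q/2}} \to 0$ as $R\to\I$.

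First I would prove the uniform bound $R^{6/q-2} N^0_{q,R}(v_0,b_0) \lec \norm{v_0}_{E^2_q}^2 + \norm{b_0}_{E^2_q}^2$. Fix $R\ge 1$ and set $S_k := \{j\in\ZZ^3 : B_1(j)\cap B_R(kR)\neq\emptyset\}$, so that $B_R(kR)\subset\bigcup_{j\in S_k}B_1(j)$ and hence $b^R_k \le \sum_{j\in S_k}a_j$. One checks $|S_k|\lec R^3$ and---crucially, uniformly in $R$---every $j$ lies in at most a fixed number $C_*$ of the $S_k$, because $j\in S_k$ forces $|k-j/R|\le 1+1/R\le 2$. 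By H\"older's inequality $(b^R_k)^{q/2} \le |S_k|^{q/2-1}\sum_{j\in S_k}a_j^{q/2}$, and summing over $k$ and using the bounded overlap of the $S_k$ gives $\sum_k (b^R_k)^{q/2} \lec R^{3(q/2-1)}\sum_j a_j^{q/2}$. Raising to the power $2/q$ and dividing by $R$ yields $N^0_{q,R}\lec R^{2-6/q}\norm{(a_j)}_{\ell^{q/2}}$, which is the claimed bound.

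To turn this into a limit (for $q>2$; the endpoint $q=2$ is handled at the end), fix $\ve>0$ and, using $(a_j)\in\ell^{q/2}$, pick a finite $F\subset\ZZ^3$ with $\bke{\sum_{j\notin F}a_j^{q/2}}^{2/q}<\ve$. Split $g = g' + g''$ with $g' := g\,\one_\Om$, $\Om := \bigcup_{j\in F}B_1(j)$ bounded, and $g'' := g-g'\ge 0$. The unit masses of $g''$ vanish on $F$ and are $\le a_j$ off $F$, so the uniform bound applied to $g''$ gives $R^{6/q-2}N^0_{q,R}(g'')\lec\ve$ for all $R\ge1$. On the other hand $g'\in L^1(\R^3)$ is supported in some fixed ball $B_M(0)$, so for $R>2M$ at most a bounded number of the balls $B_R(kR)$ meet $\supp g'$, each contributing at most $\norm{g'}_{L^1}$; hence $R^{6/q-2}N^0_{q,R}(g')\lec R^{6/q-3}\norm{g'}_{L^1}\to 0$ since $q>2$. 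Adding the two contributions (subadditivity of $N^0_{q,R}$ in $g$, again by the $\ell^{q/2}$ triangle inequality) and letting $\ve\downarrow 0$ proves $\lim_{R\to\I}R^{6/q-2}N^0_{q,R}=0$. For $q=2$ one has $E^2_2=L^2$, hence $g\in L^1$, and the two corollaries below follow directly from $N^0_R\le R^{-1}\norm{g}_{L^1}$ and $R^{-1}N^0_{2,R}\lec R^{-2}\norm{g}_{L^1}$.

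The consequences then follow by comparing balls: since $B_R(x_0)\subset B_{2R}(kR)$ for $k$ the lattice point nearest $x_0/R$, one gets $N^0_R(v_0,b_0)\le 2\,N^0_{q,2R}(v_0,b_0)$; as the first assertion gives $N^0_{q,\rho}=o(\rho^{2-6/q})$ and $2-6/q\le 0$ for $2\le q\le 3$, this forces $N^0_R\to 0$ in that range. Likewise $R^{-1}N^0_{q,R}=o(R^{1-6/q})$ with $1-6/q\le 0$ for $2\le q\le 6$, giving $R^{-1}N^0_{q,R}\to 0$ there. The hard part is really bookkeeping rather than analysis: one must keep the overlap constant $C_*$ independent of $R$ in the packing step, and track the exponents carefully so that the ``compact piece'' $g'$ decays---this is exactly what isolates the endpoint $q=2$ and pins down the admissible ranges $q\le 3$ and $q\le 6$ in the corollaries; the remaining estimates are routine.
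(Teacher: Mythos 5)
Your proof is correct and follows essentially the same route as the paper: the covering/H\"older packing estimate yielding the uniform bound $N^0_{q,R}\lec R^{2-6/q}\norm{(a_j)}_{\ell^{q/2}}$, followed by a ``small $\ell^{q/2}$-tail plus compactly supported piece'' decomposition to upgrade the bound to a limit (the paper splits the sequences $u_k,a_k$ at a cutoff $M$ rather than splitting $g$ by support, which is the same thing). Two small remarks. First, your explicit handling of the endpoint $q=2$ is actually more careful than the paper's: for $q=2$ one has $R^{6/q-2}N^0_{2,R}\to C\norm{g}_{L^1}\neq 0$ for nonzero data, so the first displayed limit genuinely requires $q>2$ (the paper's proof also only works for $q>2$ and quietly notes this), and routing the two corollaries at $q=2$ through $g\in L^1$ as you do is the right fix. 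Second, in the last step the inclusion $B_R(x_0)\subset B_{2R}(kR)$ with $k$ nearest to $x_0/R$ does not give $N^0_R\le 2N^0_{q,2R}$ as written, because $B_{2R}(kR)$ is centered on the lattice $R\ZZ^3$, not $2R\ZZ^3$, so it is not one of the balls in the definition of $N^0_{q,2R}$; the repair is trivial --- cover $B_R(x_0)$ by the boundedly many balls $B_R(kR)$ that meet it and use $\norm{(b^R_k)}_{\ell^\infty}\le\norm{(b^R_k)}_{\ell^{q/2}}$ to get $N^0_R\lec N^0_{q,R}$ directly, which is what the paper does.
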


\begin{proof}
The proof of the lemma is an adaption of the proof of \cite[Lemma 2.2]{BT-SIMA2021} for the Navier--Stokes equations to the MHD equations.

Let $\ep>0$ be given.
Suppose that $v_0,b_0\in E^2_q$ for some $2\le q<\infty$. 
Then the norms can be expressed as $\norm{v_0}_{E^2_q} = \norm{u}_{\ell^q(\ZZ^3)}$ and $\norm{v_0}_{E^2_q} = \norm{a}_{\ell^q(\ZZ^3)}$, where
\[
u = (u_k)_{k\in\ZZ^3} \in \ell^q(\ZZ^3),\quad u_k = \norm{v_0}_{L^2(B_1(k))},
\]
\[
a = (a_k)_{k\in\ZZ^3} \in \ell^q(\ZZ^3),\quad a_k = \norm{b_0}_{L^2(B_1(k))}.
\]
For any $R\ge1$,  we have the estimate
\EQS{\label{eq-2.2-BT-SIMA2021}
N_{q,R}^0(v_0,b_0) \le \frac{C}{R} \bkt{\sum_{k\in\ZZ^3}\bke{\sum_{|i-kR|<R} (u_i^2 + a_i^2)}^{\frac{q}2}}^{\frac2q}
}
Now fixe any $\de>0$. Since $u,a\in\ell^q(\ZZ^3)$, we can choose $M>1$ large enough such that $\norm{u^{>M}}_{\ell^q}\le\de$ and $\norm{a^{>M}}_{\ell^q}\le\de$, where
\[
u^{>M}_k = 
\begin{cases}
0&\ \text{ if }|k|\le M,\\
u_k&\ \text{ if }|k|>M,
\end{cases}
\quad\text{ and }\quad
a^{>M}_k = 
\begin{cases}
0&\ \text{ if }|k|\le M,\\
a_k&\ \text{ if }|k|>M.
\end{cases}
\]
Set $u^{\le M} = u - u^{>M}$ and $a^{\le M} = a - a^{>M}$. 
Following the argument leading to \cite[(2.4)]{BT-SIMA2021} by H\"older's inequality we obtain, for all $R>M$,
\[
\bkt{R^{\frac6q-2} N_{q,R}^0(v_0,b_0)}^{\frac{q}2} 
\le C \bke{\norm{u^{>M}}_{\ell^q}^q + \norm{a^{>M}}_{\ell^q}^q} + CR^{3-\frac{3q}2} M^{\frac{3(q-2)}2} \bke{\norm{u^{\le M}}_{\ell^q}^q + \norm{a^{\le M}}_{\ell^q}^q}.
\]
To conclude, we first choose $\de>0$ small enough so that $C\bke{\norm{u^{>M}}_{\ell^q}^q + \norm{a^{>M}}_{\ell^q}^q} < \ep/2$.
Then, for this fixed $M=M(\de)$, we choose $R$ sufficiently large to ensure that $CR^{3-\frac{3q}2}M(\de)^{\frac{3(q-2)}2}\bke{\norm{u^{\le M}}_{\ell^q}^q + \norm{a^{\le M}}_{\ell^q}^q} < \ep/2$, which is possible provided $q>2$.

To prove the final assertions, we begin by noting that $N_R^0(v_0,b_0) \le N_{q,R}^0(v_0,b_0)$. 
Moreover, if $v_0,b_0\in E^2_q$ for $q\le 3$, then in particular $v_0,b_0\in E^2_3$.
Therefore,
\[
\lim_{R\to\infty} N_R^0(v_0,b_0) \le \lim_{R\to\infty} N_{3,R}^0(v_0,b_0) = 0.
\]
For the final part, observe that when $q\le 6$ and $R\ge1$, we have $R^{-1} \le R^{\frac6q-2}$.
Thus,
\[
\lim_{R\to\infty} R^{-1} N^0_{q,R}(v_0,b_0) = 0 .
\]
\end{proof}

\begin{proof}[Proof of Theorem \ref{bt4-thm-1.3-mhd}]
For $2\le q\le 3$, Lemma \ref{lem2.2-BT-SIMA2021} implies
\[
\lim_{R\to\infty} N^0_R(v_0,b_0) = 0.
\]
Applying Theorem \ref{thm1.2-BT-CPDE2020} then yields the desired result.

For $1\le q<2$, the same conclusion follows from the fact that $v_0, b_0 \in E^2_q\subset L^2$ since $E^2_q$ embeds into $L^2$ when $q<2$.
This completes the proof of Theorem \ref{bt4-thm-1.3-mhd}.
\end{proof}

\subsection{A priori bounds and explicit growth rate}\label{sec-growth-rate}

In this section we prove new a priori bounds for data $(v_0,b_0)\in E^2_q\times E^2_q$ and use it to prove Theorem \ref{thm-1.4BT4-mhd}.

\begin{lemma}\label{lem3.1-BT-SIMA2021}
Assume $v_0,b_0\in E^2_q$ for some $q\ge1$ are divergence free and that $(v,b)\in\mathcal{N}_{\rm MHD}(v_0,b_0)$ satisfies, for some $T_2>0$,
\EQS{\label{eq-3.4-BT-SIMA2021}
\norm{\esssup_{0\le T\le T_1} \int_{B_1(x_0)} \bke{|v|^2 + |b|^2}dx + \int_0^{T_1}\int_{B_1(x_0)}\bke{|\nb v|^2 + |\nb b|^2} dxdt}_{\ell^{\frac{q}2}(x_0\in\ZZ^3)} < \infty
\ \text{ for all }T_1\in(0,T_2).
}
Then there are positive constants $C_1$ and $\la_0<1$, both independent of $q$ and $R$ such that, for all $R>0$ with $\la_R R^2\le T_2$,
\EQS{\label{eq-3.5-BT-SIMA2021}
\norm{\esssup_{0\le t\le \la_R R^2} \int_{B_R(x_0R)} \frac{|v|^2 + |b|^2}2\, dx + \int_0^{\la_R R^2} \int_{B_R(x_0R)} \bke{|\nb v|^2 + |\nb b|^2} dxdt}_{\ell^{\frac{q}2}(x_0\in\ZZ^3)} \le C_1A_{0,q}(R),
}
where
\[
A_{0,q}(R) = RN^0_{q,R} = \norm{\int_{B_R(x_0R)} \bke{|v_0|^2 + |b_0|^2} dx}_{\ell^{\frac{q}2}(x_0\in\ZZ^3)},\quad
\la_R = \min\bke{\la_0,\,\frac{\la_0R^2}{\bke{A_{0,q}(R)}^2}}.
\]
Furthermore, for all $R>0$,
\EQS{\label{eq-3.6-BT-SIMA2021}
\norm{\int_0^{\la_R R^2} \int_{B_R(x_0R)} |v|^{\frac{10}3} + |b|^{\frac{10}3} + \abs{\pi - c_{R x_0,R}(t)}^{\frac53} dxdt}_{\ell^{\frac{3q}{10}}(x_0\in\ZZ^3)} \le C \bke{A_{0,q}(R)}^{\frac53},\quad q\ge2,
}
and
\EQS{\label{eq-3.6-BT-SIMA2021-q<2}
\norm{\int_0^{\la_R R^2} \int_{B_R(x_0R)} |v|^{\frac{10}3} + |b|^{\frac{10}3} + \abs{\pi - c_{R x_0,R}(t)}^{\frac53} dxdt}_{\ell^\si(x_0\in\ZZ^3)} \le C \bke{A_{0,q}(R)}^{\frac53},\quad 1\le q<2.
}
for all $\si\ge\frac35$.
\end{lemma}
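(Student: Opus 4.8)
The plan is to localize the arguments of Lemma~\ref{lem2.1-BT-CPDE2020} (the pointwise-in-$x_0$ energy bound) over a lattice of balls and sum the resulting inequalities in $\ell^{q/2}$, mimicking the scheme of \cite[Lemma~3.1]{BT-SIMA2021} for Navier--Stokes but now carrying the extra $b$-field and the MHD nonlinearities $b\cdot\nb b$, $b\cdot\nb v$ through every step. First I would fix $R>0$, rescale to unit scale by setting $v_R(x,t)=R\,v(Rx,R^2t)$, $b_R(x,t)=R\,b(Rx,R^2t)$, $\pi_R(x,t)=R^2\pi(Rx,R^2t)$, so that the family of balls $\{B_R(x_0R)\}_{x_0\in\ZZ^3}$ becomes the unit lattice $\{B_1(x_0)\}_{x_0\in\ZZ^3}$ and $A_{0,q}(R)$ becomes the $\ell^{q/2}$ norm of $\int_{B_1(x_0)}(|v_{0,R}|^2+|b_{0,R}|^2)$. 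Then the task is to prove \eqref{eq-3.5-BT-SIMA2021} with $R=1$ and a short time $\la_0$, with constants independent of $q$.

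The core estimate is the local energy inequality \eqref{lei_mhd} tested against a cutoff $\phi=\phi_{x_0}$ supported in $B_2(x_0)$ and equal to $1$ on $B_1(x_0)$. Following Lemma~\ref{lem2.1-BT-CPDE2020}, this produces, for each $x_0$,
\[
\overline A_{x_0}(t) + \int_0^t\!\!\int \bke{|\nb v|^2+|\nb b|^2}\phi_{x_0}
\le \tfrac12 A_{x_0} + C\!\!\int_0^t \bke{\norm{(v,b)(s)}^3_{L^3(B_2(x_0))} + \norm{\pi-c_{x_0}}^{3/2}_{L^{3/2}(B_2(x_0))}}ds,
\]
where $A_{x_0}=\int_{B_2(x_0)}(|v_0|^2+|b_0|^2)$ and $\overline A_{x_0}(t)=\esssup_{[0,t]}\int(|v|^2+|b|^2)\phi_{x_0}$. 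For the cubic term I would use the Gagliardo--Nirenberg/Sobolev splitting exactly as in the proof of Lemma~\ref{lem2.1-BT-CPDE2020}, bounding $\norm{(v,b)}^3_{L^3(B_2(x_0))}$ by $C\de^{-3}\overline A_{x_0}(t)^3 + C\de\,\overline A_{x_0}(t) + C\de\int_0^t\int(|\nb v|^2+|\nb b|^2)\phi_{x_0}$; the $b$-field contributes identically since the MHD nonlinearities are quadratic in $(v,b)$ of the same type as the Navier--Stokes one. For the pressure term I use the representation \eqref{eq-pi-formula-mhd}: the local Calder\'on--Zygmund piece is controlled by $\norm{(v,b)}^2_{L^3}$ on $B_{4}(x_0)$, hence reduces to the cubic term on a slightly larger ball, while the nonlocal tail $\int (K(x-y)-K(x_0-y))(v\otimes v-b\otimes b)(1-\chi_{4})\,dy$ is estimated by $C\sum_{|j-x_0|\ge 2}|j-x_0|^{-4}\,\overline A_j(t)$ (a convolution with a summable lattice kernel). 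This is the one genuinely $\ell^q$-flavored move: after raising to the power $q/2$ and summing over $x_0$, the nonlocal pressure tail is handled by Young's inequality for lattice convolutions with the kernel $(1+|k|)^{-4}\in\ell^1(\ZZ^3)$, so it maps $\ell^{q/2}$ to $\ell^{q/2}$ with a universal constant.

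Raising everything to the power $q/2$, summing over $x_0\in\ZZ^3$, using $\|\sum\|_{\ell^{q/2}}$-triangle inequality (valid since $q/2\ge 1$ when $q\ge 2$, and using the elementary $\ell^{q/2}\hookrightarrow \ell^1$-type inequality or the superadditivity $(\sum a_i)^{q/2}\ge \sum a_i^{q/2}$ when $q/2\le 1$), and choosing $\de$ small, I arrive at a Gr\"onwall-type inequality for the scalar quantity $\Phi(t):=\|\overline A_{x_0}(t)\|_{\ell^{q/2}(x_0)}$ of the form $\Phi(t)\le \tfrac12 A_{0,q}+C\int_0^t(\Phi(s)+\Phi(s)^2)\,ds$ on $[0,\la_0]$ after time-rescaling, exactly the structure of \cite[Lemma~2.2]{BT-CPDE2020}; the continuity-in-$t$ of $\Phi$ comes from the energy inequality. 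The stopping-time/bootstrap argument then gives $\Phi(\la_R R^2)\le C_1 A_{0,q}(R)$ with $\la_R=\min(\la_0,\la_0 R^2 A_{0,q}(R)^{-2})$, and feeding this bound back into the $\nb$-term inequality completes \eqref{eq-3.5-BT-SIMA2021}. For \eqref{eq-3.6-BT-SIMA2021} and \eqref{eq-3.6-BT-SIMA2021-q<2}, I interpolate: $\|(v,b)\|_{L^{10/3}_{t,x}(B_R)}\lec \|(v,b)\|_{L^\infty_t L^2_x}^{2/5}\|(v,b)\|_{L^2_t L^6_x}^{3/5}$ on each ball, $\int|\nb(v,b)|^2$ controls $L^2_tL^6_x$ by Sobolev, raise to the power $\tfrac35\cdot\tfrac{q}{2}=\tfrac{3q}{10}$ and sum — H\"older in the lattice variable with exponents $(5/2, 5/3)$ distributes the $L^\infty_tL^2$ and $L^2_t\dot H^1$ factors, yielding the $\ell^{3q/10}$ bound for $q\ge 2$; for $1\le q<2$ one simply uses $\ell^{3q/10}\hookrightarrow\ell^{\si}$ for $\si\ge 3/5$ since $3q/10<3/5$, after first bounding by the $\ell^{3/5}$ norm via the crude inequality $\|\cdot\|_{\ell^{3/5}}\le\|\cdot\|_{\ell^{3q/10}}$ is false in that direction, so instead one runs the interpolation at $q=2$ level (the hypotheses give all the $\ell^{q/2}\subset\ell^1$ control needed) to land in $\ell^{3/5}$ and then embeds down. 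The pressure contribution to \eqref{eq-3.6-BT-SIMA2021} is treated as in Lemma~\ref{lem2.1-BT-CPDE2020}: \eqref{eq-pi-formula-mhd} again, with the CZ part reduced to the $L^{10/3}$ norm of $(v,b)$ and the tail a summable lattice convolution of $\overline A$.

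\textbf{Main obstacle.} The delicate point is keeping all constants \emph{independent of $q$} while summing the localized inequalities: when $1\le q<2$ the exponent $q/2<1$ means $\ell^{q/2}$ is only a quasi-norm and the triangle inequality fails, so one must use the concavity inequality $(\sum a_i)^{q/2}\le \sum a_i^{q/2}$ in the favorable direction and be careful that the nonlocal pressure convolution — which naturally wants a genuine norm triangle inequality — is instead estimated via $\big(\sum_{k}(1+|k|)^{-4}b_k\big)^{q/2}\le \big(\sum_k(1+|k|)^{-4}\big)^{q/2-1}\sum_k (1+|k|)^{-4}b_k^{q/2}$ when $q/2\le1$ (and Young's inequality when $q/2\ge1$), so that in both regimes the lattice kernel $(1+|k|)^{-4}\in\ell^1$ does the job with a $q$-uniform constant. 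Verifying that the Gr\"onwall constant $C$ and $\la_0$ in the resulting scalar inequality do not degenerate as $q\to1^+$ or $q\to\infty$ is the crux, and it works precisely because $(1+|k|)^{-4}$ is summable in $\ZZ^3$ and all the nonlinear exponents (cubic, $3/2$, $10/3$, $5/3$) are fixed.
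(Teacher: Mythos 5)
Your overall architecture is the same as the paper's: localize the energy inequality over the lattice $R\ZZ^3$ with cutoffs at scale $R$, control the cubic terms by Gagliardo--Nirenberg, split the pressure via \eqref{eq-pi-formula-mhd} into a Calder\'on--Zygmund piece (absorbed into the cubic terms on enlarged balls) and a far-field tail bounded pointwise by a lattice convolution $\overline K * e_{R,\la}$ with $\overline K(k)\sim |k|^{-4}$, then raise to the power $q/2$, sum, and close by a continuity argument in the time parameter. The interpolation argument for the $L^{10/3}$ bounds and the $\ell^{3q/10}$ versus $\ell^{\sigma}$ bookkeeping also match the paper in substance.

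There is, however, a genuine gap at the step you yourself flag as the crux: the treatment of the far-field pressure convolution when $q/2\le 1$. The displayed inequality
$\bke{\sum_{k}(1+|k|)^{-4}b_k}^{q/2}\le \bke{\sum_k(1+|k|)^{-4}}^{q/2-1}\sum_k (1+|k|)^{-4}b_k^{q/2}$
is \emph{false}: for $0<p\le 1$ the map $t\mapsto t^p$ is concave, so Jensen's inequality with the probability weights $c_k/\sum c_j$ gives precisely the reverse inequality (test with $b_1=1$, $b_2=0$, $c_1=c_2=1$, $p=1/2$: LHS $=1$, your RHS $=1/\sqrt2$). The available substitute is subadditivity, $(\sum c_kb_k)^{p}\le\sum c_k^{p}b_k^{p}$, but this transfers the power $p$ onto the kernel, and after summing over the base point one needs $\sum_k(1+|k|)^{-4p}<\infty$. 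Since the pressure--velocity term enters (after Young's inequality $ab\lec a^{3/2}+b^3$) at the power $p=\tfrac{3q}{4}$, this requires $3q>3$, which fails at the endpoint $q=1$ (the series $\sum_{k\in\ZZ^3}(1+|k|)^{-3}$ diverges) and produces a constant blowing up as $q\to1^+$, contradicting the required $q$-uniformity. This is exactly why the paper does \emph{not} reuse the $q\ge2$ estimate for $1\le q<2$: it replaces it by a Cauchy--Schwarz bound $\int\!\!\int \pi_2\, v\cdot\nb\phi \lec \la R^{-1/2}\bke{\|v\|_{L^\infty L^2(B_{2R})}^2+|\overline K * e_{R,\la}|^2}$, so that after raising to the power $q/2$ the convolution appears at exponent $q\ge1$, where the genuine Young convolution inequality $\|\overline K * e\|_{\ell^q}\le\|\overline K\|_{\ell^1}\|e\|_{\ell^q}$ applies with $\|\overline K\|_{\ell^1}$ (not a fractional power of the kernel) and yields the $E_{R,q,\la}^2$ term in \eqref{Eq.est}. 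Your proof needs this (or an equivalent) modification for $1\le q<2$; as written, the $q<2$ case does not close.
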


\begin{proof}
The proof of the lemma is an adaption of the proof of \cite[Lemma 3.1]{BT-SIMA2021} for the Navier--Stokes equations to the MHD equations.

Let $\phi_0\in C^\infty_c(\R^3)$ be radial, non-increasing cutoff function such that $\phi_0\equiv1$ on $B_1(0)$, $\supp\phi_0\subset B_2(0)$, and $|\nb\phi_0(x)|\lec1$, $|\nb\phi_0^{1/2}(x)|\lec1$.
Let $R>0$ be as in the statement of the lemma, and define the scaled cutoff $\phi(x) := \phi_0(x/R)$.
Fix $0<\la\le1$.

For each $\ka\in R\ZZ^3$, define the localized energy quantity
\[
e_{R,\la}(\ka) := \esssup_{0\le t\le\la R^2} \int \bke{|v(t)|^2 + |b(t)|^2} \phi(x-\ka)\, dx + \int_0^{\la R^2} \int \bke{|\nb v|^2 + |\nb b|^2} \phi(x-\ka)\, dxdt.
\]
We begin by deriving bounds on $e_{R,\la}(\ka)$, which will then be used to control the quantity
\[
E_{R,q,\la} := \norm{\esssup_{0\le t\le\la R^2} \int \bke{|v(t)|^2 + |b(t)|^2}\phi(x-Rk)\, dx + \int_0^{\la R^2} \int \bke{|\nb v|^2 + |\nb b|^2} \phi(x-Rk)\, dxdt}_{\ell^{\frac{q}2}(k\in\ZZ^3)}^{\frac{q}2}
\]
in terms of $A_{0,q}(R)$ for sufficiently small $\la$.
By assumption, $E_{R,q,\la}<\infty$.
To estimate $e_{R,\la}(\ka)$, we apply the local energy inequality \eqref{lei_mhd}:
\EQN{
\int &\bke{|v(t)|^2 + |b(t)|^2} \phi(x-\ka)\, dx + 2\int_0^t \int \bke{|\nb v|^2 + |\nb b|^2} \phi(x-\ka)\, dxds\\
&\le \int \bke{|v_0|^2 + |b_0|^2} \phi(x-\ka)\, dx + \int_0^t \int \bke{|v|^2 + |b|^2} \De\phi(x-\ka)\, dxds\\
&\quad + \int_0^t\int \bke{|v|^2 + |b|^2} \bke{v\cdot\nb\phi(x-\ka)} dxds + \int_0^t\int 2\pi \bke{v\cdot\nb\phi(x-\ka)} dxds\\
&\quad - 2 \int_0^t \int (b\cdot v) \bke{b\cdot\nb\phi(x-\ka)} dxds.
}
We now estimate each term on the right-hand side, beginning with the second term.
Using the properties of $\phi$, we have:
\EQN{
\int_0^{\la R^2} \int \bke{|v|^2 + |b|^2} \abs{\De\phi(x-\ka)} dxds 
&\le \frac{C}{R^2} \int_0^{\la R^2} \int_{B_{2R}(\ka)} \bke{|v|^2 + |b|^2} dxds\\
&\le C\la \sum_{\ka'\in R\ZZ^3;\,|\ka'-\ka|\le 2R} \esssup_{0\le t\le\la R^2} \int \bke{|v|^2 + |b|^2}\phi(x-\ka')\, dx\\
&\le C\la \sum_{\ka'\in R\ZZ^3;\,|\ka'-\ka|\le 2R} e_{R,\la}(\ka').
}

For the cubic terms, we apply the Gagliardo--Nirenberg inequality:
\[
\int_{B_{2R}} |v|^3\, dx \lec \bke{\int_{B_{2R}} |\nb v|^2 }^{\frac34} \bke{\int_{B_{2R}} |v|^2 }^{\frac34} + R^{-\frac32} \bke{\int_{B_{2R}} |v|^2 }^{\frac32},
\]
\[
\int_{B_{2R}} |b|^3\, dx \lec \bke{\int_{B_{2R}} |\nb b|^2 }^{\frac34} \bke{\int_{B_{2R}} |b|^2 }^{\frac34} + R^{-\frac32} \bke{\int_{B_{2R}} |b|^2 }^{\frac32}.
\]
Let $N = \sup_{0\le t\le\la R^2} \int_{B_{2R}} \bke{|v(t)|^2 + |b(t)|^2} dx + 2\int_0^{\la R^2} \int_{B_{2R}} \bke{|\nb v|^2 + |\nb b|^2} dxdt$.
Then, integrating the above estimates over time yields
\EQS{\label{eq-3.9-BT-SIMA2021}
\int_0^{\la R^2} \int_{B_{2R}} \bke{|v|^3 + |b|^3}\, dxds 
&\lec N^{\frac34} \int_0^{\la R^2}  \bkt{ \bke{\int_{B_{2R}} |\nb v|^2}^{\frac34} + \bke{\int_{B_{2R}} |\nb b|^2}^{\frac34} } ds  + R^{-\frac32} N^{\frac32} \la R^2\\
&\lec N^{\frac32} (\la R^2)^{\frac14} + N^{\frac32}\la R^{\frac12}\\
&\lec N^{\frac32} \la^{\frac14} R^{\frac12},
}
where we've used $\la\le1$ in the final step.
As a consequence, we have
\EQN{
\int_0^{\la R^2} \int \bke{|v|^2 + |b|^2} \bke{v\cdot\nb\phi(x-\ka)} dxds
&\le \frac{C}{R} \int_0^{\la R^2} \int_{B_{2R}(\ka)} \bke{|v|^3 + |b|^3} dxds\\
&\le CR^{-\frac12}\la^{\frac14} \sum_{\ka'\in R\ZZ^3;\, |\ka'-\ka|\le 4R} (e_{R,\la}(\ka'))^{\frac32},
}
and
\EQN{
-2 \int_0^{\la R^2} \int (b\cdot v) \bke{b\cdot\nb\phi(x-\ka)} dxds 
&\le \frac{C}R \int_0^{\la R^2} \int_{B_{2R}(\ka)} |v||b|^2\, dxds\\
&\le \frac{C}{R} \int_0^{\la R^2} \int_{B_{2R}(\ka)} \bke{|v|^3 + |b|^3} dxds\\
&\le CR^{-\frac12}\la^{\frac14} \sum_{\ka'\in R\ZZ^3;\, |\ka'-\ka|\le 4R} (e_{R,\la}(\ka'))^{\frac32}.
}

Now, the only remaining term is the pressure term.
To estimate it, we use the local pressure expansion \eqref{eq-pi-formula-mhd} to write $\pi(x,t)$ for $x\in B_{2R}(\ka)$ as
\EQN{
\pi(x,t) &= -\De^{-1} \div\div\bkt{(v\otimes v - b\otimes b)\chi_{4R}(x-\ka)}\\
&\quad - \int_{\R^3} \bke{K(x-y) - K(\ka-y)} (v\otimes v - b\otimes b )(y,t) \bke{1-\chi_{4R}(y-\ka)} dy + c_{\ka,R}(t)\\
&=: \pi_1(x,t) + \pi_2(x,t) + c_{\ka,R}(t).
}
Note that
\EQS{\label{eq-3.11-BT-SIMA2021}
|K(x-y) - K(\ka-y)|\le \frac{CR}{|\ka-y|^4}
} 
for $|x-\ka|\le 2R$ and $|\ka-y|\ge 4R$.
This ensures that $\pi_2$ is well-defined even if $v$ and $b$ lack decay at infinity.

For the localized term $\pi_1$, standard Calderon--Zygmund theory gives
\EQN{
\norm{\pi_1}_{L^{\frac32}(B_{2R}(\ka))} 
&\le \norm{v\chi_{4R}^{1/2}(\cdot-\ka)}_{L^3}^2 + \norm{b\chi_{4R}^{1/2}(\cdot-\ka)}_{L^3}^2\\
&\le C \sum_{\ka'\in R\ZZ^3;\, |\ka'-\ka|\le9R} \bke{ \norm{v\phi^{1/2}(\cdot-\ka)}_{L^3}^2 + \norm{b\phi^{1/2}(\cdot-\ka)}_{L^3}^2},
}
where we used the support and scaling properties of the cutoff functions.
Then, applying H\"older's inequality and the inequality $\bke{\sum_{j=1}^n a_j^2}\bke{\sum_{j=1}^n |a_j|} \le n\sum_{j=1}^n |a_j|^3$ along with the bound from \eqref{eq-3.9-BT-SIMA2021}, we obtain
\EQN{
\int_0^{\la R^2} \int &2\pi_1 \bke{v\cdot\nb\phi(x-\ka)} dxds\\
&\le \frac{C}{R} \int_0^{\la R^2} \sum_{\ka'\in R\ZZ^3;\,|\ka'-\ka|\le9R} \bke{ \norm{v\phi^{1/2}(\cdot-\ka')}_{L^3}^3 + \norm{b\phi^{1/2}(\cdot-\ka')}_{L^3}^3 } ds\\
&\le CR^{-\frac12}\la^{\frac14} \sum_{\ka'\in R\ZZ^3;\,|\ka'-\ka|\le10R} (e_{R,\la}(\ka'))^{\frac32}.
}

To estimate $\pi_2$, we begin with the following pointwise bound for $x\in B_{2R}(\ka)$:
\EQN{
|\pi_2(x,t)| 
&\le C\int \frac{R}{|\ka-y|^4} \bke{|v|^2 + |b|^2}(y,t) \bke{1-\chi_{4R}(y-\ka)} dy\\
&\le C \sum_{\ka'\in R\ZZ^3;\,|\ka'-\ka|>4R} \int_{B_{2R}(\ka')} \frac{R}{|\ka-y|^4}\bke{|v(y,t)|^2+ |b(y,t)|^2} \phi(y-\ka')\, dy\\
&\le \frac{C}{R^3} \sum_{\ka'\in R\ZZ^3;\,|\ka'-\ka|>4R} \frac1{|\ka/R - \ka'/R|^4} \int_{B_{2R}(\ka')} \bke{|v(y,t)|^2+ |b(y,t)|^2} \phi(y-\ka')\, dy\\
&\le \frac{C}{R^3} \bke{\overline{K}*e_{R,\la}}(\ka),
}
where we used the estimate \eqref{eq-3.11-BT-SIMA2021}, and the convolution $\overline{K}*e_{R,\la}$ is taken over the lattice $R\ZZ^3$, with
\[
\overline{K}(x) = 
\begin{cases}
\dfrac1{|x/R|^4}\ \text{ if } |x|>4R,\\
\overline{K}(x) = 0 \ \text{ otherwise,}
\end{cases}
\ 
\text{ for $x\in R\ZZ^3$.}
\]

Assume now that $q\ge2$.
Using $\la\le1$, we estimate
\EQS{\label{eq-q>2-key}
\int_0^{\la R^2}& \int 2\pi_2(x,s) \bke{v(x,s)\cdot\nb\phi(x-\ka)} dxds\\
&\le \frac{C}{R} \int_0^{\la R^2} \int_{B_{2R}(\ka)} |\pi_2|^{\frac32}\, dxds + \frac{C}{R} \int_0^{\la R^2} \int_{B_{2R}(\ka)} |v|^3\, dxds\\
&\le C\la^{\frac14}R^{-\frac12} \bke{(\overline{K}*e_{R,\la})(\ka)}^{\frac32} + C\la^{\frac14}R^{-\frac12} \sum_{|\ka'-\ka|\le4R} \bke{e_{R,\la}(\ka')}^{\frac32}.
}

Note that $\int_0^{\la R^2} \int 2c_{x_0,R}(s) v\cdot\nb\phi(x-\ka)\, dxds = 0$.

Combining all estimates, we obtain the key bound:
\EQS{\label{eq-3.12-BT-SIMA2021}
e_{R,\la}(\ka) 
&\le \int \bke{|v_0|^2 + |b_0|^2}\phi(x-\ka)\, dx + C\la \sum_{\ka'\in R\ZZ^3;\,|\ka'-\ka|\le2R} e_{R,\la}(\ka')\\
&\quad + C \frac{\la^{\frac14}}{R^{\frac12}} \sum_{\ka'\in R\ZZ^3;\,|\ka'-\ka|\le2R} \bke{e_{R,\la}(\ka')}^{\frac32} + C \frac{\la^{\frac14}}{R^{\frac12}}  \bke{(\overline{K} *e_{R,\la})(\ka)}^{\frac32},
}
provided $\la\le1$.
Note that all constants here are independent of  $q$.
We now raise both sides of \eqref{eq-3.12-BT-SIMA2021} to the power $q/2$ and sum over $\ka\in R\ZZ^3$.
The left-hand side yields $E_{R,q,\la}$.
For the right-hand side:
\begin{itemize}
\item The initial data term is controlled by 
\[
\sum_{\ka\in R\ZZ^3} \bkt{\int \bke{|v_0|^2 + |b_0|^2} \phi(x-\ka)\, dx}^{\frac{q}2} \le C^q \bke{A_{0,q}(R)}^{\frac{q}2},
\]

\item The linear term gives 
\[
\sum_{\ka\in R\ZZ^3} \bkt{ C\la \sum_{\ka'\in R\ZZ^3;\,|\ka'-\ka|\le2R} e_{R,\la}(\ka') }^{\frac{q}2} \le C^q\la^{\frac{q}2} E_{R,q,\la},
\]

\item For the nonlinear cubic term, using $\bke{\sum_{j=1}^n a_j}^p \le n^p \sum_{j=1}^n a_j^p$ for $p\ge1$ and $a_j\ge0$, we have 
\[
\sum_{\ka\in R\ZZ^3} \bkt{ C \frac{\la^{\frac14}}{R^{\frac12}} \sum_{\ka'\in R\ZZ^3;\,|\ka'-\ka|\le2R} \bke{e_{R,\la}(\ka')}^{\frac32}  }^{\frac{q}2} \le C^q \bke{\frac{\la^{\frac14}}{R^{\frac12}}}^{\frac{q}2} \sum_{\ka\in R\ZZ^3} \bke{e_{R,\la}(\ka)}^{\frac{3q}4}.
\]

\item For the convolution term we use Young's convolution inequality to find
\EQN{
\sum_{\ka\in R\ZZ^3} \bke{C\frac{\la^{\frac14}}{R^\frac12} \bke{(\overline{K}*e_{R,\la})(\ka)}^{\frac32} }^{\frac{q}2} 
&\le C^q \bke{\frac{\la^{\frac14}}{R^{\frac12}}}^{\frac{q}2} \sum_{\ka\in R\ZZ^3} \bke{(\overline{K}*e_{R,\la})(\ka)}^{\frac{3q}4}\\
&\le C^q \bke{\frac{\la^{\frac14}}{R^{\frac12}}}^{\frac{q}2} \norm{\overline{K}}_{\ell^1(R\ZZ^3)}^{\frac{3q}4} \norm{e_{R,\la}}_{\ell^{\frac{3q}4}}^{\frac{3q}4},
}
where $\norm{\overline{K}}_{\ell^1(R\ZZ^3)}$ is bounded independently of $R$.
\end{itemize} 

Now, since $\norm{e_{R,\la}}_{\ell^{\frac{3q}4}} \le \norm{e_{R,\la}}_{\ell^{\frac{q}2}}$, we conclude for $E=E_{R,q,\la}$ and some constant $C_2\ge1$ independent of $q$, $R$ that
\EQS{\label{eq-3.13-BT-SIMA2021}
E \le C_2^q \bke{A_{0,q}(R)}^{\frac{q}2} + C_2^q\la^{\frac{q}2}E + C_2^q\bke{\frac{\la^{\frac14}}{R^{\frac12}}}^{\frac{q}2} E^{\frac32}.
}
The right side is finite for $\la<R^{-2}T_2$ by assumption \eqref{eq-3.4-BT-SIMA2021}.
It follows from the same argument as in \cite[p.~2005]{BT-SIMA2021}, $E_{R,q,\la}$ is continuous in $\la$.
So, from \eqref{eq-3.13-BT-SIMA2021} we conclude that 
\[
E\le 2E_0,\qquad E_0 = C_2^q\bke{A_{0,q}(R)}^{\frac{q}2},
\]
provided $C_2^q\la^{\frac{q}2}\le1/4$ and $C_2^q\bke{\frac{\la^{\frac14}}{R^{\frac12}}}^{\frac{q}2} (2E_0)^{\frac12}\le 1/4$, which is achieved if (using $q\ge2$)
\EQN{
\la\le\la_R := \min\bke{\la_0,\,\frac{\la_0R^2}{\bke{A_{0,q}(R)}^2}},
}
where $\la_0 = \min\bke{(2C_2)^{-2}, (2C_2)^{-12}}$.
This shows the first estimate \eqref{eq-3.5-BT-SIMA2021} of Lemma \ref{lem3.1-BT-SIMA2021}, for $q\ge2$, with $C_1 = CC_2^2$.
Note that the constants $C_2$, $\la_0$, and $C_1$ do not depend on $q$ and $R$.

For $1\le q<2$, we replace \eqref{eq-q>2-key} by 
\EQN{
\int_0^{\la R^2} \int 2 \pi_2 v\cdot \nb \phi( x-\ka) \,dx\,ds &\lesssim 
\frac 1 {R^4} {\la R^{\frac 7 2}} \| v\|_{L^\I(0,\la R^2; L^2(B_{2R}(\ka)))}  |\overline K * e_{R,\la}(\ka)  |
\\&\lesssim 
{\frac {\la} {R^{1/2}}} \bke{ \| v\|_{L^\I(0,\la R^2; L^2(B_{2R}(\ka)))}^2+  |\overline K * e_{R,\la}(\ka)  |^2 }
\\&\lesssim {\frac {\la} {R^{1/2}}} \bke{ \sum_{\ka'\in R\ZZ^3; |\ka'-\ka|\leq 2R} e_{R,\la}(\ka') +  |\overline K * e_{R,\la}(\ka)  |^2 }.
}
Raising both sides of the above inequality to the power $q/2$ and sum over $\ka\in R\ZZ^3$, we get
\[ \sum_{\ka\in R\ZZ^3}\bigg(  C\big(\la +{\frac {\la} {R^{1/2}}} \big)  \sum_{\ka'\in R\ZZ^3; |\ka'-\ka|\leq 2R} e_{R,\la}(\ka')   \bigg)^{q/2} 
\leq C^q \bigg(\la+   {\frac {\la} {R^{1/2}}}\bigg)^{q/2}    E_{R,q,\la} ,
\]
Above we have used $\bke{\sum_{j=1}^n a_j}^p \le \sum_{j=1}^n a_j^p$ for $0<p<1$ and $a_j\ge0$.
For the  convolution term we use Young's convolution inequality to obtain 
\EQN{
&\sum_{\ka\in R\ZZ^3} \bigg({\frac {C\la} {R^{1/2}}}   |\overline K * e_{R,\la}(\ka)  |^2 \bigg)^{q/2}
= \bke{\frac {C\la} {R^{1/2}}}^{\frac q2} \sum_{\ka\in R\ZZ^3}  |\overline K * e_{R,\la}(\ka)  |^q
\\&\quad \le \bke{\frac {C\la} {R^{1/2}}}^{\frac q2}  	\|\overline K\|_{\ell^1}	 ^{q}	 	  	\|e_{R,\la}\|_{\ell^q}	 ^{q}
\le \bke{\frac {C\la} {R^{1/2}}}^{\frac q2} \|e_{R,\la}\|_{\ell^{q/2}} ^{q}				
\le  \bigg( \frac {C\la} {R^{1/2}} \bigg) ^{\frac q2}	{E_{R,q,\la}^2},
} 
where we used the fact that $ \| \overline K \|_{\ell^1(R\ZZ^3)}$ is bounded independently of $R$.
We conclude that, for some constant $C_2\ge1$ independent of $q,R$, we have
\EQS{\label{Eq.est}
E_{R,q,\la} &\leq C_2^q A_{0,q}(R)^{q/2} + C_2^{q} \bke{\la+ \frac {\la} {R^{1/2}} }^{q/2}  E_{R,q,\la}+  C_2^q  \bigg( \frac {\la} {R^{2}}  \bigg)^{\frac q8}  E_{R,q,\la}^{3/2}
\\
&\quad +C_2^q \bigg( \frac {\la} {R^{1/2}} \bigg) ^{\frac q2}{E_{R,q,\la}^2}
}
The same argument as in \cite[p.~2005]{BT-SIMA2021} shows that $E_{R,q,\la}$ is continuous in $\la$.
Therefore, we conclude from \eqref{Eq.est} and a continuity argument that the estimate \eqref{eq-3.5-BT-SIMA2021} also holds for $1<q<2$.

We now show \eqref{eq-3.6-BT-SIMA2021} and \eqref{eq-3.6-BT-SIMA2021-q<2}. 
By the Gagliardo--Nirenberg inequality,
\[
\int_{B_R} |v|^{\frac{10}3}\, dx \lec \bke{\int_{B_R} |\nb v|^2 } \bke{\int_{B_R} |v|^2 }^{\frac23} + R^{-2} \bke{\int_{B_R} |v|^2 }^{\frac53},
\]
\[
\int_{B_R} |b|^{\frac{10}3}\, dx \lec \bke{\int_{B_R} |\nb b|^2 } \bke{\int_{B_R} |b|^2 }^{\frac23} + R^{-2} \bke{\int_{B_R} |b|^2 }^{\frac53}.
\]
Denoting $N = \sup_{0\le t\le\la R^2} \int_{B_R} \bke{|v(t)|^2 + |b(t)|^2} dx + 2\int_0^{\la R^2} \int_{B_R} \bke{|\nb v|^2 + |\nb b|^2} dxds$ with $\la = \la_R$, we have
\EQS{\label{eq-3.15-BT-SIMA2021}
\int_0^{\la R^2} \int_{B_R} \bke{|v|^{\frac{10}3} + |b|^{\frac{10}3} } dxds 
&\lec N^{\frac23} \int_0^{\la R^2} \bke{\int_{B_R} |\nb v|^2 + |\nb b|^2 } ds + R^{-2} N^{\frac53} \la R^2\\
&\lec N^{\frac53} + \la N^{\frac53} 
\lec N^{\frac53},
}
using $\la\le1$.
For $k\in R\ZZ^3$ and $Q(k) = B_R(k)\times(0,\la_R R^2)$, by \eqref{eq-3.15-BT-SIMA2021} with $B_R$ replaced by $B_R(k)$, we have $N\le e_{R,\la}(k)$ and hence
\EQN{
\sum_{k\in R\ZZ^3} \bkt{\int_{Q(k)} \bke{|v|^{\frac{10}3} + |b|^{\frac{10}3}} dxdt}^{\frac{3q}{10}}
\le C\sum_{k\in R\ZZ^3} \bke{(e_{R,\la}(k))^{\frac53}}^{\frac{3q}{10}}
\le CE_0,\quad q\ge2,
}
and, for $\si>0$ satisfying $\frac{5\si}3\ge\frac{q}2$ (which is implied if $\si\ge\frac35$), we have
\EQN{
\sum_{k\in R\ZZ^3}& \bkt{\int_{Q(k)} \bke{|v|^{\frac{10}3} + |b|^{\frac{10}3}} dxdt}^{\si}\\
&\le C\sum_{k\in R\ZZ^3} \bke{e_{R,\la}(k)}^{\frac{5\si}3}
\le C\bkt{\sum_{k\in R\ZZ^3} \bke{e_{R,\la}(k)}^{\frac{q}2} }^{\frac2q\cdot\frac{5\si}3}
\le CE_0^{\frac2q\cdot\frac{5\si}3},\quad 1\le q<2.
}
By Calderon--Zygmund estimates, 
\EQN{
\sum_{k\in R\ZZ^3} &\bke{\int_{Q(k)} |\pi_1|^{\frac53} dxdt}^{\frac{3q}{10}}\\
&\le C \sum_{k\in R\ZZ^3} \sum_{k'\in R\ZZ^3;\, |k-k'|<10R} \bkt{ \bke{\int_{Q(k')} |v|^{\frac{10}3}\, dxdt }^{\frac{3q}{10}} + \bke{\int_{Q(k')} |b|^{\frac{10}3}\, dxdt }^{\frac{3q}{10}}}\\
&\le CE_0,\quad q\ge2,
}
and
\EQN{
\sum_{k\in R\ZZ^3} &\bke{\int_{Q(k)} |\pi_1|^{\frac53} dxdt}^{\si}\\
&\le C \sum_{k\in R\ZZ^3} \sum_{k'\in R\ZZ^3;\, |k-k'|<10R} \bkt{ \bke{\int_{Q(k')} |v|^{\frac{10}3}\, dxdt }^{\si} + \bke{\int_{Q(k')} |b|^{\frac{10}3}\, dxdt }^{\si}}\\
&\le CE_0^{\frac2q\cdot\frac{5\si}3},\quad 1\le q<2.
}

For $\pi_2$, recall $\pi_2$ in $B_R(k)$ is bounded by $R^{-3}\overline{K}*e_{R,\la}(k)$ and hence
\[
\int_{Q(k)} |\pi_2|^{\frac53} dxdt \le C\la\bke{(\overline{K}*e_{R,\la})(k)}^{\frac53}.
\]
Thus,
\EQN{
\sum_{k\in R\ZZ^3} \bke{\int_{Q(k)} |\pi_2|^{\frac53} dxdt}^{\frac{3q}{10}}
&\le C \la^{\frac{3q}{10}} \sum_{k\in R\ZZ^3} \bke{(\overline{K}*e_{R,\la})(k)}^{\frac{q}2}\\
&\le C \la^{\frac{3q}{10}} \norm{\overline{K}}_{\ell^1}^{\frac{q}2} \sum_{k\in R\ZZ^3} \bke{e_{R,\la}(k)}^{\frac{q}2}
\le CE_0,\quad q\ge2,
}
and, if $\frac{5\si}3\ge1$, which is implied by our assumptions, we have
\EQN{
\sum_{k\in R\ZZ^3} \bke{\int_{Q(k)} |\pi_2|^{\frac53} dxdt}^{\si}
&\le C \la^{\si} \sum_{k\in R\ZZ^3} \bke{(\overline{K}*e_{R,\la})(k)}^{\frac{5\si}3}\\
&\le C \la^{\si} \norm{\overline{K}}_{\ell^1}^{\frac{5\si}3} \sum_{k\in R\ZZ^3} \bke{e_{R,\la}(k)}^{\frac{5\si}3}
\le CE_0^{\frac2q\cdot\frac{5\si}3},\quad 1\le q<2.
}
We conclude that
\[
\norm{\int_{Q(k)} |v|^{\frac{10}3} + |b|^{\frac{10}3} + |\pi_1 + \pi_2|^{\frac53}dxdt}_{\ell^{\frac{3q}{10}}(k\in R\ZZ^3)} 
\le CE_0^{\frac{10}{3q}} 
= C\bke{A_{0,q}(R)}^{\frac53},\quad q\ge2,
\]
and, for $\frac{5\si}3\ge1$,
\[
\norm{\int_{Q(k)} |v|^{\frac{10}3} + |b|^{\frac{10}3} + |\pi_1 + \pi_2|^{\frac53}dxdt}_{\ell^{\si}(k\in R\ZZ^3)} 
\le CE_0^{\frac{10}{3q}} 
= C\bke{A_{0,q}(R)}^{\frac53},\quad 1\le q<2.
\]
This shows \eqref{eq-3.6-BT-SIMA2021} and \eqref{eq-3.6-BT-SIMA2021-q<2} and completes the proof.
\end{proof}

We now prove Theorem \ref{thm-1.4BT4-mhd}, which follows directly as a simple consequence of Lemma \ref{lem3.1-BT-SIMA2021}.

\begin{proof}[Proof of Theorem \ref{thm-1.4BT4-mhd}]
The proof of Theorem \ref{thm-1.4BT4-mhd} is an adaption of the proof of \cite[Theorem 1.4]{BT-SIMA2021} for the Navier--Stokes equations to the MHD equations.

Observe that, by the assumption $R\ge1$, the estimate \eqref{eq-2.2-BT-SIMA2021}, and H\"older's inequality, we have
\EQS{\label{eq-3.17-BT-SIMA2021}
N_{q,R}^0(v_0,b_0) 
&\le \frac{C}{R}\bkt{\sum_{k\in\ZZ^3} \bke{\sum_{|i-kR|<R}u_i^2 + a_i^2}^{\frac{q}2}}^{\frac2q}\\
&\le \frac{C}{R}\bkt{ \bke{\sum_{k\in\ZZ^3} \sum_{|i-kR|<R}u_i^q R^{\bke{3-\frac6q}\frac{q}2}}^{\frac2q} + \bke{\sum_{k\in\ZZ^3} \sum_{|i-kR|<R}a_i^q R^{\bke{3-\frac6q}\frac{q}2}}^{\frac2q} }\\
&\le CR^{2-\frac6q}\bke{\norm{v_0}_{E^2_q}^2 + \norm{b_0}_{E^2_q}^2},
}
where $u_i = \norm{v_0}_{L^2(B_1(i))}$ and $a_i = \norm{b_0}_{L^2(B_1(i))}$ for $i\in\ZZ^3$.
Now, applying Lemma \ref{lem3.1-BT-SIMA2021}, we obtain the bound
\[
\norm{\esssup_{0\le t\le \la_R R^2}\int_{B_R(x_0R)} \frac{|v|^2 + |b|^2}2\, dx + \int_0^{\la_R R^2} \int_{B_R(x_0R)} \bke{|\nb v|^2 + |\nb b|^2} dxdt}_{\ell^{\frac{q}2}(x_0\in\ZZ^3)} 
\le C_1 A_{0,q}(R).
\]
Next, using the definition of $\la_R$ from Lemma \ref{lem3.1-BT-SIMA2021} and the estimate \eqref{eq-3.17-BT-SIMA2021}, we find
\EQN{
\la_R R^2 = \min\bke{\la_0R^2,\, \frac{\la_0R^2}{\bke{N_{q,R}^0(v_0,b_0)}^2}}
&\ge \min\bke{\la_0R^2,\, \frac{\la_0R^{\frac{12}q-2}}{C^2\bke{\norm{v_0}_{E_q^2}^4 + \norm{b_0}_{E_q^2}^4}}}\\
&\ge \frac{\la_1R^{\min\bke{2,\,\frac{12}q-2}}}{\bke{1+\norm{(v_0,b_0)}_{E_q^2\times E_q^2} }^4},
}
where $\la_1 := \la_0(1+C)^{-2}$.
Furthermore, from \eqref{eq-3.17-BT-SIMA2021}, we also have $A_{0,q}(R) = RN_{q,R}^0(v_0,b_0)\le CR^{3-\frac{6}q}\bke{\norm{(v_0,b_0)}_{E^2_q\times E^2_q}^2}$.
This yields the desired upper bound in the statement of Theorem \ref{thm-1.4BT4-mhd}.
\end{proof}

\begin{lemma}[Far-field regularity of local energy solutions with data in $E^2_q$]\label{lem-solution-in-E4q}
Assume $v_0,b_0\in E^2_q$ for some $q\ge1$ are divergence free.
If $(v,b)$ is a local energy solution on $\R^3\times(0,T_0)$ evolving from $(v_0,b_0)$ with $(v,b)\in{\bf LE}_q(0,T_0)$.
Then 
\[
(v,b)(t)\in E^4_q\times E^4_q\ \text{ for a.e. }t\in(0, T_0].
\]
\end{lemma}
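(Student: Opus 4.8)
The plan is to prove this by a localized parabolic smoothing argument, in the spirit of \cite[\S3]{BT-SIMA2021}: first I would upgrade the hypothesis $(v,b)\in{\bf LE}_q(0,T_0)$ to a spacetime bound of the type $v,b\in E^{8/3,4}_{T_0,q}$, and then descend to pointwise‑in‑time information via the embedding \eqref{ineq:embedding.parabolic.space}. The argument does not involve the pressure, and since $b$ is treated exactly like $v$ I will only describe the estimates for $v$; throughout, implied constants may depend on $T_0$.

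First I would unpack the hypothesis. Setting $a_k:=\norm{v}_{L^\infty(0,T_0;L^2(B_1(k)))}$ and $G_k:=\norm{\nabla v}_{L^2(0,T_0;L^2(B_1(k)))}^2$, the finiteness of $\norm{v}_{E^{\infty,2}_{T_0,q}}$ and $\norm{\nabla v}_{E^{2,2}_{T_0,q}}$ is exactly the statement that $(a_k)_{k\in\ZZ^3}\in\ell^q$ and $(G_k)_{k\in\ZZ^3}\in\ell^{q/2}$. Next, on each unit ball I would invoke the Gagliardo--Nirenberg inequality in $\R^3$, $\norm{v(t)}_{L^4(B_1(k))}\lec \norm{v(t)}_{H^1(B_1(k))}^{3/4}\norm{v(t)}_{L^2(B_1(k))}^{1/4}$, valid for a.e.\ $t$; raising this to the power $8/3$, integrating over $(0,T_0)$, and using $\sup_t\norm{v(t)}_{L^2(B_1(k))}\le a_k$ together with $\int_0^{T_0}\norm{v(t)}_{H^1(B_1(k))}^2\,dt\le T_0a_k^2+G_k$, one obtains
\[
\norm{v}_{L^{8/3}(0,T_0;L^4(B_1(k)))}\lec a_k^{1/4}\,(T_0a_k^2+G_k)^{3/8}\lec a_k+a_k^{1/4}G_k^{3/8}.
\]
Taking the $\ell^q$‑norm in $k$ and applying H\"older's inequality in $k$ with exponents $(4,4/3)$ to the pair $\big((a_k^q)_k,(G_k^{q/2})_k\big)$ then gives
\[
\norm{v}_{E^{8/3,4}_{T_0,q}}^q\lec \sum_{k}a_k^q+\Big(\sum_{k}a_k^q\Big)^{1/4}\Big(\sum_{k}G_k^{q/2}\Big)^{3/4}<\infty ,
\]
so $v\in E^{8/3,4}_{T_0,q}$, and likewise $b$. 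By the embedding \eqref{ineq:embedding.parabolic.space} with $s=8/3$ and $p=4$ this yields $v\in L^{8/3}(0,T_0;E^4_q)$; in particular $\norm{v(t)}_{E^4_q}<\infty$ for a.e.\ $t\in(0,T_0)$, and similarly for $b$, which is the assertion.

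The step I expect to be the real obstacle is this final descent from the spacetime norm $E^{8/3,4}_{T_0,q}$ to pointwise‑in‑time membership in $E^4_q$. The exponent pair $(8/3,4)$ is forced by parabolic scaling — the energy class controls $v$ locally in $L^{8/3}_tL^4_x$ and no better along the $L^4_x$ scale — so the embedding \eqref{ineq:embedding.parabolic.space} is only at our disposal when $q\le s=8/3$, which already covers the range relevant to the applications of the lemma. To cover larger $q$ I would instead substitute the sharper localized a~priori bound of Lemma~\ref{lem3.1-BT-SIMA2021} on balls of radius $R$ — which jointly controls $\esssup_{0\le t\le T_0}$ of the $L^2$ mass and $\int_0^{T_0}$ of $|\nabla v|^2$ in $\ell^{q/2}$ — into the Gagliardo--Nirenberg step, and then cover the unit lattice $\ZZ^3$ by the $R$‑lattice $R\ZZ^3$; this is admissible for $R$ large because $q<3$ forces $\la_RR^2\to\infty$, so that Lemma~\ref{lem3.1-BT-SIMA2021} applies up to time $T_0$.
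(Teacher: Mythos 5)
Your reduction to $v,b\in E^{8/3,4}_{T_0,q}$ is correct as far as it goes (the Gagliardo--Nirenberg and H\"older-in-$k$ steps check out), but the final descent is where the argument breaks, and your proposed repair does not fix it. The embedding \eqref{ineq:embedding.parabolic.space} converts $E^{8/3,4}_{T_0,q}$ into $L^{8/3}_{T_0}E^4_q$ only when $q\le 8/3$; for $q>8/3$ Minkowski goes the wrong way, and the implication is genuinely false: finiteness of $\norm{\,\norm{v}_{L^{8/3}_{T_0}L^4(B_1(k))}\,}_{\ell^q(k)}$ does not force $\norm{v(t)}_{E^4_q}<\infty$ for a.e.\ $t$, because at a fixed time the tail decay of $k\mapsto\norm{v(t)}_{L^4(B_1(k))}$ is not controlled by a spacetime norm with time exponent smaller than $q$. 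Since, as you note, the pair $(8/3,4)$ is pinned by the energy class, replacing unit balls by balls of radius $R$ and invoking Lemma \ref{lem3.1-BT-SIMA2021} on the lattice $R\ZZ^3$ changes nothing: the obstruction is the mismatch between $q$ and the time exponent $s=8/3$, not the lattice scale, and the same Minkowski failure reappears verbatim. Your claim that $q\le 8/3$ "covers the range relevant to the applications" is also not correct: the lemma is invoked for all $q\ge 2$ in the proof of Theorem \ref{th4.8-mhd} and with $q=\infty$ in Lemma \ref{lem-A.1-BT-SIMA2021}. (As a side remark, the condition you cite for $\la_R R^2\to\infty$ is also not $q<3$; by \eqref{eq-3.17-BT-SIMA2021} one needs $A_{0,q}(R)=o(R^2)$, but this is secondary.)

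The missing idea --- and the reason the paper's proof cannot avoid the pressure --- is to obtain \emph{uniform-in-time} control in the far field from partial regularity. The paper applies Lemma \ref{lem3.1-BT-SIMA2021} with $R=1$ to get $f_k=\int_0^{T_0}\int_{B_1(k)}\big(|v|^3+|b|^3+|\pi-c_{k,1}(t)|^{3/2}\big)\,dxdt$ in $\ell^{q/3}$; since the tail of this $\ell^{q/3}$ norm vanishes, the $\ep$-regularity criterion (Lemma \ref{lem2.3-BT-CPDE2020}) applies on every unit cell with $|k|>R_0$ and yields $\norm{v}_{L^\infty(B_1(k)\times[T_0/2,T_0])}\lec\big(\sum_{|k'-k|\le2}f_{k'}\big)^{1/3}$, which is $\ell^q$-summable over $|k|>R_0$ for \emph{every} $q$ and dominates $\norm{v(t)}_{L^4(B_1(k))}$ pointwise in $t$. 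The $L^{8/3}_tL^4_x$ bound is then needed only on the single compact ball $B_{R_0}$, where the $\ell^q$ sum is finite and a.e.-in-$t$ finiteness is immediate. So your computation survives as the near-field half of the proof, but without the $\ep$-regularity input the far-field $\ell^q$ summability at fixed times is out of reach for $q>8/3$.
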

\begin{proof}
By Lemma \ref{lem3.1-BT-SIMA2021} with $R=1$, we have
\EQ{\label{eq-5.2-BT-SIMA2021}
\norm{e_k}_{\ell^{q/2}(k\in\ZZ^3)}\le CA_{0,q},\quad
\norm{f_k}_{\ell^{q/3}(k\in\ZZ^3)}\le CA_{0,q}^{3/2},
}
where
\[
e_k = \esssup_{0\le t\le T_0} \int_{B_1(k)} \frac{|v|^2+|b|^2}2\, dx + \int_0^{T_0}\int_{B_1(k)}|\nb v|^2 + |\nb b|^2\, dxdt,
\]
\[
f_k = \int_0^{T_0}\int_{B_1(k)} |v|^3 + |b|^3 + |\pi-c_{k,1}(t)|^{3/2} dxdt,
\]
and
\[
A_{0,q} = \norm{\int_{B_1(k)} |v_0(x)|^2 + |b_0(x)|^2\, dx}_{\ell^{q/2}(k\in\ZZ^3)} = \norm{(v_0,b_0)}_{E^2_q\times E^2_q}^2.
\]

Since
\[
\lim_{R\to\infty} \norm{f_k}_{\ell^{q/3}(k\in\ZZ^3; |k|>R)} = 0,
\]
by Lemma \ref{lem2.3-BT-CPDE2020}, there exists $R_0>0$ such that
\[
v,b\in L^\infty\cap C_\loc(B_{R_0}^c\times[T_0/2,T_0])
\]
and
\[
\lim_{R\to\infty} \norm{v}_{L^\infty(B_{R_0}^c\times[T_0/2,T_0])} + \norm{b}_{L^\infty(B_{R_0}^c\times[T_0/2,T_0])} = 0.
\]
In fact, for $|k|>R_0$,
\[
\norm{v}_{L^\infty(B_1(k)\times[T_0/2,T_0])}^3 + \norm{b}_{L^\infty(B_1(k)\times[T_0/2,T_0])}^3 \le C\sum_{|k'-k|\le2} f_{k'}.
\]
Thus
\[
\norm{\norm{v}_{L^\infty(B_1(k)\times[T_0/2,T_0])} + \norm{b}_{L^\infty(B_1(k)\times[T_0/2,T_0])} }_{\ell^q(k\in\ZZ^3;|k|>R_0)}
\le C\norm{f_k}_{\ell^{q/3}(k\in\ZZ^3;|k|>R_0-2)}^{1/3}\ll1.
\]
By \eqref{eq-5.2-BT-SIMA2021} and Sobolev imbedding,
\[
v,b\in L^{8/3}(0,T_0; L^4(B_{R_0})).
\]
Thus
\[
(v,b)(t)\in E^4_q\times E^4_q\ \text{ for a.e. }t\in(0, T_0],
\]
completing the proof of the lemma.
\end{proof}

\subsection{Global existence}

In this section, we prove Theorem \ref{th4.8-mhd}.

For $q\ge2$, we achieve this by considering the perturbed MHD equations 
\EQS{\label{eq-4.31-BT-SIMA2021}
\pd_tv - \De v + v\cdot\nb v + u\cdot\nb v + v\cdot\nb u - b\cdot\nb b - a\cdot\nb b - b\cdot\nb a + \nb\pi &= 0,\\
\pd_tb - \De b + v\cdot\nb b + u\cdot\nb b + v\cdot\nb a - b\cdot\nb v - a\cdot\nb v - b\cdot\nb u \qquad\ \ &=0,\\
\nb\cdot v = \nb\cdot b &= 0.
}
where $u$ and $a$ are given divergence-free vector fields. 
A local energy solution to the perturbed MHD equations, \eqref{eq-4.31-BT-SIMA2021}, is a weak solution $(v,b)$ satisfying Definition \ref{def-local-energy-sol-mhd} with the obvious modifications, namely, $(v,b)$ and $\pi$ satisfy the perturbed system as distributions and also satisfy the perturbed local energy inequality.

For $1\le q<2$, we achieve this via the localized and regularized MHD equations:
\EQS{\label{eq-4.10-BLT-MathAnn2024}
\pd_tv^\ep - \De v^\ep + \bke{\mathcal{J}_\ep(v^\ep)\cdot\nb}(v^\ep\Phi_\ep) - \bke{\mathcal{J}_\ep(b^\ep)\cdot\nb}(b^\ep\Phi_\ep) + \nb\pi^\ep &= 0,\\
\pd_tb^\ep - \De b^\ep + \bke{\mathcal{J}_\ep(v^\ep)\cdot\nb}(b^\ep\Phi_\ep) - \bke{\mathcal{J}_\ep(b^\ep)\cdot\nb}(v^\ep\Phi_\ep) \qquad\quad &= 0,\\
\nb\cdot v^\ep = \nb\cdot b^\ep &= 0,
}
where $\mathcal{J}_\ep(f) = \eta_\ep*f$ for a spatial mollifier $\eta_\ep(x) = \ep^{-3}\eta(x/\ep)$ and $\Phi_\ep(x) = \Phi(\ep x)$ for a fixed radially decreasing cutoff function $\Phi$ satisfying $\Phi=1$ on $B_1(0)$ and $\supp(\Phi)\subset B_{3/2}(0)$.

\subsubsection{The case $q\ge2$}\label{sec-q>=2}

\begin{lemma}\label{lem-4.1-BT-SIMA2021}
Let $\ep\in(0,1]$ and $\de>0$ be given, let $T_0>0$, and let $\eta$ be a spatial mollifier in $\R^3$.
Assume that $v_0,b_0\in L^2$ are divergence free and that $u,a:\R^3\times[0,T_0]\to\R^3$ satisfies $\nb\cdot u = \nb\cdot a = 0$ and
\[
\esssup_{0<t\le T_0} \norm{(u,a)(t)}_{L^3_\uloc\times L^3_\uloc} < \de.
\]
Then there exist $T_{\ep,\de} = \min\bke{T_0,\, C(\ep)(\norm{(v_0,b_0)}_{E^2_q\times E^2_q}+\de)^{-2}}$ and a mild solution $(v_\ep, b_\ep)$ of the integral equation
\EQS{\label{eq-4.2-BT-SIMA2021}
v_\ep(t) &= e^{t\De}v_0 + \int_0^t e^{(t-s)\De}\mathbb{P}\nb\cdot\bke{(\eta_\ep * v_\ep)\otimes v_\ep - (\eta_\ep * b_\ep)\otimes b_\ep} ds + L_t^{(1)}(v_\ep,b_\ep),\\
b_\ep(t) &= e^{t\De}b_0 + \int_0^t e^{(t-s)\De}\mathbb{P}\nb\cdot\bke{(\eta_\ep * v_\ep)\otimes b_\ep - (\eta_\ep * b_\ep)\otimes v_\ep} ds + L_t^{(2)}(v_\ep,b_\ep),
}
for $0<t<T_{\ep,\de}$, where
\[
L_t^{(1)}(v_\ep,b_\ep) = \int_0^t e^{(t-s)\De}\mathbb{P}\nb\cdot\bke{(\eta_\ep * u)\otimes v_\ep + v_\ep\otimes(\eta_\ep * u) - (\eta_\ep * a)\otimes b_\ep - b_\ep\otimes(\eta_\ep * a)} ds,
\]
\[
L_t^{(2)}(v_\ep,b_\ep) = \int_0^t e^{(t-s)\De}\mathbb{P}\nb\cdot\bke{(\eta_\ep * u)\otimes b_\ep + b_\ep\otimes(\eta_\ep * u) - (\eta_\ep * a)\otimes v_\ep - b_\ep\otimes(\eta_\ep * u)} ds,
\]
with $(v_\ep, b_\ep)\in{\bf LE}_q(0,T_{\ep,\de})\cap C([0,T_{\ep,\de}]; L^2\times L^2)$, and $(v_\ep, b_\ep)$ satisfies
\EQS{\label{eq-4.3-BT-SIMA2021}
\norm{\bke{\esssup_{0<t<T_{\ep,\de}} \int_{B_1(k)} \bke{|v_\ep(x,t)|^2 + |b_\ep(x,t)|^2} dx}^{\frac12} }_{\ell^q}
&\le 2C\norm{(v_0,b_0)}_{E^2_q\times E^2_q}\quad \text{ and }\\
\sup_{0<t<T_{\ep,\de}} \norm{(v_\ep,b_\ep)(t)}_{L^2\times L^2} 
&\le 2C\norm{(v_0,b_0)}_{L^2\times L^2}
}
for a universal constant $C>0$.
This is the unique mild solution of \eqref{eq-4.2-BT-SIMA2021} in the class \eqref{eq-4.3-BT-SIMA2021}.
There exists a pressure $\pi_\ep$ so that $(v_\ep, b_\ep)$ and $\pi_\ep$ solve
\EQS{\label{eq-4.29-BT-SIMA2021}
\pd_tv_\ep - \De v_\ep &+ (\eta_\ep * v_\ep)\cdot\nb v_\ep + (\eta_\ep * u)\cdot\nb v_\ep + v_\ep\cdot\nb(\eta_\ep* u)\\
& - (\eta_\ep * b_\ep)\cdot\nb b_\ep - (\eta_\ep * a)\cdot\nb b_\ep - b_\ep\cdot\nb(\eta_\ep* a) + \nb\pi_\ep = 0,\\
\pd_tb_\ep - \De b_\ep &+ (\eta_\ep * v_\ep)\cdot\nb b_\ep + (\eta_\ep * u)\cdot\nb b_\ep + v_\ep\cdot\nb(\eta_\ep* a)\\
& - (\eta_\ep * b_\ep)\cdot\nb v_\ep - (\eta_\ep * a)\cdot\nb v_\ep - b_\ep\cdot\nb(\eta_\ep * u) \qquad\ \ =0,\\
&\qquad\qquad\qquad\qquad\qquad\qquad\qquad\quad\,
\nb\cdot v_\ep = \nb\cdot b_\ep = 0,
}
in the weak sense on $\R^3\times(0, T_{\ep,\de})$.
Finally, $(v_\ep, b_\ep)$ and $\pi_\ep$ are smooth by the interior regularity of the Stokes equations with smooth coefficients.
\end{lemma}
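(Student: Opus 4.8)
The plan is to run a Picard iteration for the integral equation \eqref{eq-4.2-BT-SIMA2021} in a function space adapted to the two simultaneous estimates in \eqref{eq-4.3-BT-SIMA2021}. First I would fix the Banach space
\[
X_{T} := \bket{(v,b): \norm{(v,b)}_{X_T} := \sup_{0<t<T}\norm{(v,b)(t)}_{L^2\times L^2} + \norm{(v,b)}_{E^{\infty,2}_{T,q}\times E^{\infty,2}_{T,q}} < \infty},
\]
and observe that by \cite[(1.8),(1.10)]{MaTe} (or the analogous heat-semigroup bounds) the linear term $(e^{t\De}v_0,e^{t\De}b_0)$ is bounded in $X_T$ by $C\norm{(v_0,b_0)}_{L^2\times L^2} + C\norm{(v_0,b_0)}_{E^2_q\times E^2_q}$. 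The mollifier $\eta_\ep$ plays a crucial regularizing role: in every quadratic term one factor carries $\eta_\ep*$, so $\norm{\eta_\ep*v}_{L^\infty}\lec \ep^{-3/2}\norm{v}_{L^2_\uloc}\lec \ep^{-3/2}\norm{v}_{E^2_q}$ for $q<\infty$ (and $\lec\ep^{-3/2}\norm{v}_{L^2}$ uniformly). Hence each bilinear term $\int_0^t e^{(t-s)\De}\mathbb P\nb\cdot((\eta_\ep*w_1)\otimes w_2)\,ds$ is estimated, using the kernel bound $\norm{e^{\tau\De}\mathbb P\nb\cdot f}\lec \tau^{-1/2}\norm{f}$ on both $L^2$ and $L^2_\uloc$ (uloc, or $\ell^q$-amalgam) scales, by $C(\ep)\,T^{1/2}\,\norm{w_1}_{X_T}\norm{w_2}_{X_T}$; the linear-in-$(v,b)$ terms $L^{(i)}_t$ are estimated the same way but with $\norm{(\eta_\ep*u,\eta_\ep*a)}_{L^\infty}\lec \ep^{-3/2}\de$ (using $\esssup_t\norm{(u,a)}_{L^3_\uloc}<\de$ and $L^3_\uloc\hookrightarrow$ controls the mollification), giving a contribution $C(\ep)T^{1/2}\de\,\norm{(v,b)}_{X_T}$.

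With these estimates, the fixed-point map $\Psi(v,b) = (e^{t\De}v_0,e^{t\De}b_0) - (\text{bilinear}) - (L^{(1)},L^{(2)})$ satisfies, on the ball of radius $2C\norm{(v_0,b_0)}_{\cdot}$ in $X_T$,
\[
\norm{\Psi(v,b)}_{X_T} \le C_0\norm{(v_0,b_0)}_\cdot + C(\ep)T^{1/2}\bke{\norm{(v_0,b_0)}_\cdot + \de}\norm{(v,b)}_{X_T},
\]
and a matching Lipschitz estimate for $\Psi(v,b)-\Psi(v',b')$. Choosing $T_{\ep,\de} = \min\bke{T_0, C(\ep)(\norm{(v_0,b_0)}_{E^2_q\times E^2_q}+\de)^{-2}}$ makes the quadratic coefficient $\le 1/2$, so the contraction mapping principle produces a unique mild solution in that ball, which is exactly the class \eqref{eq-4.3-BT-SIMA2021}. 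Continuity in time with values in $L^2\times L^2$ follows by the standard argument (heat semigroup strongly continuous on $L^2$, Bochner estimate for the Duhamel terms), giving $(v_\ep,b_\ep)\in C([0,T_{\ep,\de}];L^2\times L^2)$; membership in ${\bf LE}_q$ follows once the energy identity below is established. To recover the pressure and the PDE \eqref{eq-4.29-BT-SIMA2021}, I would apply $\mathbb P$ and take the divergence: since the source terms $(\eta_\ep*v_\ep)\otimes v_\ep - (\eta_\ep*b_\ep)\otimes b_\ep + (\text{$u,a$ terms})$ lie in $L^{3/2}_\loc$ (indeed in $L^2_tL^{3/2}_{\loc}$ from the $X_T$-bound and the mollified $L^\infty$ factor), the identity $\nb\pi_\ep = (I-\mathbb P)\nb\cdot(\cdots)$ defines $\pi_\ep\in L^{3/2}_\loc$ via the Helmholtz decomposition, and $(v_\ep,b_\ep,\pi_\ep)$ solve \eqref{eq-4.29-BT-SIMA2021} distributionally. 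The smoothness assertion is then the interior regularity theory for the (time-dependent) Stokes system with smooth coefficients: $\eta_\ep*v_\ep$, $\eta_\ep*u$, etc., are $C^\infty$ in $x$, so bootstrapping in the linearized Stokes equation (first in space, then in time) upgrades $(v_\ep,b_\ep,\pi_\ep)$ to smooth functions on $\R^3\times(0,T_{\ep,\de})$.

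The main obstacle I anticipate is \emph{not} the contraction itself but verifying the uniform-in-$\ep$ \emph{a priori} bounds \eqref{eq-4.3-BT-SIMA2021} with a universal constant $C$ (the $2C$), rather than an $\ep$-dependent one: the Picard argument above only gives constants depending on $\ep$. To get the universal constant one must instead test the PDE \eqref{eq-4.29-BT-SIMA2021} against $v_\ep\phi^2$ and $b_\ep\phi^2$ for the scaled cutoffs $\phi(x)=\phi_0(x-k)$, derive the localized energy inequality, and absorb the mollified nonlinear and pressure terms exactly as in Lemma~\ref{lem3.1-BT-SIMA2021} and Lemma~\ref{lem2.1-BT-CPDE2020} — crucially, the commutator estimate $\int (\eta_\ep*w\cdot\nb)(w\phi^2)\cdot w$ must be controlled uniformly in $\ep$, which uses $\|\eta_\ep*w - w\|$-type bounds and the divergence-free structure; the $u,a$-dependent terms are absorbed by the smallness $\de$. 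This localized energy computation, run on the $\ell^q$ lattice and closed by a continuity-in-time argument on $[0,T_{\ep,\de}]$, is where the real work lies; everything else (existence, uniqueness, continuity, pressure recovery, smoothness) is routine once the mollification has made all nonlinearities well-defined.
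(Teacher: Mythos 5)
Your plan is essentially the paper's proof: a Picard iteration for \eqref{eq-4.2-BT-SIMA2021} in a space carrying both the $L^2$ norm and the $\ell^q$ local-energy norm $E^{\infty,2}_{T,q}$, with the mollifier supplying the bound $\norm{\eta_\ep * w}_{L^\infty}\le C(\ep)\norm{w}_{L^2_\uloc}$ so that every quadratic term costs $C(\ep)T^{1/2}$ times a product of norms, the perturbative terms $L^{(1)},L^{(2)}$ costing an extra factor $\de$; the pressure is then recovered by Riesz transforms from the $L^\infty_tL^2_x$ bound on the quadratic expressions, and smoothness follows from interior Stokes regularity. The difference estimate gives uniqueness in the class \eqref{eq-4.3-BT-SIMA2021}, exactly as in the paper.

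Where you go astray is in your final paragraph, and it is worth correcting because it would send you down an unnecessary (and at this stage unworkable) path. The bound \eqref{eq-4.3-BT-SIMA2021} with the \emph{universal} constant $C$ does come directly out of the contraction: $C$ is simply the constant in the linear heat-semigroup estimate $\norm{(e^{t\De}v_0,e^{t\De}b_0)}\le C\norm{(v_0,b_0)}$, and the $\ep$-dependent quadratic and perturbative contributions are absorbed precisely by taking $T_{\ep,\de}\le C(\ep)(\norm{(v_0,b_0)}_{E^2_q\times E^2_q}+\de)^{-2}$ --- the lemma's time scale is \emph{allowed} to depend on $\ep$, so there is nothing to fix. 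No uniform-in-$\ep$ commutator or energy estimate is needed here. The local energy equality does enter the paper's proof of this lemma, but only to establish the finiteness of the gradient half of the ${\bf LE}_q$ norm, i.e.\ $\norm{\int_0^t\int_{B_1(k)}(|\nb v_\ep|^2+|\nb b_\ep|^2)}_{\ell^{q/2}}<\infty$, and there all constants may depend on $\ep$ (one again uses $\norm{\eta_\ep*f}_{L^\infty(B_2(k))}\le C(\ep)\norm{f}_{L^2_\uloc}$ together with the far-field pressure decomposition). The genuinely uniform-in-$\ep$ a priori bounds, obtained by the cutoff-testing and absorption argument you describe, are the content of Lemma~\ref{lem-4.2-BT-SIMA2021} and are deliberately deferred to that later step, where they are combined with this lemma to extend the solution to an $\ep$-independent time.
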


\begin{proof}
The proof of the lemma is an adaption of the proof of \cite[Lemma 4.1]{BT-SIMA2021} for the Navier--Stokes equations to the MHD equations.

Note that $v_0,b_0\in E^2_q$ since $L^2\subset E^2_q$.
We begin by establishing estimates for the iterates of the Picard scheme.
Define the initial iterates as
\[
(v_\ep^1, b_\ep^1) = (e^{t\De} v_0, e^{t\De} b_0),
\]
and for $n>1$, set
\EQN{
v_\ep^n(t) &= e^{t\De}v_0 + \int_0^t e^{(t-s)\De}\mathbb{P}\nb\cdot\bke{(\eta_\ep * v_\ep^{n-1})\otimes v_\ep^{n-1} - (\eta_\ep * b_\ep^{n-1})\otimes b_\ep^{n-1}} ds + L_t^{(1)}(v_\ep^{n-1},b_\ep^{n-1}),\\
b_\ep^n(t) &= e^{t\De}b_0 + \int_0^t e^{(t-s)\De}\mathbb{P}\nb\cdot\bke{(\eta_\ep * v_\ep^{n-1})\otimes b_\ep^{n-1} - (\eta_\ep * b_\ep^{n-1})\otimes v_\ep^{n-1}} ds + L_t^{(2)}(v_\ep^{n-1},b_\ep^{n-1}).
}

For the initial iterates, it follows from the same approach obtaining \cite[(4.7)]{BT-SIMA2021} that
\EQS{\label{eq-4.5,4.6-BT-SIMA2021}
\norm{\bke{\sup_{0<t<1} \int_{B_1(k)} \abs{v_\ep^1(x,t)}^2 dx}^{\frac12}}_{\ell^q}
\le C\norm{v_0}_{E^2_q}
\ \text{ and }\ 
\norm{\bke{\sup_{0<t<1} \int_{B_1(k)} \abs{b_\ep^1(x,t)}^2 dx}^{\frac12}}_{\ell^q}
\le C\norm{b_0}_{E^2_q}.
}

For the $n$th iterates, we use the assumption that
\EQN{
\norm{\bke{\sup_{0<t<T_\ep} \int_{B_1(k)} \bke{|v_\ep^{n-1}(x,t)|^2 + |b_\ep^{n-1}(x,t)|^2} dx}^{\frac12}}_{\ell^q}
< 2C\norm{(v_0, b_0)}_{E^2_q\times E^2_q}.
}
We have
\EQS{\label{eq-4.10-BT-SIMA2021}
\int_{B_1(k)} \abs{v_\ep^n(x,t)}^2 dx 
&\le \int_{B_1(k)} \abs{e^{t\De} v_0(x)}^2 dx + I^{(1)}(k) + J^{(1)}(k),\\
\int_{B_1(k)} \abs{b_\ep^n(x,t)}^2 dx 
&\le \int_{B_1(k)} \abs{e^{t\De} b_0(x)}^2 dx + I^{(2)}(k) + J^{(2)}(k),
}
where 
\EQN{
I^{(1)}(k) &= \int_{B_1(k)} \abs{\int_0^t e^{(t-s)\De}\mathbb{P}\nb\cdot\bke{(\eta_\ep * v_\ep^{n-1})\otimes v_\ep^{n-1} - (\eta_\ep * b_\ep^{n-1})\otimes b_\ep^{n-1}} ds}^2 dx,\\
J^{(1)}(k) &= \int_{B_1(k)} \abs{L_t^{(1)}(v_\ep^{n-1}, b_\ep^{n-1})}^2 dx,\\
I^{(2)}(k) &= \int_{B_1(k)} \abs{\int_0^t e^{(t-s)\De}\mathbb{P}\nb\cdot\bke{(\eta_\ep * v_\ep^{n-1})\otimes b_\ep^{n-1} - (\eta_\ep * b_\ep^{n-1})\otimes v_\ep^{n-1}} ds}^2 dx,\\
J^{(2)}(k) &= \int_{B_1(k)} \abs{L_t^{(2)}(v_\ep^{n-1}, b_\ep^{n-1})}^2 dx.
}
The first two terms on the right-hand side of \eqref{eq-4.10-BT-SIMA2021} have already been estimated in \eqref{eq-4.5,4.6-BT-SIMA2021}.
For $I^{(1)}(k)$ and $I^{(2)}(k)$, using the same technique deriving \cite[(4.11), (4.12)]{BT-SIMA2021}, we get
\EQS{\label{eq-4.11,12-BT-SIMA2021}
I^{(1)}(k)& + I^{(2)}(k) \\
&\le C(\ep) t^2 \bkt{ \bke{\sup_{0<s<t}\norm{v_\ep^{n-1}}_{E^2_q}^2} \bke{(\wt{K}*u^{n-1})(k)}^2 + \bke{\sup_{0<s<t}\norm{b_\ep^{n-1}}_{E^2_q}^2} \bke{(\wt{K}*a^{n-1})(k)}^2 }\\
&\quad + C(\ep) t \left[ \sup_{0<s<t} \norm{v_\ep^{n-1}}_{E^2_q}^2 \sup_{0<s<t} \sum_{|k-k'|<8} \norm{v_\ep^{n-1}}_{L^2(B_1(k'))}^2\right.\\
&\qquad\qquad\qquad \left.+ \sup_{0<s<t} \norm{b_\ep^{n-1}}_{E^2_q}^2 \sup_{0<s<t} \sum_{|k-k'|<8} \norm{b_\ep^{n-1}}_{L^2(B_1(k'))}^2 \right],
}
where
\[
u_k^{n-1} = \bke{\sup_{0<s<t} \int_{B_1(k)} \abs{v_\ep^{n-1}(y)}^2 dy}^{1/2},\qquad
a_k^{n-1} = \bke{\sup_{0<s<t} \int_{B_1(k)} \abs{b_\ep^{n-1}(y)}^2 dy}^{1/2},
\]
and
\EQ{\label{eq-def-wt-K}
\wt{K}(k) = |k|^{-4}\ \text{ if } |k|\ge4,\qquad
\wt{K}(k) = 0 \ \text{ otherwise.}
}

We next estimate the terms $J^{(1)}(k)$ and $J^{(2)}(k)$.
Using the same argument for \cite[(4.13), (4.14)]{BT-SIMA2021} yields
\EQS{\label{eq-4.13,14-BT-SIMA2021}
J^{(1)}(k) + J^{(2)}(k) 
&\le C\de^2 t^2\bke{(\wt{K}*u^{n-1})(k)+ (\wt{K}*a^{n-1})(k)}^2\\
&\quad + C(\ep) t\de^2\bke{\norm{v_\ep^{n-1}}_{L^2(B_4(k))}^2 + \norm{b_\ep^{n-1}}_{L^2(B_4(k))}^2 }.
}

Combining \eqref{eq-4.5,4.6-BT-SIMA2021}, \eqref{eq-4.11,12-BT-SIMA2021}, and \eqref{eq-4.13,14-BT-SIMA2021}, we have

\EQS{\label{eq-4.15-BT-SIMA2021}
&\int_{B_1(k)} \bke{\abs{v_\ep^n(x,t)}^2 + \abs{b_\ep^n(x,t)}^2 }dx \\
&\ \le C\sum_{|k-k'|\le4} \int_{B_1(k')} \bke{|v_0(x)|^2 + |b_0(x)|^2} dx + C\bkt{\bke{\wt{K} * (u+a)}(k)}^2\\
&\ \quad + C(\ep) t^2 \bkt{ \bke{\sup_{0<s<t}\norm{v_\ep^{n-1}}_{E^2_q}^2} \bke{(\wt{K}*u^{n-1})(k)}^2 + \bke{\sup_{0<s<t}\norm{b_\ep^{n-1}}_{E^2_q}^2} \bke{(\wt{K}*a^{n-1})(k)}^2 }\\
&\ \quad + C(\ep) t \left[ \sup_{0<s<t} \norm{v_\ep^{n-1}}_{E^2_q}^2 \sup_{0<s<t} \sum_{|k-k'|<8} \norm{v_\ep^{n-1}}_{L^2(B_1(k'))}^2\right.\\
&\ \qquad\qquad\qquad \left.+ \sup_{0<s<t} \norm{b_\ep^{n-1}}_{E^2_q}^2 \sup_{0<s<t} \sum_{|k-k'|<8} \norm{b_\ep^{n-1}}_{L^2(B_1(k'))}^2 \right]\\
&\ \quad + C\de^2 t^2\bke{(\wt{K}*u^{n-1})(k)+ (\wt{K}*a^{n-1})(k)}^2 + C(\ep) t\de^2\bke{\norm{v_\ep^{n-1}}_{L^2(B_4(k))}^2 + \norm{b_\ep^{n-1}}_{L^2(B_4(k))}^2 }.
}
Taking the supremum in time of the left-hand side of \eqref{eq-4.15-BT-SIMA2021}, applying the $\ell^{\frac{q}2}$ norm, using Young's convolution inequality, and raising everything to the $1/2$ power yields
\EQS{\label{eq-4.16-BT-SIMA2021}
\norm{(v_\ep^n, b_\ep^n)}_{{\bf LE}_q^\flat(0,t)}
&\le C\norm{(v_0,b_0)}_{E^2_q\times E^2_q} + C(\ep) t^{\frac12} \norm{(v_\ep^{n-1}, b_\ep^{n-1})}_{{\bf LE}_q^\flat(0,t)}^2\\
&\quad + C(\ep)\de t^{\frac12} \norm{(v_\ep^{n-1}, b_\ep^{n-1})}_{{\bf LE}_q^\flat(0,t)},
}
where $\norm{\,\cdot\,}_{{\bf LE}_q^\flat(I)}$ is the first part of the norm $\norm{\,\cdot\,}_{{\bf LE}_q(I)}$ defined by $\norm{(v,b)}_{{\bf LE}_q^\flat(I)} = \norm{(v,b)}_{E^{\infty,2}_{T,q}\times E^{\infty,2}_{T,q}}$.
So, if $t$ is small as determined by $C(\ep)$, $\ep$, and $\norm{(v_0,b_0)}_{E^2_q\times E^2_q}$ (but independently of $n$), $t\le C(\ep)/\bke{\norm{(v_0, b_0)}_{E^2_q\times E^2_q}^2 + \de^2}$,
then the right-hand side of \eqref{eq-4.16-BT-SIMA2021} is controlled by $2C\norm{(v_0,b_0)}_{E^2_q\times E^2_q}$.
This establishes a uniform-in-$n$ bound for $(v_\ep^n, b_\ep^n)$.

These uniform bounds and the estimation methods above allow us to show the difference estimate
\[
\norm{(v_\ep^{n+1}, b_\ep^{n+1}) - (v_\ep^n, b_\ep^n)}_{{\bf LE}_q^\flat(0,t)} 
\le C(\ep)\sqrt{t} \bke{\norm{(v_0,b_0)}_{E^2_q\times E^2_q} + \de} \norm{ (v_\ep^n, b_\ep^n) - (v_\ep^{n-1}, b_\ep^{n-1}) }_{{\bf LE}_q^\flat(0,t)}.
\]
Thus, if $t$ is sufficiently small, then $(v_\ep^n, b_\ep^n)$ is a Cauchy sequence in ${\bf LE}_q^\flat(0,t)$ norm and converges to a limit $(v_\ep^n, b_\ep^n)$ in the sense that 
\[
\norm{ (v_\ep^n, b_\ep^n) - (v_\ep, b_\ep) }_{{\bf LE}_q^\flat(0,t)} \to 0,\quad\text{ as }n\to\infty.
\]
This convergence implies $(v_\ep, b_\ep)$ satisfies \eqref{eq-4.2-BT-SIMA2021} and \eqref{eq-4.3-BT-SIMA2021}.

Uniqueness in the class \eqref{eq-4.3-BT-SIMA2021} follows from the same difference estimates as before.
Suppose  $(v_1,b_1)$ and $(v_2,b_2)$ are two mild solutions of \eqref{eq-4.2-BT-SIMA2021} satisfying \eqref{eq-4.3-BT-SIMA2021}. 
Then
\[
\norm{ (v_1, b_1) - (v_2, b_2) }_{{\bf LE}_q^\flat(0,t)} \le C(\ep)\sqrt{t}(\norm{(v_0,b_0)}_{E^2_q\times E^2_q} + \de) \norm{ (v_1, b_1) - (v_2, b_2) }_{{\bf LE}_q^\flat(0,t)}.
\]
Hence, if $t>0$ is sufficiently small, we conclude that $(v_1,b_1) = (v_2,b_2)$.

We now recover a pressure associated to $(v_\ep,b_\ep)$. 
It is known that $(v_\ep,b_\ep)\in L^\infty(0,T_{\ep,\de};L^2\times L^2)$ for some $T_{\ep,\de}>0$, with $\norm{(v_\ep,b_\ep)}_{L^\infty(0,T_{\ep,\de};L^2\times L^2)}\le2C\norm{(v_0,b_0)}_{L^2\times L^2}$.
Therefore, the following nonlinear terms belongs to $L^\infty(0,T_{\ep,\de};L^2)$:
\[
\eta_\ep * v_\ep\otimes v_\ep + \eta_\ep * u\otimes v_\ep + v_\ep\otimes \eta_\ep * u 
-\eta_\ep * b_\ep\otimes b - \eta_\ep * a\otimes b_\ep - b_\ep\otimes \eta_\ep * a.
\]
Consequently $\pi_\ep = (-\De)^{-1} \pd_i\pd_j (\eta_\ep * v_\ep\otimes v_\ep + \eta_\ep * u\otimes v_\ep + v_\ep\otimes \eta_\ep * u -\eta_\ep * b_\ep\otimes b - \eta_\ep * a\otimes b_\ep - b_\ep\otimes \eta_\ep * a)$ is well-defined.
It follows that $v_\ep-e^{t\De}v_0$ solves the Stokes system with pressure $\pi_\ep$ and forcing term equal to $\nb\cdot(\eta_\ep * v_\ep\otimes v_\ep + \eta_\ep * u\otimes v_\ep + v_\ep\otimes \eta_\ep * u -\eta_\ep * b_\ep\otimes b - \eta_\ep * a\otimes b_\ep - b_\ep\otimes \eta_\ep * a)$.
Adding back the linear term $e^{t\De}v_0$, we see that $(v_\ep,\pi_\ep)$ solve the perturbed, regularized Navier--Stokes equations.
The local pressure expansion \eqref{eq-pi-formula-mhd} follows from the definition of $\pi_\ep$.

We now establish the estimate
\EQ{\label{eq-4.17-BT-SIMA2021}
\norm{\int_0^t\int_{B_1(k)} \bke{|\nb v_\ep|^2 + |\nb b_\ep|^2} dxds}_{\ell^{q/2}(k)} < \infty.
}
This follow from the local energy equality satisfied by $(v_\ep,b_\ep)$ and the associated pressure $\pi_\ep$, valid for $t\le T_{\ep,\de}$ due to the regularity of the solutions due to smoothness and convergence to the data in $L^2_\loc$:
\EQS{
&\int_{B_1(k)} \bke{|v_\ep|^2 + |b_\ep|^2}(x,t)\phi(x-k)\, dx + 2\int_0^t\int \bke{|\nb v_\ep|^2 + |\nb b_\ep|^2}\phi(x-k)\, dxds\\
&\quad = \int \bke{|v_0|^2 + |b_0|^2}\phi(x-k)\, dx + \int_0^t\int \bke{|v_\ep|^2 + |b_\ep|^2} \De\phi(x-k)\, dxds\\
&\quad\quad + \int_0^t \int \bke{|v_\ep|^2 + |b_\ep|^2} (\eta_\ep * v_\ep + \eta_\ep * u)\cdot\nb\phi(x-k)\, dxds\\
&\quad\quad -2\int_0^t\int (v_\ep\cdot\nb(\eta_\ep * u))\cdot v_\ep\phi(x-k)\, dxds
 +2\int_0^t\int (b_\ep\cdot\nb(\eta_\ep * a))\cdot v_\ep\phi(x-k)\, dxds\\
&\quad\quad -2\int_0^t\int (v_\ep\cdot\nb(\eta_\ep * a))\cdot b_\ep\phi(x-k)\, dxds
 +2\int_0^t\int (b_\ep\cdot\nb(\eta_\ep * u))\cdot b_\ep\phi(x-k)\, dxds\\
&\quad\quad + 2\int_0^t\int \pi_\ep (v_\ep\cdot\nb\phi(x-k))\, dxds
 -2\int_0^t\int (v_\ep\cdot b_\ep)(\eta_\ep * b_\ep + \eta_\ep * a)\cdot \nb\phi(x-k)\, dxds.
}
To estimate the nonlinear terms, we use the bound $\norm{\eta_\ep * f}_{L^\infty(B_2(k))} \le C(\ep)\norm{f}_{L^2_\uloc}$ for $\ep\le1$.
Hence, we have
\EQS{\label{eq-4.19-BT-SIMA2021}
\int_0^t\int &\bke{|v_\ep|^2 + |b_\ep|^2} (\eta_\ep * v_\ep)\cdot\nb\phi(x-k)\, dxds\\
&\le C(\ep) \norm{v_\ep}_{L^2\infty L^2_\uloc} \esssup_{0<s<t} \sum_{k'\sim k} \int_{B_1(k')} \bke{|v_\ep|^2 + |b_\ep|^2} dx,
}
and 
\EQS{\label{eq-4.19-BT-SIMA2021-b}
\int_0^t\int &(v_\ep\cdot b_\ep) (\eta_\ep * b_\ep)\cdot\nb\phi(x-k)\, dxds\\
&\lec \int_0^t\int \bke{|v_\ep|^2 + |b_\ep|^2}  (\eta_\ep * b_\ep)\cdot\nb\phi(x-k)\, dxds\\
&\le C(\ep) \norm{b_\ep}_{L^2\infty L^2_\uloc} \esssup_{0<s<t} \sum_{k'\sim k} \int_{B_1(k')} \bke{|v_\ep|^2 + |b_\ep|^2} dx.
}
From the assumptions on $u$ and $a$, we also have
\[
\norm{\eta_\ep * u}_{L^\infty} + \norm{\nb(\eta_\ep * u)}_{L^\infty}
\le C(\ep) \norm{u}_{L^\infty(0,T_0;L^3_\uloc)}
\le C(\ep) \de,
\]
and
\[
\norm{\eta_\ep * a}_{L^\infty} + \norm{\nb(\eta_\ep * a)}_{L^\infty}
\le C(\ep) \norm{a}_{L^\infty(0,T_0;L^3_\uloc)}
\le C(\ep) \de.
\]
These imply the following estimates:
\EQS{\label{eq-4.20-BT-SIMA2021}
\int_0^t\int& \bke{|v_\ep|^2 + |b_\ep|^2 } (\eta_\ep * u)\cdot\nb\phi(x-k)\, dxds\\
&\le C(\ep)\de\esssup_{0<s<t}\sum_{k'\sim k} \int_{B_1(k')} \bke{|v_\ep(x,s)|^2 + |b_\ep(x,s)|^2} dx,
}
\EQS{\label{eq-4.20-BT-SIMA2021-b}
\int_0^t\int& (v_\ep\cdot b_\ep) (\eta_\ep * a)\cdot\nb\phi(x-k)\, dxds\\
&\lec \int_0^t\int \bke{v_\ep|^2 + |b_\ep|^2 } (\eta_\ep * a)\cdot\nb\phi(x-k)\, dxds\\
&\le C(\ep)\de\esssup_{0<s<t}\sum_{k'\sim k} \int_{B_1(k')} \bke{|v_\ep(x,s)|^2 + |b_\ep(x,s)|^2} dx,
}
\EQS{\label{eq-4.21-BT-SIMA2021-vuv}
\int_0^t\int (v_\ep\cdot\nb(\eta_\ep * u))\cdot v_\ep\phi(x-k)\, dxds
\le C(\ep)\de\esssup_{0<s<t}\sum_{k'\sim k}\int_{B_1(k')} |v_\ep|^2\, dx,
}
\EQS{\label{eq-4.21-BT-SIMA2021-bav}
\int_0^t\int (b_\ep\cdot\nb(\eta_\ep * a))\cdot v_\ep\phi(x-k)\, dxds
&\le C(\ep)\de\esssup_{0<s<t}\sum_{k'\sim k}\int_{B_1(k')} |b_\ep||v_\ep|\, dx\\
&\le C(\ep)\de\esssup_{0<s<t}\sum_{k'\sim k}\int_{B_1(k')} \bke{|b_\ep|^2 + |v_\ep|^2} dx,
}
\EQS{\label{eq-4.21-BT-SIMA2021-vab}
\int_0^t\int (v_\ep\cdot\nb(\eta_\ep * a))\cdot b_\ep\phi(x-k)\, dxds
&\le C(\ep)\de\esssup_{0<s<t}\sum_{k'\sim k}\int_{B_1(k')} |v_\ep||b_\ep|\, dx\\
&\le C(\ep)\de\esssup_{0<s<t}\sum_{k'\sim k}\int_{B_1(k')} \bke{|v_\ep|^2 + |b_\ep|^2} dx,
}
and
\EQS{\label{eq-4.21-BT-SIMA2021-bub}
\int_0^t\int (b_\ep\cdot\nb(\eta_\ep * u))\cdot b_\ep\phi(x-k)\, dxds
\le C(\ep)\de\esssup_{0<s<t}\sum_{k'\sim k}\int_{B_1(k')} |b_\ep|^2\, dx.
}

The pressure satisfies the local pressure expansion \eqref{eq-pi-formula-mhd}, which allows us--after incorporating an additive constant--to express it as a sum of two components: $\pi_\ep(x,t) + c = \pi_{\ep,{\rm near}} + \pi_{\ep,{\rm far}}$, where $\pi_{\ep,{\rm near}}$ is a Calderon--Zygmund operator applied to a localized term, and $\pi_{\ep,{\rm far}}$ is a nonsingular integral operator acting on data supported away from the ball $B_2(k)$.
Due to the structure of the pressure term in the local energy inequality, the additive constant $c$ plays no role and may be disregarded.
By applying the Calderon--Zygmund inequality, the contribution from $\pi_{\ep,{\rm near}}$ to the local energy inequality can be estimated in the same way as the nonlinear and perturbative terms discussed earlier. 
Specifically, it is controlled by the right-hand sides of estimates \eqref{eq-4.19-BT-SIMA2021} through \eqref{eq-4.21-BT-SIMA2021-bub}.
We are thus left to estimate only the far-filed component $\pi_{\ep,{\rm far}}$.
In $B_2(k)\times(0,T_0)$, 
\EQN{
|\pi_{\ep,{\rm far}}|
&\le C \sum_{k'\in\ZZ^3; |k'-k|>4} \frac1{|k-k'|^4} \int_{B_2(k')} \bke{|v_\ep||\eta_\ep * v_\ep| + |\eta_\ep * u| |v_\ep| + |b_\ep||\eta_\ep * b_\ep| + |\eta_\ep * a| |b_\ep|} dy\\
&\le C(\ep) \sum_{k'\in\ZZ^3; |k'-k|>4} \frac1{|k-k'|^4} \left(\norm{v_\ep}_{L^2(B_3(k'))}^2 + \norm{v_\ep}_{L^2(B_3(k'))} \norm{u}_{L^\infty(0,T_0;L^3_\uloc)}\right.\\
&\qquad\qquad\qquad\qquad\qquad\qquad\qquad\left. + \norm{b_\ep}_{L^2(B_3(k'))}^2 + \norm{b_\ep}_{L^2(B_3(k'))} \norm{a}_{L^\infty(0,T_0;L^3_\uloc)} \right).
}
Therefore, the contribution of $\pi_{\ep,{\rm far}}$ to the local energy equation satisfies
\EQS{\label{eq-4.22-BT-SIMA2021}
\int_0^t&\int_{B_2(k)} \pi_{\ep,{\rm far}}(v_\ep\cdot\nb\phi(x-k))\, dxds\\
&\le C(\ep) T_0 \norm{v_\ep}_{L^\infty L^2_\uloc} \esssup_{0<s<t} \sum_{k'\in\ZZ^3;|k'-k|>4} \frac1{|k-k'|^4} \int_{B_3(k')} \bke{|v_\ep|^2 + |b_\ep|^2} dy\\
&\quad + C(\ep)\de T_0 \norm{v_\ep}_{L^\infty L^2(B_2(k))} \esssup_{0<s<t} \sum_{k'\in\ZZ^3;|k'-k|>4} \frac1{|k-k'|^4} \bke{\norm{v_\ep}_{L^2(B_3(k'))} + \norm{b_\ep}_{L^2(B_3(k'))}}.
}

Taking the essential supremum in $t$, raising both sides of \eqref{eq-4.22-BT-SIMA2021} to the power $q/2$, and summing over $k\in\ZZ^3$, we apply H\"older's and Young's inequalities to control the far-field pressure term.
This establishes the estimate \eqref{eq-4.17-BT-SIMA2021}, completing the proof of Lemma \ref{lem-4.1-BT-SIMA2021}.
\end{proof}

\begin{lemma}\label{lem-4.2-BT-SIMA2021}
Assume that $v_0,b_0\in E^2_q$, for some $2\le q<\infty$, are divergence free.
There exists a small universal constant $c_0$ so that for all $\de\in(0,c_0]$ and for all divergence free vector fields $u,a:\R^3\times[0,T_0]\to\R^3$ satisfying 
\[
\esssup_{0<t\le T_0} \norm{(u,a)(t)}_{L^3_\uloc\times L^3_\uloc} < \de
\]
for some $T_0>0$ and if, additionally, a given local energy solution $(v,b)$ to the perturbed MHD equations, \eqref{eq-4.31-BT-SIMA2021}, satisfies
\EQN{
\norm{\esssup_{0\le t\le T_0} \int_{B_1(x_0)} \bke{|v|^2 + |b|^2} dx + \int_0^{T_0} \int_{B_1(x_0)} \bke{|\nb v|^2 + |\nb b|^2} dxdt}_{\ell^{\frac{q}2}(x_0\in\ZZ^3)} < \infty,
}
then there are positive universal constants $C_1$ and $\la_0<1$ such that
\EQN{
\norm{\esssup_{0\le t\le \la} \int_{B_1(x_0)} \frac{|v|^2 + |b|^2}2 dx + \int_0^{\la} \int_{B_1(x_0)} \bke{|\nb v|^2 + |\nb b|^2} dxdt}_{\ell^{\frac{q}2}(x_0\in\ZZ^3)} < C_1 A_{0,q},
}
where
\[
A_{0,q} = \norm{\int_{B_1(x_0)} \bke{|v_0|^2 + |b_0|^2} dx}_{\ell^{\frac{q}2}(x_0\in\ZZ^3)},\quad
\la = \min\bke{T_0,\,\la_0,\,\frac{\la_0}{A_{0,q}^2}}.
\]
Consequently,
\EQN{
\norm{\int_0^{\la} \int_{B_1(x_0)} |v|^{\frac{10}3} + |b|^{\frac{10}3} + \abs{\pi - c_{x_0}(t)}^{\frac53} dxdt}_{\ell^{\frac{3q}{10}}(x_0\in\ZZ^3)} \le C A_{0,q}^{\frac53}.
}
\end{lemma}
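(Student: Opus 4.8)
The plan is to adapt the proof of Lemma~\ref{lem3.1-BT-SIMA2021}, specialized to $R=1$ and $q\ge2$, to the perturbed MHD system \eqref{eq-4.31-BT-SIMA2021}. Fix a radial, non-increasing $\phi_0\in C^\infty_c(B_2(0))$ with $\phi_0\equiv1$ on $B_1(0)$ and $|\nb\phi_0|,|\nb\phi_0^{1/2}|\lec1$, and for $\kappa\in\ZZ^3$ put $\phi=\phi_0(\cdot-\kappa)$. Set
\[
e_\lambda(\kappa)=\esssup_{0\le t\le\lambda}\int(|v|^2+|b|^2)\phi(x-\kappa)\,dx+\int_0^{\lambda}\!\!\int(|\nb v|^2+|\nb b|^2)\phi(x-\kappa)\,dx\,dt,\qquad E=E_{q,\lambda}=\norm{e_\lambda}_{\ell^{q/2}(\kappa\in\ZZ^3)}^{q/2},
\]
which is finite for $\lambda\le T_0$ by hypothesis. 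Applying the (perturbed) local energy inequality satisfied by $(v,b)$ and the (perturbed) local pressure expansion, the Laplacian term, the cubic terms coming from $v\cdot\nb v,\ b\cdot\nb b,\ v\cdot\nb b,\ b\cdot\nb v$, the cross term $(v\cdot b)(b\cdot\nb\phi)$, and the near/far parts $\pi_1,\pi_2$ of the pressure are estimated \emph{exactly} as in the proof of Lemma~\ref{lem3.1-BT-SIMA2021} with $R=1$: one obtains contributions $C\lambda\sum_{\kappa'\sim\kappa}e_\lambda(\kappa')$, $C\lambda^{1/4}\sum_{\kappa'\sim\kappa}e_\lambda(\kappa')^{3/2}$, and $C\lambda^{1/4}\bke{(\overline K*e_\lambda)(\kappa)}^{3/2}$, using Gagliardo--Nirenberg, Calder\'on--Zygmund, and the far-field kernel bound $|K(x-y)-K(\kappa-y)|\lec|\kappa-y|^{-4}$ for $|x-\kappa|\le2$, $|\kappa-y|\ge4$, with $\overline K(k)=|k|^{-4}\one_{|k|\ge4}$ on $\ZZ^3$.

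The new ingredient is the perturbative terms in $u,a$. In the local energy inequality these appear as integrals of the type $\int_0^{\lambda}\!\int(|v|^2+|b|^2)(u\cdot\nb\phi)$, $\int_0^{\lambda}\!\int(v\cdot\nb u)\cdot v\,\phi$, $\int_0^{\lambda}\!\int(v\cdot b)(a\cdot\nb\phi)$, $\int_0^{\lambda}\!\int(b\cdot\nb a)\cdot v\,\phi$ and their analogues (together with the $u$-,$a$-dependent pressure pieces, whose near part is again treated by Calder\'on--Zygmund and whose far part is bounded via the same kernel estimate, now paired with $\|u\|_{L^3}\|v\|_{L^{3/2}}$). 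Using $\esssup_{0<t\le T_0}\|(u,a)(t)\|_{L^3_\uloc\times L^3_\uloc}<\delta$, H\"older's inequality, the Gagliardo--Nirenberg inequalities $\|v\|_{L^3(B_2(\kappa))}^2\lec\|\nb v\|_{L^2(B_2(\kappa))}^{1/2}\|v\|_{L^2(B_2(\kappa))}^{3/2}+\|v\|_{L^2(B_2(\kappa))}^2$ and $\|v\|_{L^6(B_2(\kappa))}\lec\|\nb v\|_{L^2(B_2(\kappa))}+\|v\|_{L^2(B_2(\kappa))}$, and $\lambda\le1$, each such term is bounded by $C\delta\sum_{\kappa'\sim\kappa}e_\lambda(\kappa')+C\delta\,(\overline K*e_\lambda)(\kappa)$ — that is, \emph{linear} in $e_\lambda$ with coefficient $\lec\delta$ (up to a harmless extra $\lambda^{3/4}$ on the $\nb\phi$-terms). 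The point is that here the smallness is furnished by $\delta$, not by $\lambda$.

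Collecting everything, raising to the power $q/2$, summing over $\kappa\in\ZZ^3$ and invoking Young's convolution inequality together with $\|e_\lambda\|_{\ell^{3q/4}}\le\|e_\lambda\|_{\ell^{q/2}}$ and $\|\overline K\|_{\ell^1(\ZZ^3)}<\infty$, exactly as in the derivation of \eqref{eq-3.13-BT-SIMA2021}, one arrives at
\[
E\le C_2^{\,q}A_{0,q}^{q/2}+C_2^{\,q}\bke{\lambda^{q/2}+\delta^{q/2}}E+C_2^{\,q}\lambda^{q/8}E^{3/2},
\]
for a universal $C_2\ge1$ independent of $q$. Choosing $c_0=(8C_2^2)^{-1}$ gives $C_2^{\,q}\delta^{q/2}\le 8^{-q/2}\le1/8$ for all $\delta\le c_0$ and $q\ge2$, and $\lambda_0=(8C_2^2)^{-1}$ gives $C_2^{\,q}\lambda_0^{q/2}\le1/8$; then for $\lambda\le\min(T_0,\lambda_0,\lambda_0/A_{0,q}^2)$ the quantity $C_2^{\,q}\lambda^{q/8}(C_2^{\,q}A_{0,q}^{q/2})^{1/2}$ is made small, and the continuity-in-$\lambda$ bootstrap of \cite[p.~2005]{BT-SIMA2021} (noting $E_{q,\lambda}$ is continuous in $\lambda$ with $E_{q,0}=0$, $E$ being finite for $\lambda<R^{-2}T_2$ — here $\lambda<T_0$ — by assumption) yields $E\le2C_2^{\,q}A_{0,q}^{q/2}$, i.e.\ $\norm{e_\lambda}_{\ell^{q/2}}\le C_1A_{0,q}$ with $C_1$ universal. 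This is the first asserted bound. The last assertion — the $\ell^{3q/10}$ bound for $\int\!\int|v|^{10/3}+|b|^{10/3}+|\pi-c_{x_0}(t)|^{5/3}$ — then follows verbatim from the argument for \eqref{eq-3.6-BT-SIMA2021} (Gagliardo--Nirenberg for $|v|^{10/3},|b|^{10/3}$; Calder\'on--Zygmund for the near pressure; the kernel bound for the far pressure), the perturbative contributions again being lower-order pieces absorbed by the smallness of $\delta$.

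The main obstacle will be the bookkeeping for the perturbative \emph{pressure}: one must check that, once the $u$- and $a$-dependent source terms are incorporated into the pressure representation for the perturbed system, the near/far decomposition still produces only terms at most linear in $e_\lambda$ (and a convolution thereof) with coefficient $\lec\delta$, so that they are absorbed by taking $c_0$ small. Everything else is a routine transcription of the proof of Lemma~\ref{lem3.1-BT-SIMA2021}.
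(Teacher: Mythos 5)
Your proposal is correct and follows essentially the same route as the paper's proof: specialize Lemma \ref{lem3.1-BT-SIMA2021} to $R=1$, bound the new $u,a$-dependent terms (including the perturbative near/far pressure) linearly in $e_\lambda$ with coefficient $\lesssim\delta$, absorb them by taking $c_0$ small after the $\ell^{q/2}$-summation and Young's convolution inequality, and close with the continuity-in-$\lambda$ argument. The only cosmetic difference is that the paper records the far-field perturbative pressure as $C\delta\bke{(\wt K*e_\lambda^{1/2})(\kappa)}^2$ rather than your $C\delta(\overline K*e_\lambda)(\kappa)$, but both are linear in $E$ after summation and lead to the same absorption.
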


\begin{proof}
The proof of the lemma is an adaption of the proof of \cite[Lemma 4.2]{BT-SIMA2021} for the Navier--Stokes equations to the MHD equations.

Once the perturbation terms in the local energy inequality for $(v,b)$ are estimated, the proof proceeds identically to that of Lemma \ref{lem3.1-BT-SIMA2021} with $R=1$ and $\la_R = \la$.

The linear terms in the perturbed local energy inequality can be estimated as follows:
\EQN{
\int_0^\la& \int \bke{u\cdot\nb v + v\cdot\nb u - a\cdot\nb b - b\cdot\nb a}\cdot\bke{\phi(x-\ka)v} dxdt\\
&= \int_0^\la \int \left[u\cdot\nb v\cdot(\phi(x-\ka)v) - v\otimes u:\nb(\phi(x-\ka)v)\right.\\
&\qquad\qquad\ \left. - a\cdot\nb b\cdot(\phi(x-\ka)v) + b\otimes a:\nb(\phi(x-\ka)v)\right] dxdt\\
&\le C \int_0^\la\int_{B_2(\ka)} \bkt{|u|(|v|^2+|v||\nb b|) + |a|(|v||b|+|v||\nb b| + |b||\nb v|)} dxdt\\
&\le C \norm{u}_{L^\infty L^3_\uloc} \int_0^\la \norm{v}_{L^6(B_2(\ka))} \bke{\norm{v}_{L^2(B_2(\ka))} + \norm{\nb v}_{L^2(B_2(\ka))} } dt\\
&\quad + C \norm{a}_{L^\infty L^3_\uloc} \int_0^\la \norm{v}_{L^6(B_2(\ka))} \bke{\norm{b}_{L^2(B_2(\ka))} + \norm{\nb b}_{L^2(B_2(\ka))} } dt\\
&\quad + C \norm{a}_{L^\infty L^3_\uloc} \int_0^\la \norm{b}_{L^6(B_2(\ka))} \norm{\nb v}_{L^2(B_2(\ka))} dt\\
&\le C\la\de\esssup_{0<t<\la} \sum_{\ka'\sim\ka} \int_{B_1(\ka')} \bke{|v(x,t)|^2 + |b(x,t)|^2} dx\\
&\quad + C\de\esssup_{0<t<\la} \sum_{\ka'\sim\ka} \int_0^\la \int_{B_1(\ka')} \bke{|\nb v(x,t)|^2 + |\nb b(x,t)|^2} dxdt,
}
and
\EQN{
\int_0^\la& \int \bke{u\cdot\nb b + v\cdot\nb a - a\cdot\nb v - b\cdot\nb u}\cdot\bke{\phi(x-\ka)b} dxdt\\
&= \int_0^\la \int \left[u\cdot\nb b\cdot(\phi(x-\ka)b) - v\otimes a:\nb(\phi(x-\ka)b)\right.\\
&\qquad\qquad\ \left. - a\cdot\nb v\cdot(\phi(x-\ka)b) + b\otimes u:\nb(\phi(x-\ka)b)\right] dxdt\\
&\le C \int_0^\la\int_{B_2(\ka)} \bkt{|u|(|b|^2+|b||\nb b|) + |a|(|v||b|+|v||\nb b| + |b||\nb v|)} dxdt\\
&\le C \norm{u}_{L^\infty L^3_\uloc} \int_0^\la \norm{b}_{L^6(B_2(\ka))} \bke{\norm{b}_{L^2(B_2(\ka))} + \norm{\nb b}_{L^2(B_2(\ka))} } dt\\
&\quad + C \norm{a}_{L^\infty L^3_\uloc} \int_0^\la \norm{v}_{L^6(B_2(\ka))} \bke{\norm{b}_{L^2(B_2(\ka))} + \norm{\nb b}_{L^2(B_2(\ka))} } dt\\
&\quad + C \norm{a}_{L^\infty L^3_\uloc} \int_0^\la \norm{b}_{L^6(B_2(\ka))} \norm{\nb v}_{L^2(B_2(\ka))} dt\\
&\le C\la\de\esssup_{0<t<\la} \sum_{\ka'\sim\ka} \int_{B_1(\ka')} \bke{|v(x,t)|^2 + |b(x,t)|^2} dx\\
&\quad + C\de\esssup_{0<t<\la} \sum_{\ka'\sim\ka} \int_0^\la \int_{B_1(\ka')} \bke{|\nb v(x,t)|^2 + |\nb b(x,t)|^2} dxdt.
}

The pressure can be decomposed into local and far-field contributions.
The local part contains new terms that are handled exactly as in the previous estimates, once the Calderon--Zygmund inequality is applied.
The far-field pressure splits as $\pi_{\rm far} = \pi_{{\rm far}, (v, b)} + \pi_{{\rm far}, (u, a)}$, where $\pi_{{\rm far}, (v, b)}$ matches the far-field term treated in the proof of Lemma \ref{lem3.1-BT-SIMA2021}, and $\pi_{{\rm far}, (u, a)}$ is the remaining contribution.
Since the estimate for $\pi_{{\rm far}, (v, b)}$ is already established in the proof of Lemma \ref{lem3.1-BT-SIMA2021}, we focus on bounding $\pi_{{\rm far}, (u, a)}$ in $B_2(\ka)\times(0,T)$.
Specifically, we have
\EQN{
\abs{ \pi_{{\rm far}, (u, a)}(x,t) } 
&\le C\int \frac1{|\ka-y|^4} \bke{|v(y,t)||u(y,t)| + |b(y,t)||a(y,t)|} \bke{1-\chi_4(y-\ka)} dy\\
&\le C\de \wt{K} * e_{\la}^{1/2}(\ka),
}
where $\wt{K}$ is defined in \eqref{eq-def-wt-K}, and
\[
e_\la(\ka) = \esssup_{0\le t\le\la} \int_{B_1(\ka)} \bke{|v(x,t)|^2 + |b(x,t)|^2} dx + \int_0^\la \int_{B_1(\ka)} \bke{|\nb v(x,t)|^2 + |\nb b(x,t)|^2} dxdt.
\]
This yields the estimate:
\EQN{
\int_0^\la\int& \pi_{{\rm far},(u,a)}(x,s) v(x,s)\cdot\nb\phi(x-\ka)\, dxds\\
&\le C\int_0^\la\int_{B_2(\ka)} \de^{1/2} (\wt{K} * e_{\la}^{1/2})\de^{1/2} |v|\, dxds\\
&\le C\de\int_0^\la\int_{B_2(\ka)} (\wt{K} * e_{\la}^{1/2})^2\, dxds + \de\int_0^\la\int_{B_2(\ka)} |v|^2\, dxds\\
&\le C\de\la\bke{(\wt{K} * e_\la^{1/2})(\ka)}^2 + \de\la\esssup_{0<t<\la} \sum_{|\ka-\ka'|<4}\int_{B_1(\ka')} |v|^2\, dx.
}

Combining the above estimates with the argument in the proof of Lemma \ref{lem3.1-BT-SIMA2021} (see \eqref{eq-3.12-BT-SIMA2021}), we obtain
\EQS{
e_\la(\ka) &\le \int \bke{|v_0|^2 + |b_0|^2} \phi(x-\ka)\, dx + C\la\sum_{\ka'\in\ZZ^3; |\ka'-\ka|\le2} e_\la(\ka')\\
&\quad +C\la^{1/4}\sum_{\ka'\in\ZZ^3; |\ka'-\ka|\le10} \bke{e_\la(\ka')}^{3/2} + C\la^{1/4}\bke{(\wt{K} * e_\la)(\ka)}^{3/2}\\
&\quad + C\de\sum_{|\ka-\ka'|<10} e_\la(\ka') + C\de\la^{1/4}\bke{(\wt{K} * e_\la^{1/2})(\ka)}^2,
}
where we are using $\la\le\la_0\le1$.
The first two lines above coincide exactly with the estimates in the proof of Lemma \ref{lem3.1-BT-SIMA2021}, so we focus on the two additional terms in the last line.
To control the final term, we apply the $\ell^{q/2}$ norm:
\[
\norm{\bke{(\wt{K} * e_\la^{1/2})(\ka)}^2}_{\ell^{q/q}} = \norm{(\wt{K} * e_\la^{1/2})(\ka)}_{\ell^{q/q}}^2 \le C\norm{\wt{K}}_{\ell^1} \norm{e_\la}_{\ell^{q/2}}.
\]
We now choose $c_0$ (which bounds $\de$) sufficiently small so that, after taking the $\ell^{q/2}$ norm of both sides of the inequality, the $\de$-weighted terms on the right can be absorbed into the left-hand side.
With this absorption, the remaining terms are exactly as in the proof of Lemma \ref{lem3.1-BT-SIMA2021}, and the conclusion follows by the same argument.
\end{proof}

\begin{lemma}\label{lem-4.3-BT-SIMA2021}
Let $\ep,\de>0$ be given and assume $\de\le c_0$, where $c_0$ is given in Lemma \ref{lem-4.3-BT-SIMA2021}.
Assume that $v_0,b_0\in L^2$ are divergence free and that $u,a:\R^3\times[0,T_0]\to\R^3$ are divergence free that satisfy
\[
\esssup_{0<t\le T_0} \bke{\norm{u(t)}_{L^3_\uloc} + \norm{b(t)}_{L^3_\uloc}} < \de.
\]
Then there exist $T\in(0, T_0]$ and a weak solution $(v_\ep, b_\ep)$ and pressure $\pi_\ep$ to \eqref{eq-4.29-BT-SIMA2021} on $\R^3\times[0,T]$.
Furthermore, we have that $(v_\ep,b_\ep)\in L^\infty(0,T;E^2_q\times E^2_q)$ and satisfies
\EQN{
\norm{\esssup_{0\le t\le T_0} \int_{B_1(x_0)} \frac{|v_\ep|^2 + |b_\ep|^2}2\, dx + \int_0^{T_0} \int_{B_1(x_0)}\bke{|\nb v_\ep|^2 + |\nb b_\ep|^2} dxdt}_{\ell^{\frac{q}2}(x_0\in\ZZ^3)}^2
\le 2C\norm{(v_0,b_0)}_{E^2_q\times E^2_q}
}
for some positive constant $C$ independent of $\ep, \de, (u,a)$ and $(v_0,b_0)$.
Here, $T=\min\bke{T_0,\la_0,\la_0A_{0,q}^{-2}}$ depends on $\norm{(v_0,b_0)}_{E^2_q\times E^2_q}$ but not on $\norm{(v_0,b_0)}_{L^2\times L^2}$, $(v_\ep,b_\ep)$, $\ep,\de$, or $(u,a)$.
\end{lemma}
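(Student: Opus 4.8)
The plan is to combine the short-time existence for the regularized, perturbed system from Lemma~\ref{lem-4.1-BT-SIMA2021} with the uniform-in-$\ep$ a priori bound from Lemma~\ref{lem-4.2-BT-SIMA2021}, so that the local-in-time solution can be continued up to the time $T=\min(T_0,\la_0,\la_0 A_{0,q}^{-2})$ dictated by the $E^2_q$ norm of the data alone. First I would invoke Lemma~\ref{lem-4.1-BT-SIMA2021} with $(v_0,b_0)$, $(u,a)$, and the mollifier $\eta_\ep$ to produce, on a possibly short interval $[0,T_{\ep,\de}]$, the mild solution $(v_\ep,b_\ep)$ of the integral equation~\eqref{eq-4.2-BT-SIMA2021}, together with an associated pressure $\pi_\ep$ such that $(v_\ep,b_\ep,\pi_\ep)$ solves the weak form~\eqref{eq-4.29-BT-SIMA2021}; since the coefficients are smooth and the solution converges to the data in $L^2_\loc$, $(v_\ep,b_\ep,\pi_\ep)$ is smooth and satisfies the local energy \emph{equality}, in particular it is a local energy solution of the perturbed system~\eqref{eq-4.31-BT-SIMA2021} in the sense described after that display. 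Note $v_0,b_0\in L^2\subset E^2_q$ for $q\ge 2$, so the hypotheses of Lemma~\ref{lem-4.2-BT-SIMA2021} are met on any interval where $(v_\ep,b_\ep)\in{\bf LE}_q$, which is guaranteed by~\eqref{eq-4.17-BT-SIMA2021} on $[0,T_{\ep,\de}]$.

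Next I would apply Lemma~\ref{lem-4.2-BT-SIMA2021} to this $(v_\ep,b_\ep)$ on the interval $[0,T_{\ep,\de}]$ (playing the role of $T_0$ in that lemma), with $\de\le c_0$, to obtain
\[
\norm{\esssup_{0\le t\le \la} \int_{B_1(x_0)} \tfrac{|v_\ep|^2 + |b_\ep|^2}{2}\, dx + \int_0^{\la} \int_{B_1(x_0)} \bke{|\nb v_\ep|^2 + |\nb b_\ep|^2}\, dxdt}_{\ell^{q/2}(x_0\in\ZZ^3)} < C_1 A_{0,q},
\]
with $\la=\min(T_{\ep,\de},\la_0,\la_0 A_{0,q}^{-2})$ and $C_1,\la_0<1$ \emph{universal}, in particular independent of $\ep$, $\de$, and the (possibly large) $L^2$ norm of the data. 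The key point is that this bound is a priori: the constant on the right depends only on $A_{0,q}=\norm{(v_0,b_0)}_{E^2_q\times E^2_q}^2$. In particular $\sup_{0\le t\le\la}\norm{(v_\ep,b_\ep)(t)}_{E^2_q\times E^2_q}$ is controlled by $2C\norm{(v_0,b_0)}_{E^2_q\times E^2_q}$ after taking square roots, independently of $\ep,\de$.

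Then I would run a standard continuation argument in time. The local existence time $T_{\ep,\de}$ from Lemma~\ref{lem-4.1-BT-SIMA2021} has the form $C(\ep)(\norm{(v_0,b_0)}_{E^2_q\times E^2_q}+\de)^{-2}$, so it depends only on the $E^2_q$ norm of the data (plus $\ep,\de$), not on the $L^2$ norm. Starting from any time $t_0<T:=\min(T_0,\la_0,\la_0 A_{0,q}^{-2})$ at which we already have a solution with $\norm{(v_\ep,b_\ep)(t_0)}_{E^2_q\times E^2_q}\le 2C\norm{(v_0,b_0)}_{E^2_q\times E^2_q}$, the local existence lemma extends the solution by a further time $\gtrsim C(\ep)(1+\norm{(v_0,b_0)}_{E^2_q\times E^2_q})^{-2}$, a \emph{fixed} positive step independent of $t_0$; uniqueness in the class~\eqref{eq-4.3-BT-SIMA2021} guarantees the pieces glue consistently, and the a priori bound of Lemma~\ref{lem-4.2-BT-SIMA2021} (reapplied at each stage, noting $\la\ge\min(T_0,\la_0,\la_0A_{0,q}^{-2})$ is bounded below throughout because $A_{0,q}$ does not grow) reinstates the inductive hypothesis at the new time. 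Finitely many steps of this fixed size cover $[0,T]$, producing a weak solution $(v_\ep,b_\ep)$ and pressure $\pi_\ep$ to~\eqref{eq-4.29-BT-SIMA2021} on $\R^3\times[0,T]$, which by construction lies in $L^\infty(0,T;E^2_q\times E^2_q)$ and satisfies the asserted $\ell^{q/2}$ energy bound with constant $2C\norm{(v_0,b_0)}_{E^2_q\times E^2_q}$. I would also record, as in Lemma~\ref{lem-4.1-BT-SIMA2021}, that $(v_\ep,b_\ep)\in C([0,T];L^2\times L^2)$ when the data are in $L^2$, though the final $T$ is governed by the $E^2_q$ norm rather than the $L^2$ norm.

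The main obstacle is making the continuation genuinely \emph{independent of the $L^2$ norm of the data}: Lemma~\ref{lem-4.1-BT-SIMA2021} gives a solution whose existence interval $T_{\ep,\de}$ is already stated in terms of $\norm{(v_0,b_0)}_{E^2_q\times E^2_q}$, but one must check that, at each continuation step, the relevant norm being fed back in is the $E^2_q$ norm and not the (possibly huge) $L^2$ norm, so that the step size does not shrink. This is exactly where the a priori estimate of Lemma~\ref{lem-4.2-BT-SIMA2021} is essential: it bounds $\norm{(v_\ep,b_\ep)(t)}_{E^2_q\times E^2_q}$ uniformly on $[0,\la]$ with a constant depending only on $A_{0,q}$, so the inductive hypothesis propagates with a constant that never deteriorates. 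One secondary technical point is verifying that the hypothesis $(v_\ep,b_\ep)\in{\bf LE}_q(0,T_{\ep,\de})$ required to even invoke Lemma~\ref{lem-4.2-BT-SIMA2021} holds at each stage; this is supplied by~\eqref{eq-4.17-BT-SIMA2021} from Lemma~\ref{lem-4.1-BT-SIMA2021}, which is why it was proved there. The rest is bookkeeping: matching the roles of $T_0$, $\la$, $A_{0,q}$, and $\la_0$ between the two lemmas, and checking that $\min(T_0,\la_0,\la_0A_{0,q}^{-2})\le\la$ at every stage so the a priori bound always reaches the endpoint $T$.
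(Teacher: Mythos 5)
Your proposal follows essentially the same route as the paper: local existence from Lemma \ref{lem-4.1-BT-SIMA2021}, the a priori $\ell^{q/2}$ energy bound from Lemma \ref{lem-4.2-BT-SIMA2021}, and a restart-and-glue continuation with a step size that does not shrink. Two small points of divergence: the paper first derives a uniform-in-$\ep$ global $L^2$ bound $M_1$ on $[0,T_0]$ from the energy equality of the regularized perturbed problem (citing Lemari\'e-Rieusset) and fixes the continuation step by that $M_1$, whereas you fix the step via the $E^2_q$ norm alone using Lemma \ref{lem-4.1-BT-SIMA2021}'s stated time-scale together with the Lemma \ref{lem-4.2-BT-SIMA2021} bound — this also works, since the $L^2$ norm of the restart data only needs to remain finite over finitely many steps, not uniformly bounded. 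Finally, be careful that the object produced by Lemma \ref{lem-4.1-BT-SIMA2021} solves the \emph{regularized} system \eqref{eq-4.29-BT-SIMA2021}, not the perturbed system \eqref{eq-4.31-BT-SIMA2021}; to invoke Lemma \ref{lem-4.2-BT-SIMA2021} one must observe, as the paper does, that its proof applies verbatim to the regularized system rather than asserting the solution is a local energy solution of \eqref{eq-4.31-BT-SIMA2021}.
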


\begin{proof}
The proof of the lemma is an adaption of the proof of \cite[Lemma 4.3]{BT-SIMA2021} for the Navier--Stokes equations to the MHD equations.

Let $(v_\ep, b_\ep,\pi_\ep)$ be a smooth solution of \eqref{eq-4.29-BT-SIMA2021} on $\R^3\times[0,T_0]$ with initial data$(v_0,b_0)\in L^2$.
The energy equality for the regularized perturbed problem reads:
\EQN{
&\norm{v_\ep(t)}_{L^2}^2+ \norm{b_\ep(t)}_{L^2}^2 + 2\int_0^t\int_0 (|\nb v_\ep|^2 + |\nb b_\ep|^2)\, dxds\\
&= \norm{v_0}_{L^2}^2 + \norm{b_0}_{L^2}^2 + 2\int_0^t \int \bkt{(v_\ep\cdot\nb v_\ep-b_\ep\cdot\nb b_\ep)\cdot(\eta_\ep * u) + (v_\ep\cdot\nb b_\ep-b_\ep\cdot\nb v_\ep)\cdot(\eta_\ep * a)} dxds
}
By the estimate from \cite[p.~217]{MR1938147}, the right-hand side is uniformly bounded in $\ep$, implying that $\norm{(v_\ep,b_\ep)(t)}_{L^2\times L^2}\le M_1$ for all $t\in[0,T_0]$ for some constant $M_1$ independent of $\ep$.
Furthermore, if $(v_\ep,b_\ep)\in{\bf LE}_q(0,T_0)$, then by Lemma \ref{lem-4.2-BT-SIMA2021}, the local energy estimates extend up to time $T=\min\bke{T_0,\la_0,\la_0A_{0,q}^{-2}}$, yielding $\norm{(v_\ep,b_\ep)}_{{\bf LE}_q(0,T_0)} < M_2$ for some constant $M_2$ independent of $\ep$.

Now, let $T_{\ep,\de}$ be the time-scale provided in Lemma \ref{lem-4.1-BT-SIMA2021} corresponding to initial data of size $M_1$ in $L^2$.
Then, Lemma \ref{lem-4.1-BT-SIMA2021} ensures that the solution $(v_\ep, b_\ep)$ exists on $\R^3\times[0,T_{\ep,\de}]$ and belongs to ${\bf LE}_q(0,T_{\ep,\de})$.
Since Lemma \ref{lem-4.2-BT-SIMA2021} also applies to the regularized system, we further conclude that
$\norm{(v_\ep,b_\ep)}_{{\bf LE}_q(0,T_{\ep,\de})}<M_2$ and hence
$\esssup_{0<t\le T_{\ep,\de}} \norm{(v,b)(t)}_{E^2_q\times E^2_q}\le M_2$.
In addition, the energy estimate gives $\esssup_{0<t\le T_{\ep,\de}} \norm{(v,b)(t)}_{L^2\times L^2}\le M_1$.
This allows us to restart the solution at any time $t_*\in[T_{\ep,\de}/2, 3T_{\ep,\de}/4]$, and apply Lemma \ref{lem-4.2-BT-SIMA2021} again with the same bounds.
By uniqueness, the extended solution coincides with the original one, and hence we obtain a solution on $[0,3T_{\ep,\de}/2]$ that remains in ${\bf LE}_q(0,3T_{\ep,\de}/2)$.
Repeating this argument and iterating the solution step-by-step, we reach the full time interval $[0,T]$.
Throughout the iteration, the ${\bf LE}_q$ and $L^2$ norms remain bounded uniformly by $M_2$ and $M_1$, respectively.
Therefore, for each $\ep>0$, the solution $(v_\ep,b_\ep)$ to the regularized system exists on $[0,T]$ and satisfies $(v_\ep,b_\ep)\in{\bf LE}_q(0,T)$ with bounds independent of $\ep$.
\end{proof}

\begin{lemma}\label{lem-4.4-BT-SIMA2021}
Let $c_0$ and $\la_0$ be the constants in Lemma \ref{lem-4.2-BT-SIMA2021}.
Assume that $v_0,b_0\in E^2_q$ are divergence free and that $u,a:\R^3\times[0,T_0]\to\R^3$ are divergence free and satisfy
\[
\esssup_{0<t\le T_0} \bke{\norm{u(t)}_{L^3_\uloc} + \norm{a(t)}_{L^3_\uloc}} < \de\le c_0\ \text{ and }\ 
\esssup_{0<t\le T_0} \bke{\norm{u(t)}_{L^4_\uloc} + \norm{a(t)}_{L^4_\uloc}} < \infty.
\]
Let $T=\min\bke{T_0,\,\la_0,\,\la_0A_{0,q}^{-2}}$.
Then there exist a local energy solution $(v,b)$ and $\pi$ to the perturbed MHD equations, \eqref{eq-4.31-BT-SIMA2021}, satisfying 
\[
\norm{(v,b)}_{{\bf LE}_q(0,T)} \le C\norm{(v_0,b_0)}_{E^2_q\times E^2_q}
\]
for some constant $C>0$.
\end{lemma}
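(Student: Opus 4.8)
The plan is to realize $(v,b,\pi)$ as a limit of the $\eta_\ep$-regularized perturbed solutions furnished by Lemma \ref{lem-4.3-BT-SIMA2021}, and then to check that every item in the definition of a local energy solution for \eqref{eq-4.31-BT-SIMA2021} survives the passage $\ep\to0^+$. First I would invoke Lemma \ref{lem-4.3-BT-SIMA2021}, whose hypotheses are exactly those assumed here ($\de\le c_0$ and $u,a\in L^\infty_{T_0}(L^3_\uloc\cap L^4_\uloc)$), to obtain for each $\ep\in(0,1]$ a smooth solution $(v_\ep,b_\ep,\pi_\ep)$ of \eqref{eq-4.29-BT-SIMA2021} on $\R^3\times[0,T]$, $T=\min(T_0,\la_0,\la_0A_{0,q}^{-2})$, with $\norm{(v_\ep,b_\ep)}_{{\bf LE}_q(0,T)}\le C\norm{(v_0,b_0)}_{E^2_q\times E^2_q}$ uniformly in $\ep$. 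In particular $(v_\ep,b_\ep)$ is uniformly bounded in $L^\infty_T L^2_\uloc$ and $\nb(v_\ep,b_\ep)$ in $L^2_T L^2_\uloc$, so by the Gagliardo--Nirenberg/Sobolev interpolation already used in the proof of Lemma \ref{lem3.1-BT-SIMA2021} the fields $v_\ep,b_\ep$ are uniformly bounded in $L^{10/3}_\loc$ and $\pi_\ep-c_{x_0,R}$ in $L^{5/3}_\loc$ (via the local pressure expansion, which $\pi_\ep$ satisfies with the evident extra $u,a$-contributions).

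Next I would establish equicontinuity in time. Fixing $\psi\in C^\infty_c(\R^3)$ and using \eqref{eq-4.29-BT-SIMA2021} to express $\pd_t\bka{v_\ep,\psi}$, $\pd_t\bka{b_\ep,\psi}$, one bounds the nonlinear, perturbative and pressure contributions by the uniform $L^{10/3}_\loc$, $L^{5/3}_\loc$ and $L^4_\uloc$ bounds (using $\norm{\eta_\ep*f}_{L^p_\loc}\le\norm{f}_{L^p_\loc}$), obtaining a uniform-in-$\ep$ modulus of continuity for $t\mapsto\int(v_\ep,b_\ep)(t)\cdot\psi\,dx$ on $[0,T]$. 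Combined with the uniform $L^2_\uloc$ bound and a diagonal argument over countable dense families of test functions and of balls, Arzel\`a--Ascoli together with the compact embedding $H^1(B)\hookrightarrow\hookrightarrow L^2(B)$ (the Aubin--Lions--Simon lemma) yields a subsequence along which $(v_\ep,b_\ep)\to(v,b)$ strongly in $L^2_\loc(\R^3\times[0,T])$, hence in $L^p_\loc$ for every $p<10/3$, with $\nb(v_\ep,b_\ep)\wkto\nb(v,b)$ in $L^2_\loc$ and $\pi_\ep-c^\ep_{x_0,R}\wkto\pi-c_{x_0,R}$ in $L^{3/2}_\loc$. Since $\eta_\ep*g_\ep\to g$ in $L^2_\loc$ whenever $g_\ep\to g$ in $L^2_\loc$, and $\eta_\ep*u\to u$, $\eta_\ep*a\to a$ in $L^2_\loc$, every quadratic product occurring in \eqref{eq-4.29-BT-SIMA2021} (e.g.\ $(\eta_\ep*v_\ep)\otimes v_\ep$, $(\eta_\ep*u)\otimes v_\ep$, $(\eta_\ep*b_\ep)\otimes b_\ep$) converges in $L^1_\loc$ to the corresponding product without mollifier; hence $(v,b,\pi)$ is a distributional solution of \eqref{eq-4.31-BT-SIMA2021} and the local pressure expansion \eqref{eq-pi-formula-mhd} (with the $u,a$-terms adjoined) passes to the limit, with the far-field part controlled uniformly in $\ep$ by the kernel estimate \eqref{eq-3.11-BT-SIMA2021} as in Lemma \ref{lem-4.2-BT-SIMA2021}.

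I would then verify the local energy inequality: the regularized triple satisfies a local energy \emph{equality}, and testing it against nonnegative $\phi\in C^\infty_c(Q)$ and sending $\ep\to0$, all right-hand terms converge by the strong $L^2_\loc$/$L^3_\loc$ and weak $L^{3/2}_\loc$ convergences (the mollifier discrepancies $\eta_\ep*v_\ep-v_\ep$, $\eta_\ep*u-u$, etc.\ tend to $0$ in $L^2_\loc$), while $\int\!\!\int(|\nb v|^2+|\nb b|^2)\phi$ is weakly lower semicontinuous; this yields \eqref{lei_mhd} with the perturbation terms. Item 6 (weak continuity in $t$) follows from the equicontinuity above, item 4 (attainment of $(v_0,b_0)$ in $L^2_\loc$) follows in the usual way from the uniform bounds and the equation, and the quantitative bound $\norm{(v,b)}_{{\bf LE}_q(0,T)}\le C\norm{(v_0,b_0)}_{E^2_q\times E^2_q}$ is inherited by weak-$*$ lower semicontinuity of the $E^{\infty,2}_{T,q}$ and $E^{2,2}_{T,q}$ norms. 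The main obstacle is the simultaneous limit in the nonlinear terms: one must upgrade weak to strong $L^2_\loc$ convergence while the mollifier is being removed, which is precisely what the Aubin--Lions compactness (equicontinuity in time plus $L^2_\uloc$ bounds) provides; the only other delicate point is maintaining uniform control, in $\ep$, of the slowly decaying far-field pressure, which is already supplied by the estimates in the proof of Lemma \ref{lem-4.2-BT-SIMA2021}.
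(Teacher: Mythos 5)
Your overall strategy---compactness for the regularized perturbed solutions, Aubin--Lions via time-derivative/equicontinuity bounds, passage to the limit in the nonlinear and pressure terms, and lower semicontinuity for the ${\bf LE}_q$ bound---is the same as the paper's, and most of the details you sketch (the role of the $L^4_\uloc$ bound on $(u,a)$, the far-field pressure control, weak lower semicontinuity of the dissipation) match what the paper does. However, there is one genuine gap at the very first step: you assert that the hypotheses of Lemma \ref{lem-4.3-BT-SIMA2021} ``are exactly those assumed here'' and apply it directly to $(v_0,b_0)\in E^2_q\times E^2_q$. Lemma \ref{lem-4.3-BT-SIMA2021} requires $v_0,b_0\in L^2$, and its proof genuinely needs this: the uniform-in-$\ep$ bound there comes from the \emph{global} energy equality $\norm{(v_\ep,b_\ep)(t)}_{L^2\times L^2}\le M_1$, which is then fed into the mild-solution construction of Lemma \ref{lem-4.1-BT-SIMA2021} (also stated for $L^2$ data). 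For $q>2$ one has $L^2\subsetneq E^2_q$, so general data in $E^2_q$ have infinite energy and the chain Lemma \ref{lem-4.1-BT-SIMA2021} $\to$ Lemma \ref{lem-4.3-BT-SIMA2021} is simply not available for them.

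The paper closes this gap by first approximating: for each $\ep$ it chooses divergence-free $(v_0^{(\ep)},b_0^{(\ep)})\in L^2\times L^2$ with $\norm{v_0-v_0^{(\ep)}}_{E^2_q}+\norm{b_0-b_0^{(\ep)}}_{E^2_q}<\ep$ (constructed via the Bogovskii map), runs Lemma \ref{lem-4.3-BT-SIMA2021} on those data, and then performs the compactness argument with the regularization parameter and the data approximation sent to their limits simultaneously; the uniform ${\bf LE}_q$ bounds depend only on $\norm{(v_0^{(\ep)},b_0^{(\ep)})}_{E^2_q\times E^2_q}$, which is uniformly controlled, so the rest of your limit argument goes through essentially unchanged. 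If you insert this approximation step (and check that the attainment of the initial data in $L^2_\loc$ survives the fact that the approximants' data also vary with $\ep$), your proof is complete and coincides with the paper's.
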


\begin{proof}
The proof of the lemma is an adaption of the proof of \cite[Lemma 4.4]{BT-SIMA2021} for the Navier--Stokes equations to the MHD equations.

Fix $(v_0,b_0)\in E^2_q\times E^2_q$.
For each $\ep>0$, approximate the data by divergence-free vector fields $(v_0^{(\ep)}, b_0^{(\ep)})\in L^2\times L^2$ satisfying $\norm{v_0-v_0^{(\ep)}}_{E^2_q} + \norm{b_0-b_0^{(\ep)}}_{E^2_q}<\ep$.
Such approximations can be constructed using the Bogovskii map (see \cite{Tsai-book}).
Let $(v_\ep,b_\ep)$ denote the solutions constructed in Lemma \ref{lem-4.4-BT-SIMA2021} corresponding to the initial data $(v_0^{(\ep)}, b_0^{(\ep)})$.
By the uniform estimates from Lemma \ref{lem-4.4-BT-SIMA2021}, we obtain bounds on $\pd_tv_\ep$ and $\pd_tb_\ep$ in the dual of $L^3(0,T; W_0^{1,3}(B_M(0)))$, which allow us to extract a subsequence $(v_n, b_n):=(v_{\ep_n}, b_{\ep_n})$ of $(v_\ep,b_\ep)$ and $\pi_n := \pi_{\ep_n}$ of $\pi_\ep$, such that, as $n\to\infty$,
\EQN{
(v_n, b_n) \overset{*}{\rightharpoonup} (v,b)&\ \text{ in }L^\infty(0,T; L^2_\loc\times L^2_\loc),\\
(v_n, b_n) \rightharpoonup (v,b)&\ \text{ in }L^2(0,T; H^1_\loc\times H^1_\loc),\\
(v_n, b_n), (\eta_{\ep_n}*v_n, \eta_{\ep_n}*b_n) \rightarrow (v,b)&\ \text{ in }L^3(0,T; L^3_\loc\times L^3_\loc),\\
(\eta_{\ep_n}*u, \eta_{\ep_n}*a) \rightarrow (u,a)&\ \text{ in }L^3(0,T; L^3_\loc\times L^3_\loc),\\
\pi_n^{(k)} \rightharpoonup \pi^{(k)}&\ \text{ in }L^{3/2}(0,T; L^{3/2}(B_k(0))),
}
where $\pi^{(k)}(x,t) = \pi(x,t) - c_k(t)$ for $x\in B_k(0)$ and $t\in(0,T_0]$ for some $c_k\in L^{3/2}(0,T_0)$, and $\pi_n^{(k)}$ is the local pressure expansion for $\pi_n$ on ball $B_k(0)$.
The limit $(v,b,\pi)$ is a local energy solution to the perturbed MHD equations with initial data $(v_0,b_0)$.
We claim that $(v,b,\pi)$ satisfies the \emph{perturbed} local energy inequality: for all nonnegative $\phi\in C^\infty_c(\R^3\times[0,T))$,
\EQS{\label{eq-4.33-BT-SIMA2021}
2&\iint \bke{|\nb v|^2 + |\nb b|^2}\phi\, dxdt\\
&\le \int \bke{|v_0|^2 + |b_0|^2}\phi\, dx + \iint \bke{|v|^2 + |b|^2}(\pd_t\phi + \De\phi)\, dxdt + \iint \bke{|v|^2 + |b|^2 + 2\pi}(v\cdot\nb\phi)\, dxdt\\
&\quad + \iint \bke{|v|^2 + |b|^2}(u\cdot\nb\phi)\, dxdt + 2\iint (v\cdot\nb v - b\cdot\nb b)\cdot(u\phi)\, dxdt \\
&\quad + 2\iint (v\cdot u + b\cdot a)(v\cdot\nb\phi)\, dxdt + 2\iint (v\cdot\nb b -b\cdot\nb v)\cdot(a\phi)\, dxdt  \\
&\quad - 2\iint (v\cdot a + b\cdot u)(b\cdot\nb\phi)\, dxdt - 2\iint (v\cdot b)((b+a)\cdot\nb\phi)\, dxdt.
}
The first two lines are inherited via standard compactness arguments.
We now focus on the convergence of the remaining terms, especially those not involving $\nb\phi$, which are of higher order.
We have
\EQS{
&\abs{\iint (v_n\cdot\nb v_n)\cdot(\eta_{\ep_n} * u)\phi - (v\cdot\nb v)\cdot u\phi\, dxdt}\\
&\qquad\le \abs{\iint ((v_n-v)\cdot\nb v_n)\cdot(u\phi)\, dxdt} + \abs{\iint (v\cdot\nb (v_n-v))\cdot(u\phi)\, dxdt}\\
&\qquad\quad + \abs{\iint (v_n\cdot\nb v_n)\cdot(\eta_{\ep_n} * u - u)\phi\, dxdt}\\
&\qquad=: I_{1,n} + I_{2,n} + I_{3,n},
}
\EQS{
&\abs{\iint (b_n\cdot\nb b_n)\cdot(\eta_{\ep_n} * u)\phi - (b\cdot\nb b)\cdot u\phi\, dxdt}\\
&\qquad\le \abs{\iint ((b_n-b)\cdot\nb b_n)\cdot(u\phi)\, dxdt} + \abs{\iint (b\cdot\nb (b_n-b))\cdot(u\phi)\, dxdt}\\
&\qquad\quad + \abs{\iint (b_n\cdot\nb b_n)\cdot(\eta_{\ep_n} * u - u)\phi\, dxdt}\\
&\qquad=: I_{4,n} + I_{5,n} + I_{6,n},
}
\EQS{
&\abs{\iint (v_n\cdot\nb b_n)\cdot(\eta_{\ep_n} * a)\phi - (v\cdot\nb b)\cdot a\phi\, dxdt}\\
&\qquad\le \abs{\iint ((v_n-v)\cdot\nb b_n)\cdot(a\phi)\, dxdt} + \abs{\iint (v\cdot\nb (b_n-b))\cdot(a\phi)\, dxdt}\\
&\qquad\quad + \abs{\iint (v_n\cdot\nb b_n)\cdot(\eta_{\ep_n} * a - a)\phi\, dxdt}\\
&\qquad=: I_{7,n} + I_{8,n} + I_{9,n},
}
and
\EQS{
&\abs{\iint (b_n\cdot\nb v_n)\cdot(\eta_{\ep_n} * a)\phi - (b\cdot\nb v)\cdot a\phi\, dxdt}\\
&\qquad\le \abs{\iint ((b_n-b)\cdot\nb v_n)\cdot(a\phi)\, dxdt} + \abs{\iint (b\cdot\nb (v_n-v))\cdot(a\phi)\, dxdt}\\
&\qquad\quad + \abs{\iint (b_n\cdot\nb v_n)\cdot(\eta_{\ep_n} * a - a)\phi\, dxdt}\\
&\qquad=: I_{10,n} + I_{11,n} + I_{12,n}.
}
Our goals is to show that the above twelve quantities vanishes as $n\to\infty$.
Let $B$ be a ball containing $\supp\,\phi$.
Then, using H\"older's inequality and log-convexity of $L^p$ norms, we have
\[
I_{1,n}
\lec \norm{u}_{L^\infty L^4_\uloc} \norm{v_n-v}_{L^2(0,T;L^2(B))}^{1/4} \norm{v_n}_{L^2(0,T;H^1(B))}^{7/4}\to0,
\]
\[
I_{4,n}
\lec \norm{u}_{L^\infty L^4_\uloc} \norm{b_n-b}_{L^2(0,T;L^2(B))}^{1/4} \norm{b_n}_{L^2(0,T;H^1(B))}^{7/4}\to0,
\]
\[
I_{7,n}
\lec \norm{a}_{L^\infty L^4_\uloc} \norm{v_n-v}_{L^2(0,T;L^2(B))}^{1/4} \norm{b_n}_{L^2(0,T;H^1(B))}^{7/4}\to0,
\]
and
\[
I_{10,n}
\lec \norm{a}_{L^\infty L^4_\uloc} \norm{b_n-b}_{L^2(0,T;L^2(B))}^{1/4} \norm{v_n}_{L^2(0,T;H^1(B))}^{7/4}\to0,
\]
as $n\to\infty$ by strong convergence of $(v_n,b_n)$ to $(v,b)$ in $L^2(0,T;L^2(B)\times L^2(B))$.
Next, weak convergence of $(v_n,b_n)$ to $(v,b)$ in $L^2(0,T;H^1(B)\times H^1(B))$ ensures that $I_{2,n}$, $I_{5,n}$, $I_{8,n}$, $I_{11,n}\to0$ as $n\to\infty$, since the products $v_iu_j\phi, b_iu_j\phi, v_ia_j\phi$, and $b_ia_j\phi$ all belong to $L^2(B\times(0,T))$. 
Finally, for the mollifier terms, we use strong convergence of the mollified quantities in $L^\infty(0,T;L^3(B))$ and uniform bounds on $v_n$, $b_n$ in $L^2(0,T;H^1(B))$, to deduce $I_{3,n}$, $I_{6,n}$, $I_{9,n}$, and $I_{12,n}\to0$ as $n\to\infty$.
Hence, all error terms vanish in the limit, and \eqref{eq-4.33-BT-SIMA2021} holds. 
Moreover, following the argument in \cite[(3.28)-(3.29)]{KT-CMP2020}, we derive the time-slice version of the perturbed local energy inequality: for any nonnegative $\psi\in C^\infty_c(\R^3)$ and any $t\in(0,T)$,
\EQS{\label{eq-4.35-BT-SIMA2021}
\int& \bke{|v(t)|^2 + |b(t)|^2}\psi\, dx + 2\int_0^t\int \bke{|\nb v|^2 + |\nb b|^2}\psi\, dxdt\\
&\le \int \bke{|v_0|^2 + |b_0|^2}\psi\, dx + \int_0^t\int \bke{|v|^2 + |b|^2} \De\psi\, dxdt + \iint \bke{|v|^2 + |b|^2 + 2\pi}(v\cdot\nb\psi)\, dxdt\\
&\quad + \int_0^t\int \bke{|v|^2 + |b|^2}(u\cdot\nb\psi)\, dxdt + 2\int_0^t\int (v\cdot\nb v - b\cdot\nb b)\cdot(u\psi)\, dxdt \\
&\quad + 2\iint (v\cdot u + b\cdot a)(v\cdot\nb\psi)\, dxdt + 2\int_0^t\int (v\cdot\nb b -b\cdot\nb v)\cdot(a\psi)\, dxdt  \\
&\quad - 2\int_0^t\int (v\cdot a + b\cdot u)(b\cdot\nb\psi)\, dxdt - 2\int_0^t\int (v\cdot b)((b+a)\cdot\nb\psi)\, dxdt.
}

We now establish the bound for $\norm{(v,b)}_{{\bf LE}_q(0,T)}$.
Let $\phi$ the cutoff function used in the proof of Lemma \ref{lem3.1-BT-SIMA2021}, and define $\psi(x) := \phi(x-k)$ for each $k\in\ZZ^3$.
Fix a large integer $K>0$, and restrict to $|k|\le K$.
Applying \eqref{eq-4.35-BT-SIMA2021} with this choice of $\psi$, the right-hand side can be approximated by the corresponding terms for the sequence $\{(v_n,b_n)\}$, since all such quantities converge in the limit.
In particular, for all $|k|\le K$, we can ensure that the difference between the terms involving $(v,b)$ and those for $(v_n,b_n)$ is less than $2^{-K}K^{-3}$ uniformly, provided $n\ge N_K$ for some sufficiently large $N_K$.
Taking these approximate terms, applying standard estimates (which can be derived similar to the proof of \eqref{eq-3.6-BT-SIMA2021})
\EQS{\label{eq-3.7-BT-SIMA2021}
\norm{\int_0^{\la_R R^2} \int_{B_R(x_0R)} |v|^3 + |b|^3 + \abs{\pi - c_{R x_0,R}(t)}^{3/2} dxdt}_{\ell^{\frac{q}3}(x_0\in\ZZ^3)} \le C \la_R^{\frac1{10}} R^{\frac12} \bke{A_{0,q}(R)}^{\frac32},\quad R>0.
}
Taking the essential supremum in time followed by the $\ell^{q/2}$-sum over $|k|\le K$, we obtain a uniform bound 
\EQN{
&\bkt{\sum_{|k|\le K} \bke{\esssup_{0<t<T} \int_{B_1(k)} \bke{|v(x,t)|^2 + |b(x,t)|^2} dx + \int_0^T\int_{B_1(k)} \bke{|\nb v|^2 + |\nb b|^2} dxdt}^{q/2} }^{1/q}\\
&\qquad\qquad\qquad\qquad\qquad\qquad\qquad\qquad\qquad\qquad\qquad\qquad\qquad\le C\bke{\norm{(v_0,b_0)}_{E^2_q\times E^2_q} + \frac{C}{2^K}}.
}
Since this bound holds for all $K\in\NN$, it follows that $(v,b)\in {\bf LE}_q(0,T)$, with the norm estimate $\norm{(v,b)}_{{\bf LE}_q(0,T)}\le C\norm{(v_0,b_0)}_{E^2_q\times E^2_q}$.
\end{proof}

\begin{lemma}\label{lem-5.1-BT-SIMA2021}
Let $2\le q<\infty$.
Assume $v_0,b_0\in L^2$ are divergence free, and assume $u,a:\R^3\times[0,T_0]\to\R^3$ are divergence free and satisfy
\[
\esssup_{0<t\le T_0} \bke{\norm{u(t)}_{L^3_\uloc} + \norm{a(t)}_{L^3_\uloc}} < \de\le c_0\ \text{ and }\ 
\esssup_{0<t\le T_0} \bke{\norm{u(t)}_{L^4_\uloc} + \norm{a(t)}_{L^4_\uloc}} < \infty.
\]
where $c_0$ is from Lemma \ref{lem-4.2-BT-SIMA2021}.
For any $T\in(0,T_0]$, if $\de\le\de_0(T)\le c_0$ is sufficiently small, then there exists a local energy solution $(v,b)$ to the perturbed MHD equations, \eqref{eq-4.31-BT-SIMA2021}, so that $(v,b)\in{\bf LE}_q(0,T)$.
In particular, this is true when $(u,a) \equiv (0,0)$.
\end{lemma}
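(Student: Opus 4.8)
The plan is to bootstrap from Lemma \ref{lem-4.4-BT-SIMA2021}, which already produces a local energy solution on $[0,T^*]$ with $T^* = \min(T_0,\la_0,\la_0A_{0,q}^{-2})$ for \emph{$L^2$} data, but only on a time interval whose length degenerates as $\|(v_0,b_0)\|_{L^2}$ (equivalently $A_{0,q}$) grows. The new content here is that, by taking the perturbation $\de$ small depending on $T$, one can reach an \emph{arbitrary} prescribed $T\le T_0$. The mechanism is that the local energy bound from Lemma \ref{lem-4.2-BT-SIMA2021} is uniform in $\ep$ and, crucially, the $\ell^{q/2}$ local-energy quantity $E$ satisfies an inequality of the form $E \le C(A_{0,q} + \de\text{-terms}) + (\text{small})E + (\text{small})E^{3/2}$ on the full interval $[0,T]$ once $\de$ is small; the $\de$-weighted contributions to the right-hand side can be absorbed into the left, so the solution does not need to be restarted at a fixed time-step size. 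In other words, with $\de$ small the coupling to $(u,a)$ is weak enough that the a priori bound propagates globally in time on $[0,T]$ rather than only on $[0,T^*]$.

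Concretely, first I would fix $T\in(0,T_0]$ and run the regularized construction of Lemma \ref{lem-4.1-BT-SIMA2021}: for each $\ep\in(0,1]$ there is a smooth solution $(v_\ep,b_\ep,\pi_\ep)$ of \eqref{eq-4.29-BT-SIMA2021} on a short interval, with the energy equality giving $\|(v_\ep,b_\ep)(t)\|_{L^2\times L^2}\le M_1$ uniformly (using the estimate from \cite[p.~217]{MR1938147} for the perturbative terms, valid since $\|(u,a)\|_{L^\infty L^3_\uloc}<\de\le c_0$). Second, I would invoke Lemma \ref{lem-4.2-BT-SIMA2021}, which applies verbatim to the regularized system, to get $\|(v_\ep,b_\ep)\|_{{\bf LE}_q(0,T)}\le C\|(v_0,b_0)\|_{E^2_q\times E^2_q}=:M_2$ with $M_2$ independent of $\ep$ and of $\de$ (once $\de\le c_0$) — and independent of the length of the interval up to $T$, which is the key point: here one uses that the right-hand side of the localized energy inequality \eqref{eq-3.12-BT-SIMA2021}-type bound, augmented by the $\de$-terms, still closes because the $\de$-weighted terms are absorbed on the left. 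Since $\de\le\de_0(T)$ can be chosen so that the continuation-in-$\la$ argument of Lemma \ref{lem-4.2-BT-SIMA2021} runs all the way to $\la=T$ (rather than stopping at $\la_0 A_{0,q}^{-2}$), the bound $M_2$ holds on $[0,T]$. Then, exactly as in Lemma \ref{lem-4.3-BT-SIMA2021}, I would use the local existence time $T_{\ep,\de}$ from Lemma \ref{lem-4.1-BT-SIMA2021} together with the uniform bounds $M_1,M_2$ to iterate/restart the solution a finite number of times (each step of length comparable to $T_{\ep,\de}$, which depends only on $M_1,\ep,\de$) until the full interval $[0,T]$ is covered, with ${\bf LE}_q$ and $L^2$ norms staying $\le M_2$ and $\le M_1$ throughout.

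Third, I would pass to the limit $\ep\to0^+$ along a subsequence exactly as in the proof of Lemma \ref{lem-4.4-BT-SIMA2021}: the uniform bounds give weak-$*$, weak, and (via Aubin--Lions with the $\pd_t$ bounds in the dual of $L^3(0,T;W^{1,3}_0(B_M))$) strong $L^3_\loc$ compactness for $(v_\ep,b_\ep)$ and $(\eta_\ep*v_\ep,\eta_\ep*b_\ep)$, and the local pressure expansion \eqref{eq-pi-formula-mhd} passes to the limit modulo the time-dependent constants $c_k(t)$. The twelve error-term computations $I_{1,n},\dots,I_{12,n}\to0$ are identical to those in Lemma \ref{lem-4.4-BT-SIMA2021} (here the hypothesis $\|(u,a)\|_{L^\infty L^4_\uloc}<\infty$ is what is used in the $I_{1,n}$-type bounds via H\"older and $L^p$ log-convexity), giving the perturbed local energy inequality \eqref{eq-4.33-BT-SIMA2021} and its time-slice form \eqref{eq-4.35-BT-SIMA2021} for the limit. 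Finally, testing \eqref{eq-4.35-BT-SIMA2021} against $\phi(\cdot-k)$, taking the $\ell^{q/2}$ sum over $|k|\le K$, using lower semicontinuity and the $2^{-K}K^{-3}$-approximation of the right-hand side by the $(v_n,b_n)$-terms, and letting $K\to\infty$ yields $(v,b)\in{\bf LE}_q(0,T)$ with $\|(v,b)\|_{{\bf LE}_q(0,T)}\le C\|(v_0,b_0)\|_{E^2_q\times E^2_q}$. The case $(u,a)\equiv(0,0)$ is immediate since then $\de=0\le\de_0(T)$ trivially.

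I expect the main obstacle to be the \textbf{global-in-time a priori bound on $[0,T]$}, i.e.\ verifying that the continuation-in-$\la$ argument behind Lemma \ref{lem-4.2-BT-SIMA2021} genuinely extends to $\la=T$ once $\de\le\de_0(T)$, rather than only to $\la_0A_{0,q}^{-2}$. This requires a careful look at the structure of the localized inequality \eqref{eq-3.12-BT-SIMA2021}: the superlinear terms $E^{3/2}$ and $E^{2}$ (the latter appearing in the $1\le q<2$ variant \eqref{Eq.est}) carry small prefactors proportional to powers of $\la$ or to $\de$, so the bootstrap closes only if one is careful that (a) the $\de$-linear terms are absorbed \emph{before} the continuity argument, and (b) the resulting constant $C$ (hence $M_2$) is genuinely independent of $\la\le T$ and of $\de$. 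This is exactly the point at which the dependence $\de_0 = \de_0(T)$ enters, and getting the quantifiers in the right order is the delicate part; everything else is a faithful transcription of the Navier--Stokes arguments of \cite{BT-SIMA2021} to the MHD coupling.
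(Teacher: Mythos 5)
Your overall architecture (regularize, derive uniform bounds, iterate, pass to the limit) matches the paper's, but the mechanism you propose for reaching an arbitrary $T\le T_0$ --- the step you yourself flag as the main obstacle --- does not work as described. You claim that once $\de\le\de_0(T)$ the continuation-in-$\la$ argument behind Lemma \ref{lem-4.2-BT-SIMA2021} extends all the way to $\la=T$ because ``the $\de$-weighted terms are absorbed on the left.'' But the restriction $\la\le\la_0A_{0,q}^{-2}$ in Lemma \ref{lem-4.2-BT-SIMA2021} does not come from the perturbation at all: it comes from the superlinear self-interaction term $C_2^q\la^{q/8}E^{3/2}$ in \eqref{eq-3.13-BT-SIMA2021} (with $R=1$), whose prefactor is independent of $\de$ and is not small once $\la$ exceeds $\la_0A_{0,q}^{-2}$. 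Shrinking $\de$ absorbs the $\de$-linear and $\de$-quadratic contributions but leaves this term untouched, so the localized bootstrap genuinely stops at $\la_0A_{0,q}^{-2}$, and no choice of $\de_0(T)$ repairs that.

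The paper's resolution is different and uses the hypothesis $v_0,b_0\in L^2$ (not merely $E^2_q$) in an essential way: the \emph{global} $L^2$ energy inequality for the perturbed system, after absorbing the perturbation terms --- this is where $\de\le\de_0(T)$ actually enters, to control a contribution of size $C\de\bke{\norm{(\nb v,\nb b)}_{L^2(0,t;L^2)}^2 + t\sup_{0<s<t}\norm{(v,b)(s)}_{L^2}^2}$ over the whole interval --- yields $\sup_{0<t<T}\norm{(v,b)(t)}_{L^2}^2\le 2\norm{(v_0,b_0)}_{L^2}^2$. Combined with the elementary inequality $\norm{f}_{E^2_q}\le C\norm{f}_{L^2}$ for $q\ge2$ (precisely why the lemma is restricted to $2\le q<\infty$), this gives a uniform-in-time bound on $\norm{(v,b)(t)}_{E^2_q}$, so each restart via Lemma \ref{lem-4.4-BT-SIMA2021} has the \emph{same} existence time $T_0(M_2)$, and finitely many steps --- glued using Lemma \ref{lem-solution-in-E4q} to place $(v,b)(t_0)$ in $E^3\times E^3$ and the $E^3$ uniqueness of Corollary \ref{cor1.8-BT-CPDE2020} --- cover $[0,T]$. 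Your proposal never invokes this global energy estimate for the limit solution, nor the $E^3$ uniqueness needed to glue across restart times; without them the iteration has no reason to advance past $\min(T_0,\la_0,\la_0A_{0,q}^{-2})$.
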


\begin{proof}
The proof of the lemma is an adaption of the proof of \cite[Lemma 5.1]{BT-SIMA2021} for the Navier--Stokes equations to the MHD equations.

We begin with the special case $(u,a) = (0,0)$ to highlight the key ideas.
Suppose the initial data $(v_0, b_0)\in L^2\times L^2$, and define $a_k = \int_{B_1(k)} \bke{|v_0|^2 + |b_0|^2}dx$ for $k\in\ZZ^3$.
Then, 
\[
\sum_ka_k^{q/2}\le(\max_k a_k)^{q/2-1}\sum_ka_k\le\bke{\sum_ka_k}^{q/2},
\]
which shows that $(v_0,b_0)\in E^2_q\times E^2_q$ with $\norm{(v_0,b_0)}_{E^2_q\times E^2_q}\le M_2:= C\norm{(v_0,b_0)}_{L^2\times L^2}$.
Let $(v,b)$ be the solution of the perturbed MHD equations with $(u,a) = (0,0)$ (so that it is a solution of \eqref{MHD}) constructed via Lemma \ref{lem-4.4-BT-SIMA2021} with initial data $(v_0,b_0)$.
Then $(v,b)\in {\bf LE}_q(0,T_0)$, where $T_0=T_0(M_2)$ is the existence time depending on the size of the initial data in $E^2_q\times E^2_q$.
For almost every $t\in(0,T)$, we have that $(v,b)(t)\in E^3\times E^3$ and that $\norm{(v,b)(t)}_{L^2\times L^2}\le \norm{(v_0,b_0)}_{L^2\times L^2}$.
The inclusion $(v,b)(t)\in E^3$ follows from Lemma \ref{lem-solution-in-E4q} and the embedding $E^4_q\subset E^3$. 
Hence, $\norm{(v,b)(t)}_{E^2_q\times E^2_q}\le M_2$ for almost every $t\in(0,T_0)$.
In particular, these bounds hold at some time $t_0\in(T_0/2,T_0)$.
We now restart the MHD equations at time $t_0$, treating $(v,b)(t_0)$ as new initial data in $E^3\times E^3$. By Lemma \ref{lem-4.4-BT-SIMA2021}, there exists a local energy solution $(v_1,b_1)$ in ${\bf LE}_q(t_0,t_0+T_0)$.
By uniqueness of local energy solution with $E^3\times E^3$ (Corollary \ref{cor1.8-BT-CPDE2020}), we have $(v_1,b_1) = (v,b)$ on some short interval $[t_0,t_0+\De t_1]$.
This allows us to glue $(v_1,b_1)$ to $(v,b)$, yielding a local energy solution (still denoted $(v,b)$) on $[0,3T_0/2]$ that lies in ${\bf LE}_q(t_0,t_0+T_0)\cap{\bf LE}_q(0,T_0)$.
Hence, $(v,b)\in {\bf LE}_q(0,3T_0/2)$.
Repeating this argument, we obtain a solution $(v,b)\in{\bf LE}_q(0,T)$ for any $T>0$, using the uniform-in-time control of $\norm{(v,b)}_{E^2_q\times E^2_q}$.

Now consider the general case $(u,a)\neq(0,0)$.
Let $(v_0,b_0)\in L^2\times L^2\subset E^2_q\times E^2_q$, and let $(v,b)$ be the local energy solution to the perturbed MHD equations, \eqref{eq-4.31-BT-SIMA2021}, given by Lemma \ref{lem-4.4-BT-SIMA2021}.
Assuming $\de:= \norm{(u,a)}_{L^\infty(L^3_\uloc\times L^3\uloc)}\ll c_0$, we have $(v,b)\in{\bf LE}_q(0,T_0)$ for some $T_0 = T_0(\norm{(v_0,b_0)}_{E^2_q\times E^2_q})$.
We now derive an energy estimate.
Using the following bounds:
\[
\int_{\R^3} |u||v|(|\nb v|+|v|)
\le \sum_k \int_{B_1(k)} |u||v|(|\nb v|+|v|)
\lec \norm{u}_{L^3_\uloc} \sum_k \int_{B_1(k)} (|\nb v|^2 + |v|^2),
\]
\[
\int_{\R^3} |u||b|(|\nb b|+|b|)
\le \sum_k \int_{B_1(k)} |u||b|(|\nb b|+|b|)
\lec \norm{u}_{L^3_\uloc} \sum_k \int_{B_1(k)} (|\nb b|^2 + |b|^2),
\]
\[
\int_{\R^3} |a||v|(|\nb b|+|b|)
\le \sum_k \int_{B_1(k)} |a||v|(|\nb b|+|b|)
\lec \norm{a}_{L^3_\uloc} \sum_k \int_{B_1(k)} (|\nb b|^2 + |b|^2 + |v|^2),
\]
and
\[
\int_{\R^3} |a||b|(|\nb v|+|v|)
\le \sum_k \int_{B_1(k)} |a||b|(|\nb v|+|v|)
\lec \norm{a}_{L^3_\uloc} \sum_k \int_{B_1(k)} (|\nb v|^2 + |v|^2 + |b|^2),
\]
we obtain the energy inequality:
\EQS{
&\norm{(v,b)(t)}_{L^2\times L^2}^2 + 2\norm{(\nb v,\nb b)}_{L^2(0,t;L^2\times L^2)}^2\\
&\qquad\le \norm{(v_0,b_0)}_{L^2\times L^2}^2\\
&\qquad\quad + C\norm{(u,a)}_{L^\infty(L^3_\uloc\times L^3_\uloc)} \bke{\norm{(\nb v,\nb b)}_{L^2(0,t;L^2\times L^2)}^2 + t\sup_{0<s<t} \norm{(v,b)(s)}_{L^2\times L^2}^2 },
}
for $0<t<T$.
If $T\le T_0$, the result follows. Otherwise, we choose $\de$ sufficiently small (depending on $T$) to absorb the right-hand side, yielding:
\[
\sup_{0<t<T} \norm{(v,b)(t)}_{L^2\times L^2}^2 + 2\norm{(\nb v,\nb b)}_{L^2(0,T; L^2\times L^2)}^2 \le 2\norm{(v_0,b_0)}_{L^2\times L^2}^2.
\]
This in turn implies
\[
\sup_{0<t<T} \norm{(v,b)(t)}_{E^2_q\times E^2_q}
\le C\norm{(v_0,b_0)}_{L^2\times L^2}.
\]
Thus, we obtain uniform-in-time control of $\norm{(v,b)(t)}_{E^2_q\times E^2_q}$ and the argument from the $(u,a) = (0,0)$ case applies to yield the desired result.
\end{proof}

\begin{lemma}\label{lem-A.1-BT-SIMA2021}
Suppose that $v_0,b_0\in E^4_\infty$ are divergence free.
Assume also that 
$\de:=\norm{(v_0,b_0)}_{E^4_\infty\times E^4_\infty}<\ep_*$ for a universal constant $\ep_*$.
Then there exists a second universal constant $\tau_0>0$ and $(v,b)$ and $\pi$ comprising a local energy solution to \eqref{MHD} in $\R^3\times(0,\tau_0)$ with initial data $(v_0,b_0)$ so that $(v,b)$ and $\pi$ are smooth in space and time, $(v,b)\in C([0,\tau_0]; E^4_\infty\times E^4_\infty)$, and 
\[
\sup_{0\le t\le \tau_0} \norm{(v,b)(t)}_{L^4_\uloc\times L^4_\uloc} < C\de.
\]
Furthermore, if $(u,a)\in\mathcal{N}_{\rm MHD}(v_0,b_0)$, then $(u,a) = (v,b)$ on $\R^3\times[0,\tau_0]$.
\end{lemma}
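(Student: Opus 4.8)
The plan is to build the solution with the subcritical mild theory of Theorem~\ref{thrm.subcritical-mhd}, upgrade it to a smooth local energy solution, and then prove uniqueness among local energy solutions by a Duhamel/Gr\"onwall argument exploiting the smallness of $\de:=\norm{(v_0,b_0)}_{E^4_\infty\times E^4_\infty}$. Since $E^4_\infty=L^4_\uloc$ and $4>3$, I would invoke Theorem~\ref{thrm.subcritical-mhd} with $r=4$, $q=\infty$, and a fixed universal time $\tau_0\le c_0$ ($c_0$ from \eqref{eq2.3-BT-CPDE2020}); then \eqref{def:T.subcrit-mhd} reads $\tau_0^{1/8}+\tau_0^{1/2}\lec\de^{-1}$, which holds once $\de<\ep_*$ for a small universal $\ep_*$, and the analogous smallness makes the hypothesis for the space-time bound hold as well. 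This produces a unique mild solution $(v,b)\in L^\infty(0,\tau_0;E^4_\infty\times E^4_\infty)\cap C((0,\tau_0);E^4_\infty\times E^4_\infty)$ with $\sup_{0\le t\le\tau_0}\norm{(v,b)(t)}_{E^4_\infty\times E^4_\infty}\le C\de$, with $\norm{e^{t\De}v_0-v(t)}_{E^4_\infty}+\norm{e^{t\De}b_0-b(t)}_{E^4_\infty}\to0$ as $t\to0^+$, with the continuity on $[0,\tau_0]$ asserted in the lemma, and with $(v,b)\in E^{s,p}_{\tau_0,\infty}\times E^{s,p}_{\tau_0,\infty}$ for all $(s,p)$ on the line $\frac2s+\frac3p=\frac34$, $s\in[4,\infty]$, $p\in[4,12]$; in particular $(v,b)\in L^4_{\tau_0}E^{12}_\infty\times L^4_{\tau_0}E^{12}_\infty$.

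For smoothness I would bootstrap on the Duhamel representation $v=e^{t\De}v_0-B_1((v,b),(v,b))$ and its analogue for $b$: from $(v,b)\in L^\infty_tE^4_\infty\cap L^4_tE^{12}_\infty$ one gets $v\ot v-b\ot b\in L^2_tE^6_\infty$, and repeated use of the heat/Oseen smoothing estimates (\cite[Lemma~2.1]{BLT-MathAnn2024}) raises the integrability of $(v,b)$ until it is locally bounded on $\R^3\times(0,\tau_0]$; interior regularity for the Stokes system with the now-bounded nonlinearity as forcing then iterates to $C^\infty$-smoothness of $(v,b)$ and of the associated pressure $\pi$ on $\R^3\times(0,\tau_0]$, and $\sup_{0\le t\le\tau_0}\norm{(v,b)(t)}_{L^4_\uloc\times L^4_\uloc}<C\de$. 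To see $(v,b)\in\mathcal N_{\rm MHD}(v_0,b_0)$ I would take $\pi$ to be the pressure already carried by $B_1$, i.e.\ the one given locally by \eqref{eq-pi-formula-mhd} applied to $v\ot v-b\ot b$; then $(v,b,\pi)$ solves \eqref{MHD} distributionally with $\pi\in L^{3/2}_\loc$, item~3 of Definition~\ref{def-local-energy-sol-mhd} holds by construction, and the local energy quantities of item~2 and the local energy (in)equality of item~5 follow by testing the classical equations for $t>0$ against $\phi^2v$ and $\phi^2b$ and integrating, the uniform $L^4_\uloc$ bound controlling every term and allowing the limit $t\to0^+$. Item~4 follows from $v(t)-e^{t\De}v_0\to0$ in $E^4_\infty\imbed L^2(K)$ together with $e^{t\De}v_0\to v_0$ in $L^2_\loc$ (and likewise for $b$), and item~6 follows from $(v,b)\in C((0,\tau_0);E^4_\infty)$ plus item~4 at $t=0$.

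For uniqueness, let $(u,a)\in\mathcal N_{\rm MHD}(v_0,b_0)$. Since $v_0,b_0\in L^4_\uloc$, H\"older on unit balls gives $A_0(1)\lec\de^2$ and, by summing over covering balls, $N_R^0(v_0,b_0)\lec R^2\de^2$ for all $R>0$; hence Lemma~\ref{lem2.1-BT-CPDE2020} with $r=1$ yields $\esssup_{0\le t\le c_0}\norm{(u,a)(t)}_{L^2_\uloc\times L^2_\uloc}^2\lec\de^2$, while Lemma~\ref{lem2.3-BT-CPDE2020} and Theorem~\ref{thm1.2-BT-CPDE2020}(2) (with $R_0=1$, valid since $\de^2\lec\ep_1$) make $(u,a)$ smooth on $\R^3\times(0,\tau_0]$ with $t^{1/2}\norm{(u,a)(t)}_{L^\infty_\uloc\times L^\infty_\uloc}\lec\de$; the same bounds hold for $(v,b)$. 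Next I would show $(u,a)$ satisfies the Duhamel equation \eqref{eq-mild-mhd} on $(0,\tau_0)$: for $0<\e<t$ the classical, bounded solution obeys $u(t)=e^{(t-\e)\De}u(\e)+\int_\e^t e^{(t-s)\De}\mathbb P\nb\cdot(u\ot u-a\ot a)(s)\,ds$ (Stokes--Duhamel, with $e^{(t-s)\De}\mathbb P\nb\cdot$ defined via the Oseen kernel $|\pd_l S_{ij}(x,t)|\lec(|x|+\sqrt t)^{-4}$ so as to act on bounded fields, cf.\ \cite[\S8]{KMT-IMRN2018}), and letting $\e\to0^+$ one has $e^{(t-\e)\De}u(\e)\wkto e^{t\De}v_0$ as distributions (from $u(\e)\to v_0$ in $L^2_\loc$, the uniform bound $\norm{u(\e)}_{L^2_\uloc}\lec\de$, and rapid decay of $e^{t\De}\psi$), while $\int_\e^t\to\int_0^t$ because the integrand is dominated by $C(t-s)^{-1/2}\norm{u(s)}_{L^\infty_\uloc}\norm{u(s)}_{L^2_\uloc}\lec C(t-s)^{-1/2}\de^2s^{-1/2}\in L^1(0,t)$; the same applies to $a$. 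Finally, with $(w,d)=(v,b)-(u,a)$ and $v\ot v-u\ot u=w\ot v+u\ot w$ (and similarly for the other quadratic differences), the bound $\norm{\int_0^t e^{(t-s)\De}\mathbb P\nb\cdot F(s)\,ds}_{L^2_\uloc}\lec\int_0^t(t-s)^{-1/2}\norm{F(s)}_{L^2_\uloc}\,ds$ combined with $\norm{v(s)}_{L^\infty_\uloc}+\norm{u(s)}_{L^\infty_\uloc}+\norm{b(s)}_{L^\infty_\uloc}+\norm{a(s)}_{L^\infty_\uloc}\lec\de s^{-1/2}$ gives
\EQ{
\sup_{0<t<\tau_0}\norm{(w,d)(t)}_{L^2_\uloc\times L^2_\uloc}\;\le\;C\de\,\sup_{0<t<\tau_0}\norm{(w,d)(t)}_{L^2_\uloc\times L^2_\uloc},
}
so $(w,d)\equiv0$ once $\ep_*$ is small enough that $C\de<1$; that is, $(u,a)=(v,b)$ on $\R^3\times[0,\tau_0]$.

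The main obstacle is the first half of the uniqueness step: showing that an \emph{arbitrary} local energy solution with the same $L^4_\uloc$ data satisfies the Duhamel integral equation near $t=0$. In contrast to Theorem~\ref{thm-1.7-BT-CPDE2020}, the data need not lie in $E^2$ nor satisfy the far-field condition \eqref{eq-1.12-BT-CPDE2020}, so the harmonic-function/decay argument used there does not transfer verbatim; one must instead use the scaling $N_R^0\lec R^2\de^2$ to promote $(u,a)$ to a uniformly-(in $L^\infty_\uloc$-norm, after the $t^{1/2}$ weight)-bounded smooth solution on $(0,\tau_0]$ via the $\e$-regularity criterion, and then justify the passage $\e\to0^+$ in the restarted Duhamel formula with only the $L^2_\loc$ convergence $u(\e)\to v_0$ guaranteed by Definition~\ref{def-local-energy-sol-mhd}. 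Verifying the complete list of local energy axioms in the construction step, though routine, is also somewhat lengthy.
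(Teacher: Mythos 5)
Your construction of the solution follows essentially the same route as the second half of the paper's proof: the mild solution from Theorem \ref{thrm.subcritical-mhd} with $r=4$, $q=\infty$ on a universal time interval, upgraded to a smooth solution and then verified to satisfy all items of Definition \ref{def-local-energy-sol-mhd} (the paper additionally builds a Kikuchi--Seregin local energy solution first, but that object is only used to feed the uniqueness theorem). Where you genuinely diverge is the uniqueness step, and that is where the gap sits. The paper disposes of uniqueness in one line by asserting $E^4_\infty\subset E^2$ and invoking Theorem \ref{thm-1.7-BT-CPDE2020}; you instead try to rerun the proof of that theorem directly for $L^4_\uloc$ data. The obstruction you name at the end is real and is not resolved by anything you wrote: the entire difficulty of Theorem \ref{thm-1.7-BT-CPDE2020} is the claim that an arbitrary $(u,a)\in\mathcal{N}_{\rm MHD}(v_0,b_0)$ satisfies the Duhamel identity, and its proof hinges on the Liouville step $|V_\ep|\lec (f(r)/r^3)^{1/2}\to0$, which needs $A_0(r)=o(r^3)$ --- guaranteed when $v_0,b_0\in E^2$ or under \eqref{eq-1.12-BT-CPDE2020}, but \emph{false} for general $L^4_\uloc$ data, where one only gets $A_0(r)\lec \de^2 r^3$ and hence $|V_\ep|\lec\de$ rather than $V_\ep=0$.

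Your proposed fix --- restart at $t=\e$ where $(u,a)$ is smooth and bounded and write the Stokes--Duhamel formula on $[\e,t]$ --- does not come for free either. A bounded smooth solution of \eqref{MHD} with \emph{some} pressure need not satisfy the Duhamel formula: the parasitic solutions $u=g(t)$, $\pi=-g'(t)\cdot x$ are the standard counterexample. To rule these out you must again run the Liouville argument for $V=u-\td u$ on $[\e,t]$, conclude from boundedness only that the mollification of $V$ equals some $c(t)$, and then invoke the local pressure expansion (item 3 of Definition \ref{def-local-energy-sol-mhd}) to force $c'(t)=0$, combined with $V(\cdot,\e)=0$ to get $c\equiv0$. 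This can be carried out, but it is the essential content of the uniqueness assertion and cannot be left at ``cf.\ \cite{KMT-IMRN2018}'' together with an admission that the argument ``does not transfer verbatim.'' Until that step is written out, the claim that every $(u,a)\in\mathcal{N}_{\rm MHD}(v_0,b_0)$ coincides with your mild solution is unproved. (For what it is worth, the paper's own shortcut is also fragile: $E^4_\infty=L^4_\uloc$ contains nonzero constants, which do not lie in $E^2$, so the inclusion $E^4_\infty\subset E^2$ invoked there fails as stated; it is only in the paper's application of the lemma, where the data actually lies in $E^4_q$ with $q<\infty$, that the reduction to Theorem \ref{thm-1.7-BT-CPDE2020} is legitimate. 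So the issue you identified is a genuine one, not an artifact of your approach --- but identifying it is not the same as closing it.)
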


\begin{proof}
The proof of the lemma is an adaption of the proof of \cite[Lemma A.1]{BT-SIMA2021} for the Navier--Stokes equations to the MHD equations.

Since $(v_0,b_0)\in E^4_\infty\times E^4_\infty$, it follows that $(v_0,b_0)\in E^2\times E^2$ as $E^4_\infty\subset E^2$.
Viewing the MHD equations as a coupled system of inhomogeneous Stokes systems, we apply the linear theory from \cite[\S5]{KS-book2007} and follow the argument of Theorem 1.5 therein to construct a global-in-time local energy solution $(v,b)$ evolving from $(v_0,b_0)$.
We may assume $\norm{(v_0,b_0)}_{L^3_\uloc\times L^3_\uloc} < \ep_3$, where $\ep_3$ is given in Theorem \ref{thm-1.7-BT-CPDE2020}.
Then, for all $x_0\in\R^3$ and $r\le 1$,
\[
\frac1r\int_{B_r(x_0)} (|v_0|^2 + |b_0|^2)\, dx 
\le C\bke{\int_{B_r(x_0)} (|v_0|^3 + |b_0|^3)\, dx }
\le C\ep_3.
\]
Thus, Theorem \ref{thm-1.7-BT-CPDE2020} ensures uniqueness of the local energy solution $(v,b)\in\mathcal{N}_{\rm MHD}(v_0,b_0)$ up to some time $\tau_0$.

Now consider the mild solution constructed in Theorem \ref{thrm.subcritical-mhd} with $r=4$, $q=\infty$. Since $E^4\subset\mathcal{L}^4_\uloc$, the closure of $BUC(\R^3)$ in the $L^4_\uloc$ norm, there exists a time $T>0$ and a unique mild solution $(u,a)\in C([0,T); L^4_\uloc)$.
In the construction of this mild solution (see Section \ref{sec-subcritical}), we may redefine the space $\mathcal{E}_T$ with the norm $\norm{(v,b)}_{\mathcal{E}_T} = \sup_{0<t<T} \norm{(v,b)(t)}_{E^r_q} + \sup_{0<t<T} t^{3/(2r)}\norm{(v,b)(t)}_{L^\infty}$ to ensure that the mild solution $(u(t),a(t))\in L^\infty\times L^\infty$ for all $t>0$.
Then, by adapting the regularity argument from \cite[\S4]{GIM-QuadMat1999}, we conclude that $(u,a)$ is smooth in both space and time for all $t>0$.
By choosing $\ep_*$ sufficiently small, the existence time $T$ for the mild solution in Theorem \ref{thrm.subcritical-mhd} exceeds $\tau_0$.

We now verify that $(u,a)$ defines a local energy solution.
By embeddings, the same convergence properties at $t=0$ hold with $L^4$ and $L^4_\uloc$ replaced by $L^2$ and $L^2_\uloc$, respectively.
This implies that if $w\in L^2(\R^3)$ is compactly supported, then
\[
\lim_{t\to0}\int (u(x,t)-u_0(x))w(x)\, dx = \lim_{t\to0}\int (a(x,t)-a_0(x))w(x)\, dx = 0.
\]
Moreover, the maps 
\[
t\mapsto \int u(x,t)\cdot w(x)\, dx\quad \text{ and }\quad
t\mapsto \int a(x,t)\cdot w(x)\, dx
\]
are continuous for $t>0$ due to the smoothness of $(u,a)$.

To complete the verification, we adapt the pressure construction from \cite[Theorem 1.4]{BT-JMFM2022}, as carried out in \cite[\S6]{BT-JMFM2022} for the Navier--Stokes equations.
This yields a pressure $\pi$ such that $(u,a,\pi)$ satisfies the MHD equations in the distributional sense.
The local expansion of $\pi$ also guarantees that $\pi\in L^{3/2}_\loc(\R^3\times(0,T))$.
The local energy inequality follows from the space-time smoothness of $(u,a)$, and item 2 in the definition of local energy solution is satisfied since $(u,a)\in L^\infty L^3_\uloc$.
Hence, $(u,a)\in\mathcal{N}_{\rm MHD}(v_0,b_0)$, and uniqueness implies $(v,b) = (u,a)$ on $\R^3\times(0,\tau_0)$.
Therefore,
\[
\norm{(v,b)(t)}_{L^3_\uloc\times L^3_\uloc} \le C\norm{(v,b)(t)}_{L^4_\uloc\times L^4_\uloc}<C\de
\]
for all $t\in(0,\tau_0)$.

The mild solution constructed in Theorem \ref{thrm.subcritical-mhd} satisfies $(v,b)\in C([0,\tau_0];L^4_\uloc\times L^4_\uloc)$.
Since $L^4_\uloc = E^4_\infty\subset E^2_\infty$, we may apply Lemma \ref{lem-solution-in-E4q} with $q=\infty$ and data $(v_0,b_0)\in E^4_\infty\times E^4_\infty\subset E^2_\infty\times E^2_\infty$ to conclude that for almost every $t>0$, $(v,b)(t)\in E^4_\infty\times E^4_\infty$.
This further implies $(v,b)\in C([0,\tau_0];E^4_\infty\times E^4_\infty)$.
\end{proof}

\begin{proof}[Proof of Theorem \ref{th4.8-mhd} for $q\ge2$]
The proof of Theorem \ref{th4.8-mhd} for $q\ge2$ is an adaption of the proof of \cite[Theorem 1.5]{BT-SIMA2021} for the Navier--Stokes equations to the MHD equations.

Assume that $v_0,b_0\in E^2_q$ are divergence free.
By Lemma \ref{lem-4.4-BT-SIMA2021} with $(u,a) = (0,0)$, there exists a local energy solution $(v,b)$ to the MHD equations on $\R^3\times(0,T_0)$ such that $(v,b)\in{\bf LE}_q(0,T_0)$.
Moreover, by Lemma \ref{lem-solution-in-E4q}, we have 
\[
(v,b)(t)\in E^4_q\times E^4_q\ \text{ for a.e. }t\in[T_0/2, T_0].
\]

Choose a time $t_0>T_0/2$ so that $(v,b)(t_0)\in E^4_q\times E^4_q$.
We aim to construct a local energy solution in ${\bf LE}_q(t_0,t_0+\tau_0)$ with initial data $(v,b)(t_0)\in E^4_q\times E^4_q$, where $\tau_0$ is the fixed time-scale in Lemma \ref{lem-A.1-BT-SIMA2021}.
Using the Bogovskii map (see \cite{Tsai-book} for the details), for any $\de>0$, we can decompose the initial data as
\[
(v,b)(t_0) = (u_0,a_0) + (w_0,d_0),\qquad 
\nb\cdot w_0 = \nb\cdot d_0=0,
\]
where
\[
\norm{(u_0,a_0)}_{E^4_q\times E^4_q} < \de,\qquad
w_0, d_0\in L^2(\R^3).
\]

By Lemma \ref{lem-A.1-BT-SIMA2021}, choosing $\de$ sufficiently small ensures the existence of a local energy solution $(u,a)$ with pressure $\pi$, defined on $\R^3\times(t_0,t_0+\tau_0)$, evolving from the initial data $(u_0,a_0)$, which is smooth in both space and time and satisfies
\[
\sup_{t_0\le t\le t_0+\tau_0}  \norm{(u,a)(t)}_{L^4_\uloc\times L^4_\uloc} 
< C\de.
\]
Furthermore, by the uniqueness result in Lemma \ref{lem-A.1-BT-SIMA2021}, this solution $(u,a)$ coincides with the one given by Lemma \ref{lem-4.4-BT-SIMA2021}, and hence $(u,a)\in{\bf LE}_q(t_0,t_0+\tau_0)$.

Next, we apply Lemma \ref{lem-5.1-BT-SIMA2021} with the perturbation factor $(u,a)$, again choosing $\de$ sufficiently small to ensure that the time-scale it yields is at least $\tau_0$.
This gives a local energy solution $(w,d)\in{\bf LE}_q(t_0,t_0+\tau_0)$ to the the perturbed MHD equation with initial data $(w_0,d_0)$ and associated pressure $\pi$.
Define $(v_1,b_1) := (u,a) + (w,d)$.
This gives a local energy solution on $\R^3\times(t_0,t_0+\tau_0)$.
To very the local energy inequality for $(v_1,b_1)$, we use approximations $(w^{(n)},d^{(n)})\to(w,d)$, as in the proof of Lemma \ref{lem-4.4-BT-SIMA2021}, and apply the inequality to $(u,a) + (w^{(n)},d^{(n)})$.

Since $(v,b)$ and $(v_1,b_1)$ coincide at $t_0$, and since $(v,b)(t_0)\in E^3\times E^3$ since $E^4_\infty\subset E^3$, Corollary \ref{cor1.8-BT-CPDE2020} implies that $(v,b)(x,t) = (v_1,b_1)(x,t)$ on $\R^3\times(t_0,t_0+\ga)$ for some $\ga>0$.
Thus, we may glue $(v_1,b_1)$ to $(v,b)$ to extend the solution to ${\bf LE}_q(0,t_0+\tau_0)$.
Repeating this procedure $n$ times yields a solution $(v,b)\in{\bf LE}_q(0,t_0+n\tau_0)$.
Taking the limit $n\to\infty$, we obtain the global-in-time local energy solution asserted in Theorem \ref{th4.8-mhd} for $q\ge2$.
\end{proof}

\subsubsection{The case $1\le q<2$}\label{sec-1<=q<2}

Now, we consider the case $1\le q<2$ and look at the localized and regularized MHD equations, \eqref{eq-4.10-BLT-MathAnn2024}.
The following lemma corresponds to \cite[Lemma 4.6]{BLT-MathAnn2024} for the Navier--Stokes equations.

\begin{lemma}\label{lem-4.6-BLT-MathAnn2024}
Let $q\ge1$.
For each $0<\ep<1$ and divergence free $v_0, b_0$ with $\norm{v_0}_{E^2_q}\le B$, $\norm{b_0}_{E^2_q}\le B$, if $0<T<\min\bke{1,\,c\ep^3 B^{-2}}$, we can find a unique solution $(v,b) = (v^\ep,b^\ep)$ to the integral form of \eqref{eq-4.10-BLT-MathAnn2024}
\EQS{\label{eq-4.11-BLT-MathAnn2024}
v(t) &= e^{t\De}v_0 - \int_0^t e^{(t-\tau)\De}\mathbb{P}\nb\cdot\bke{\mathcal{J}_\ep(v)\otimes v\Phi_\ep - \mathcal{J}_\ep(b)\otimes b\Phi_\ep}(\tau)\, d\tau,\\
b(t) &= e^{t\De}b_0 - \int_0^t e^{(t-\tau)\De}\mathbb{P}\nb\cdot\bke{\mathcal{J}_\ep(v)\otimes b\Phi_\ep - \mathcal{J}_\ep(b)\otimes v\Phi_\ep}(\tau)\, d\tau,
}
satisfying
\[
\norm{(v,b)}_{{\bf LE}_q(0,T)} \le 2C_0B,
\]
where $c>0$ and $C_0>1$ are absolute constants.
\end{lemma}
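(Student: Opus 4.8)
The plan is to run a Picard iteration scheme for the system \eqref{eq-4.11-BLT-MathAnn2024} in the Banach space $X_T$ defined by the norm $\norm{(v,b)}_{X_T} := \norm{(v,b)}_{{\bf LE}_q(0,T)} = \norm{(v,b)}_{E^{\infty,2}_{T,q}\times E^{\infty,2}_{T,q}} + \norm{(\nabla v,\nabla b)}_{E^{2,2}_{T,q}\times E^{2,2}_{T,q}}$, exactly as in the proof of \cite[Lemma 4.6]{BLT-MathAnn2024}, with the only new feature being the coupling between $v$ and $b$ through the bilinear terms $\mathcal{J}_\ep(v)\otimes v\Phi_\ep$, $\mathcal{J}_\ep(b)\otimes b\Phi_\ep$, $\mathcal{J}_\ep(v)\otimes b\Phi_\ep$, $\mathcal{J}_\ep(b)\otimes v\Phi_\ep$. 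First I would record the linear estimate: by \cite[Lemma 2.1]{BLT-MathAnn2024} (or the relevant $E^2_q$-semigroup bound used in \cite{BLT-MathAnn2024}) one has $\norm{(e^{t\De}v_0, e^{t\De}b_0)}_{{\bf LE}_q(0,T)} \le C_0 B$ for $0<T<1$, with $C_0>1$ absolute. This gives the first iterate $(v^{(1)},b^{(1)}) = (e^{t\De}v_0, e^{t\De}b_0)$ the bound $C_0 B$.

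Next I would establish the bilinear estimate. Because $\Phi_\ep$ is compactly supported (in $B_{3/2/\ep}(0)$) and $\mathcal{J}_\ep$ is convolution with a mollifier of width $\ep$, one has the crude bound $\norm{\mathcal{J}_\ep(f)}_{L^\infty} \lec_\ep \norm{f}_{E^2_q}$ for each fixed $\ep$, so that the nonlinear term behaves like $\nabla\cdot(\text{bounded}\cdot f)$. The key is that the cutoff $\Phi_\ep$ localizes the product, and one controls $\int_0^t e^{(t-\tau)\De}\mathbb P\nabla\cdot(g\,\Phi_\ep)\,d\tau$ in ${\bf LE}_q$ by $\| g\Phi_\ep\|$ in a suitable parabolic amalgam space, picking up a factor $T^{1/4}$ (or more precisely $c^{-1/3}\ep^{-1}T^{1/3}$-type) from the singular kernel, together with the $\ell^q$-summability which is preserved because $\Phi_\ep$ restricts the spatial extent. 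Concretely, I expect an estimate of the shape
\[
\norm{B((v,b),(u,a))}_{{\bf LE}_q(0,T)} \le C_1 \ep^{-\al} T^{\be} \norm{(v,b)}_{{\bf LE}_q(0,T)} \norm{(u,a)}_{{\bf LE}_q(0,T)}
\]
for suitable absolute $C_1, \al, \be>0$ (one expects $\be = 1/4$ or so after interpolating $E^{\infty,2}_{T,q}$ and $E^{2,2}_{T,q}$ via Gagliardo--Nirenberg to bound $\| |v|\,|u| \|$ in the parabolic $L^{2}_t$-type norm needed for the $\nabla\cdot$-smoothing). Here all four bilinear pieces — the $vv$, $bb$, $vb$, $bv$ combinations — are estimated identically since each has the form $\mathcal{J}_\ep(\cdot)\otimes(\cdot)\Phi_\ep$ with factors in the same space; the $b$-components simply sit in the same ${\bf LE}_q$ norm as the $v$-components. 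This is where I'd just cite the scalar computation of \cite[Lemma 4.6]{BLT-MathAnn2024} and remark that the coupled structure changes nothing.

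With the linear bound $\le C_0 B$ and the bilinear bound $\le C_1\ep^{-\al}T^\be\|\cdot\|\|\cdot\|$ in hand, the standard Picard contraction lemma (e.g. \cite[Lemma 2.1 or its analogue]{Tsai-book}) applies: choosing $T$ small enough that $4 C_0 C_1 \ep^{-\al} T^{\be} B < 1$, i.e. $T < \min(1, c\,\ep^{3}B^{-2})$ for an appropriate absolute constant $c$ (matching the $\al,\be$ that come out of the bilinear estimate — the stated $\ep^3 B^{-2}$ scaling indicates $\al/\be = 3$ and the quadratic dependence on $B$), the iteration converges to a unique fixed point $(v,b)\in X_T$ with $\norm{(v,b)}_{{\bf LE}_q(0,T)} \le 2C_0 B$. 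Uniqueness in this ball follows from the same bilinear estimate applied to the difference of two solutions.

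The main obstacle I anticipate is the bilinear estimate in the $E^{\infty,2}_{T,q}$ and $E^{2,2}_{T,q}$ norms simultaneously: one must verify that the Oseen-kernel (heat-kernel) smoothing $\int_0^t e^{(t-\tau)\De}\mathbb P\nabla\cdot(\,\cdot\,)d\tau$ maps the relevant parabolic amalgam space into ${\bf LE}_q$ with the $\ell^{q}$-summation surviving — this is delicate because $q$ may be less than $2$, so Minkowski's inequality goes the "wrong" way and one must use \eqref{ineq:embedding.parabolic.space2} and the off-diagonal decay of the heat kernel carefully (the kernel's spatial localization is what lets the lattice sum close, aided by $\Phi_\ep$). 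Since the paper explicitly says this is an adaptation of \cite[Lemma 4.6]{BLT-MathAnn2024}, I would structure the write-up as: (i) state the linear estimate citing \cite[Lemma 2.1]{BLT-MathAnn2024}; (ii) state the bilinear estimate, noting the four terms are handled as in \cite[Lemma 4.6]{BLT-MathAnn2024} with the $b$-terms identical to the $v$-terms; (iii) invoke Picard contraction and read off the smallness condition on $T$; (iv) note uniqueness. I would keep the technical amalgam-space kernel estimates by reference rather than reproducing them.
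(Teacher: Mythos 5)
Your overall strategy---Picard contraction in the ${\bf LE}_q(0,T)$ norm, a linear estimate of size $C_0B$, a bilinear estimate with a factor $\ep^{-\al}T^{\be}$, and smallness of $T$ to close---is exactly the paper's. The one place where you diverge is the mechanism of the bilinear estimate, and the paper's route is simpler than what you sketch. You propose to gain the time factor by Gagliardo--Nirenberg interpolation between $E^{\infty,2}_{T,q}$ and $E^{2,2}_{T,q}$ and guess exponents like $T^{1/4}$ or $\ep^{-1}T^{1/3}$; the paper instead applies the Stokes/heat energy estimate \cite[Lemma 2.9]{BLT-MathAnn2024} to bound the Duhamel term by $\norm{\mathcal J_\ep(v)\otimes v\Phi_\ep}_{E^{2,2}_{T,q}}$ (and the three analogous products), then simply puts the mollified factor in $L^\infty$ via the crude bound $\norm{\mathcal J_\ep(f)}_{L^\infty}\lec \ep^{-3/2}\norm{f}_{E^{\infty,2}_{T,q}}$ and uses H\"older in time, $\norm{v}_{E^{2,2}_{T,q}}\le \sqrt{T}\,\norm{v}_{E^{\infty,2}_{T,q}}$. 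This gives $\al=3/2$, $\be=1/2$ directly, which is what produces $T<c\ep^3B^{-2}$ (your reverse-engineered constraints $\al/\be=3$ and $1/\be=2$ are consistent with this, but your trial exponents are not, and no interpolation is needed). Two smaller points: the $\ell^q$-summability issue you flag as the main obstacle is entirely absorbed into the cited \cite[Lemma 2.9]{BLT-MathAnn2024}, so there is nothing left to check there; and $\Phi_\ep$ plays no role in closing the lattice sum in this lemma (it is just bounded by $1$ and discarded in the estimate---its purpose is to make the nonlinearity globally $L^2$-valued so the pressure can be recovered later), so you should not lean on it for the summability. With the bilinear estimate executed as above, your contraction and uniqueness steps go through verbatim and match the paper.
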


\begin{proof}
The proof of the lemma is an adaption of the proof of \cite[Lemma 4.6]{BLT-MathAnn2024} for the Navier--Stokes equations to the MHD equations.

Let $\Psi(v,b)$ denote the mapping defined by the right-hand side of \eqref{eq-4.11-BLT-MathAnn2024} for $(v,b)\in{\bf LE}_q(0,T)$.
By \cite[Lemma 2.9]{BLT-MathAnn2024} and the assumption $T\le1$, we obtain the estimate
\EQN{
\norm{\Psi(v,b)}_{{\bf LE}_q(0,T)} 
&\lec \norm{v_0}_{E^2_q}  + \norm{\mathcal{J}_\ep(v)\otimes v\Phi_\ep}_{E_{T,q}^{2,2}} + \norm{\mathcal{J}_\ep(b)\otimes b\Phi_\ep}_{E_{T,q}^{2,2}} \\
&\quad + \norm{b_0}_{E^2_q} + \norm{\mathcal{J}_\ep(v)\otimes b\Phi_\ep}_{E_{T,q}^{2,2}} + \norm{\mathcal{J}_\ep(b)\otimes v\Phi_\ep}_{E_{T,q}^{2,2}}\\
&\lec \norm{v_0}_{E^2_q}  + \norm{\mathcal{J}_\ep(v)}_{L^\infty(0,T;L^\infty(\R^3))} \norm{v}_{E_{T,q}^{2,2}} + \norm{\mathcal{J}_\ep(b)}_{L^\infty(0,T;L^\infty(\R^3))} \norm{b}_{E_{T,q}^{2,2}}\\
&\quad + \norm{b_0}_{E^2_q}  + \norm{\mathcal{J}_\ep(v)}_{L^\infty(0,T;L^\infty(\R^3))} \norm{b}_{E_{T,q}^{2,2}} + \norm{\mathcal{J}_\ep(b)}_{L^\infty(0,T;L^\infty(\R^3))} \norm{v}_{E_{T,q}^{2,2}}\\
&\lec \norm{v_0}_{E^2_q} + \norm{b_0}_{E^2_q} + \ep^{-\frac32} \sqrt{T} \bke{\norm{v}_{E_{T,q}^{\infty,2}}^2 + \norm{b}_{E_{T,q}^{\infty,2}}^2 }.
}
Therefore, for some constants $C_0, C_1>0$,
\[
\norm{\Psi(v,b)}_{{\bf LE}_q(0,T)} 
\le C_0\norm{(v_0,b_0)}_{E^2_q\times E^2_q} + C_1\ep^{-\frac32} \sqrt{T} \norm{(v,b)}_{{\bf LE}_q(0,T)}^2,
\]
To estimate the difference, let $(v,b), (u,a)\in {\bf LE}_q(0,T)$. Then
\EQN{
&\norm{\Psi(v,b) - \Psi(u,a)}_{{\bf LE}_q(0,T)}\\
&\qquad \le C_1\ep^{-\frac32}\sqrt{T}\bke{\norm{(v,b)}_{{\bf LE}_q(0,T)} + \norm{(u,a)}_{{\bf LE}_q(0,T)}} \norm{(v,b) - (u,a)}_{{\bf LE}_q(0,T)}.
}
Applying the Picard contraction principle, we see that if the time $T$ satisfies $T<\frac{\ep^3}{64(C_0C_1B)^2} = c\ep^3B^{-2}$, then $\Psi$ has a unique fixed point $(v,b)\in {\bf LE}_q(0,T)$ with $\norm{(v,b)}_{{\bf LE}_q(0,T)}\le 2C_0B$, solving the integral system \eqref{eq-4.11-BLT-MathAnn2024}.
\end{proof}

\begin{lemma}\label{lem-4.7-BLT-MathAnn2024}
Let $v_0,b_0\in E^2_q$, $q\ge1$, be divergence free.
For each $\ep\in(0,1)$, we can find $(v^\ep,b^\ep)$ in ${\bf LE}_q(0,T)$ and $\pi^\ep$ in $L^\infty(0,T;L^2(\R^3))$ for some positive $T=T(\ep,\norm{(v_0,b_0)}_{E^2_q\times E^2_q})$ which solve the localized and regularized MHD equations, \eqref{eq-4.10-BLT-MathAnn2024}, in the sense of distributions and $(v^\ep,b^\ep)(t) \to (v_0,b_0)$ in $L^2(E)\times L^2(E)$ as $t\to0^+$ for any compact subset $E$ of $\R^3$.
\end{lemma}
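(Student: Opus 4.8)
The plan is to transcribe the proof of \cite[Lemma 4.7]{BLT-MathAnn2024} to the coupled MHD setting, proceeding in four steps. First I would apply Lemma \ref{lem-4.6-BLT-MathAnn2024} with $B:=\max\bke{\norm{v_0}_{E^2_q},\norm{b_0}_{E^2_q}}$ and any $T\in\bke{0,\min(1,c\ep^3B^{-2})}$ to obtain the unique fixed point $(v^\ep,b^\ep)\in{\bf LE}_q(0,T)$ of the map $\Psi$ given by the right-hand side of \eqref{eq-4.11-BLT-MathAnn2024}, with $\norm{(v^\ep,b^\ep)}_{{\bf LE}_q(0,T)}\le 2C_0B$. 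Since $\mathbb{P}\nb\cdot(\,\cdot\,)$, $e^{t\De}v_0$, and $e^{t\De}b_0$ are divergence free, so are $v^\ep(t)$ and $b^\ep(t)$; moreover ${\bf LE}_q(0,T)\subset L^\infty(0,T;E^2_q\times E^2_q)\subset L^\infty(0,T;L^2_\uloc\times L^2_\uloc)$, so on every compact $E\subset\R^3$ we have $v^\ep,b^\ep\in L^\infty(0,T;L^2(E))$.

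Next I would recover the pressure. Because $\Phi_\ep$ has compact support and $\norm{\mathcal{J}_\ep(w)}_{L^\infty}\lesssim\ep^{-3/2}\norm{w}_{L^2_\uloc}$, the nonlinear tensors
\[
G^\ep=\mathcal{J}_\ep(v^\ep)\ot(v^\ep\Phi_\ep)-\mathcal{J}_\ep(b^\ep)\ot(b^\ep\Phi_\ep),\qquad
H^\ep=\mathcal{J}_\ep(v^\ep)\ot(b^\ep\Phi_\ep)-\mathcal{J}_\ep(b^\ep)\ot(v^\ep\Phi_\ep)
\]
lie in $L^\infty(0,T;L^2(\R^3))$, with support uniformly compact in $t$. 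I would set $\pi^\ep:=\De^{-1}\div\div\,G^\ep$, a second-order Riesz transform applied to $G^\ep$; by Calder\'on--Zygmund theory $\pi^\ep\in L^\infty(0,T;L^2(\R^3))$, as required.

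Third, I would verify the distributional equations. The field $w^\ep:=v^\ep-e^{t\De}v_0$ is the mild solution of the Stokes system $\pd_t w-\De w+\nb p=-\div G^\ep$, $\nb\cdot w=0$, $w|_{t=0}=0$, and $G^\ep\in L^\infty_tL^2_x$ suffices to run the standard argument (as in \cite[Section 3]{BLT-MathAnn2024} and \cite{Tsai-book}) identifying $w^\ep$ as a distributional solution with pressure $\pi^\ep$; adding back the heat-flow piece $e^{t\De}v_0$, which solves $\pd_t u=\De u$ distributionally, shows that $(v^\ep,\pi^\ep)$ solves $\eqref{eq-4.10-BLT-MathAnn2024}_1$ in the sense of distributions. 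The analogous Stokes/heat decomposition applied to $H^\ep$ yields $\eqref{eq-4.10-BLT-MathAnn2024}_2$ for $b^\ep$ (up to a gradient term absorbed into the usual auxiliary scalar), together with the divergence-free constraints established in the first step.

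Finally, for the continuity at $t=0$, write $v^\ep(t)-v_0=(e^{t\De}v_0-v_0)-\mathcal{B}^\ep_v(t)$, where $\mathcal{B}^\ep_v$ is the Duhamel term in \eqref{eq-4.11-BLT-MathAnn2024}. The first bracket tends to $0$ in $L^2(E)$ as $t\to0^+$ by continuity of the heat semigroup on $L^2_\loc$ (see \cite{MaTe}). For the Duhamel term, the estimate used to prove Lemma \ref{lem-4.6-BLT-MathAnn2024} (via \cite[Lemma 2.9]{BLT-MathAnn2024}) gives $\norm{\mathcal{B}^\ep_v}_{E^{\infty,2}_{t,q}}\lesssim\ep^{-3/2}\sqrt{t}\,\norm{(v^\ep,b^\ep)}_{{\bf LE}_q(0,T)}^2\to0$ as $t\to0^+$; covering $E$ by finitely many unit balls $B_1(k)$ then shows $\sup_{0<\tau<t}\norm{\mathcal{B}^\ep_v(\tau)}_{L^2(E)}\to0$, hence $\norm{v^\ep(t)-v_0}_{L^2(E)}\to0$, and the $b$-component is identical. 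The main obstacle is the third step: carefully passing from the integral identity to the PDE and identifying the pressure in a way consistent with the Leray projection in \eqref{eq-4.11-BLT-MathAnn2024}, while using only the low time-regularity $L^\infty_tL^2_x$ of $G^\ep$ and $H^\ep$; everything else is a routine transcription of the Navier--Stokes argument to the two coupled equations.
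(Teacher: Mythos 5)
Your proposal follows essentially the same route as the paper's proof: apply Lemma \ref{lem-4.6-BLT-MathAnn2024} to produce the mild solution in ${\bf LE}_q(0,T)$, define $\pi^\ep$ by a second-order Riesz transform applied to the compactly supported, $L^\infty_t L^2_x$ nonlinear tensor, identify the distributional equations by viewing $v^\ep-e^{t\De}v_0$ as a Stokes solution and $b^\ep-e^{t\De}b_0$ as a forced heat solution before adding back the caloric extensions, and deduce convergence to the data from the $O(\ep^{-3/2}\sqrt{t})$ Duhamel bound together with continuity of $e^{t\De}$ on $L^2_\loc$. The only nit is the sign in your pressure formula: taking the divergence of the momentum equation yields $\pi^\ep=(-\De)^{-1}\div\div\,G^\ep$, as in the paper.
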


\begin{proof}
The proof of the lemma is an adaption of the proof of \cite[Lemma 4.7]{BLT-MathAnn2024} for the Navier--Stokes equations to the MHD equations.
We provide the corresponding details by following the same logic used in the proof of \cite[Lemma 3.4]{KT-CMP2020} for the Navier--Stokes equations, adapting it from the $L^2_\uloc$ framework to the $E^2_q$ setting, and from the Navier--Stokes equations to the MHD equations.

By Lemma \ref{lem-4.6-BLT-MathAnn2024}, there is a mild solution $(v^\ep,b^\ep)\in{\bf LE}_q(0,T)$ to \eqref{eq-4.11-BLT-MathAnn2024} for $T=T(\ep,\norm{(v_0,b_0)}_{E^2_q\times E^2_q})$.
Apparently,
\EQN{
\norm{v^\ep-e^{t\De}v_0}_{E^{\infty,2}_{t',q}}
 &= \norm{\int_0^t e^{(t-\tau)\De}\mathbb{P}\nb\cdot\bke{\mathcal{J}_\ep(v)\otimes v\Phi_\ep - \mathcal{J}_\ep(b)\otimes b\Phi_\ep}(\tau)\, d\tau}_{E^{\infty,2}_{t',q}}\\
 &\lec \norm{\mathcal{J}_\ep(v)\otimes v\Phi_\ep }_{E^{2,2}_{t',q}} + \norm{\mathcal{J}_\ep(b)\otimes b\Phi_\ep }_{E^{2,2}_{t',q}}\\
 &\lec \ep^{-3/2}\sqrt{t'}\bke{\norm{v}_{E^{\infty,2}_{t',q}}^2 + \norm{b}_{E^{\infty,2}_{t',q}}^2},
}
and 
\EQN{
\norm{b^\ep-e^{t\De}b_0}_{E^{\infty,2}_{t',q}}
 &= \norm{\int_0^t e^{(t-\tau)\De}\mathbb{P}\nb\cdot\bke{\mathcal{J}_\ep(v)\otimes b\Phi_\ep - \mathcal{J}_\ep(b)\otimes v\Phi_\ep}(\tau)\, d\tau}_{E^{\infty,2}_{t',q}}\\
 &\lec \norm{\mathcal{J}_\ep(v)\otimes b\Phi_\ep }_{E^{2,2}_{t',q}} + \norm{\mathcal{J}_\ep(b)\otimes v\Phi_\ep }_{E^{2,2}_{t',q}}\\
 &\lec \ep^{-3/2}\sqrt{t'}\norm{vb}_{E^{\infty,2}_{t',q}}
 \lec \ep^{-3/2}\sqrt{t'}\bke{\norm{v}_{E^{\infty,2}_{t',q}}^2 + \norm{b}_{E^{\infty,2}_{t',q}}^2},
}
where we have used \cite[Lemma 2.9]{BLT-MathAnn2024} and assumed $t'\le T\le 1$.
Also, for any compact subset $E$ of $\R^3$, we have $\norm{e^{t\De}v_0 - v_0}_{L^2(E)}$, $\norm{e^{t\De}b_0 - b_0}_{L^2(E)}\to0$ as $t$ goes to $0$ by Legesgue's convergence theorem.
Then, it follows that $\lim_{t\to0^+}\norm{v^\ep(t)-v_0}_{L^2(E)}=0$ and $\lim_{t\to0^+}\norm{b^\ep(t)-b_0}_{L^2(E)}=0$ for any compact subset $E$ of $\R^3$.

Note that both $e^{t\De}v_0$ and $e^{t\De}b_0$, with $v_0,b_0\in E^2_q$, solve the homogeneous heat equation in the distribution sense. 
Also, using $\nb\cdot v_0 = \nb\cdot b_0 = 0$, we can easily see that $\nb\cdot e^{t\De}v_0 = \nb\cdot e^{t\De}b_0 = 0$.

On the other hand, $\mathcal{J}_\ep(v^\ep), \mathcal{J}_\ep(b^\ep)\in L^\infty(\R^3\times[0,T])$ and $(v^\ep,b^\ep)\in{\bf LE}_q(0,T)$ imply
\[
\mathcal{J}_\ep(v^\ep)\otimes v^\ep\Phi_\ep,\quad
 \mathcal{J}_\ep(b^\ep)\otimes b^\ep\Phi_\ep,\quad 
  \mathcal{J}_\ep(v^\ep)\otimes b^\ep\Phi_\ep,\quad  
  \mathcal{J}_\ep(b^\ep)\otimes v^\ep\Phi_\ep\in L^\infty(0,T;L^2(\R^3)).
\]
Hence by the classical theory, $u^\ep = v^\ep - e^{t\De}v_0$ and $\pi^\ep$ defined by 
\[
\pi^\ep = (-\De)^{-1}\pd_i\pd_j\bkt{\bke{\mathcal{J}_\ep(v^\ep_i)v^\ep_j - \mathcal{J}_\ep(b^\ep_i)b^\ep_j}\Phi_\ep}
\in L^\infty(0,T;L^2(\R^3)),
\]
solves the Stokes system with the source term $\nb\cdot\bkt{\bke{\mathcal{J}_\ep(v^\ep)\otimes v^\ep - \mathcal{J}_\ep(b^\ep)\otimes b^\ep}\Phi_\ep}$ in the distribution sense.
Moreover, $a^\ep = b^\ep - e^{t\De}b_0$ solves the forced heat equation with the forcing  $\nb\cdot\bkt{\bke{\mathcal{J}_\ep(v^\ep)\otimes b^\ep - \mathcal{J}_\ep(b^\ep)\otimes v^\ep}\Phi_\ep}$ in the distribution sense.
By adding the homogeneous heat equation for $e^{t\De}v_0$ with $\nb\cdot e^{t\De}v_0 = 0$ and the Stokes system for $u^\ep$ and $\pi^\ep$, $v^\ep = e^{t\De}v_0 + u^\ep$ satisfies
\[
\pd_tv^\ep - \De v^\ep + \bke{\mathcal{J}_\ep(v^\ep)\cdot\nb}(v^\ep\Phi_\ep) - \bke{\mathcal{J}_\ep(b^\ep)\cdot\nb}(b^\ep\Phi_\ep) + \nb\pi^\ep = 0
\]
in the sense of distribution.
Moreover, by adding the homogeneous heat equation for $e^{t\De}b_0$ with $\nb\cdot e^{t\De}v_0 = 0$ and the forced heat equation for $a^\ep$, $b^\ep = e^{t\De}b_0 + a^\ep$ satisfies
\[
\pd_tb^\ep - \De b^\ep + \bke{\mathcal{J}_\ep(v^\ep)\cdot\nb}(b^\ep\Phi_\ep) - \bke{\mathcal{J}_\ep(b^\ep)\cdot\nb}(v^\ep\Phi_\ep)= 0
\]
in the sense of distribution. 
\end{proof}

We next show global existence for the localized and regularized MHD equations, \eqref{eq-4.10-BLT-MathAnn2024}.

\begin{lemma}\label{lem-4.8-BLT-MathAnn2024}
Assume $v_0,b_0\in E^2_q$, $1\le q<2$, are divergence free, and fix $\ep\in(0,1)$.
Then, there exists a global solution $(v^\ep,b^\ep,\pi^\ep)$ to the localized and regularized MHD equations, \eqref{eq-4.10-BLT-MathAnn2024}, such that $(v^\ep,b^\ep)\in {\bf LE}_q(0,T)$ for any $T<\infty$, and $(v^\ep,b^\ep,\pi^\ep)$ satisfies the a priori bounds \eqref{eq-3.5-BT-SIMA2021} and \eqref{eq-3.6-BT-SIMA2021-q<2} for all $R=n\in\NN$ up to time $T_n = \la_0n^2\min\bke{1,\, n^2(c_3\norm{(v_0,b_0)}_{E^2_q\times E^2_q}^2)^{-2}}$ for some $c_3>0$, where $\la_0$ is given in Lemma \ref{lem3.1-BT-SIMA2021}.
\end{lemma}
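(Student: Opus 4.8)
The plan is to prove Lemma \ref{lem-4.8-BLT-MathAnn2024} by combining the local-in-time existence from Lemma \ref{lem-4.6-BLT-MathAnn2024} (equivalently Lemma \ref{lem-4.7-BLT-MathAnn2024}) with an $\ep$-independent a priori bound obtained by adapting Lemma \ref{lem3.1-BT-SIMA2021} to the regularized system \eqref{eq-4.10-BLT-MathAnn2024}, and then iterating in time. First I would fix $\ep \in (0,1)$ and invoke Lemma \ref{lem-4.7-BLT-MathAnn2024} to obtain a solution $(v^\ep, b^\ep, \pi^\ep)$ on a maximal time interval $[0, T^*_\ep)$ with $(v^\ep, b^\ep) \in {\bf LE}_q(0,T)$ for every $T < T^*_\ep$; by the usual blow-up alternative, if $T^*_\ep < \infty$ then $\norm{(v^\ep,b^\ep)}_{{\bf LE}_q(0,T)} \to \infty$ as $T \uparrow T^*_\ep$. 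The goal is then to rule this out up to any prescribed finite time by showing the ${\bf LE}_q$-norm cannot blow up, using an a priori estimate that is uniform in $\ep$.

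The heart of the argument is a regularized analogue of the local energy estimate in Lemma \ref{lem3.1-BT-SIMA2021}. Since $(v^\ep, b^\ep, \pi^\ep)$ is smooth and solves \eqref{eq-4.10-BLT-MathAnn2024}, one has an exact local energy identity with test function $\phi(x-\ka) = \phi_0((x-\ka)/R)$; the modifications relative to the unregularized case are that the advecting fields are $\mathcal{J}_\ep(v^\ep)$ and $\mathcal{J}_\ep(b^\ep)$ rather than $v^\ep, b^\ep$, and that the nonlinearity carries the cutoff $\Phi_\ep$. The crucial point — exactly as in \cite[proof of Lemma 4.1]{BT-SIMA2021} and as reflected in Lemma \ref{lem-4.1-BT-SIMA2021} above — is that the mollifier is an $L^1$-contraction on each $L^p$ and commutes with spatial translations, so $\norm{\mathcal{J}_\ep(v^\ep)}_{L^3(B_{2R}(\ka))} \lesssim \norm{v^\ep}_{L^3(B_{3R}(\ka))}$ with constants independent of $\ep$; similarly $|\Phi_\ep| \le 1$. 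Consequently, every term in the regularized local energy inequality is dominated, pointwise in $\ka \in R\ZZ^3$, by the same right-hand sides appearing in \eqref{eq-3.12-BT-SIMA2021} (with possibly enlarged but still $R$- and $\ep$-independent neighbourhood radii and finitely many extra lattice shifts from the mollification), namely the initial-data term, a linear term $C\la \sum e_{R,\la}(\ka')$, a cubic term $C\la^{1/4}R^{-1/2}\sum e_{R,\la}(\ka')^{3/2}$, and a convolution term $C\la^{1/4}R^{-1/2}(\overline K * e_{R,\la})(\ka)^{3/2}$ handling the far-field pressure via \eqref{eq-3.11-BT-SIMA2021}. The pressure $\pi^\ep$ admits the same local expansion \eqref{eq-pi-formula-mhd} with $v\otimes v - b\otimes b$ replaced by $(\mathcal{J}_\ep(v^\ep)\otimes v^\ep - \mathcal{J}_\ep(b^\ep)\otimes b^\ep)\Phi_\ep$, which again is controlled by $v^\ep \otimes v^\ep$ and $b^\ep \otimes b^\ep$ up to $\ep$-uniform constants after applying the mollifier contraction and Calderón--Zygmund theory. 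Raising to the power $q/2$ and summing over $R\ZZ^3$ — using $(\sum a_j)^p \le n^p\sum a_j^p$ for $p\ge 1$ when $q\ge 2$ is not needed here since $q<2$, so instead using $(\sum a_j)^p \le \sum a_j^p$ for $0<p<1$ as in the $1\le q<2$ part of Lemma \ref{lem3.1-BT-SIMA2021} — and invoking Young's convolution inequality with $\norm{\overline K}_{\ell^1(R\ZZ^3)}$ bounded independently of $R$, one arrives at an inequality of the form \eqref{Eq.est} for $E_{R,q,\la}^\ep$. Since $E_{R,q,\la}^\ep$ is finite (the regularized solution lies in ${\bf LE}_q$ on its interval of existence) and continuous in $\la$, the same continuity-in-$\la$ bootstrap as on \cite[p.~2005]{BT-SIMA2021} yields $E_{R,q,\la}^\ep \le 2 C_2^q (A_{0,q}(R))^{q/2}$ for $\la \le \la_R$, i.e. the bound \eqref{eq-3.5-BT-SIMA2021}, with constants $\la_0, C_1$ independent of $\ep$, $R$, and $q$; the companion bound \eqref{eq-3.6-BT-SIMA2021-q<2} for $|v^\ep|^{10/3} + |b^\ep|^{10/3} + |\pi^\ep - c|^{5/3}$ follows by the same Gagliardo--Nirenberg and Calderón--Zygmund steps as in the proof of Lemma \ref{lem3.1-BT-SIMA2021}.

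With the $\ep$-uniform a priori bound \eqref{eq-3.5-BT-SIMA2021} in hand (applied with $R=1$, say, to control $\norm{(v^\ep,b^\ep)}_{{\bf LE}_q(0,\la_1)}$ for the fixed time-step $\la_1$ it produces, which depends only on $\norm{(v_0,b_0)}_{E^2_q\times E^2_q}$ and not on $\ep$), I would run the standard continuation argument: on each window $[t_j, t_j + \Delta t]$ with $\Delta t$ the $\ep$-independent local existence time from Lemma \ref{lem-4.6-BLT-MathAnn2024} corresponding to a bound of size $2C_0 \norm{(v_0,b_0)}_{E^2_q\times E^2_q}$ on the data, the solution exists and, by the a priori estimate (which controls the $E^2_q$-norm at the endpoint uniformly), can be restarted with the same data-size bound. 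Since the per-step length $\Delta t$ does not shrink, finitely many steps cover any $[0,T]$, giving $(v^\ep, b^\ep) \in {\bf LE}_q(0,T)$ for all $T<\infty$, and the bounds \eqref{eq-3.5-BT-SIMA2021}, \eqref{eq-3.6-BT-SIMA2021-q<2} hold for all $R = n \in \NN$ up to $T_n = \la_R R^2 = \la_0 n^2 \min(1, n^2 (c_3 \norm{(v_0,b_0)}_{E^2_q\times E^2_q}^2)^{-2})$, where $c_3$ absorbs the constant in the estimate $A_{0,q}(n) \le c_3 n^{3-6/q}\norm{(v_0,b_0)}_{E^2_q\times E^2_q}^2$ of the type \eqref{eq-3.17-BT-SIMA2021} (note $n^{3-6/q} \le 1$ since $q<2$ and $n\ge 1$, which is why the power of $n$ collapses to the stated form). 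The main obstacle I anticipate is purely bookkeeping rather than conceptual: one must check carefully that the mollification $\mathcal{J}_\ep$ and the spatial cutoff $\Phi_\ep$ do not introduce any $\ep$-dependence into the constants in the local energy inequality — in particular that $\mathcal{J}_\ep$ acting on a function supported near $B_{2R}(\ka)$ produces something supported in $B_{2R+\ep}(\ka) \subset B_{3R}(\ka)$ (using $\ep < 1 \le R$ when $R\ge 1$, or simply enlarging the neighbourhoods by one lattice unit), so that the finitely-many-shifts structure and the $R$-uniform $\ell^1$-norm of $\overline K$ are preserved; once this is confirmed, the estimates of Lemma \ref{lem3.1-BT-SIMA2021} transfer verbatim.
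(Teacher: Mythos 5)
Your proposal is correct and follows essentially the same route as the paper: an $\ep$-uniform adaptation of the a priori bound of Lemma \ref{lem3.1-BT-SIMA2021} to the regularized system (the mollifier and cutoff costing nothing in the constants, with the $q<2$ branch handled via $(\sum a_j)^p\le\sum a_j^p$), combined with the local existence of Lemma \ref{lem-4.7-BLT-MathAnn2024} and iteration in time up to $T_n$ for each $R=n$. One small mislabel: the local existence time of Lemma \ref{lem-4.6-BLT-MathAnn2024} is $\min(1,c\ep^3B^{-2})$ and hence is not $\ep$-independent, but since $\ep$ is fixed throughout and the a priori bound keeps the restart data size uniform along the iteration, the step length is uniform in the iteration index and the continuation argument goes through exactly as in the paper.
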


\begin{proof}
The proof of the lemma is an adaption of the proof of \cite[Lemma 4.8]{BLT-MathAnn2024} for the Navier--Stokes equations to the MHD equations.

We set the radius $R=n\in\NN$. 
By \eqref{eq-3.17-BT-SIMA2021}, we have the bound $A_{0,q}(n)\le c_3 (\norm{v_0}_{E^2_q}^2 + \norm{b_0}_{E^2_q}^2)$ for all $n\in\NN$.
Define
\[
T_n = \la_0n^2\min\bket{1, n^2\bke{c_3(\norm{v_0}_{E^2_q}^2 + \norm{b_0}_{E^2_q}^2)}^{-2}}
\le \la_n n^2,
\]
where the constants $\la_0$ and $\la_n$ are as in Lemma \ref{lem3.1-BT-SIMA2021}.
Note that $T_n$ is increasing and $T_n\to\infty$ as $n\to\infty$.
Now, by the same argument used in the proof of Lemma \ref{lem3.1-BT-SIMA2021},
if a solution $(v^\ep,b^\ep,\pi^\ep)$ of \eqref{eq-4.10-BLT-MathAnn2024} satisfies $(v^\ep,b^\ep)\in{\bf LE}_q(0,T)$, then it satisfies the a priori bounds \eqref{eq-3.5-BT-SIMA2021} and \eqref{eq-3.6-BT-SIMA2021-q<2} on the interval $[0,\min(T,T_n)]$ with radius $R=n$.

Since the system \eqref{eq-4.10-BLT-MathAnn2024} is a coupled system of inhomogeneous Stokes systems with localized and regularized forcing, standard theory guarantees the existence of unique global solution $(v^\ep,b^\ep,\pi^\ep)$.
By uniqueness, this solution agrees with the ${\bf LE}_q$-solution constructed in Lemma \ref{lem-4.7-BLT-MathAnn2024}, and thus $(v^\ep,b^\ep)\in{\bf LE}_q(0,\tau)$ for some $\tau=\tau(\ep,\norm{(v_0,b_0)}_{E^2_q\times E^2_q})>0$.
Fix $n\in\NN$.
Applying \eqref{eq-3.5-BT-SIMA2021} with $R=n$, we obtain $\norm{(v^\ep,b^\ep)(\tau)}_{E^2_q\times E^2_q}\le C(n) \norm{(v_0,b_0)}_{E^2_q\times E^2_q}$.
Then, by Lemma \ref{lem-4.7-BLT-MathAnn2024}, there exists an ${\bf LE}_q$-solution on $(\tau,\tau+\tau_1)$ for some $\tau_1 = \tau_1(\ep,C(n)\norm{(v_0,b_0)}_{E^2_q\times E^2_q})>0$.
By uniqueness, this solution coincides with $(v^\ep,b^\ep)$, and we conclude that $(v^\ep,b^\ep)\in{\bf LE}_q(0,\tau+\tau_1)$, with the a priori bound \eqref{eq-3.5-BT-SIMA2021} valid up to time $\tau+\tau_1$.
This argument can be iterated: by repeatedly extending the solution, we obtain $(v^\ep,b^\ep)\in {\bf LE}_q(0,\tau+k\tau_1)$ for $k\in\NN$, until $\tau+k\tau_1\ge T_n$.
Thus, for each $n\in\NN$, we obtain $(v^\ep,b^\ep)\in {\bf LE}_q(0,T_n)$, with the a priori bound \eqref{eq-3.5-BT-SIMA2021} holding for $R=n$ up to time $T_n$.
Since $T_n\to\infty$, the lemma follows.
\end{proof}

\begin{proof}[Proof of Theorem \ref{th4.8-mhd} for $1\le q<2$]
The proof of Theorem \ref{th4.8-mhd} for $1\le q<2$ is an adaption of the proof of \cite[Theorem 1.4]{BLT-MathAnn2024} for the Navier--Stokes equations to the MHD equations.
We provide the necessary details by following the same stragegy as in the proof of \cite[Theorem 1.5]{BT-CPDE2020} for the Navier--Stokes equations, adapting the argument from the $L^2_\uloc$ framework to the $E^2_q$ setting, and from the Navier--Stokes to the MHD equations.

For $k\in\NN$, let $(v^k,b^k,\pi^k)$ be the solution of the localized, regularized MHD equations, \eqref{eq-4.10-BLT-MathAnn2024}, with $\ep=1/k$, given in Lemma \ref{lem-4.8-BLT-MathAnn2024}.
They share the same a priori bound \eqref{eq-3.5-BT-SIMA2021} for $R=n$ up to time $T_n$, thus
\[
\sup_{k\in\NN} \norm{(v^k,b^k)}_{{\bf LE}_q(0,T_n)}<\infty,\qquad
\forall n\in\NN.
\]
Using this a priori bound, we now construct the desired global solutions as the limit of $v^k$ defined in $(0,T_k)$, $T_k\to\infty$ by induction.
Lemma \ref{lem-4.8-BLT-MathAnn2024} implies that $(v^\ep,b^\ep)$ are uniformly bounded in the class from inequalities 
\EQ{\label{eq-4.1-KS-AMST2007}
\sup_{0<t<T_1}\int_{B_1} (|v^k|^2 + |b^k|^2)\, dx
+ \int_0^{T_1}\int_{B_1} (|\nb v^k|^2 + |\nb b^k|^2)\, dxdt
\le cA
}
\EQ{\label{eq-4.2-KS-AMST2007}
\int_0^{T_1}\int_{B_1} (|v^k|^{10/3} + |b^k|^{10/3})\, dxdt \le cA^{5/3},
}
\EQ{\label{eq-4.3-KS-AMST2007}
\int_0^{T_1}\int_{B_1} |\pi^k(x,t) - c_{0,2}^k(t)|^{3/2}\, dxdt \le C(T_1,A),
}
where $c_{0,2}^k(t)$ is the function of $t$ in \eqref{eq-3.6-BT-SIMA2021} with $x_0=0$ and $R=2$,
and 
\EQ{\label{eq-4.4-KS-AMST2007}
\norm{\pd_tv^k}_{\chi_1} + \norm{\pd_tb^k}_{\chi_1} \le C(T_1,A),
}
where $\chi_1$ is the space dual to $L^3(0,T_1; W^{1,3}_0(B_1))$.
Hence there exists a sequence $(v^{1,k}, b^{1,k})$ (where the corresponding $\ep$ are denoted by $\ep_{1,k}$) that converges to a solution $(v_1,b_1)$ of \eqref{MHD} on $B_1\times(0,T_1)$ in the following sense:
\EQN{
(v^{1,k}, b^{1,k}) \overset{*}{\rightharpoonup} (v_1,b_1)&\ \text{ in }L^\infty(0,T_1; (L^2\times L^2)(B_1)),\\
(v^{1,k}, b^{1,k}) \rightharpoonup (v_1,b_1)&\ \text{ in }L^2(0,T_1; (H^1\times H^1)(B_1)),\\
(v^{1,k}, b^{1,k}) \rightarrow (v_1,b_1)&\ \text{ in }L^3(0,T_1; (L^3\times L^3)(B_1)),\\
(\mathcal{J}_{\ep_{1,k}}v^{1,k}, \mathcal{J}_{\ep_{1,k}}b^{1,k}) \rightarrow (v_1,b_1)&\ \text{ in }L^3(0,T_1; (L^3\times L^3)(B_1)).
}
By Lemma \ref{lem-4.8-BLT-MathAnn2024} all $v^{1,k}$ are also uniformly bounded on $B_n\times[0,T_n]$ for $n\in\NN$, $n\ge2$ and, recursively, we can extract subsequences $\{(v^{n,k}, b^{n,k})\}_{k\in\NN}$ from $\{(v^{n-1,k}, b^{n-1,k})\}_{k\in\NN}$ which converge to a solution $(v_n,b_n)$ of \eqref{MHD} on $B_n\times(0,T_n)$ as $k\to\infty$ in the following sense:
\EQN{
(v^{n,k}, b^{n,k}) \overset{*}{\rightharpoonup} (v_n,b_n)&\ \text{ in }L^\infty(0,T_n; (L^2\times L^2)(B_n)),\\
(v^{n,k}, b^{n,k}) \rightharpoonup (v_n,b_n)&\ \text{ in }L^2(0,T_n; (H^1\times H^1)(B_n)),\\
(v^{n,k}, b^{n,k}) \rightarrow (v_n,b_n)&\ \text{ in }L^3(0,T_n; (L^3\times L^3)(B_n)),\\
(\mathcal{J}_{\ep_{n,k}}v^{n,k}, \mathcal{J}_{\ep_{n,k}}b^{n,k}) \rightarrow (v_n,b_n)&\ \text{ in }L^3(0,T_n; (L^3\times L^3)(B_n)).
}
Let $(\td v_n, \td b_n)$ be the extension by $0$ of $(v_n,b_n)$ to $\R^3\times(0,\infty)$.
Note that, at each step, $(\td v_n, \td b_n)$ agrees with $(\td v_{n-1}, \td b_{n-1})$ on $B_{n-1}\times(0,T_{n-1})$.
Let $(v,b) = \lim_{n\to\infty} (\td v_n, \td b_n)$.
Then $(v,b) = (v_n, b_n)$ on $B_n\times(0,T_n)$ for every $n\in\NN$.

Let $(v^k,b^k) = (v^{k,k}, b^{k,k})$ on $B_k\times(0,T_k)$ and equal $0$ elsewhere.
Let $\ep_k$ denote the corresponding regularization parameter.
Then, for every fixed $n$ and as $k\to\infty$,
\EQS{\label{eq-3.6-BT-CPDE2020}
(v^k, b^k) \overset{*}{\rightharpoonup} (v,b)&\ \text{ in }L^\infty(0,T_n; (L^2\times L^2)(B_n)),\\
(v^k, b^k) \rightharpoonup (v,b)&\ \text{ in }L^2(0,T_n; (H^1\times H^1)(B_n)),\\
(v^k, b^k) \rightarrow (v,b)&\ \text{ in }L^3(0,T_n; (L^3\times L^3)(B_n)),\\
(\mathcal{J}_{\ep_k}v^k, \mathcal{J}_{\ep_k}b^k) \rightarrow (v,b)&\ \text{ in }L^3(0,T_n; (L^3\times L^3)(B_n)).
}
Based on the uniform bounds for the approximates, we have that $(v,b)$ satisfies \eqref{eq-thm1.4BT4}.

To resolve the pressure, let
\EQN{
\pi^k(x,t) &= -\frac13\bkt{\mathcal{J}_{\ep_k}(v^k)\cdot v^k(x,t)\Phi_{\ep_k}(x) - \mathcal{J}_{\ep_k}(b^k)\cdot b^k(x,t)\Phi_{\ep_k}(x)}\\
&\quad + \text{p.v.} \int_{B_2} K_{ij}(x-y)\bkt{\mathcal{J}_{\ep_k}(v^k_i) v^k_j(y,t) - \mathcal{J}_{\ep_k}(b^k_i) b^k_j(y,t)}\Phi_{\ep_k}(y)\, dy\\
&\quad + \text{p.v.} \int_{B_2^c} (K_{ij}(x-y)-K_{ij}(-y)) \bkt{\mathcal{J}_{\ep_k}(v^k_i) v^k_j(y,t) - \mathcal{J}_{\ep_k}(b^k_i) b^k_j(y,t)}\Phi_{\ep_k}(y)\, dy,
}
which, together with $(v^k,b^k) = (v^{\ep_k}, b^{\ep_k})$, solves \eqref{eq-4.10-BLT-MathAnn2024} with $\ep=\ep_k$ in the distributional sense.

From the convergence properties of $(v^k, b^k)$, it follows that $\pi^k\to\pi$ in $L^{3/2}(0,T_n;L^{3/2}(B_n))$ for all $n$ where $\pi(x,t) = \lim_{n\to\infty}\bar{\pi}^n(x,t)$ in which $\bar{\pi}^n(x,t)$ is defined for $|x|<2^n$ by
\[
\bar{\pi}^n(x,t) = -\frac13\bke{|v(x,t)|^2 - |b(x,t)|^2} + \text{p.v.}\int_{B_2} K_{ij}(x-y)\bke{v_iv_j-b_ib_j}(y,t)\, dy + \bar{\pi}^n_3 + \bar{\pi}^n_4,
\]
with
\EQN{
\bar{\pi}^n_3(x,t) &= \text{p.v.}\int_{B_{2^{n+1}}\setminus B_2} \bke{K_{ij}(x-y)-K_{ij}(-y)}\bke{v_iv_j-b_ib_j}(y,t)\, dy,\\
\bar{\pi}^n_4(x,t) &= \int_{B_{2^{n+1}}^c} \bke{K_{ij}(x-y)-K_{ij}(-y)}\bke{v_iv_j-b_ib_j}(y,t)\, dy.
}
We have $\bar{\pi}^n_3, \bar{\pi}^n_4\in L^{3/2}((0,T)\times B_{2^n})$ and
\[
\bar{\pi}^n_3 + \bar{\pi}^n_4 = \bar{\pi}^{n+1}_3 + \bar{\pi}^{n+1}_4\ \text{ in }L^{3/2}((0,T)\times B_{2^n}).
\]
Thus $\bar{\pi}^n$ is independent of $n$ for $n>\log_2|x|$.

We now establish the above local pressure expression for all scales.
Note that the formula is valid for $\pi^k$ at all scales, that is, for any $T>0$, fixed $R>0$ and $x_0\in\R^3$, we have the following equality in $L^{3/2}(B_{2R}(x_0)\times(0,T))$,
\EQN{
\hat{\pi}_{x_0,R}^k(x,t) &:= \pi^k(x,t) - c_{x_0,R}^k(t)\\
&= -\De^{-1} \div\div\bkt{\bke{(\mathcal{J}_kv^k\otimes v^k - \mathcal{J}_kb^k\otimes b^k)\Phi_k}\chi_{4R}(x-x_0)}\\
&\quad - \int_{\R^3} \bke{K(x-y) - K(x_0-y)} \bke{(\mathcal{J}_kv^k\otimes v^k - \mathcal{J}_kb^k\otimes b^k)\Phi_k }(y,t) \bke{1-\chi_{4R}(y-x_0)} dy,
}
where $\mathcal{J}_k = \mathcal{J}_{\ep_k}$ and $\Phi_k = \Phi_{\ep_k}$.
Similarly, let 
\EQN{
\hat{\pi}_{x_0,R}(x,t)
&= -\De^{-1} \div\div\bkt{(v\otimes v - b\otimes b)\chi_{4R}(x-x_0)}\\
&\quad - \int_{\R^3} \bke{K(x-y) - K(x_0-y)} (v\otimes v - b\otimes b)(y,t) \bke{1-\chi_{4R}(y-x_0)} dy.
}
Fix $T>0$, $x_0\in\R^3$ and $R>0$.
Choose $n$ large enough that $B_{8R}(x_0)\times(0,T)\subset Q_n=B_n\times(0,T_n)$.
We claim that $\hat{\pi}_{x_0,R}^k(x,t)$ converges to $\hat{p}_{x_0,R}(x,t)$ in $L^{3/2}(B_{2R}(x_0)\times(0,T))$.
If this is the case, by taking the limit of the weak form of \eqref{eq-4.10-BLT-MathAnn2024}, we can show that $(v,b,\hat{\pi}_{x_0,R})$ also satisfies \eqref{MHD} in $B_{2R}(x_0)\times(0,T)$.
Hence $\nb\pi - \nb\hat{\pi}_{x_0,R} = 0$, and we may define
\[
c_{x_0,R}(t) = \pi(x,t) - \hat{\pi}_{x_0,R}(x,t)
\]
which is hence a function of $t$ in $L^{3/2}(0,T)$ that is independent of $x$.
This gives the desired local pressure expansion in $B_{2R}(x_0)\times(0,T)$.

To verify the claim we work term by term.
Note that the estimate in \cite[(3.26)]{KT-CMP2020} shows that
\[
\norm{v_iv_j-(\mathcal{J}_kv^k_i)v^k_j\Phi_k}_{L^{3/2}(B_M\times[0,T_n])}, \qquad 
\norm{b_ib_j-(\mathcal{J}_kb^k_i)b^k_j\Phi_k}_{L^{3/2}(B_M\times[0,T_n])}\to0,
\]
as $k\to\infty$ for every $M>0$ and $n\in\NN$.
This implies
\EQN{
-\De^{-1} \div\div&\bkt{\bke{(\mathcal{J}_kv^k\otimes v^k - \mathcal{J}_kb^k\otimes b^k)\Phi_k}\chi_{4R}(x-x_0)}\\
&\qquad \rightarrow -\De^{-1} \div\div\bkt{(v\otimes v - b\otimes b)\chi_{4R}(x-x_0)}
}
in $L^{3/2}(B_{2R}(x_0)\times(0,T_n))$,
and
\EQN{
- \int_{|x|<M}& \bke{K(x-y) - K(x_0-y)} \bke{(\mathcal{J}_kv^k\otimes v^k - \mathcal{J}_kb^k\otimes b^k)\Phi_k }(y,t) \bke{1-\chi_{4R}(y-x_0)} dy\\
&\rightarrow - \int_{|x|<M} \bke{K(x-y) - K(x_0-y)} (v\otimes v - b\otimes b)(y,t) \bke{1-\chi_{4R}(y-x_0)} dy
}
in $L^{3/2}(B_{2R}(x_0)\times(0,T_n))$ for every $M>8R$.
For the far-field part, still assuming $M>8R$, we have
{\small
\EQN{
&\norm{\int_{|x|\ge M} \bke{K(x-y) - K(x_0-y)} \bke{(\mathcal{J}_kv^k\otimes v^k - \mathcal{J}_kb^k\otimes b^k)\Phi_k - (v\otimes v - b\otimes b)}(y,t) dy}_{L^{3/2}(B_{2R}(x_0)\times(0,T_n))}\\
&\qquad \le C(R,n,\norm{(v_0,b_0)}_{L^2_\uloc\times L^2_\uloc}) M^{-1}
\le C(R,n,\norm{(v_0,b_0)}_{E^2_q\times E^2_q}) M^{-1},
}}
where we've used the embedding $E^2_q\subset L^2_\uloc$.
This can be made arbitrarily small by taking $M$ large and noting $R$ and $n$ are fixed.
Consequently, and since the other parts of the pressure converge, we conclude that 
\EQ{\label{eq-4.6-KS-AMST2007}
\hat{\pi}_{x_0,R}^k(x,t) \rightarrow \hat{\pi}_{x_0,R}(x,t)\ \text{ in }L^{3/2}(B_{2R}(x_0)\times(0,T_n)),
}
which leads to the desired local pressure expansion.
Since $n$ was arbitrary, this gives the pressure formula for arbitrarily large times.

At this point we have established items 1.-3. from the definition of local energy solutions. We now check remaining items.

Fix $T_0$ and choose $n$ so that $T_n\ge T_0$.
Then \eqref{eq-3.6-BT-CPDE2020} holds for all $n$ with $T_n$ replaced by $T_0$.
Furthermore, the estimates \eqref{eq-4.1-KS-AMST2007}--\eqref{eq-4.4-KS-AMST2007} and \eqref{eq-4.6-KS-AMST2007} are valid in $B_n\times[0,T_0]$ up to a re-definition of $A$.
Moreover,  we have
\EQ{\label{eq-4.7-KS-AMST2007}
\norm{\pd_tv}_{\chi_n} + \norm{\pd_tb}_{\chi_n} \le C(n,T_0,A),
}
and 
\EQ{\label{eq-4.9-KS-AMST2007}
\bigg\|\esssup_{0\leq t \leq T_0} \int_{B_1(k) } \bke{|v|^2 + |b|^2} dx+ \int_0^{T_0}\int_{B_1(k) } \bke{|\nabla v|^2 + |\nabla b|^2} dx\,dt \bigg\| _{\ell^{q/2}(k\in \ZZ^3)}
\leq  2A.
}
It follows from \eqref{eq-4.7-KS-AMST2007} and \eqref{eq-4.9-KS-AMST2007} that for every $n$,
\EQ{\label{eq-4.10-KS-AMST2007}
t\mapsto \int_{B_n} v(x,t)\cdot w(x)\, dx\qquad
t\mapsto \int_{B_n} b(x,t)\cdot w(x)\, dx
}
are continuous in $t\in[0,T_0]$ for every $w\in L^2(B_2)$.
Since $T_0$ was arbitrary, we can extend this to all times.
The local energy inequality follows from the local energy equality for $(v^k,b^k)$ and $\pi^k$, and \eqref{eq-3.6-BT-CPDE2020}, \eqref{eq-4.6-KS-AMST2007} in $B_n\times[0,T_0]$, \eqref{eq-4.7-KS-AMST2007}, and $\hat{\pi}_n(x,t) = \pi(x,t) - c_n(t)$ for some $c_n\in L^{3/2}(0,T_0)$.
Convergence to initial data in $L^2_\loc$ follows from \eqref{eq-4.10-KS-AMST2007} and the local energy inequality. 
This confirms that items 4.-6. from the definition of local energy solutions are satisfied and finishes the proof of Theorem \ref{th4.8-mhd} for $1\le q<2$.
\end{proof}

\section*{Acknowledgments}
I warmly thank Zachary Bradshaw and Tai-Peng Tsai for helpful comments.
The research was partially support by the AMS-Simons Travel Grant and the Simons Foundation Math + X Investigator Award \#376319 (Michael I. Weinstein).
The author gratefully acknowledges the unwavering financial and emotional support of his wife, Anyi Bao.

\bigskip

\end{document}